\DeclareDelimiter{\dTV}[\mathnormal{d}_{\operatorname{TV}}]{\lparen}{\rparen}
\DeclareDelimiter{\opnorm}{\lVert}{\rVert_{\operatorname{op}}}
\DeclareDelimiter{\Normal}[\mathcal{N}]{\lparen}{\rparen}
\DeclareDelimiter{\Ber}[\operatorname{Ber}]{\lparen}{\rparen}
\DeclareDelimiter{\Berpm}[\operatorname{Ber}_{\pm}]{\lparen}{\rparen}
\DeclareDelimiter{\DKL}[\mathcal{D}_{\operatorname{KL}}]{\lparen}{\rparen}
\DeclareDelimiter{\Ent}[\operatorname{Ent}]{\lbrack}{\rbrack}
\DeclareDelimiter{\chis}[\chi^2]{\lparen}{\rparen}
\DeclareDelimiter{\Var}[\operatorname{Var}]{\lbrack}{\rbrack}
\DeclareDelimiter{\Cov}[\operatorname{Cov}]{\lbrack}{\rbrack}
\DeclareDelimiter{\Ehat}[\widehat{\mathbb E}]{\lbrack}{\rbrack}
\DeclareDelimiter{\cov}[\operatorname{cov}]{\lparen}{\rparen}
\DeclareDelimiter{\mean}[\operatorname{mean}]{\lparen}{\rparen}
\DeclareDocumentMathCommand{\observe}{m}{
	\mathrel{}\mathclose{}\lvert\mathopen{}\mathrel{}#1\vartriangleright
}
\DeclareDocumentMathCommand{\corMat}{}{\Psi^{\operatorname{cor}}}
\DeclareDocumentMathCommand{\tilt}{}{\mathcal{T}}
\DeclareDocumentMathCommand{\F}{}{\mathcal{F}}
\title{Trickle-Down in Localization Schemes and Applications}
\date{}
	\author[1]{Nima Anari}
	\author[2]{Frederic Koehler}
	\author[1]{Thuy-Duong Vuong}
	\affil[1]{Stanford University, \url{{anari,tdvuong}@stanford.edu}}
        \affil[2]{University of Chicago, \url{fkoehler@uchicago.edu}}
	\author{Nima Anari}
	\email{anari@cs.stanford.edu}
	\affiliation{
	  \institution{Stanford University}
	  \city{Stanford}
	  \state{California}
	  \country{USA}
	}
	\author{Frederic Koehler}
	\email{fkoehler@uchicago.edu}
	\affiliation{
	  \institution{University of Chicago}
	  \city{Chicago}
	  \state{Illinois}
	  \country{USA}
	}
	\author{Thuy-Duong Vuong}
	\email{tdvuong@stanford.edu}
	\affiliation{
	  \institution{Stanford University}
	  \city{Stanford}
	  \state{California}
	  \country{USA}
	}
	\author{}
\begin{document}
	\Tag<sigconf>{
		\begin{abstract}
			Trickle-down is a phenomenon in high-dimensional expanders with many important applications — for example, it is a key ingredient in various constructions of high-dimensional expanders or the proof of rapid mixing for the basis exchange walk on matroids and in the analysis of log-concave polynomials. We formulate a generalized trickle-down equation in the abstract context of linear-tilt localization schemes. Building on this generalization, we improve the best-known results for several Markov chain mixing or sampling problems — for example, we improve the threshold up to which Glauber dynamics is known to mix rapidly in the Sherrington-Kirkpatrick spin glass model. Other applications of our framework include improved mixing results for the Langevin dynamics in the $O(N)$ model, and near-linear time sampling algorithms for the antiferromagnetic and fixed-magnetization Ising models on expanders. For the latter application, we use a new dynamics inspired by polarization, a technique from the theory of stable polynomials.
		\end{abstract}
		\keywords{Markov chains, localization schemes, stochastic localization, trickle-down, Ising model, Sherrington-Kirkpatrick model, O(N) model}
		\maketitle
	}
	\Tag{
		\maketitle
		\begin{abstract}
			
		\end{abstract}	
		\clearpage
	}
    \newpage
    \tableofcontents
    \newpage
    \section{Introduction}\label{sec:intro}
Recent years have seen an explosion of work on understanding \emph{high-dimensional expansion}, in other words, higher-dimensional analogues of expansion in graphs. This has contributed to significant breakthroughs in our understanding of Markov chains, error-correcting codes, the geometry of polynomials, and other areas. See, e.g., \cite{CGM19,AL20,KO18,ALOV19,dinur2017high} for a few relevant references.

The present work is inspired by one of the key ingredients in the theory of high-dimensional expanders: \Citeauthor{Opp18}'s \emph{trickle-down} (or trickling-down) theorem and the related \citeauthor{garland1973p} method \cite{Opp18,garland1973p}. Trickle-down expresses the following geometric idea: if a simplicial complex is locally very well-connected (in the sense of expansion), then the complex must be composed of disjoint pieces, each globally very well-connected.

To be a bit more precise, a pure simplicial complex of dimension $k - 1$ with $n$ vertices is described by a (possibly weighted) subset of $\binom{[n]}{k}$, which are the highest-dimensional faces of the complex. The ``top links'' of the complex are induced by conditioning these faces to contain a specific set of $k - 2$ vertices and forming the family/distribution of the additional $2$ vertices; these top links can be thought of as a graph whose edges are these $2$-sized subsets. \Citeauthor{Opp18}'s trickle-down tells us that if the top links of the complex are very good spectral expanders, then this ``trickles down'' to ensure the complex itself is a spectral high-dimensional expander provided the mild condition that it is not disconnected at any level. Spectral expansion of the complex means that a natural $1 \to k \to 1$ \emph{up-down walk}, which starts at a vertex, moves to an adjacent face, and then goes to a uniformly random vertex of that face, has a large spectral gap.

Important Markov chains like the Glauber dynamics can naturally be viewed as types of down-up walks on a weighted simplicial complex, and trickle-down has revealed itself to be an invaluable tool for proving sharp mixing time bounds for important classes of distributions like spanning trees, bases of matroids, more generally, distributions induced by log-concave polynomials, and quite recently edge colorings of graphs; see, e.g., \cite{ALOV19,ALOVV21,CGM19,abdolazimi2022matrix}.

\paragraph{Our contribution.} In this work, we extend the reach of the trickle-down approach beyond the context of high-dimensional expanders. We do this by formulating a trickle-down equation in the more general context of \emph{linear-tilt localization schemes}. Localization schemes, introduced by \textcite{chen2022localization}, constitute a framework that naturally generalizes the concept of iterative ``pinning'' in high-dimensional expanders (conditioning on vertices, as in the discussion of ``links'' above). In short, linear-tilt localization schemes generalize the operation of pinning to iterative reweighing of a measure by linear functionals (a.k.a.\ tilts).  The generalized trickle-down equation gives a systematic way to perform backward induction and to show that the original measure is ``well-connected'' as long as the simpler/localized measures are. See \cref{subsec:techniques} for more details.

The new perspective arising from the trickle-down equation allows us to make progress on several important problems in the sampling literature, and also derive important structural consequences like concentration of measure which have so far been beyond the reach of existing tools. We briefly overview a couple of representative applications here, but there are more --- see \cref{subsec:results} and \cref{subsec:examples}.

\paragraph{Example application: mixing in the Sherrington-Kirkpatrick (SK) model.} The SK model is a celebrated spin glass model from statistical physics \cite{sherrington1975solvable,mezard1987spin}. This is an Ising model where the interaction matrix is sampled from the GOE (Gaussian Orthogonal Ensemble). In other words, the matrix $J$ is a random $n \times n$ symmetric matrix with independent entries (for $i < j$)
\[ J_{ij} \sim \Normal{0, \beta^2/n} \]
where $\beta \ge 0$ is the \emph{inverse temperature}, and the induced probability measure $\nu$ on $\set{\pm 1}^n$ is given by
\[\nu(x) \propto \exp\parens*{\dotprod{x, J x}/2}.\]
It was previously known for $\beta < 0.25$ that, with high probability over the choice of $J$, the Glauber dynamics\footnote{This is a Markov chain which at every step, picks a random site $i$ and resamples the spin $X_i$ according to the conditional law $\nu\parens*{\cdot \given X_{\sim i}}$. It is also known as the Gibbs sampler.} mixes in nearly-linear time \cite{bauerschmidt2019very,eldan2021spectral,anari2021entropic}. This threshold does not have any apparent physical meaning (for example, there is no phase transition associated with the Gibbs measure until $\beta = 1$). However, mathematically it was an inherent limit of previous techniques because it is the sharp threshold for an associated functional, arising from the Hubbard-Stratonovich transform, to be \emph{convex}. Analyzing the trickle-down equation for this model arising from stochastic localization, we can break the convexity barrier and still obtain optimal mixing times:
\begin{theorem}[Informal]
    For the SK model, there exists absolute constant $c > 0$ such that up to $\beta = 0.25 + c \approx 0.295$, with high probability over the choice of $J$, Glauber dynamics mixes in $O(n \log n)$ time.
\end{theorem}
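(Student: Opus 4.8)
\emph{Proof plan.} The strategy is to reduce the mixing statement to a uniform operator-norm bound on covariance matrices of externally tilted copies of $\nu$, and then to establish that bound by setting up the generalized trickle-down equation for the stochastic-localization process and integrating it.

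\medskip\noindent\emph{Step 1 (reduction to covariance bounds).} By the linear-tilt localization scheme route to functional inequalities (entropic independence / approximate tensorization), Glauber dynamics for $\nu$ mixes in $O(n\log n)$ time as soon as there is an absolute constant $C$ with $\opnorm{\Cov{\nu^h}}\le C$ for every external-field tilt $\nu^h(x)\propto\nu(x)\exp(\langle h,x\rangle)$, $h\in\mathbb{R}^n$; this yields a modified log-Sobolev constant $\Omega(1/n)$ for the chain. Controlling all tilts simultaneously (rather than only the random ones arising along a particular localization path) means the only randomness we ever have to handle is that of the interaction matrix $J$, via a few standard random-matrix estimates --- chiefly $\opnorm{J}=2\beta+o(1)$ and convergence of the empirical spectral distribution of $J$ to the semicircle law on $[-2\beta,2\beta]$ --- all of which hold with high probability.

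\medskip\noindent\emph{Step 2 (Hubbard--Stratonovich and the convexity barrier).} Fix $\alpha=2\beta+o(1)$ so that $A:=J+\alpha I\succeq 0$; on $\set{\pm 1}^n$ one has $\exp(\langle x,Jx\rangle/2)\propto\mathbb{E}_{g\sim\Normal{0,I}}\exp\langle A^{1/2}g,x\rangle$, and marginalizing out $x$ presents $\nu^h$ as a mixture of product measures governed by the field law
\[
  \tilde\nu^h(g)\;\propto\;\exp(-\tfrac12\langle g,g\rangle)\prod_{i=1}^{n}\cosh((A^{1/2}g+h)_i)\qquad(g\in\mathbb{R}^n).
\]
A short law-of-total-covariance computation reduces the desired bound on $\opnorm{\Cov{\nu^h}}$ to covariance bounds for $\tilde\nu^h$, and the key pointwise input is the Hessian identity $-\nabla^2\log\tilde\nu^h(g)=I-A^{1/2}D(g)A^{1/2}$ with $D(g)=\operatorname{diag}(\operatorname{sech}^2((A^{1/2}g+h)_i))\preceq I$, independent of $h$. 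When $\beta<1/4$ we have $\opnorm{A}=4\beta+o(1)<1$, so this Hessian is uniformly positive definite and Brascamp--Lieb closes the argument --- this is precisely the convexity barrier that caps the known threshold at $0.25$.

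\medskip\noindent\emph{Step 3 (trickle-down, the heart).} To cross the barrier, run stochastic localization on $\tilde\nu^h$; the iterate $\tilde\nu^h_t$ has $-\nabla^2\log\tilde\nu^h_t=(1+t)I-A^{1/2}D_tA^{1/2}\succeq(1+t)I-A$, so for $t$ above the threshold $t_\star:=\opnorm{A}-1$ it is strongly log-concave and $\Cov{\tilde\nu^h_t}\preceq((1+t)I-A)^{-1}$ by Brascamp--Lieb. For $\beta>1/4$ one has $t_\star>0$ and must transport control from above $t_\star$ down to $t=0$; this is where the paper's generalized trickle-down equation enters. Applied to the linear-tilt localization generated by stochastic localization, it produces a backward matrix differential inequality of Riccati type,
\[
  -\frac{d}{dt}\,M_t\;\preceq\;\Phi_t(M_t),
\]
for a deterministic matrix $M_t$ dominating $\Cov{\tilde\nu^h_t}$ along the process, with terminal data $M_{t_1}=((1+t_1)I-A)^{-1}$ at any fixed base point $t_1>t_\star$, where $\Phi_t$ is a Riccati-type nonlinearity assembled from the conditional second-moment (and third-cumulant) increments of the localization --- schematically $\Phi_t(M)\approx\opnorm{M}\,M$ up to corrections carrying the factor $(1+t)$ and the spectral data of $A$ and of the $\operatorname{sech}^2$ weights. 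Integrating this inequality from $t_1$ down to $0$ bounds $\sup_t\opnorm{\Cov{\tilde\nu^h_t}}$, uniformly in $h$, by a finite constant --- provided the integration does not blow up before reaching $t=0$.

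\medskip\noindent\emph{Step 4 (the new threshold, and the main obstacle).} Everything hinges on the non-blow-up condition for this matrix Riccati inequality, and this is exactly where the improvement over $0.25$ lives: collapsing the recursion to a scalar one (tracking only $\opnorm{\Cov{\tilde\nu^h_t}}$ and bounding $\Phi_t(M)$ by $\opnorm{M}^2$) recovers only $\opnorm{A}<1\iff\beta<1/4$. The gain comes from keeping the analysis genuinely matrix-valued and feeding in the finer structure of the model: the empirical spectral distribution of $A$ is the shifted semicircle, whose mass sits well below the edge $\opnorm{A}$, so the spectral averages actually driving $\Phi_t$ behave like $\int(1+t-\lambda)^{-1}\,d\mu_{\mathrm{sc}}(\lambda)$ rather than the worst case $(1+t-\opnorm{A})^{-1}$; and the weights $D_t$, while only $\preceq I$ pointwise, contribute their typical strictly-sub-$I$ size to the integrated second-moment increments. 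Carrying these through reduces the non-blow-up question to a scalar ODE whose coefficients are Stieltjes transforms of the semicircle law, and a direct computation shows it is solvable on all of $[0,t_\star]$ for every $\beta$ up to $0.25+c$ with $c>0$ an explicit absolute constant. I expect this matrix-valued, spectrum-aware analysis of the trickle-down ODE to be the hard part; the remaining work is routine --- checking that the trickle-down recursion composes cleanly with the Brascamp--Lieb base case at $t_1$ and with the Step~1--2 reductions, that the handful of random-matrix inputs for $J$ hold simultaneously with high probability, and that all bounds are uniform in the tilt $h$ (which they are, since $h$ drops out already at the level of the Hessian). Combining the four steps gives, with high probability over $J$, $\sup_{h,t}\opnorm{\Cov{\tilde\nu^h_t}}=O(1)$ for $\beta\le 0.25+c$, hence $O(n\log n)$ mixing for Glauber dynamics.
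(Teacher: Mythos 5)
There is a genuine gap, and it sits exactly where your improvement over $0.25$ is supposed to come from. Your Steps 1--2 and the skeleton of Step 3 (integrate a trickle-down/Riccati inequality backward from a time $t_1$ at which the localized measure is strongly log-concave) are reasonable, but Step 4 is not an argument: the non-blow-up of your matrix Riccati inequality past $\opnorm{A}<1$ is asserted, not proved, and the two ingredients you propose to prove it with are both problematic. First, the claim that ``$h$ drops out already at the level of the Hessian'' is only true for the crude lower bound $-\nabla^2\log\tilde\nu^h_t\succeq(1+t)I-A$; the finer $\operatorname{sech}^2$ weights $D(g)=\operatorname{diag}(\operatorname{sech}^2((A^{1/2}g+h)_i))$, which are the sole source of your claimed gain, depend strongly on $h$ (an adversarial $h$ can push the local fields toward zero and make $D(g)$ close to $I$), and the covariance bound must hold uniformly over all tilts $h$ -- so ``typical strictly-sub-$I$ size'' of the weights is exactly the statement that needs a worst-case proof, and none is given. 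Second, feeding the semicircle bulk of $A$ into the recursion requires controlling how the random, tilt-dependent covariance matrices align with the spectral decomposition of $A$; since they are not simultaneously diagonalizable with $A$ and the worst case over $h$ need not concentrate on the bulk, replacing $(1+t-\opnorm{A})^{-1}$ by a Stieltjes-transform average is unjustified.

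It is also instructive that the paper's actual mechanism is different and does not use the bulk spectrum at all: its bound $q_\eta(\opnorm{J})$ depends only on $\opnorm{J}$ and $\eta=\alpha/\opnorm{J}$. The paper runs stochastic localization directly on the hypercube measure with driving matrix $J^{1/2}$ (not on the Hubbard--Stratonovich field), so that at time $1$ the measure is a product of $\pm1$ spins with external field $JX^*+h+J^{1/2}G$; the diagonal contributes a self-field term $\alpha X_i^*$ plus independent noise $\sqrt{\alpha}\,g$, and since $X_i^*$ is positively correlated with its own local field (monotonicity of $\tanh$), the expected conditional variance is at most $1-r(\alpha)$ with
\[
r(a)=\E_{\sigma\sim\Berpm{},\,g\sim\Normal{0,1}}\bigl[\tanh^2(a\sigma+\sqrt{a}\,g)\bigr],
\]
the worst case over the off-diagonal field $b$ being shown to be $b=0$ by a coupling of two stochastic localization processes. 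Plugging this improved ``time-one'' term into the scalar trickle-down comparison (a Volterra equation in $\opnorm{\cdot}$) gives $q_{1/2}$ blowing up slightly above $1.18$, hence $\beta\approx 0.295$ for SK after the shift $J\mapsto J+2\beta I$, and ATE/mixing then follow from entropic stability, conservation of entropy, and the supermartingale property. That hypercube anticoncentration step, made uniform in $h$ via the $b=0$ worst-case coupling, is the idea your proposal is missing; without it (or a rigorous substitute for your ``typical weights plus bulk spectrum'' heuristic), your argument only recovers the $\beta<0.25$ threshold you set out to beat.
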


This is obtained as a special case of a general result that improves the mixing time bounds for many Ising models in terms of the spectrum of their interaction matrix, \cref{thm:ising-main}.

Another feature of the generalized trickle-down is that it makes sense for continuous models as well as discrete ones. We obtain, analogously to the improvement in the aforementioned result for Ising models, an improvement for higher-spin-dimension variants, namely, the well-known $O(N)$ model, under both the Langevin and Glauber dynamics. See \cref{subsec:examples} for details.

\paragraph{Example application: mixing for antiferromagnetic Ising models on expander graphs.} 
The \emph{antiferromagnetic} Ising model at inverse temperature $\beta \ge 0$ with external field $h \in \R^n$ on a graph $G$ is the probability measure on the hypercube $\set{\pm 1}^n$ given by
\[ \nu(x) \propto \exp \parens*{-\beta \sum_{i \sim j} x_i x_j + \dotprod{h, x}} \]
where $i \sim j$ is the adjacency relation in graph $G$. For a \emph{worst-case} graph, the largest value of $\beta$ up to which polynomial time sampling is possible is known and it corresponds to what is called the ``tree uniqueness threshold'' for the Gibbs measure on the infinite $d$-regular tree \cite{weitz2006counting,mossel2009hardness,CLV20}. The uniqueness threshold is asymptotically $\simeq 1/d$ as $d \to \infty$.
We show this can be greatly improved in expander graphs.
\begin{theorem}[Informal]
Consider a $d$-regular graph with adjacency matrix $A$ on $n$ vertices, and suppose that $\max\set{\abs{\lambda_2(A)}, \abs{\lambda_n(A)}} \le \lambda$. For all $\beta < 1/2\lambda$ there exists an algorithm, which we call the polarized walk, which samples from the antiferromagnetic Ising model in $\widetilde{O}(nd)$ time.
\end{theorem}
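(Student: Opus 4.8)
The plan is to bring $\nu$ into a shape to which the generalized trickle-down applies, by quarantining the single bad eigendirection of the interaction and treating it through polarization. Writing the interaction matrix as $J=-\beta A$ and peeling off the Perron eigenvector $\mathbf{1}/\sqrt{n}$ (eigenvalue $\lambda_1(A)=d$),
\[
 \nu(x)\;\propto\;\exp\parens*{-\frac{\beta d}{2n}\,\dotprod{\mathbf{1}, x}^2}\cdot\exp\parens*{-\frac{\beta}{2}\dotprod{x, A'x}+\dotprod{h, x}},\qquad A':=A-\frac{d}{n}\,\mathbf{1}\mathbf{1}^{\top},
\]
and $\opnorm{A'}\le\lambda$ because $A'$ carries exactly the eigenvalues $\lambda_2,\dots,\lambda_n$. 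Thus $\nu$ is a ``nice'' Ising model whose residual interaction $-\beta A'$ has operator norm $\beta\lambda<1/2$, reweighted by a factor that is \emph{log-concave in the magnetization} $\dotprod{\mathbf{1}, x}$ (the map $m\mapsto\exp(-\beta d\,m^2/(2n))$ is log-concave). The structural point is that the only part of the interaction which is not uniformly small is a \emph{concave} reweighting of a single linear statistic. Polarization is the device that makes this usable: the univariate magnetization generating function of the pure mean-field antiferromagnet, $\sum_{k}\binom{n}{k}\exp(-\beta d(2k-n)^2/(2n))\,w^{k}$, is real-rooted (a standard consequence of $(1+w)^n$ being real-rooted together with the Gaussian factor being a multiplier sequence), so its polarization --- splitting the over-constrained magnetization coordinate into several coordinates and symmetrizing, the operation sending real-rooted univariate polynomials to real stable multivariate ones --- produces a lift of $\nu$ on which the natural down-up dynamics, the \emph{polarized walk}, has stationary law projecting to $\nu$, admits magnetization-changing moves, and fits the template of a linear-tilt localization scheme.

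With $\nu$ realized as such a scheme, the core of the argument is to run the generalized trickle-down equation along it. By backward induction over the localization one controls the spectral gap --- indeed the modified-log-Sobolev constant --- of the localized measures $\nu_t$: the log-concave magnetization reweighting contributes a \emph{nonpositive} term to the trickle-down recursion, which is exactly where the naive ``$\opnorm{J}<1$'' convexity barrier is evaded, in complete analogy with the Sherrington--Kirkpatrick application; meanwhile the residual interaction contributes at most $\beta\opnorm{A'}\le\beta\lambda$, and the hypothesis $\beta<1/(2\lambda)$ is what keeps the recursion contractive --- the factor $2$ reflecting that a polarized, swap-type move feels the interaction with roughly twice the sensitivity of a single-site update. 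This yields an $O(1)$ modified-log-Sobolev constant for the polarized walk, hence mixing in $O(n\log n)$ steps, and projecting a mixed configuration back down gives a sample from $\nu$.

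For the running time: on a $d$-regular graph each site interacts with only $d$ others, so one step of the polarized walk --- computing the relevant tilted conditional marginal, including the polarization move on the magnetization block --- costs $\widetilde{O}(d)$, for $\widetilde{O}(nd)$ overall. I expect the main obstacle to be the middle step: setting up the trickle-down equation for the polarized localization and proving its solution stays bounded all the way up to $\beta=1/(2\lambda)$. Concretely one must (a) verify that the polarized lift is genuinely a linear-tilt localization, so the abstract machinery applies, and (b) carry out the backward induction while tracking the joint contribution of the magnetization/polarization block and the residual interaction to the recursion; it is this bookkeeping that pins down both the threshold and the constant, and where the convexity-barrier breaking actually takes place.
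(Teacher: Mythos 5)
Your overall plan is the same as the paper's: split off the Perron direction so that $\nu$ becomes a spectrally small Ising model reweighted by an antiferromagnetic Curie--Weiss factor $\exp\parens{-\frac{\gamma}{2n}(\sum_i x_i)^2}$, treat that rank-one factor through polarization, treat the residual interaction through trickle-down along stochastic localization, and implement the polarized walk in $\widetilde{O}(d)$ time per step. However, the middle step---which you yourself flag as the main obstacle---is where your description departs from what actually works, and as written it is a genuine gap. In the paper the magnetization term never enters the trickle-down recursion: the localization is driven by $C_t=J^{1/2}$ where $J$ is the residual interaction \emph{shifted to be positive semidefinite} (a PSD driving matrix is required, so $-\beta A'$ cannot be used directly; the shift roughly doubles $\opnorm{J}$ relative to $\beta\lambda$, which is why the formal results require $\beta\lesssim 1/(4\lambda)$, e.g.\ $(1-\delta)/(8\sqrt{d-1})$ for random $d$-regular graphs). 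The Curie--Weiss factor survives untouched to time $1$, where two facts about the tilted rank-one measure $\nu_1\propto\exp(\dotprod{h',x}-\frac{\gamma}{2n}(\sum_i x_i)^2)$ do all the work: (i) its polar homogenization is log-concave (via ultra-log-concavity of the coefficients $e^{-c(k-n/2)^2}$ and preservation of Lorentzianity under polarization, \cref{lem:bivariate,lem:half-flc}), which yields $(1-1/n)$-entropy contraction of the polarized down operator with respect to $\nu_1$; and (ii) $\nu_1$ is $1/2$-fractionally log-concave, hence $\cov{\nu_1}\preceq 2I$ under every external field. Trickle-down then amounts to the Riccati comparison $y'=\opnorm{J}y^2$, $y(0)=2$, giving $\opnorm{\cov{\nu_t}}\le 2/(1-2(1-t)\opnorm{J})$, finite exactly when $\opnorm{J}<1/2$. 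So the factor $2$ in the threshold comes from the covariance bound $2$ of the half-FLC base measure, not from ``swap-type moves feeling the interaction twice''; and there is no ``nonpositive term contributed by the log-concave reweighting'' in the recursion---the correction term $\cov{\nu_t}\cov{Z_{t+1}\given\F_t}\cov{\nu_t}$ in \cref{eqn:trickledown-1} is always PSD. Trickle-down also controls covariances only; it does not, by backward induction, control spectral gaps or modified log-Sobolev constants of the $\nu_t$ as your sketch asserts.

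Two further ingredients needed to close the argument are missing from your sketch. First, entropy contraction at time $1$ must be transported back to $\nu=\nu_0$: the paper does this by combining the uniform covariance bound along the path with entropic stability (\cref{lem:entropic-stability}), the conservation-of-entropy estimate (\cref{prop:conservation-of-entropy}), and the supermartingale property of the averaged conditional entropies under stochastic localization (\cref{lem:supermartingale}); some warm-start/exchange-inequality argument is also needed to turn entropy contraction into the stated $O(n\log(n/\epsilon))$ mixing bound. Second, $\widetilde{O}(d)$ per step is not automatic from $d$-regularity: the polarized up step must sample among \emph{all} currently $-1$ sites with weights $\exp(B_j(x))$, which naively costs $\Theta(n)$; the paper achieves $O(d\log n)$ amortized time only via a balanced-BST range-search data structure. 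None of these obstacles is insurmountable, but together they are precisely the content of the proof, so your proposal sets up the right objects without yet establishing either the mixing bound or the runtime.
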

For example, in a random $d$-regular graph this means we can handle $\beta$ up to a threshold of $\Theta(1/\sqrt{d})$ instead of $\Theta(1/d)$. We also obtain results for Glauber dynamics when $d$ is small. See \cref{subsec:results} and \cref{subsec:examples} for more discussion.

\Tag{\subsection{Our results}}\Tag<sigconf>{\subsection{Our Results}}\label{subsec:results}
We now proceed to formally state the main results of this work, obtained based on the generalized trickle-down equation. Since these results apply to large classes of models, we give more specialized and concrete examples illustrating their application in the later \cref{subsec:examples}. 

Our main results involve systems of $n$ spins, with each of the $n$ spins taking values on the unit sphere $S^{N-1}=\set{x\in \R^N\given \norm{x}=1}$. We use $\sigma^{N-1}$ to denote the natural ``uniform'' probability measure on $S^{N-1}$. As a special case, the $0$-dimensional unit sphere is the discrete set $\set{\pm 1}$, and we use $\Berpm{}$ to denote the uniform distribution in that case. Our main results bound the covariance matrix $\cov{\nu}$ of the underlying distribution $\nu$, from which approximate tensorization of entropy, abbreviated as ATE, \Tag{(see \cref{sec:fi-and-mixing})} and appropriate mixing time results follow.

For a distribution $\mu$ on (a subset of) $\R^N$, such as $\sigma^{N-1}$ or $\Berpm{}$, and vector $w\in \R^N$, we use $\tilt_w \mu$ to denote the exponentially tilted distribution given by the following density \Tag{(see \cref{def:exponential-tilts})}:
\[ \frac{d\tilt_w \mu}{d\mu}(x)\propto \exp(\dotprod{w, x}). \]

\paragraph{Improved spectral condition for rapid mixing of Glauber dynamics in Ising models.} A distribution $\nu$ on $\set{\pm 1}^n$ is an \emph{Ising model} with interaction matrix $J$ and external field $h$ if 
\[ \nu(x) \propto \exp\parens*{\frac{1}{2} \dotprod{x, J x} + \dotprod{h, x}} \]
for all $x \in \set{\pm 1}^n$. Note that the diagonals of $J$ are arbitrary; they do not affect the measure $\nu$. 

\Tag{\begin{theorem}[\cref{thm:ising} below]}\Tag<sigconf>{\begin{theorem}}\label{thm:ising-main}
Suppose that $\nu$ is an Ising model parameterized by some external field $h \in \R^n$ and interaction matrix $J$ satisfying $J \succeq 0$.
Suppose, without loss of generality, that the diagonal of $J$ is constant, so there exists $\alpha$ such that $J_{ii} = \alpha \ge 0$. Let $\eta = \alpha/\opnorm{J} \in [0,1]$. 
Then
\[ \opnorm{\cov{\nu}} \le q_{\eta}(\opnorm{J}) \]
where $q_{\eta} : \R_{\geq 0} \to \R_{\geq 0} \cup \set{\infty}$ solves the Volterra integral equation
\[ q_\eta(z)=r(\eta z)+\int_0^z q_\eta(y)^2dy \]
with 
\Tag{
\[ r(t)=\E*_{x\sim \Berpm{}, g\sim \Normal{0, 1}}{\cov*{\tilt_{tx+\sqrt{t}g}\Berpm{}}}=\E*_{x\sim \Berpm{}, g\sim \Normal{0, 1}}{1-\tanh^2\parens*{tx+\sqrt{t}g}}\leq 1. \]
}%
\Tag<sigconf>{
\begin{multline*}
r(t)=\E*_{x\sim \Berpm{}, g\sim \Normal{0, 1}}{\cov*{\tilt_{tx+\sqrt{t}g}\Berpm{}}}=\\
\E*_{x\sim \Berpm{}, g\sim \Normal{0, 1}}{1-\tanh^2\parens*{tx+\sqrt{t}g}}\leq 1. 
\end{multline*}
}%
Furthermore, $\nu$ satisfies approximate tensorization of entropy with constant at most
\[ \exp\parens*{\int_0^{\opnorm{J}} q_{\eta}(z) dz }. \]
\end{theorem}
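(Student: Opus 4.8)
The plan is to analyze $\nu$ through the stochastic localization process adapted to the Ising family and to control the operator norm of the covariance along it using the generalized trickle-down equation of \cref{subsec:techniques}. Since $J \succeq 0$, and since $\dotprod{x, x}$ is constant on $\set{\pm 1}^n$, for $z \in [0, \opnorm{J}]$ we may define the linear-tilt localization whose measure at time $z$ is $\nu_z \propto \exp\bigl(\tfrac12 (1 - z/\opnorm{J}) \dotprod{x, Jx} + \dotprod{h + \ell_z, x}\bigr)$, that is, $\nu_z = \tilt_{\ell_z}\nu$, where $\ell_z$ is the Gaussian process with $\ell_0 = 0$, quadratic variation $(J/\opnorm{J})\, dz$, and drift chosen in the standard way so that $z \mapsto \nu_z$ is a martingale of measures; positive semidefiniteness of $J$ is exactly what makes $J/\opnorm{J}$ an admissible diffusion matrix. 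As $z \uparrow \opnorm{J}$ the quadratic term disappears and $\nu_{\opnorm{J}}$ is a product measure on $\set{\pm 1}^n$; conditioning the process on its terminal sample (equivalently, passing to the tilted process), the field accumulated at a single coordinate by time $z$ has law $\eta z\, x + \sqrt{\eta z}\, g$ with $x \sim \Berpm{}$ and $g \sim \Normal{0, 1}$, so the per-coordinate (diagonal) part of $\cov{\nu_z}$ is exactly $r(\eta z)$; this is the base case of the backward induction, which reads $r(\alpha)$ at $z = \opnorm{J}$.

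Next I would track $\Sigma_z := \cov{\nu_z}$ along the process. By the law of total covariance applied to an infinitesimal localization step, $\Sigma_z$ is a supermartingale whose drift is minus the covariance of the conditional means, which to first order equals $\tfrac{1}{\opnorm{J}} \Sigma_z J \Sigma_z\, dz$. Splitting $J = \alpha I + (J - \alpha I)$: the $\alpha I$ piece is precisely an independent per-site field-localization running at rate $\eta$, and integrating its contribution reproduces the term $r(\eta z)$ above, while the remaining piece enters only through the quadratic drift, whose operator norm is at most $\opnorm{J}\, \opnorm{\Sigma_z}^2\, dz$ because $J \preceq \opnorm{J}\, I$. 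Feeding these two contributions into the generalized trickle-down equation --- which, read as backward induction from $\nu_{\opnorm{J}}$ toward $\nu = \nu_0$, says that any valid upper envelope $\psi$ for $z \mapsto \opnorm{\Sigma_z}$ must dominate $r(\eta\, \cdot\,)$ plus the running integral of $\psi^2$ --- shows that $\opnorm{\cov{\nu}} = \opnorm{\Sigma_0}$ is at most $\psi(\opnorm{J})$ for every such supersolution $\psi$.

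Since $q_\eta$ is by definition the maximal solution of $q_\eta(z) = r(\eta z) + \int_0^z q_\eta(y)^2\, dy$, a routine monotone-iteration (Gr\"onwall-type) comparison for Volterra integral inequalities then gives $\opnorm{\cov{\nu}} \le q_\eta(\opnorm{J})$, the bound being vacuous on any interval where $q_\eta \equiv \infty$. For the approximate tensorization of entropy I would invoke the entropy-decay principle for localization schemes established in the earlier part of the paper: along the process the averaged localized entropy of a test function contracts at a rate governed by $1/\opnorm{\cov{\nu_z}}$, so a Gr\"onwall argument bounds the ratio between the entropy functional under $\nu$ and the expected entropy under the terminal measure $\nu_{\opnorm{J}}$ by $\exp\bigl(\int_0^{\opnorm{J}} \opnorm{\cov{\nu_z}}\, dz\bigr)$; since $\nu_{\opnorm{J}}$ is a product measure its entropy tensorizes exactly, and inserting the path bound $\opnorm{\cov{\nu_z}} \le q_\eta(z)$ --- which holds at every time, as $q_\eta$ is increasing and $q_\eta \ge 1 \ge r(\eta z)$ --- yields the claimed constant $\exp\bigl(\int_0^{\opnorm{J}} q_\eta(z)\, dz\bigr)$.

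The hard part is making the second paragraph rigorous: one must pin down the generalized trickle-down equation in the Ising setting so that the diagonal of $J$ contributes \emph{exactly} $r(\eta z)$ and not something weaker, which forces a careful separation of the clean per-site Gaussian noise (handled by the one-dimensional identity that defines $r$) from the genuinely interacting part of $J$ (handled purely through $\opnorm{\Sigma_z}$, uniformly over the random realization of $\ell_z$), together with a check that the ``covariance of the conditional means'' term is a genuine first-order equality rather than an inequality in the unhelpful direction. A related subtlety is that $\opnorm{\Sigma_z}$ is itself random whereas $q_\eta$ is deterministic, so the induction must propagate a deterministic envelope --- this is what prevents, for example, the operator norm of the terminal product measure's diagonal covariance (a maximum of $1 - \tanh^2$ values rather than their average $r(\alpha)$) from breaking the argument; organizing the trickle-down recursion to track such an envelope from the outset is where that difficulty is absorbed.
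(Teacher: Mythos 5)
Your overall architecture matches the paper's: run stochastic localization with driving matrix proportional to $J^{1/2}$ so that the terminal measure is a product measure, feed the trickle-down identity \eqref{eqn:sl-trickledown} into a worst-case-over-external-fields envelope, compare with the Volterra equation defining $q_\eta$, and get ATE from entropic stability plus exact tensorization at the terminal product measure. The gap is in the step you yourself flag as "the hard part," and it is not a matter of bookkeeping: your claim that the field accumulated at a single coordinate has law $\eta z\,x+\sqrt{\eta z}\,g$ with $x\sim\Berpm{}$, so that the diagonal contribution is "exactly $r(\eta z)$," is false as stated. By the Gaussian-channel description (\cref{thm:gci}), coordinate $i$ of the terminal product measure is tilted by $\dotprod{J_i,X^*}+h_i+\sqrt{J_{ii}}\,g = b + J_{ii}X_i^* + \sqrt{J_{ii}}\,g$ with $b=\dotprod{J_{i,\sim i},X^*_{\sim i}}+h_i$, and conditionally on $b$ the spin $X_i^*$ is distributed as $\Berpm{\tanh(b)}$, not as the unbiased $\Berpm{}$. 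So what you actually control is $\E\bigl[1-\tanh^2(b+\alpha\sigma+\sqrt{\alpha}g)\bigr]$ with a $b$-biased $\sigma$, and to bound this by $1-r(\alpha)$ you must show that $b=0$ is the worst case, uniformly over the random field $b$ produced by the other spins and by $h$. This monotonicity is the genuinely new ingredient of the paper (\cref{lem:ising-optimal-b}), and its proof is not a one-line convexity or rearrangement fact: the paper establishes it by coupling two single-spin stochastic localization processes and using the sign-flip (spherical) symmetry of the base measure (\cref{lem:rotation-coupling-cons,lem:rotation-coupling-sl,lem:mean-under-orthogonal}). Your sketch neither proves nor even isolates this statement; "a careful separation of the clean per-site Gaussian noise from the interacting part" does not by itself deliver it, because the interacting part enters precisely through the bias $b$ of the spin whose variance you are computing, not only through $\opnorm{\Sigma_z}$.

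A secondary omission: for the ATE constant you only argue entropy conservation down to the terminal product measure and invoke its exact tensorization. That gives a bound in terms of the conditional entropies of $\nu_{\opnorm{J}}$, not of $\nu$ itself; the paper closes this by the supermartingale property of localized entropies (\cref{lem:supermartingale}, resting on \cref{lem:factorization-rhs-preserve}), which transfers $\sum_i \E\bigl[\E_{X_{\sim i}\sim\nu_1}\Ent_{\nu_1(\cdot\mid X_{\sim i})}f\bigr]$ back to $\sum_i\E_{X_{\sim i}\sim\nu_0}\Ent_{\nu_0(\cdot\mid X_{\sim i})}f$. Also note that entropic stability requires the covariance bound under \emph{all} exponential tilts of the intermediate measures, which is exactly why the envelope must be a supremum over external fields (the paper's $\beta(t)=\sup_h\opnorm{\cov{\mu_{(1-t)J,h}}}$); your "deterministic envelope" remark points in the right direction, but make this the definition rather than an afterthought.
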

\begin{figure}
    \centering
    \includegraphics[width=\columnwidth]{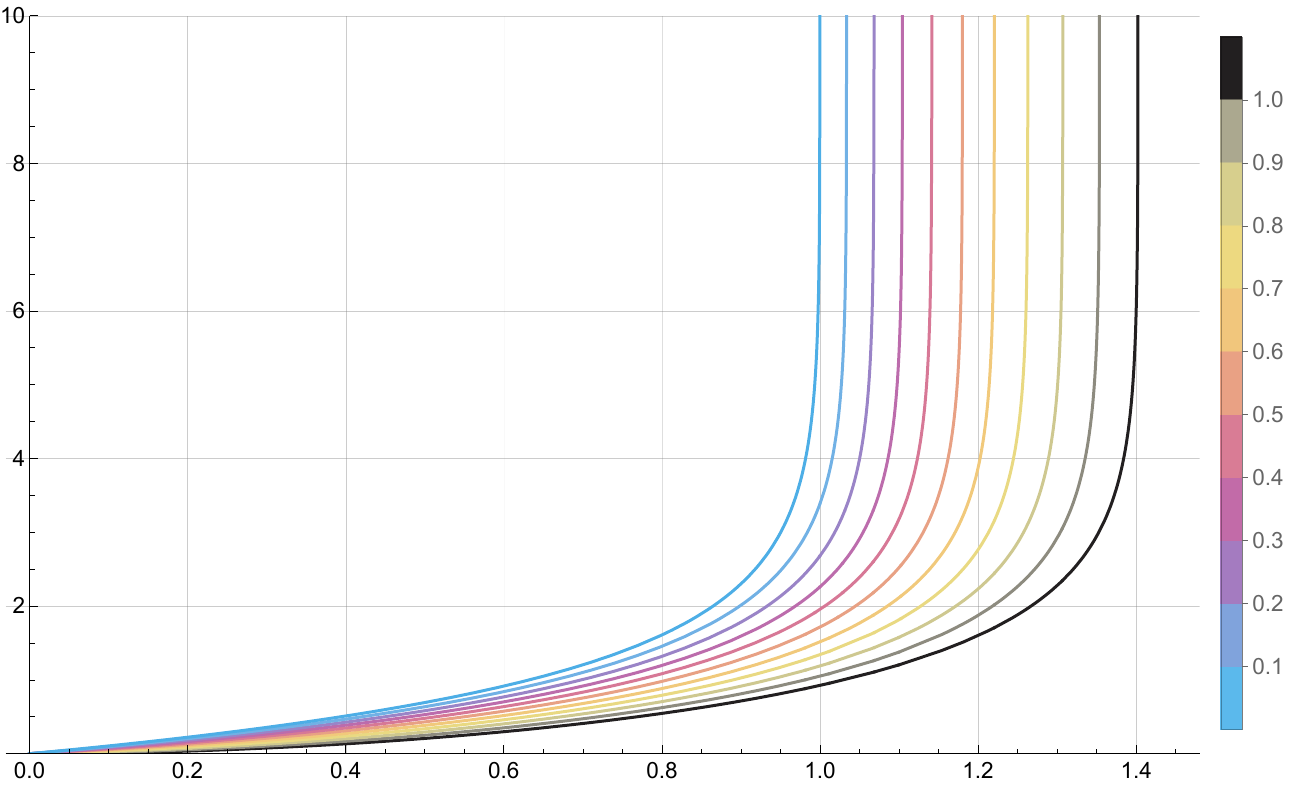}
    \caption{$\log q_{\eta}(z)$ from \cref{thm:ising-main} plotted using numerical integration in Mathematica for $\eta = 0,0.1,\ldots,0.9,1$. The curve for $\eta = 0$ was, before this work, the best known bound for all values of $\eta$ in the Ising model; it asymptotes to $\infty$ at $z = 1$, which is tight due to the phase transition in the Curie-Weiss model \cite{ellis2006entropy}. For $\eta = 0.5$ the function $\log q_{\eta}$ asymptotes to $\infty$ at $z$ slightly above $1.18$ and for $\eta = 1.0$ it asymptotes around $1.40$.}
    \label{fig:qeta}
\end{figure}

\begin{remark}[Spectral interpretation]\label{rmk:interpretation}
As mentioned above, there is no a priori significance to the diagonal entries of $J$. For this reason, some works, including this one, use the convention that $J$ is positive semidefinite without loss of generality. On the other hand, it is probably most common to parameterize the Ising model in terms of an interaction matrix with zero diagonal. Such a matrix will typically have both positive and negative eigenvalues because the mean of its spectral distribution is zero. Fortunately, the above theorem has a very clean interpretation in terms of the zero-diagonal parameterization.

Let $\tilde J$ be a matrix with zero diagonals which specifies the interactions in the Ising model, so $\nu(x) \propto \exp(\dotprod{x, \tilde J x}/2 + \dotprod{h, x})$. If we add a copy of the identity to $\tilde J$, we can always ensure the resulting matrix is positive semidefinite without changing the corresponding measure. More specifically, we can define $J = \tilde J + \alpha I$ where $\alpha = -\lambda_{\min}(\tilde J)$. Then when we apply \cref{thm:ising-main}, our $\alpha$ will be the same as the one appearing in the theorem statement. We have 
\[ \opnorm{J} = \alpha + \lambda_{\max}(\tilde J) \]
so
\[ \eta = \frac{\alpha}{\opnorm{J}} = \frac{1}{1 + \abs{\lambda_{\max}(\tilde J)/\lambda_{\min}(\tilde J)}} \in [0,1]. \]
 We have $\eta \approx 0$ when the spectrum of $\tilde J$ has only very tiny negative eigenvalues. In that case, \cref{thm:ising-main} reduces to the same bound as the existing results \cite{bauerschmidt2019very,eldan2021spectral,anari2021entropic}. However, in most applications, $\tilde J$ has non-negligible negative eigenvalues, so $\eta > 0$ and then this result significantly improves on the previous work --- see \cref{fig:qeta} and examples in \cref{subsec:examples}. For example, in many cases, $\tilde J$ has an approximately symmetrical spectrum and so $\eta \approx 1/2$. 
\end{remark}

\paragraph{Langevin dynamics in the $O(N)$ model.} The $O(N)$ model \cite{stanley1968dependence} is a famous class of models in statistical physics parameterized by an ambient dimension $N$. In this model, there are $n$ interacting spins and each spin lives on the $(N-1)$-dimensional unit sphere $S^{N-1}$; the name $O(N)$ is a reference to symmetry under the $N$-dimensional orthogonal group. In the case $N = 1$, the $0$-dimensional unit sphere is simply the points $\set{\pm 1}$ and so the $O(1)$ model is just the Ising model. The case $N=2$ is the XY model and the case $N=3$ is the (classical) Heisenberg model, which are very famous models in statistical physics; for example, the XY model on a two-dimensional lattice exhibits the celebrated BKT phase transition \cite{kosterlitz2018ordering,berezinskii1971destruction} for which the 2016 Nobel Prize in Physics was awarded.

When $N \ge 2$ the unit sphere becomes connected, so the (manifold) Langevin dynamics is a natural sampling algorithm to analyze. \Textcite{bauerschmidt2019very} proved a log-Sobolev inequality for this dynamics under a bounded operator norm assumption on the interaction matrix $J$. Analogous to the Ising case, we prove a new result with a refined spectral condition that yields improved bounds in most applications. An interesting feature of this result is that for each $N$, the bound is given by an explicit rational function.
\Tag{\begin{theorem}[\cref{thm:ON} below]}\Tag<sigconf>{\begin{theorem}}\label{thm:ON-main}
Suppose $N \ge 2, n \ge 1$ and that $\nu$ is the probability distribution on $(S^{N - 1})^{n}$ with probability density
\[ \frac{d\nu}{d\mu}(x) \propto \exp\parens*{\frac{1}{2} \sum_{1 \le i,j \le n} J_{ij} \dotprod{ x_i, x_j } + \sum_{i = 1}^n \dotprod{ h_i, x_i } } \]
with respect to the uniform measure $\mu=(\sigma^{N-1})^n$ on $(S^{N - 1})^{n}$. Here $J$ and $h$ are parameters and we assume the interaction matrix $J$ satisfies $J \succeq 0$. Suppose, without loss of generality, that the diagonal of $J$ is constant, so there exists $\alpha$ such that $J_{ii} = \alpha \ge 0$. Let $\eta = \alpha/\opnorm{J} \in [0,1]$. 
Then\footnote{This result is improved compared to the conference version of this paper \cite{conference-version}.}
\[ \opnorm{\cov{\nu}} \le q_{\eta,1/N}(\opnorm{J}) \]
where $q_{\eta,1/N} = (1/N)Q_{\eta}(z/N)$ and
$Q_{\eta} : [0,s(\eta)) \to \mathbb{R}_{\ge 0}$ is given by
\begin{equation} Q_{\eta}(z) = \eta \frac{-\lambda_1 \lambda_2^2 (\eta z + 1)^{\lambda_1 - \lambda_2} + \lambda_1^2 \lambda_2}{\lambda_2^2(\eta z + 1)^{\lambda_1 - \lambda_2 + 1} - \lambda_1^2 (\eta z + 1)}, \label{eqn:big-Q}
\end{equation}
with $\lambda_1 > 0 > \lambda_2$ the two roots of the quadratic equation 
$\eta \lambda^2 - \eta \lambda - 1 = 0$, explicitly
\[ \lambda_1 = \lambda_1(\eta) = \frac{\eta + \sqrt{\eta^2 + 4\eta}}{2 \eta}, \qquad \lambda_2 = \lambda_2(\eta) = \frac{\eta - \sqrt{\eta^2 + 4\eta}}{2 \eta}, \]
and where 
\[ s(\eta) = \frac{1}{\eta} \left[\left|\frac{\lambda_1}{\lambda_2}\right|^{2/(\lambda_1 - \lambda_2)} - 1\right]. \]
Furthermore, the log-Sobolev constant of the Langevin dynamics is at least
\[ C_N \exp\parens*{-\int_0^{\opnorm{J}} q_{\eta, 1/N}(z) dz } \]
where $C_N > 0$ is a constant depending only on $N$, inherited from \cite{zhang2011uniform}. 
\end{theorem}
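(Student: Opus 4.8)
The plan is to instantiate the generalized trickle-down equation of \cref{subsec:techniques} for stochastic localization on $(S^{N-1})^n$ driven along the interaction matrix $J$. First I would run the linear-tilt (stochastic) localization whose Gaussian increments are guided by $J \otimes I_N$, reparametrized so that ``time'' runs over $[0,\opnorm{J}]$, and write $\nu_z$ for the localized measure; by the backward-induction principle it then suffices to exhibit a super-solution $\phi$ of the resulting equation with $\phi(z)\ge\opnorm{\cov{\nu_z}}$ whose terminal value matches the covariance of the fully localized pieces, after which $\opnorm{\cov{\nu}}=\opnorm{\cov{\nu_0}}\le\phi(0)$. Because $J\succeq 0$ and $\norm{x_i}=1$, the constant diagonal $\alpha=\eta\opnorm{J}$ acts throughout as a pure self-tilt $x_i\mapsto\tilt_{\alpha x_i}$ while the off-diagonal part melts, so each terminal piece is a single-site measure $\tilt_w\sigma^{N-1}$. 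This reduces the theorem to two subtasks: computing $\opnorm{\cov{\tilt_w\sigma^{N-1}}}$, and solving the trickle-down equation in closed form.

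For the single-site computation I would use rotational symmetry: for $w = u\,e_1$ the matrix $\cov{\tilt_w\sigma^{N-1}}$ is diagonal with one ``longitudinal'' eigenvalue $m_N'(u)$ and $N-1$ equal ``transverse'' eigenvalues, where $m_N(u) = I_{N/2}(u)/I_{N/2-1}(u)$ is the mean magnitude. The Bessel recurrence gives the Riccati identity $m_N'(u) = 1 - m_N(u)^2 - \tfrac{N-1}{u}m_N(u)$, and combined with the constraint $\norm{x}^2 = 1$ this yields the clean formula $(\text{transverse variance}) = m_N(u)/u$. One then checks, from these two ODEs and the common value $1/N$ at $u=0$, that the transverse eigenvalue dominates for all $u\ge 0$, so $\opnorm{\cov{\tilt_w\sigma^{N-1}}} = m_N(\norm{w})/\norm{w}$; averaging over the random direction of the self-tilt gives the input term $r_N(\eta z)$, the exact spherical analogue of the $1-\tanh^2$ term in \cref{thm:ising-main} with $m_N(u)/u$ replacing the Bernoulli variance.

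For solving the equation, the key point is that, unlike the transcendental $\tanh$ in the Ising case, the Riccati structure of $m_N$ propagates: after the rescaling $\phi(z)=\tfrac1N Q_\eta(z/N)$ that reflects the $\Theta(1/N)$ size of single-site fluctuations, the trickle-down equation for $\phi$ is itself a Riccati ODE of Euler type. Linearizing it gives a second-order linear ODE whose power-law solutions $(\eta z+1)^{\lambda}$ have exponents $\lambda$ solving the indicial equation $\eta\lambda^2-\eta\lambda-1=0$ --- exactly the roots $\lambda_1>0>\lambda_2$ in the statement --- and reassembling the Riccati solution from the two powers $(\eta z+1)^{\lambda_1},(\eta z+1)^{\lambda_2}$ produces precisely the rational function \eqref{eqn:big-Q}, with the integration constants pinned down by $Q_\eta(0)=1$ (matching the uniform transverse variance $1/N$) and with first pole at $\eta z = \abs{\lambda_1/\lambda_2}^{2/(\lambda_1-\lambda_2)}-1$, i.e.\ at $z=s(\eta)$. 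A comparison argument upgrades this to $\opnorm{\cov{\nu}}\le q_{\eta,1/N}(\opnorm{J})$. Finally, the log-Sobolev bound follows the standard localization-plus-local-LSI route: the single-site spherical measures satisfy an LSI with constant $C_N$ (inherited from \cite{zhang2011uniform}), and integrating the entropy-dissipation estimate along the localization against the covariance bound just obtained yields LSI for $\nu$ with constant at least $C_N\exp(-\int_0^{\opnorm{J}} q_{\eta,1/N})$.

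I expect the main obstacle to be the pathwise propagation of the operator-norm bound through the matrix-valued dynamics: the quadratic term in the trickle-down equation arises from an expression of the form $\cov{\nu_z}\,(J\otimes I_N)\,\cov{\nu_z}$, whose operator norm must be controlled by $\opnorm{\cov{\nu_z}}^2$ uniformly along the trajectory (this is exactly where $J\succeq 0$ enters), while simultaneously tracking the law of the random single-site tilt magnitudes so that the input really is $r_N(\eta z)$ rather than something larger. The second genuinely nontrivial piece is verifying --- rather than merely guessing --- that \eqref{eqn:big-Q} is the exact solution of the derived Riccati equation with the correct behavior at both $z=0$ and $z=s(\eta)$; this part is mechanical once the linearization is set up.
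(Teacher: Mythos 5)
Your high-level skeleton matches the paper's: stochastic localization driven along $J$ (i.e.\ driving matrix $(J\otimes I_N)^{1/2}$), the trickle-down equation with a comparison/super-solution argument, reduction of the terminal measure to single-site tilted sphere measures, a Riccati-type closed form, and entropic stability plus the uniform LSI of \cite{zhang2011uniform} for the log-Sobolev part. But there is a genuine gap at the one step that actually produces the function $Q_\eta$ of \cref{eqn:big-Q}. You feed the trickle-down equation the \emph{exact} averaged terminal covariance $r_N(\eta z)=\E\bigl[m_N(\norm{w})/\norm{w}\bigr]$ over the random tilt $w$, and then assert that ``the Riccati structure of $m_N$ propagates'' so that the resulting equation is an Euler-type Riccati ODE solved exactly by \cref{eqn:big-Q}. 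That is not true: the Riccati identity $m_N'(u)=1-m_N(u)^2-\tfrac{N-1}{u}m_N(u)$ lives in the tilt-magnitude variable $u$, and once you average over the law of the random tilt (which depends on the unknown measure $\nu_t$ itself) the input term is a Bessel-ratio expectation with no closed form in $z$; differentiating $q(z)=r_N(\eta z)+\int_0^z q^2$ gives $q'=\eta r_N'(\eta z)+q^2$, which is not the Riccati equation whose solution is \cref{eqn:big-Q}. This exact-input route is precisely the paper's \cref{thm:hard-to-use}, which the authors explicitly decline to evaluate because it requires a nontrivial per-$N$ computation and yields a (slightly different, possibly better) bound — not the stated rational function.

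What the paper actually does to get \cref{eqn:big-Q} is replace the exact terminal covariance by a uniform bound obtained in two steps: a decoupling lemma (\cref{lem:decoupling-sl}) showing each block of $\E{\cov{\nu_1}\given\F_t}$ is the expected covariance of a \emph{single-site} stochastic localization run for time $\alpha(1-t)$, and then the covariance-decay estimate of stochastic localization (\cref{lem:cov-decay}, Eqn.~(16) of \cite{eldan2020taming}) combined with the $1/N$-semi-log-concavity of $\sigma^{N-1}$ (from the Dyson--Lieb--Simon analysis, \cref{thm:dyson-sphere} — essentially your observation that both eigenvalues of $\cov{\tilt_w\sigma^{N-1}}$ are at most $g_N'(0)=1/N$). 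This yields the block bound $\frac{1}{\alpha(1-t)+N}I$, hence the input term $\frac{1}{\eta z+N}$ in \cref{eqn:qsemilogconcave}; only for \emph{this} input does differentiation give $q'=\frac{-\eta}{(\eta z+N)^2}+q^2$, whose linearization $q=-u'/u$ with $x=\log(\eta z/N+1)$ has indicial equation $\eta\lambda^2-\eta\lambda-1=0$ and produces exactly \cref{eqn:big-Q} with $Q_\eta(0)=1$ and first pole at $s(\eta)$. So to repair your argument you must either prove the intermediate bound $\E{\cov{\nu_1}\given\F_t}\preceq\frac{1}{\alpha(1-t)+N}I$ (decoupling plus the Gronwall-type decay, rather than computing $r_N$ exactly), or carry out the genuinely harder program of solving the equation with the true Bessel-ratio input, which will not give the stated formula. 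Your remaining steps (submultiplicativity for the quadratic term — note $J\succeq0$ is needed for the driving matrix $J^{1/2}$ to exist, not for that bound — the worst-case-over-fields comparison, and the LSI via \cref{prop:conservation-of-entropy}) are consistent with the paper.
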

\begin{figure}
    \centering
    \includegraphics[width=\columnwidth]{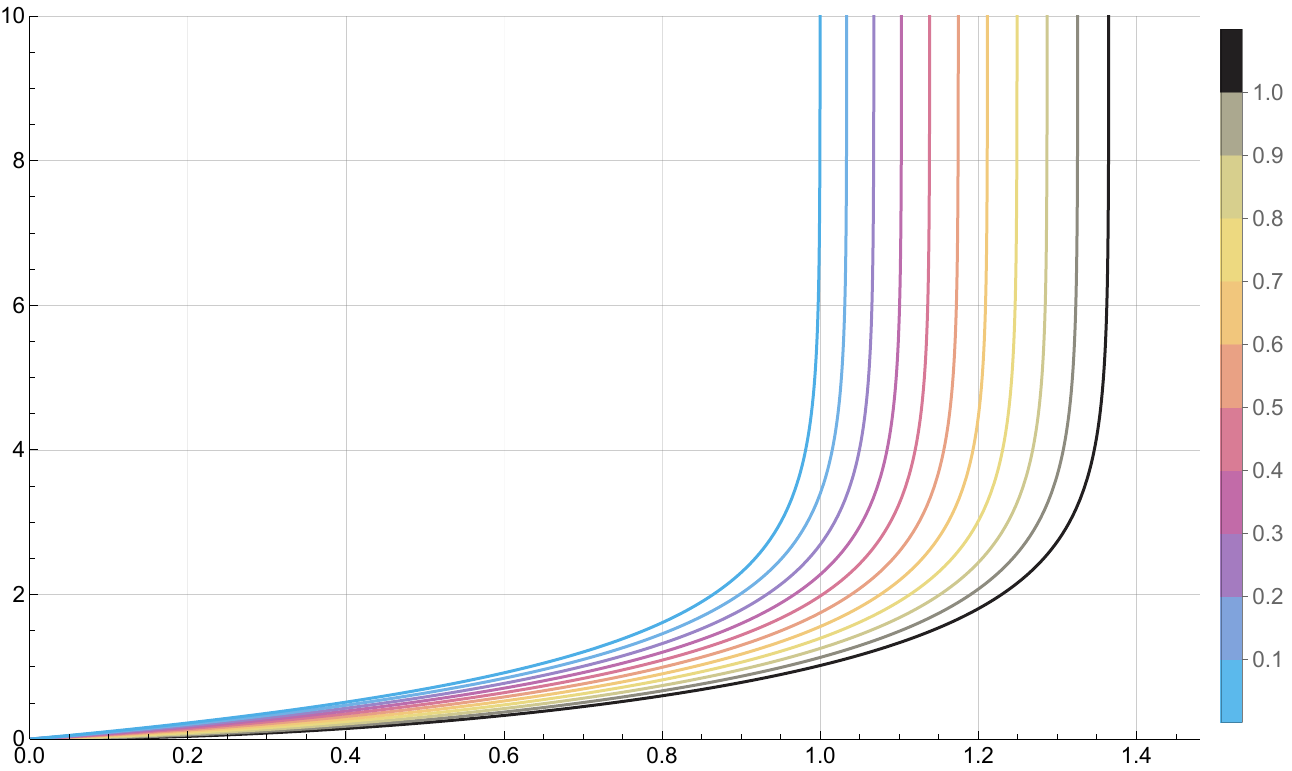}
    \caption{$\log Q_{\eta}(z)$ from \cref{thm:ON-main} plotted using numerical integration in Mathematica for $\eta = 0,0.1,\ldots,0.9,1$. For the $O(N)$ model, the actual function bounding the covariance is $q_{\eta,1/N}(z) = (1/N)Q_{\eta}(z/N)$. So for example, in the case of the XY model which is $N = 2$, the $\eta = 0.5$ curve asymptotes to $\infty$ slightly above $2.34$ and for the Heisenberg model it asymptotes to $\infty$ slightly above $3.52$. Similarly for $\eta = 1$, when $N = 2$ the resulting curve asymptotes to $\infty$ slightly above $2.73$ for XY and at around $4.09$ for Heisenberg.}
    \label{fig:big-q-eta}
\end{figure}
\begin{example}
The equations simplify considerably in the case $\eta = 1/2$: we have that $\lambda_1 = 2$ and $\lambda_2 = -1$,
\[ Q_{1/2}(z) = \frac{-(z/2 + 1)^{3} - 1}{(z/2 + 1)^{4} - 2 (z/2 + 1)} = \frac{48 + 24 z + 12 z^2 + 2 z^3}{48 - 24 z^2 - 8 z^3 - z^4}, \]
and 
\[ s(1/2) = 2 \left[4^{1/3} - 1\right] \approx 1.1748. \]
\end{example}
\begin{remark}
    Our methods automatically yield an additional result, \cref{thm:hard-to-use}, which for each particular value of $N$ can likely yield a small improvement in the threshold obtained by \cref{thm:semilogconcave}. We expect the improvement is relatively small, and determining the quantitative guarantee requires a nontrivial computation for each value of $N$, so we did not further pursue this direction. 
\end{remark}

As stated, this result applies to the continuous-time (manifold) Langevin dynamics. To get an algorithm, some discretization scheme is usually applied. Fortunately, the log-Sobolev inequality also applies polynomial time guarantees for the discretized Langevin dynamics on manifolds \cite{li2020riemannian}. 

\paragraph{Glauber dynamics in the $O(N)$ model: nearly linear time sampling.} Besides the Langevin dynamics, there is another very natural dynamics to consider in the $O(N)$ model --- the Glauber dynamics.

Compared to Langevin, Glauber dynamics has the advantage that it doesn't require discretization or choice of step size to implement. All known discretization schemes for the Langevin dynamics incur extra dimension-dependent factors, even in the simplest Euclidean setting \cite[see, e.g.,][]{lee2021lower}. Concretely, this means that while we established a ``dimension-free'' LSI constant, combining it with the discretization analysis of Langevin as in \cite{li2020riemannian} pays an extra dimension factor of $n$, yielding a runtime guarantee of $\widetilde{O}(n^2 d)$ for a graphical model of maximum degree $d$ (i.e., row-sparsity $d$ in $J$) even in the simplest case $N = 2$. 

We improve on this by proving that under the same conditions as before, the mixing time of Glauber dynamics is $O(n \log n)$, i.e., as fast as possible, and that each update step of the dynamics can be implemented very quickly. Together, these yield a nearly linear time sampler for our class of models. Here is the precise result:
\Tag{\begin{theorem}[\cref{thm:ON glauber} below]}\Tag<sigconf>{\begin{theorem}}
In the same setting as \cref{thm:ON-main}, the probability measure $\nu$ satisfies approximate tensorization of entropy (ATE) with constant
\[ \exp\parens*{\int_0^{\opnorm{J}} q_{\eta, N}(z) dz }. \]
Furthermore, single steps of Glauber dynamics can be $\epsilon$-approximately implemented using $O(dN + \log(1/\epsilon)\cdot (\log N + \log \log (1+B))$ arithmetic operations assuming at most $d$ nonzeros in the corresponding row of $J$, and where $B = \max_i \sum_j \abs{J_{ij}} + \norm{h_i}$. In other words, we can sample from a distribution that is $\epsilon$-close in total variation distance to the conditional distribution at the spin chosen to be updated. To sample from a distribution $\varepsilon$-close to $\nu$ in total variation distance, we run the Glauber dynamics for $T=n \log(n/\varepsilon)$ steps and set $\epsilon =\frac{\varepsilon}{T}$, resulting in an overall runtime of $\max\set{\operatorname{nnz}(J), n}\cdot N\cdot \poly\log(1/\varepsilon, n, N, \log(1+B))$.
\end{theorem}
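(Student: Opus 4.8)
My plan is to establish the three assertions — the ATE constant, the per-step implementation cost, and the resulting end-to-end runtime — essentially separately, since only the first is tied to the trickle-down machinery. For the ATE bound, the key point is that the covariance estimate of \cref{thm:ON-main} is stable under the linear-tilt (stochastic) localization used to prove it: reweighting $\nu$ on $(S^{N-1})^n$ by $\exp(\sum_i \dotprod{w_i, x_i})$ only shifts each external field $h_i \mapsto h_i + w_i$ and leaves the interaction matrix $J$ — hence $\alpha$, $\opnorm{J}$, and $\eta$ — unchanged, so every localized measure is again an $O(N)$ model in the same class. I would therefore (i) verify that the localized measures stay in the model class; (ii) feed the pointwise covariance bound into the entropy-dissipation identity for the localization — the continuous analogue of the Ising computation behind \cref{thm:ising-main} — so that the relative-entropy decay rate at effective-interaction level $z$ is governed by the value $q_{\eta,N}(z)$ coming from the generalized trickle-down equation; and (iii) integrate over $z \in [0,\opnorm{J}]$ and invoke the equivalence (\cref{sec:fi-and-mixing}) between entropy contraction along the localization and ATE for Glauber dynamics, reading off the stated constant $\exp(\int_0^{\opnorm{J}} q_{\eta,N}(z)\,dz)$.

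For the per-step cost, observe that a Glauber update at site $i$ draws $x_i$ from $\nu(\cdot \mid x_{\sim i})$, and since $\dotprod{x_i,x_i}=1$ on $S^{N-1}$ the diagonal of $J$ is irrelevant, so this conditional law is exactly $\tilt_{w_i}\sigma^{N-1}$ with $w_i = h_i + \sum_{j\ne i} J_{ij} x_j$ and $\norm{w_i}\le B$, computable in $O(dN)$ operations. To sample $\tilt_w\sigma^{N-1}$ I would rotate so that $w=\norm{w}e_1$, write $x=(t,\sqrt{1-t^2}\,u)$ with $u$ uniform on $S^{N-2}$ independent of $t\in[-1,1]$, whose density is $\propto(1-t^2)^{(N-3)/2}\exp(\norm{w}t)$ — the von Mises--Fisher height — and sample $t$ by one-dimensional inverse-transform / rejection sampling: locate the mode (the relevant root of $\norm{w}(1-t^2)=(N-3)t$, found by a doubly exponentially converging Newton step) together with the concentration scale, then bisect the CDF to accuracy $\epsilon/\poly(N,B)$, costing $O(\log(1/\epsilon)(\log N + \log\log(1+B)))$; generating $u$ from $N-1$ Gaussians and undoing the rotation is $O(N)$ and is absorbed into the $O(dN)$ term.

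For the end-to-end runtime, combine: by the ATE bound and the standard consequence for random-site Glauber (\cref{sec:fi-and-mixing}), the relative entropy to $\nu$ contracts by a factor $1-\Theta(1/(Cn))$ per step, where $C=\exp(\int_0^{\opnorm{J}} q_{\eta,N})$; starting from the product uniform measure, whose relative entropy to $\nu$ is $O(nB)$, after $T=O(Cn\log(nB/\varepsilon))$ steps Pinsker gives total variation distance $\le \varepsilon/2$, and running the one-site sampler above with per-call error $\epsilon=\varepsilon/(2T)$ contributes at most $\varepsilon/2$ more by a union bound over the $T$ updates. Multiplying $T$ by the per-step cost, using $nd\le\operatorname{nnz}(J)$ and that any algorithm must read at least $n$ numbers, and absorbing the finite constant $C$ together with the $\log(1+B)$ warm-start factor into the polylogarithm yields $\max\set{\operatorname{nnz}(J),n}\cdot N\cdot\poly\log(1/\varepsilon,n,N,\log(1+B))$.

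The step I expect to be the main obstacle is the accuracy analysis of the tilted-sphere sampler: showing that a single proposal density has rejection probability bounded away from $1$ uniformly in $N$ and in $\norm{w}\le B$ — especially when the target concentrates on a window of width $\sim 1/\norm{w}$ near $t=1$ — and that the mode and normalizing constant can be computed to the precision needed for $\epsilon$-total-variation accuracy within only $O(\log\log(1+B))$ Newton iterations. By contrast, the covariance-to-ATE argument is conceptually routine once the localization estimate of \cref{thm:ON-main} is in hand, since it is essentially its entropy version obtained from the same analysis, and the runtime accounting is bookkeeping.
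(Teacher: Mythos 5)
Your overall architecture coincides with the paper's: the ATE bound is obtained by running stochastic localization with driving matrix $(J\otimes I_N)^{1/2}$, noting the localized measures stay in the same $O(N)$ class, feeding the covariance bound of \cref{thm:ON-main} into entropic stability and the conservation-of-entropy estimate, and closing with tensorization at the product endpoint together with the supermartingale property (this is exactly how the paper deploys \cref{thm:semilogconcave} and \cref{thm:fixed-semilogconcave}); likewise your reduction of a Glauber update to sampling, after a rotation, the one-dimensional ``height'' $t\in[-1,1]$ with density proportional to $(1-t^2)^{(N-3)/2}e^{bt}$ with $0\le b\le B$ matches the paper, as does the final runtime bookkeeping.

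The genuine gap is the one-dimensional sampler itself, which is where the entire claimed $O(\log(1/\epsilon)(\log N+\log\log(1+B)))$ cost lives and which you explicitly defer as ``the main obstacle.'' Your sketch (Newton iteration for the mode, then ``bisect the CDF'') does not go through as stated: the CDF of $(1-t^2)^{(N-3)/2}e^{bt}$ has no closed form, so bisection presupposes a quadrature or approximation scheme whose accuracy and cost you never control; for $N=2$ the density is unbounded at $t=\pm 1$ (and at $N=3$ the strong-convexity constant $N-3$ vanishes), so any mode-centered scheme with acceptance probability uniform in $N$ and $b$ breaks in exactly the low-dimensional cases; and the Newton step is in fact unnecessary, since the mode solves a quadratic and is available in closed form. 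The paper handles this by (i) treating $N=2$ separately via the substitution $t=\cos\theta$ and rejection from a restricted strongly log-concave piece with acceptance probability at least $1/4$, (ii) treating $N=3$ by an exact inverse-CDF formula, and (iii) for $N\ge 4$ writing $V(t)=-bt-\tfrac{N-3}{2}\log(1-t^2)$, using $V''\ge N-3$ globally and $V''\le\kappa$ on an explicit central interval outside of which $V\ge 1/2$, and invoking a modification of the Chewi et al.\ rejection sampler (\cref{lem:log concave sample 1 dim}) whose piecewise Gaussian-type envelope has constant acceptance probability and whose breakpoints are found by a doubling/binary search costing doubly logarithmically many evaluations in the product of the interval length and the condition number; this is what produces the $\log N+\log\log(1+B)$ dependence. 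Without this envelope construction and its acceptance-probability analysis (or an equally quantitative substitute), the per-step implementation claim, and hence the stated total runtime, is not established.
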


\paragraph{Glauber dynamics over semi-log-concave base measures.} In the  mathematical physics literature, it is common to study more general distributions such as ``soft spin'' systems where the spin is $\mathbb R$-valued. See e.g. \cite{simon1973varphi} for some references and discussion in the context of field theory.
Our methods can give a general bound in terms of semi-log-concavity properties of the base measure of the spin system. We say that a measure is $\beta$-semi-log-concave if $\cov{\mathcal T_w \mu} \preceq \beta I$ for every $w \in \mathbb{R}^n$ \cite{eldan2022log}; see the preliminaries for more discussion.
\begin{theorem}[\cref{thm:semilogconcave} below]
Suppose that $\mu = \bigotimes_{i = 1}^n \mu^{(i)}$ is a product measure where each $\mu^{(i)}$ is a probability measure on $\mathbb R^N$. Let $J \succeq 0$ with $J_{ii} = \alpha$ for all $i \in [n]$, and define the measure $\nu$ by its density
\[ \frac{d\nu}{d\mu}(x) \propto \exp\left(\frac{1}{2} \sum_{i,j} J_{ij} \langle x_i, x_j \rangle \right). \]
Let $\eta = \alpha/\|J\|_{OP}$.
Suppose that for all $i \in [n]$ and $s \in [0,\alpha]$ the measure $\mathcal{G}_{-s} \mu^{(i)}$ defined by
\begin{equation}\label{eqn:strong-slc=intro}
\frac{d\mathcal{G}_{-s} \mu^{(i)}}{d\mu^{(i)}}(x) \propto \exp(s\|x\|^2/2) 
\end{equation}
exists and is $\rho(s) > 0$ semi-log-concave. Then $\|\cov{\nu}\|_{OP} \le q_{\eta,\rho}(\|J\|_{OP})$ where $q_{\eta,\rho}$ solves the integral equation
\begin{equation}\label{eqn:qsemilogconcave-intro}
q_{\eta,\rho}(z) = \frac{1}{\eta z + [\rho(\eta z)]^{-1}} + \int_0^z q_{\eta,\rho}(s)^2 ds
\end{equation}
and ATE holds with constant at most
\[ \exp\left(\int_0^{\|J\|_{OP}} q_{\eta,\rho}(z) dz\right).  \]
\end{theorem}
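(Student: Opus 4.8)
The plan is to realize $\nu$ as the terminal measure of a linear-tilt localization scheme — concretely, a stochastic-localization-type process — and to derive the generalized trickle-down equation for $\opnorm{\cov{\nu}}$ along that process, exactly as in the proof of \cref{thm:ising-main} and \cref{thm:ON-main}, but now bookkeeping the semi-log-concavity parameter $\rho$ instead of a model-specific computation of $r(t)$. First I would write $J = \eta\opnorm{J}\cdot I + (J - \alpha I)$, so that $J - \alpha I \succeq 0$ with operator norm $(1-\eta)\opnorm{J}$, and set up the localization in which at ``time'' $z$ the measure $\nu_z$ has density proportional to $\exp\parens*{\tfrac12\dotprod{x, (J-\alpha I) x}\cdot(z/\opnorm{J}) + \dotprod{w_z, x}}$ against the product base measure $\mathcal{G}_{-\eta z}\mu = \bigotimes_i \mathcal{G}_{-\eta z}\mu^{(i)}$, i.e. we simultaneously ``pay out'' the diagonal quadratic term $\exp(\eta z \norm{x}^2/2)$ into the base measure and drive the off-diagonal interaction via a matrix Dyson/Gaussian-channel tilt $w_z$. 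The key structural input is that conditioning/tilting a product measure keeps it a product measure, so the covariance of the localized measure decomposes blockwise and the relevant scalar quantity is controlled by the per-site semi-log-concavity constant $\rho(\eta z)$ of $\mathcal{G}_{-\eta z}\mu^{(i)}$.

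Next I would run the trickle-down / Itô argument on $\Phi(z) := \opnorm{\cov{\nu_z}}$ (or its deterministic envelope). Differentiating along the localization, the interaction term contributes a quadratic self-interaction $\Phi(z)^2$ coming from the variance of the martingale driving $w_z$ — this is the universal $\int_0^z q(s)^2\,ds$ term that already appears in \cref{thm:ising-main}, and its coefficient is normalized to $1$ because we have rescaled so that $J - \alpha I$ has the right spectral scale. The ``boundary/source'' term is the instantaneous covariance contributed by the base measure after absorbing the diagonal tilt: at time $z$ this is the covariance of a single-site measure that is $\rho(\eta z)$-semi-log-concave and additionally carries a linear tilt of strength governed by $\eta z$, and the standard semi-log-concave covariance bound (combine $\rho$-semi-log-concavity with the Gaussian-tilt/Bakry–Émery-type estimate for the $\exp(\eta z\norm{x}^2/2)$ part, cf. \cite{eldan2022log,bauerschmidt2019very}) gives exactly $\parens*{\eta z + [\rho(\eta z)]^{-1}}^{-1}$. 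Matching the two contributions yields the integro-differential inequality $\Phi'(z) \le \text{(source)}' + \Phi(z)^2$ with $\Phi(0)$ matching the source at $0$, so $\Phi$ is dominated by the solution $q_{\eta,\rho}$ of \cref{eqn:qsemilogconcave-intro} by a Grönwall/comparison argument for Volterra equations; evaluating at $z = \opnorm{J}$ gives the claimed bound $\opnorm{\cov{\nu}}\le q_{\eta,\rho}(\opnorm{J})$. The ATE statement then follows from the general machinery relating an $\int_0^{\opnorm{J}} q_{\eta,\rho}(z)\,dz$ bound on the ``spectral independence along the scheme'' to approximate tensorization of entropy, as already invoked for \cref{thm:ising-main} and \cref{thm:ON-main}.

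I expect the main obstacle to be the source-term computation: showing cleanly that tilting $\mathcal{G}_{-\eta z}\mu^{(i)}$ by the linear functional coming from the Gaussian channel, combined with the extra $\exp(\eta z\norm{x}^2/2)$ factor, yields a covariance bounded by $\parens*{\eta z + [\rho(\eta z)]^{-1}}^{-1}$ uniformly over the tilt direction. The tilt-by-linear-functional part is harmless (it can only decrease the operator norm of a covariance when the measure is already semi-log-concave, via the standard argument that $\cov{\tilt_w\mu}$ is bounded by the semi-log-concavity constant), but one has to be careful that the $\exp(\eta z\norm{x}^2/2)$ piece is handled consistently — it is precisely the contribution that strengthens the denominator from $[\rho(\eta z)]^{-1}$ to $\eta z + [\rho(\eta z)]^{-1}$, and getting the additive-in-$\eta z$ improvement (rather than a multiplicative one) requires using the harmonic-mean/Brascamp–Lieb-type inequality for covariances of measures with an added strongly log-concave factor rather than a crude bound. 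A secondary technicality is verifying that $\mathcal{G}_{-s}\mu^{(i)}$ remains a well-defined probability measure for all $s\in[0,\alpha]$ along the whole process, which is exactly the hypothesis placed in the theorem statement, so this reduces to checking it is preserved under the additional linear tilts — which it is, since linear tilts never destroy integrability of a measure with an $\exp(s\norm{x}^2/2)$ factor once $s < $ the log-concavity threshold.
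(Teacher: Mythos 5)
Your overall architecture (stochastic localization, trickle-down, comparison with a Volterra equation, then ATE via entropic stability and the supermartingale property) matches the paper, but the step you yourself flag as the main obstacle --- producing the source term $\frac{1}{\eta z + [\rho(\eta z)]^{-1}}$ --- is where the proposal breaks, and the repair is not the one you sketch. You propose to get it as a \emph{static} covariance bound: every tilt of the per-site measure carrying the factor $\exp(\eta z\norm{x}^2/2)$ should have covariance at most $(\eta z + [\rho(\eta z)]^{-1})^{-1}$ via a ``harmonic-mean/Brascamp--Lieb-type inequality for an added strongly log-concave factor.'' But $\exp(\eta z\norm{x}^2/2)$ is log-\emph{convex}, not log-concave, and the hypothesis already prices it in: $\rho(\eta z)$-semi-log-concavity of $\mathcal{G}_{-\eta z}\mu^{(i)}$ \emph{is} the statement $\sup_w \opnorm{\cov{\tilt_w \mathcal{G}_{-\eta z}\mu^{(i)}}} \le \rho(\eta z)$, and no tilt-by-tilt argument can improve this to the strictly smaller $(\eta z + 1/\rho(\eta z))^{-1}$. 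Concretely, for $\mu^{(i)}$ uniform on the sphere one has $\mathcal{G}_{-s}\mu^{(i)} = \mu^{(i)}$ and the untilted covariance is exactly $\frac{1}{N} I = \rho(s) I$, strictly larger than $(s + N)^{-1} I$ for $s > 0$; so the inequality you need is false as a statement about fixed tilted measures.

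In the paper the additive $\eta z$ is a genuinely \emph{dynamic}, averaged gain, and it requires keeping the diagonal in the driving matrix. One drives the localization with the full $(J \otimes I_N)^{1/2}$, so that conditionally on $\F_t$ the time-$1$ per-site measure is the posterior of a single-site Gaussian channel of signal strength $\alpha(1-t) = \eta z$ started from a tilt of $\mathcal{G}_{-\eta z}\mu^{(i)}$ (the decoupling argument, \cref{lem:decoupling-sl}); then the forward covariance-decay bound for stochastic localization, $\E{\cov{\mu_s}} \preceq (sI + \cov{\mu_0}^{-1})^{-1}$ (\cref{lem:cov-decay}), applied with $s = \eta z$ and $\cov{\mu_0} \preceq \rho(\eta z) I$, gives $(\eta z + [\rho(\eta z)]^{-1})^{-1}$ \emph{in expectation over the channel randomness} --- which is exactly what the trickle-down equation needs, since its first term is $\E{\cov{\nu_1} \given \F_t}$, not a worst-case covariance. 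Your scheme instead drives only the off-diagonal part: besides the technical problem that $J - \alpha I$ need not be positive semidefinite (take $J = \alpha \1\1^\intercal/1$ on two sites), its diagonal is zero, so the per-site channel carries no signal, the decoupling-plus-forward-decay mechanism is unavailable, and the best source you could justify is $\rho(\eta z)$ (you would also pick up a coefficient $\max\set{\eta, 1-\eta}$ rather than $1$ in front of $\int_0^z q^2$). The fix is to keep the full $J^{1/2}$ as the driving matrix, use semi-log-concavity only to bound the \emph{initial} covariance of the decoupled single-site process, and let the channel-time $\eta z$ do the rest; your remaining steps (comparison argument, change of variables, and ATE via \cref{lem:entropic-stability} and \cref{lem:supermartingale}) then go through as in the paper.
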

The covariance bounds we obtain for the $O(N)$ model are a special case of this result --- when $\rho(s) = \rho$ is constant, the integral equation admits an explicit solution $q_{\eta,\rho}(z) = \rho Q_{\eta}(\rho z)$ where $Q_{\eta}$ is from \cref{eqn:big-Q}. See \cref{thm:fixed-semilogconcave}.

\paragraph{Nearly linear time sampling for a class of Ising models via polarized walks.} Antiferromagnetic Ising models are those where neighboring spins repel rather than attract each other. An impressive line of work has shown that for the antiferromagnetic Ising model on a worst-case $d$-regular graph, there is a computational phase transition at the uniqueness threshold of the infinite $d$-regular tree; see, e.g., \cite{sly2012computational,weitz2006counting,galanis2016inapproximability} for a few of the relevant works. This means that above a certain precise threshold for the strength of interactions in the model (inverse temperature), no polynomial time sampling algorithm exists under standard hypotheses from computational complexity.

What about non-worst-case graphs? Based on the picture coming from statistical physics as well as related rigorous results, we do not expect the uniqueness threshold to be relevant for random graphs; see \cite{coja2022ising} and references within. In particular, for $d$-regular random graphs sampling should be possible up to a threshold of $\Theta(1/\sqrt{d})$ instead of the $\Theta(1/d)$ scaling of the uniqueness threshold. For constant degrees $d$, this was recently proven by \textcite{koehler2022sampling}, where a polynomial time algorithm inspired by variational inference was developed. This was recovered as part of a general result improving sampling guarantees for all expander graphs.

Can we improve this result to hold for all degrees $d$, to run in nearly linear time, and to be achieved by a simple Markov chain? Our \cref{thm:ising-main}, while applicable to antiferromagnetic Ising models, is not well-adapted to this setting because adjacency matrices of expander graphs have a large ``trivial'' eigenvalue, and its effect on the model is not so trivial. 

Nevertheless, there is a natural class of models we can analyze using our techniques which captures both the antiferromagnetic Ising model on expanders as well as some other important applications like fixed-magnetization models. This class of models can be understood as spectrally bounded Ising models with an arbitrarily strong \emph{confining potential} proportional to $ \parens*{\sum_i x_i - k}^2$ for any $k \in \Z$. This potential\footnote{In the actual statement, we do not include the parameter $k$ explicitly, since it can be absorbed into the external field of the model.} allows the measure to localize near a slice $\set{x\given \sum_i x_i \approx k}$. Since this class of models includes those with fixed magnetization, Glauber dynamics may not even be ergodic. Fortunately, an interesting connection to the geometry of polynomials suggests a different Markov chain which does rapidly mix! We call this chain the \emph{polarized walk} and describe it in more detail in \cref{subsec:techniques}.

Concretely, we establish the following result, which tells us that the polarized walk samples successfully in \emph{nearly linear time} from a large class of models:
\Tag{\begin{theorem}[\cref{thm:ising projected downup} below]}\Tag<sigconf>{\begin{theorem}}\label{thm:ising projected downup intro}\label{thm:ising-projected-downup-main}
Suppose that $\nu$ is a measure on the hypercube $\set{\pm 1}^n$ of the form
\[ \nu(x) \propto \exp\parens*{\frac{1}{2} \dotprod{x, J x} + \dotprod{h, x} - \frac{\gamma}{2n}\parens*{\sum_i x_i}^2} \]
for some $J \succeq 0$, $\gamma \ge 0$, and $h \in \R^n$, and suppose that $\opnorm{J} < 1/2.$ Then:
\begin{enumerate}
    \item The polarized down-operator has $\parens*{1-\frac{1-2\opnorm{J}}{n}}$-entropy contraction with respect to $\nu$. 
    \item Hence, the polarized walk on $\nu$ mixes within $\epsilon$ total variation distance in $O\parens*{\frac{1}{1-2\opnorm{J}}n\log(n/\epsilon)}$ steps. 
    \item Let 
\[ Q = J - \frac{\gamma}{n} \1\1^\intercal \]
and let $d$ be the maximum number of non-diagonal nonzero entries in a row of $Q$. Each step of the projected down-up walk can be implemented in $ O(d\log n)$ amortized time, so the polarized walk outputs a sample within $\epsilon$ total variation distance of $\nu$ in total runtime $O\parens*{\frac{1}{1-2\opnorm{J}} nd\log(n)\log(n/\epsilon)}$
\end{enumerate}
\end{theorem}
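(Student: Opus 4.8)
The plan is to derive part~(1) from the generalized trickle-down equation applied to the localization scheme underlying the polarized walk, and then obtain parts~(2)--(3) by standard arguments. Recall that the polarized walk enlarges the state space: using the polarization transformation one attaches to $\nu$ auxiliary ``slot'' coordinates so that the confining factor $\exp\parens*{-\tfrac{\gamma}{2n}\parens*{\sum_i x_i}^2}$ becomes part of a measure $\hat\nu$ living on a slice of an enlarged ground set whose marginal on the original coordinates is exactly $\nu$; the polarized walk is then the down--up walk on $\hat\nu$, from which a sample of $\nu$ is read off. The purpose of the enlargement is that the down--up walk on a slice is always ergodic --- the extra slots let the magnetization of $x$ fluctuate --- so the construction makes sense even in the fixed-magnetization limit $\gamma = \infty$, where Glauber dynamics is reducible.

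For part~(1) I would run the localization scheme that reveals the elements of the enlarged ground set one at a time (equivalently, conditions $\hat\nu$ on its coordinates one by one) and write out the associated trickle-down equation. Every localized measure is again of the type covered by the theorem --- a spectrally bounded interaction on the surviving spins, an external field, and a modified confining factor --- and the operator norm of the positive-semidefinite part cannot grow under pinning (principal submatrices of a PSD matrix do not increase the operator norm), so by backward induction it suffices to control a single localization step. As in the plain Ising case the interaction $J$ contributes a Riccati term $\int q^2$ to the trickle-down equation; the novelty is that the confining term is accommodated not by spending $J$'s spectral budget but through the polarized slot coordinates, and the heart of the argument is to show that this piece --- being the ``shadow'' of a log-concave reweighting along the magnetization direction, in the spirit of \cref{thm:semilogconcave} --- contributes a second Riccati term of the same sign rather than an adversarial one. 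Combined with the base-measure covariance bound $r \le 1$, this should produce the equation $q(z) = 1 + 2\int_0^z q(s)^2\,ds$, whose solution $q(z) = \tfrac{1}{1-2z}$ is finite precisely for $z < 1/2$; evaluating at $z = \opnorm{J}$ then bounds the ATE constant of $\nu$ by $\tfrac{1}{1-2\opnorm{J}}$, which is equivalent to the claimed $\parens*{1-\tfrac{1-2\opnorm{J}}{n}}$-entropy contraction of the polarized down-operator. I expect the main obstacle to be exactly this bookkeeping: arranging the enlargement so that the negative rank-one part $-\tfrac{\gamma}{n}\1\1^\intercal$ of the effective interaction $Q = J - \tfrac{\gamma}{n}\1\1^\intercal$ is absorbed ``for free'' by the polarization, yielding the clean coefficient $2$ (equivalently the threshold $1/2$) and no dependence on $\gamma$.

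Part~(2) is then the routine passage from entropy contraction to mixing: iterating a $\kappa$-entropy-contractive down-operator drives the relative entropy to $\nu$ down geometrically at rate $\kappa = 1 - \tfrac{1-2\opnorm{J}}{n}$, and together with a crude ($\log\log$-type) bound on the initial relative entropy this gives $O\parens*{\tfrac{1}{1-2\opnorm{J}}\,n\log(n/\epsilon)}$ steps, after which Pinsker's inequality yields the total-variation statement. For part~(3), I would maintain the current configuration together with the vector $Qx$ and the scalar $\1^\intercal x$; since one step of the polarized walk changes only $O(1)$ coordinates, $Qx$ is refreshed in $O(d)$ time, where $d$ bounds the off-diagonal nonzeros per row of $Q$ --- and it is $Q$, not $J$, that is relevant, because the dense rank-one part of $Q$ is tracked separately through $\1^\intercal x$. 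The conditional law of the removed element is supported on $O(1)$ outcomes whose weights are computable in $O(1)$ (plus $O(\log n)$ to evaluate a weight ratio of the polarized factor) from the maintained data, while choosing the element to reinsert --- a weighted draw over up to $n$ candidates whose weights depend on $Qx$ --- is handled by a Fenwick tree or balanced BST that absorbs the $O(d)$ weight changes per step in $O(d\log n)$ amortized time and answers a query in $O(\log n)$. Multiplying the per-step cost by the step count from part~(2) gives the stated $O\parens*{\tfrac{1}{1-2\opnorm{J}}\,nd\log(n)\log(n/\epsilon)}$ total runtime.
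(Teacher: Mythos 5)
The main gap is in part (1), which is the heart of the theorem. In the paper's argument, one runs stochastic localization with driving matrix $J^{1/2}$; by the Gaussian-channel interpretation (\cref{thm:gci}) the PSD quadratic form is exactly cancelled at time $1$, leaving a random measure of the form $\propto \exp\parens*{\dotprod{h+H_1,x}-\frac{\gamma}{2n}\parens*{\sum_i x_i}^2}$, i.e.\ an antiferromagnetic Curie--Weiss model with external field. The crucial new ingredient is a geometry-of-polynomials fact about precisely this measure: its polar homogenization has a strongly log-concave generating polynomial, hence the measure is $1/2$-fractionally log-concave (\cref{lem:half-flc}, proved via ultra-log-concavity of the coefficients $\binom{n}{k}e^{-c(k-n/2)^2}$, closure of strong log-concavity under polarization, and the complement/homogenization lemmas). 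This yields both the covariance bound $2$ under arbitrary external fields that seeds the Riccati comparison $y'=\opnorm{J}y^2$, $y(0)=2$ (\cref{lem:trickle down d regular}), and the $(1-1/n)$-entropy contraction of the polarized down operator with respect to each localized measure (\cref{prop:entropy contraction at T=1}). Your proposal asserts that the confining term is ``absorbed for free by the polarization'' and that the bookkeeping ``should produce'' $q(z)=1+2\int_0^z q^2$, but you never prove anything about the measure $\propto e^{-\frac{\gamma}{2n}(\sum_i x_i)^2+\dotprod{h,x}}$. This cannot be a generic fact about negatively spiked rank-one models: as the paper notes, replacing $\1$ by a general integer vector makes sampling NP-hard, so the arithmetic structure of $(\sum_i x_i)^2$ must be used, and that is exactly the step missing from your outline. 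In addition, your localization scheme (sequentially pinning coordinates of the enlarged measure) does not cancel $J$, so it is unclear how you would ever reach a terminal measure whose polarization you can control; the choice of driving matrix $J^{1/2}$ is what makes the decomposition into tractable pieces possible.

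A second flaw is the claimed equivalence between an ATE constant $\frac{1}{1-2\opnorm{J}}$ for $\nu$ and the stated entropy contraction of the polarized down operator. ATE is, by definition, contraction of the \emph{standard} coordinate-erasing down operator (Glauber), not the polarized one, and ATE with a $\gamma$-free constant cannot hold here since Glauber fails to be ergodic as $\gamma\to\infty$. The correct lift from the covariance bound to contraction of $D^{\text{pol}}$ with respect to $\nu$ goes through entropic stability and conservation of entropy along the localization (\cref{lem:entropic-stability}, \cref{prop:conservation-of-entropy}) together with the supermartingale property (\cref{lem:supermartingale}), combined with the time-$1$ contraction of $D^{\text{pol}}$; none of this machinery appears in your sketch. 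For part (2), a ``crude bound on the initial relative entropy'' reintroduces dependence on $\gamma$ and $h$ through the smallest atom; the paper removes this with an exchange inequality for $\Pi(\nu)$ yielding a $\poly(n)$-warm start after $O(n\log n)$ steps. Your part (3) data-structure argument (balanced BST/Fenwick tree with $O(d)$ weight updates per step) matches the paper's and is fine.
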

\Tag{\begin{corollary}[\cref{cor:random d regular} below]}\Tag<sigconf>{\begin{corollary}}
Let $\nu$ be the antiferromagnetic Ising model with parameter $\beta$ on a random $d$-regular graph $G$. Suppose $0\leq \beta \leq (1-\delta)/(8\sqrt{d-1})$ for a  constant $\delta > 0.$  With probability $1-o(1)$ over the random instance $G$, we can sample from $\hat{\nu}$ s.t.\ $\dTV{\hat{\nu}, \nu} \leq \epsilon$ in time $O(\delta^{-1} n d\log(n)\log(n/\epsilon))$.
\end{corollary}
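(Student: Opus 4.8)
The plan is to obtain the corollary as a direct instance of \cref{thm:ising-projected-downup-main}. Write $A$ for the adjacency matrix of $G$. The first step is to put the antiferromagnetic measure $\nu(x)\propto\exp\parens*{-\beta\sum_{i\sim j}x_ix_j+\dotprod{h,x}}=\exp\parens*{-\tfrac{\beta}{2}\dotprod{x,Ax}+\dotprod{h,x}}$ into the confined form required by the theorem. Since $G$ is $d$-regular, $\1$ is an eigenvector of $A$ with eigenvalue $d$, so I would split $A=\tfrac{d}{n}\1\1^\intercal+B$ with $B:=A-\tfrac{d}{n}\1\1^\intercal$; then $B\1=0$, $B$ agrees with $A$ on $\1^\perp$, and the spectrum of $B$ is $\set{0}\cup\set{\lambda_i(A):2\le i\le n}$. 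The rank-one part contributes $-\tfrac{\beta d}{2n}\parens*{\sum_i x_i}^2$, which is exactly the confining potential, so I take $\gamma:=\beta d\ge0$; for the remaining quadratic form I set $J:=-\beta B+\alpha I$ with $\alpha:=\beta\,\lambda_{\max}(B)\ge0$, the least shift making $J\succeq0$. Since $\norm{x}^2=n$ on the hypercube, adding $\alpha I$ leaves $\nu$ unchanged, so $\nu$ now has precisely the shape required by \cref{thm:ising-projected-downup-main}, with this $J\succeq 0$, this $\gamma\ge 0$, and the same external field $h$.

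The heart of the argument is checking $\opnorm{J}<1/2$ and controlling $(1-2\opnorm{J})^{-1}$. Because $J\succeq0$ and $\alpha$ is the minimal PSD shift, $\opnorm{J}=\lambda_{\max}(J)=\beta\parens*{\lambda_{\max}(B)-\lambda_{\min}(B)}$, i.e.\ $\beta$ times the spread of the eigenvalues of $B$, which are $0$ together with the non-trivial adjacency eigenvalues. Here I would invoke Friedman's theorem (the resolution of Alon's conjecture, together with its extensions if $d$ is allowed to grow): with probability $1-o(1)$ a uniformly random $d$-regular graph satisfies $\max\set{\abs{\lambda_2(A)},\abs{\lambda_n(A)}}\le 2\sqrt{d-1}+o(1)$. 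Then $\lambda_{\max}(B)\le 2\sqrt{d-1}+o(1)$ and $\lambda_{\min}(B)\ge-(2\sqrt{d-1}+o(1))$, so $\opnorm{J}\le 4\beta\sqrt{d-1}+o(1)\le\tfrac{1-\delta}{2}+o(1)$ using $\beta\le(1-\delta)/(8\sqrt{d-1})$. Hence for $n$ large enough $\opnorm{J}<1/2$ and $1-2\opnorm{J}\ge\delta-o(1)\ge\delta/2$, so $(1-2\opnorm{J})^{-1}=O(\delta^{-1})$. I expect this spectral estimate to be the only genuinely deep ingredient; everything else is bookkeeping.

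With this in hand the corollary follows by reading off \cref{thm:ising-projected-downup-main}. Part~2 gives mixing to within $\epsilon$ in total variation in $O\parens*{(1-2\opnorm{J})^{-1}n\log(n/\epsilon)}=O(\delta^{-1}n\log(n/\epsilon))$ steps of the polarized walk. For the per-step cost I would observe that $Q=J-\tfrac{\gamma}{n}\1\1^\intercal=-\beta B+\alpha I-\tfrac{\beta d}{n}\1\1^\intercal=-\beta A+\alpha I$, whose off-diagonal support equals that of $A$; so the ``$d$'' appearing in part~3 is exactly the graph degree, and each step runs in $O(d\log n)$ amortized time. Multiplying, the polarized walk outputs a sample within $\epsilon$ of $\nu$ in total time $O\parens*{\delta^{-1}nd\log(n)\log(n/\epsilon)}$, as claimed. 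The two points to handle with care are the normalization of the quadratic forms --- the factor $\tfrac12$ relating $\sum_{i\sim j}$ to $\dotprod{x,Ax}$ and the choice $\gamma=\beta d$, which together pin the threshold at $1/(8\sqrt{d-1})$ --- and verifying that the $o(1)$ error in Friedman's bound is absorbed by the fixed slack $\delta>0$.
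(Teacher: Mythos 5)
Your proposal is correct and follows essentially the same route as the paper: the paper also writes $-\beta A$ as the rank-one confining term $-\tfrac{\beta d}{2n}\parens{\sum_i x_i}^2$ plus a residual shifted to a PSD matrix $J$ with $\opnorm{J}\le\tfrac{1-\delta}{2}$ via Friedman's theorem, then invokes \cref{thm:ising-projected-downup-main} and the observation that $Q$ has the sparsity of $A$. The only cosmetic differences are that the paper proves a general proposition for arbitrary graphs via the normalized Laplacian (which reduces to your decomposition in the $d$-regular case) and uses the fixed diagonal shift $\tfrac{1-\delta}{4}I$ rather than your minimal shift $\beta\lambda_{\max}(B)I$.
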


\paragraph{Glauber dynamics on low-degree expanders.} As discussed above, there is no hope of Glauber dynamics mixing for the general class of models studied in \cref{thm:ising-projected-downup-main}. However, it is still possible Glauber mixes rapidly for some subclasses of models. In the case of antiferromagnetic Ising models of bounded degree on expander graphs, we prove that the Glauber dynamics does indeed mix in $O(n \log n)$ time. One proof of this result is based on ideas from \cite{koehler2022sampling} and in fact, yields mixing for a larger class of models\Tag<sigconf>{; details are deferred to the full version of this paper. }\Tag{(see \cref{thm:klr-based}); in \cref{rmk:trickledown-alternative} we explain a variant of the result which is obtained using a trickle-down approach instead.}
\Tag{\begin{theorem}[\cref{cor:glauber ising low degree} below]}\Tag<sigconf>{\begin{theorem}}
	Suppose that 
	\[ \nu(x) \propto \exp\parens*{-\frac{\beta}{2} \dotprod{x, A x} + \dotprod{h, x}} \]
	where $A$ is the adjacency matrix of a $d$-regular graph on $n$ vertices and suppose that $\max\set{\abs{\lambda_2(A)}, \abs{\lambda_n(A)}} \leq \lambda$. If $\lambda \beta \le 1 - 1/c$ for some $c > 0$, then $\nu$ satisfies approximate tensorization of entropy with constant at most $e^{c e^{O(\beta d)}}$ and the Glauber dynamics on $\nu$ mixes in $O\parens*{e^{c e^{O(\beta d)}}\cdot n\log n}$ steps.     
\end{theorem}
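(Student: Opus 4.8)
The plan is to avoid applying \cref{thm:ising-main} to $\nu$ directly, since that fails: the positive semidefinite representative of the interaction is $J = -\beta A + \beta d I$, which has $\opnorm{J} \approx \beta d$ (the trivial eigenvalue $d$ of $A$ becomes $0$, all others become $\approx \beta d$) and hence $\eta = \alpha/\opnorm{J} \approx 1$, so the bound $q_\eta(\opnorm{J})$ is already infinite as soon as $\beta d$ exceeds the blow-up threshold of $q_1$ ($\approx 1.4$), whereas the theorem should tolerate any bounded $\beta d$ while only asking $\beta\lambda < 1$. The obstruction is exactly the trivial eigenvalue, which we peel off: writing $A = \frac{d}{n}\1\1^\intercal + E$, the matrix $E$ annihilates $\1$ and agrees with $A$ on $\1^\perp$, so $\opnorm{E}\le\lambda$, and
\[ \nu(x) \propto \exp\parens*{-\frac{\beta d}{2n}\parens*{\sum_i x_i}^2 - \frac{\beta}{2}\dotprod{x, E x} + \dotprod{h, x}}. \]
Absorbing a harmless multiple of the identity into $-\beta E$ (the diagonal does not affect $\nu$) turns this into a positive semidefinite, spectrally $O(\beta\lambda)$-bounded Ising model perturbed by the antiferromagnetic confining potential $-\frac{\beta d}{2n}(\sum_i x_i)^2$; it has the same shape as the measures of \cref{thm:ising-projected-downup-main} with $\gamma = \beta d$, except that the bounded part can have operator norm above the $1/2$ threshold there and that for Glauber we want approximate tensorization of entropy rather than mixing of the polarized walk.

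Next I would localize away the confining potential. Because it is \emph{antiferromagnetic}, the induced marginal of the magnetization $\sum_i x_i$ is benign --- an antiferromagnetic Curie--Weiss marginal, with no phase transition --- so we pay little for it. Concretely, consider the localization schedule that first anneals the measure toward a fixed magnetization (equivalently, grows a term $-\frac{c_t}{2n}(\sum_i x_i)^2$ until it dominates, localizing the $\1\1^\intercal$ direction); along this phase $\opnorm{\cov{\nu_t}}$ stays bounded precisely because the antiferromagnetic mean-field interaction is benign, and the endpoint is essentially $\nu$ conditioned on its magnetization. On each such slice the effective interaction is $(-\beta A)$ restricted to $\1^\perp$, which has operator norm $\le \beta\lambda < 1$, so the slice model is genuinely high-temperature; running the generalized trickle-down equation for the slice model (restriction to a slice being itself a linear-tilt operation in the localization framework, so a variant of \cref{thm:ising-main} applies) gives $\opnorm{\cov{\cdot}} = O(1/(1-\beta\lambda)) = O(c)$ uniformly over slices and over external-field tilts. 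Recombining the slices --- using that the conditional mean of $x$ moves essentially proportionally to $\1$ as the magnetization varies, so the inter-slice variation contributes only an $O(c)$ correction --- yields $\opnorm{\cov{\tilt_w\nu}} = O(c)$ for every external field $w$, i.e.\ spectral independence with constant $O(c)$.

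Finally, since every single-spin conditional of $\nu$ has both marginals at least $e^{-O(\beta d)}$ (the effective field on any spin has magnitude at most $\beta d + \norm{h}_\infty$), the standard conversion from spectral independence to approximate tensorization of entropy on the hypercube upgrades the $O(c)$ spectral independence into ATE with constant $\exp(O(c)\cdot e^{O(\beta d)}) = e^{c e^{O(\beta d)}}$, and ATE with constant $K$ implies Glauber mixes in $O(Kn\log n)$ steps, giving the claim. I expect the main obstacle to be the second step, in particular the recombination of slices: unlike the linear tilts of the localization framework, the confining potential is a \emph{negative} quadratic, so it cannot be written as a Gaussian mixture of external-field tilts (Hubbard--Stratonovich would require an imaginary Gaussian), and one must verify both that the slice-restricted trickle-down equation still closes and that recombination does not reintroduce a constant growing with $n$. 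A secondary point is that the clean trickle-down bound is finite only below the blow-up threshold of $q_\eta$, so covering $\beta\lambda$ all the way up to $1$ may need the top of that range to be absorbed into the $e^{O(\beta d)}$ factor via a cruder bounded-degree pinning argument (worst-case local conditionals are Ising models on $O(d)$ spins, hence trivially satisfy ATE with constant at most $e^{O(\beta d)}$). The paper also gives an alternative proof, closer to \cite{koehler2022sampling}, that analyzes Glauber directly through a mean-field/variational comparison and obtains the same guarantee for a broader class, \cref{thm:klr-based}.
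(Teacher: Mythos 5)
Your reduction modulo the trivial eigenvalue is the right first observation, and your closing sentence correctly names the route the paper actually takes (\cref{thm:klr-based}); but the slicing argument that forms the core of your proposal has genuine gaps. First, the threshold: the only machinery in the paper for controlling covariances of measures carrying the confining potential --- \cref{lem:half-flc}, \cref{lem:trickle down d regular}, \cref{cor:down up walk slice} --- starts the trickle-down recursion from the base case that the polar-homogenized confined/slice measure is $1/2$-fractionally log-concave, i.e.\ has covariance at most $2$ under all tilts, and the resulting recursion $y'=\opnorm{J}y^2$, $y(0)=2$ blows up at $\opnorm{J}=1/2$. After peeling off the $\1\1^\intercal$ spike, the PSD-shifted bulk has operator norm up to $2\beta\lambda$, so this route only closes for $\beta\lambda$ below roughly $1/4$, not up to $1-1/c$ --- exactly the constant-factor loss acknowledged in \cref{rmk:trickledown-alternative}. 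Your claimed $O(1/(1-\beta\lambda))$ slice covariance bound all the way to $\beta\lambda<1$ is not established by anything here, and the fallback that ``local conditionals are $O(d)$-spin Ising models, hence trivially satisfy ATE'' is circular: ATE for the global measure $\nu$ is the statement to be proven and does not follow from properties of bounded-size conditional models. Second, the slicing itself is not licensed by the framework: conditioning on the magnetization is not a linear tilt, so the trickle-down equation does not apply ``on the slice'' without the polarization machinery, and your recombination step requires bounding $\cov{\mathbb{E}[x \mid \sum_i x_i]}$ under arbitrary external fields (i.e.\ that the conditional mean moves essentially along $\1$), which you assert but do not prove. Finally, your last step invokes a spectral-independence-plus-marginal-boundedness-to-ATE black box that is not part of this paper's toolkit, so even granting the covariance bounds the quantitative constant would need separate justification.

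The paper's proof sidesteps all of this and is much shorter. \cref{thm:klr-based} compares $\nu$ with an Ising model $\pi$ of bounded spectral diameter via Corollary B.2 of \cite{koehler2022sampling}, with density ratio $d\nu/d\pi \le e^{c\Tr(J_-)}$ where $J_-$ is the negative (spike) part after a suitable diagonal shift; the ratio transfers the Brascamp--Lieb-type covariance bound for $\pi$ to $\nu$, giving $\opnorm{\cov{\tilt_w\nu}} \le c\,e^{c\Tr(J_-)}$ uniformly over external fields, and for the expander the spike coming from the top adjacency eigenvalue has trace $O(\beta d)$, hence $c\,e^{O(\beta d)}$. ATE then follows exactly as at the end of the proof of \cref{thm:ising}: run stochastic localization with driving matrix built from the PSD shift of $J$, use the covariance bound at every time to get entropic stability (\cref{lem:entropic-stability}), and combine \cref{prop:conservation-of-entropy} with the supermartingale property (\cref{lem:supermartingale}), integrating to obtain the constant $e^{c e^{O(\beta d)}}$; the $O(e^{ce^{O(\beta d)}} n\log n)$ mixing bound is the standard consequence of ATE. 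No slicing, no recombination, and no external spectral-independence machinery is needed.
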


\Tag{\subsection{Our techniques}}\Tag<sigconf>{\subsection{Our Techniques}}\label{subsec:techniques}
In this section, we give a brief overview of some of the techniques in this paper. In this part, we largely focus on the motivating applications involving the Glauber dynamics and polarized walk in the Ising model.

\paragraph{Challenge: nonconvexity of the Hubbard-Stratonovich transform.} In the continuous domain, there has been a long and mathematically deep study of sampling \emph{log-concave} distributions, i.e., probability densities $\propto e^{f(x)}$ where $f$ is a concave function. In comparison, no distribution supported on the hypercube $\set{\pm 1}^n$ or products of unit spheres (as in the $O(N)$ model) can be log-concave because its support is not even a convex set. 

Nevertheless, many previous works \cite{brascamp2002some,dyson1978phase,bauerschmidt2019very,eldan2021spectral,chen2022localization,anari2021entropic,koehler2022sampling} have been able to analyze Ising and $O(N)$ models using the celebrated \emph{Hubbard-Stratonovich (HS) transform} \cite{hubbard1959calculation}. There are different ways to view this trick; at its heart, it essentially corresponds to applying the Fourier transform. However, a more probabilistic view is to say that we take $X \sim \nu$ for $\nu$ an Ising model with interaction matrix $J$ and define a random vector 
\[ Y = X + G, \qquad G \sim \Normal{0,\Sigma}, \qquad \Sigma = J^{-1}. \]
For this carefully chosen covariance matrix $\Sigma$, it can be checked via the Bayes rule that the posterior law $X \mid Y$ becomes a product measure. This is the Hubbard-Stratonovich trick. As a result, the task of sampling the \emph{discrete random vector} $X$ and \emph{continuous random vector} $Y$ become computationally equivalent. In particular, we can sample $X$ and $Y$ efficiently if the law of $Y$ ends up being log-concave. Moreover, more sophisticated analyses use the log-concavity of $Y$ to derive rapid mixing of the Glauber dynamics for $X$. 

This trick is elegant and it is \emph{tight} in a certain sense: for the Curie-Weiss/mean-field Ising model,  
\begin{equation}\label{eqn:cw}
p(x) \propto \exp\parens*{\frac{\beta}{2n} \parens*{\sum_i x_i}^2} 
\end{equation}
with $x \in \set{\pm 1}^n$ and inverse temperature $\beta \ge 0$, the high-temperature or rapid mixing regime of the model ($\beta \le 1$) exactly corresponds to the set of parameters where the Hubbard-Stratonovich transform is log-concave.

However, it is less clear if this analysis is tight for other models. For the Sherrington-Kirkpatrick model, the Hubbard-Stratonovich transform is log-concave exactly when $\beta \le 0.25$. Is this barrier fundamental? No. To go beyond this threshold (and prove all of our main results listed above), we need to develop new arguments which \emph{do not rely on log-concavity.} 

\begin{remark}
The papers \cite{eldan2021spectral,anari2021entropic} use an alternative to the Hubbard-Stratonovich transform trick called \emph{needle decomposition}, which decomposes the Ising model into Ising models with rank-one interaction matrices \cite[cf.\ analogous notions in convex geometry,][]{kannan1995isoperimetric}. Although this is a different decomposition, it runs into the same barrier --- the Curie-Weiss model is a rank-one model and the HS transform analysis is tight for it. The works \cite{pspin,anari2023universality} use an inductive approach instead, but only obtain results up to comparatively small values of $\beta$. 
\end{remark}

\paragraph{Trickle-down equation in localization schemes.} The way we go beyond the log-concavity threshold is by finding a more ``intrinsic'' approach to analyze the mixing time of discrete distributions. From a long line of previous work \cite[see, e.g.,][]{KO18,cryan2019modified,AL20,ALO20,anari2021entropic,alimohammadi2021fractionally,chen2021optimal,anari2021entropic2,bauerschmidt2022log,chen2022localization}, we know that there are deep connections between proving mixing time bounds and establishing control of the covariance matrix of a distribution \emph{under arbitrary tilts or pinnings}. As a reminder, exponential tilts are reweighings of a distribution $p(x)$ by a factor proportional to $\exp(\dotprod{w, x})$ for some external field vector $w$. \Tag{See also \cref{subsec:sl} of the preliminaries.} So to prove our results, we develop a systematic method to control the covariance matrix of all of these tilts.

To do this, we take inspiration from the celebrated \emph{trickle-down phenomenon} in high-dimensional expanders \cite{Opp18}, and develop a general trickle-down method that applies in the abstract setting of \emph{linear-tilt localization schemes}. A linear-tilt localization scheme \cite{chen2022localization} for a probability measure $\nu_0$ induced by a martingale difference sequence $Z_t$ corresponds to a random sequence of measures
\[ \frac{d\nu_{t + 1}}{d\nu_t}(x) = 1 + \dotprod{x - \mean{\nu_t}, Z_{t + 1}}. \]
This general framework captures the usual pinning localization scheme used in high-dimensional expanders, stochastic localization, and other localization schemes like negative-field localization, etc.\ \cite{chen2022localization}. In this context, we prove the following generalized trickle-down equation (\cref{eqn:trickledown-1}, see \cref{sec:trickledown} for precise definitions and notation):
\[ \cov{\nu_t}-\cov{\nu_t}\cov{Z_{t+1}\given \F_t}\cov{\nu_t}=\E*{\cov{\nu_{t+1}}\given \F_t} \]
The key point is that this recursion allows us to perform backward induction over time to control the covariance matrix at time $t$ by the covariance matrix at time $t + 1$. In particular, we identify the exact analogue of trickle-down in stochastic localization, which we then use to prove our results. 

\paragraph{Analyzing trickle-down via geometry of the base measure.} A na\"ive application of the trickle-down method will only reprove the existing results obtained via the log-concavity of the Hubbard-Stratonovich transform. The reason for this is that for natural analogues of the distributions we study, these thresholds would actually be tight; we expand on this in the below example.

\begin{example}[Gaussian SK model] For example, we could consider a ``Gaussian SK model'' which is defined just like the usual SK model but w.r.t.\ the Gaussian measure instead of the hypercube, so that its probability density function is 
\[ p(x) \propto \exp\parens*{\beta \dotprod{x, J x}/2 - \norm{x}_2^2/2}, \]
where $J$ is a scaled GOE matrix plus twice the identity, i.e., it is symmetric and independently $J_{ij} \sim \Normal{0,1/n}$ for $i < j$ and $J_{ii} = 2$ for $i \in [n]$. The diagonal is included to ensure $J$ is approximately PSD --- this is a necessary convention because the stochastic localization process we use with trickle-down needs to use the driving matrix $J^{1/2}$. Both the base Gaussian measure $\propto e^{-\norm{x}_2^2/2}dx$ in this example and the uniform measure on the hypercube have the same covariance, and in the Gaussian model the condition $\beta < 1/4$ is clearly sharp for rapid mixing. More precisely, the Gaussian model does not even exist when $\beta > 1/4$, since the top eigenvalue of $J$ is concentrated about $4$, and if $\beta J/2-I$ has eigenvalues $\geq 0$, then the integral $\int p(x)dx$ becomes infinite.
\end{example} 

So to obtain any actual improvement for SK and other Ising models, we \emph{need} to use something fundamental about the fact that our distribution is supported on the hypercube $\set{\pm 1}^n$. 
Ultimately, the improvement arises from specific concentration/anticoncentration properties which the Gaussian measure lacks.
Here is an example, which helps illustrate the key role that the diagonal entries of $J$ play in our analysis:
\begin{lemma}
If $X \sim \nu$ where the probability measure $\nu$ on $\set{\pm 1}^n$ is the Ising model with interaction matrix $J \succeq 0$ and external field $h$, then for any site $i \in [n]$ we have
\[ \P_{\nu}{\abs{\dotprod{J_i, X} + h_i} \ge J_{ii}} \ge 1/2. \]
\end{lemma}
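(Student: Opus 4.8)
The plan is to reduce everything to the one-site conditional (Gibbs) law of $X_i$ together with a one-line case analysis on the sign of the local field.

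First I would fix a site $i \in [n]$ and condition on $X_{\sim i}$. Collecting the terms of $\tfrac12 \dotprod{x, J x} + \dotprod{h, x}$ that depend on $x_i$, and using that $J$ is symmetric and $x_i^2 = 1$ (so the diagonal term $\tfrac12 J_{ii} x_i^2$ is a constant), one finds that the conditional law of $X_i$ given $X_{\sim i}$ is the tilted Bernoulli $\tilt_{m_i}\Berpm{}$, i.e.\ $\P_\nu{X_i = s \given X_{\sim i}} \propto e^{s\, m_i}$ for $s \in \set{\pm 1}$, where
\[ m_i := \dotprod{J_i, X} + h_i - J_{ii} X_i \]
is a function of $X_{\sim i}$ alone. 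The point to keep track of is that the diagonal entry $J_{ii}$ is \emph{absent} from the local field $m_i$ governing $X_i$, but \emph{present} in the quantity we care about, which we write as $W := \dotprod{J_i, X} + h_i = m_i + J_{ii} X_i$.

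Next, since $J \succeq 0$ we have $J_{ii} \ge 0$, so
\[ \max_{s \in \set{\pm 1}} \abs{m_i + s J_{ii}} = \abs{m_i} + J_{ii} \ge J_{ii}, \]
and the maximum is attained at $s = \operatorname{sign}(m_i)$ (with ties broken towards $s = +1$). It remains to note that $X_i$ takes this optimal value with conditional probability at least $1/2$: the logistic weight $e^{s m_i}/(e^{m_i}+e^{-m_i})$ is nondecreasing in $s\, m_i$, and for $s = \operatorname{sign}(m_i)$ we have $s\, m_i = \abs{m_i} \ge 0$, hence this weight is at least its value at $m_i = 0$, namely $1/2$. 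Therefore $\P_\nu{\abs{W} \ge J_{ii} \given X_{\sim i}} \ge 1/2$ pointwise in $X_{\sim i}$, and taking expectation over $X_{\sim i}$ via the tower property gives the claim.

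There is essentially no obstacle: the lemma is elementary once the one-site conditional is written down. The only thing to be careful about is the bookkeeping described above — that flipping $X_i$ so as to align with $\operatorname{sign}(m_i)$ is simultaneously the more likely outcome (conditional probability $\ge 1/2$) and the one that forces $\abs{W}$ above the diagonal entry $J_{ii}$, regardless of the value of $m_i$.
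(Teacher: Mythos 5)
Your argument is correct and is essentially the paper's proof: both condition on $X_{\sim i}$, observe that $X_i$ matches the sign of the local field $m_i = \dotprod{J_{i,\sim i}, X_{\sim i}} + h_i$ with conditional probability at least $1/2$ (the paper states this via $\E{X_i \given X_{\sim i}} = \tanh(m_i)$ and monotonicity, you via the logistic weights), and note that on this event $\abs{\dotprod{J_i, X} + h_i} = \abs{m_i} + J_{ii} \ge J_{ii}$ since $J_{ii} \ge 0$. Your write-up just makes the bookkeeping of the diagonal term slightly more explicit; no substantive difference.
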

\begin{proof}
From the definition of the Ising model, we can observe that the conditional law at site $i$ satisfies
\[ \E_{\nu}{X_i \given X_{\sim i}} = \tanh(\dotprod{J_{i,\sim i}, X_{\sim i}} + h_i), \]
where $X_{\sim i}$ denotes $X$ with coordinate $i$ removed. From this equation and the monotonicity of the $\tanh$ function, we see that $X_i$ and $\dotprod{J_{i,\sim i}, X_{\sim i}} + h_i$ are positively correlated. So conditional on any value of $\dotprod{J_{i,\sim i}, X_{\sim i}} + h_i$, the random variable $X_i$ has at least a $50\%$ chance of having the same sign as this quantity, which proves the result. 
\end{proof}
The reason this is useful is that the random vector $J X + h$ naturally arises as part of the external field for large times in the trickle-down procedure we use\Tag{ (see proof of \cref{lem:tbound}; in fact, what we really do is observe a connection between a multi-spin and single-spin stochastic localization processes)}; the fact that it is large means the relevant product measure on the hypercube has a large bias, which means it has a small covariance. The actual anticoncentration analysis we use is more sophisticated and yields a tighter quantitative bound\Tag<sigconf>{.}\Tag{: see \cref{sec:ising} for details, and \cref{sec:semi} for more general results.}

\paragraph{Polarized walks and the geometry of polynomials.} Our general result for Glauber dynamics in the Ising model cannot handle models of the form studied in \cref{thm:ising projected downup intro}. This reflects a limitation of the Glauber dynamics itself --- in the fixed-magnetization limit $\gamma \to \infty$ with $h = 0$, the distribution becomes close to supported on the slice of the hypercube $\set{ x \given \sum_i x_i = 0 }$ and the Glauber dynamics will not be ergodic. This is because every two points on the slice differ in at least two positions.

Fortunately, our analysis based upon the geometry of polynomials naturally suggests a different random walk, the \emph{polarized walk}, which we show will sample in nearly linear time. We formally describe the dynamics in terms of its transition matrix in \Tag{\cref{sec:expander}}\Tag<sigconf>{the full version of this paper}, but \Tag{first} explain the intuition behind it here. The ordinary Glauber dynamics can naturally be viewed as the composition of a down step (erasing the spin at a randomly chosen site) and an up step (resampling the erased spin). Since this is not ergodic for large values of $\gamma$, we instead use the following \emph{polarized down step}:  
\begin{enumerate}
    \item Given as input state $x \in \set{\pm 1}^n$, select a coordinate $i$ uniformly at random from $[n]$.
    \item Output $x$ with entry $x_i$ set to $-1$.
\end{enumerate}
Compared to the down step in the Glauber dynamics, which erases the spin at a particular site, the \emph{polarized} down step is different in two key ways: first, it can only change coordinates of $x$ which are equal to $+1$, and second, if it does pick such a coordinate $i$ where $x_i = +1$, it sets the coordinate to $-1$ instead of erasing it. 

Given this down step, the \emph{polarized up step}  takes as input a vector $y \in \set{\pm 1}^n$ and samples from the posterior distribution on $x \sim \nu$ assuming that $y$ is the output of the projected down operator. This is completely analogous to the definition of the usual up step. Because the projected down step only replaces $+1$ spins by $-1$ spins, the projected up operator only replaces $-1$ spins by $+1$ spins. \Tag{See \cref{def:polarized} for the explicit transition matrix of this operator.} Note that the polarized walk does not treat $+1$ and $-1$ symmetrically, even though they are symmetrical in the definition of the model. This means that an alternative dynamics with $+1$ and $-1$ reversed can also be used.

Once we choose a way to break the symmetry, these dynamics naturally arise from a construction in the geometry of polynomials called \emph{polarization}, which is a natural way to map a polynomial to a multiaffine polynomial while preserving the property of being \emph{log-concave}/\emph{Lorentzian} \cite{anari2021log,branden2020lorentzian}. These polynomials show up in our work by identifying a spin vector $x \in \set{\pm 1}^n$ with the set $x_+$ of indices with $+1$ spins, and then looking at the \emph{generating polynomial} $\sum_i \mu(S) x^S$ of the corresponding probability measure $\mu$ on sets. \Tag{See \cref{subsec:poly} for details.}

By setting up the right stochastic localization process, we decompose our original Ising measure into a mixture of negatively-spiked rank one Ising models. As a remark, this is in contrast to \cite{eldan2021spectral,anari2021entropic}, where the decomposition used positively-spiked rank one models. More specifically, we obtain models of the form
\[ p(x) \propto \exp\parens*{-\frac{\gamma}{2n}\parens*{\sum_i x_i}^2 + \dotprod{h, x}}. \]
This can be thought of as the antiferromagnetic analogue of the classical Curie-Weiss model (\cref{eqn:cw}); interestingly, this simple-looking statistical physics model ends up connecting in a very elegant way with the geometry of polynomials. We use this connection to prove that the polarized dynamics mixes rapidly in this model, and then using trickle-down along with some new facts about entropy contraction in stochastic localization\Tag{ (see \cref{lem:supermartingale})}, we establish the mixing time bound in \cref{thm:ising projected downup intro}.

Quickly implementing the up step of the polarized dynamics is nontrivial --- to do this, we build a data structure based on self-balancing binary search trees\Tag{ (see \cref{prop:data structure})}. This completes the construction of the nearly linear time sampling algorithm.

\Tag{\subsection{Some example applications}}\Tag<sigconf>{\subsection{Some Example Applications}}\label{subsec:examples}
\paragraph{Sherrington-Kirkpatrick (SK) model} 
Recalling the discussion earlier, in the SK model the matrix $J$ is a random $n \times n$ symmetric matrix with independent entries (for $i < j$)
\[ J_{ij} \sim \Normal{0, \beta^2/n} \]
where $\beta \ge 0$ is the \emph{inverse temperature}. It was previously known for $\beta < 0.25$ that the Glauber dynamics mixes in nearly linear time \cite{bauerschmidt2019very,eldan2021spectral,anari2021entropic}. From the famous results of Wigner \cite[see, e.g.,][]{anderson2010introduction}, we know that as $n \to \infty$, the smallest and largest eigenvalues of the zero-diagonal interaction matrix $J$ are $-2 + o(1)$ and $2 + o(1)$ respectively. Hence, applying \cref{thm:ising-main} to the equivalent interaction matrix $J-\lambda_{\min}(J)\cdot I$ with $\eta = 1/2 + o(1)$ (see \cref{rmk:interpretation}) shows that our general result implies nearly linear time mixing up to $\beta \approx 0.295$; see \cref{fig:qeta}.

\paragraph{$d$-regular diluted SK model.} In spin glass theory, \emph{diluted} versions of mean-field spin glasses, which are supported on sparse random graphs, have long been studied \cite[see, e.g.,][]{talagrand2010mean}; for example, with the motivation that sparse interactions are more realistic models of various physical phenomena. One natural diluted version of the SK model has its support on a random $d$-regular graph; for example, in the case of Rademacher disorder, for each edge $(i,j)$ the edge weight $J_{ij}$ would be sampled i.i.d.\ and uniformly from $\set{\pm \beta}$. In this case, the zero-diagonal interaction matrix has operator norm at most $2\sqrt{d - 1} + o(1)$ with high probability by a version of \citeauthor{Friedman2003APO}'s theorem \cite{deshpande2019threshold,Friedman2003APO}. Thus, the best previous results yield $O(n\log n)$ mixing time up to the threshold $\frac{0.25}{\sqrt{d - 1}}$ and our result with $\eta = 1/2 + o(1)$ improves the guarantee to hold up to $\approx \frac{0.295}{\sqrt{d - 1}}$.

\paragraph{Hopfield networks.} Hopfield networks \cite{little1974existence,hopfield1982neural,pastur1977exactly} are a neural model of associative memory that have been hugely influential and extensively studied. Formally, given i.i.d.\ random patterns $\eta_1,\ldots,\eta_m$ sampled uniformly from $\set{\pm 1}^n$, the Hopfield network at inverse temperature $\beta \ge 0$ is the Ising model with interaction matrix
\[ J = \frac{\beta}{2n} \sum_{v = 1}^m \eta_v \eta_v^\intercal. \]
This is thought of as a ``Hebbian'' learning rule because for each memory $\eta_v$ and for ``neurons'' $i$ and $j$, the term $(\eta_v)_i (\eta_v)_j$ is positive if $(\eta_v)_i = (\eta_v)_j$ and negative otherwise. Therefore if $J$ is thought of as the ``wiring'' of the neurons, then for each pattern all of the neurons which ``fire together'', i.e., have the same spin, are ``wired together''.

When the number of memories $m$ is a constant, there is a polynomial time sampling algorithm for this model for all $\beta$ \cite{koehler2022sampling}. But this is not the only regime of interest. Perhaps the most studied setting is when $m$ and $n$ jointly go to infinity at a fixed aspect ratio $m/n \simeq \lambda$. In random matrix theory, statistics, and other areas this is called a ``proportional scaling limit'', and in this limit the spectrum of the interaction matrix $J$ will go to a scaled version of the celebrated \emph{\citeauthor{marchenko1967distribution} law} \cite{marchenko1967distribution}. From the definition of the model, we know that the diagonal entries all equal $\frac{\beta m}{2n} = \frac{\beta \lambda}{2}$. This means the spectrum of the zero-diagonal matrix $\tilde J=J-\frac{\beta\lambda}{2}\cdot I$ is a shifted and scaled version of the \citeauthor{marchenko1967distribution} law. For every value of $\lambda$, applying our main result will improve the threshold for rapid mixing compared to prior works \cite{bauerschmidt2019very,eldan2021spectral,anari2021entropic}.

Unlike the previous examples, the optimal choice of $\eta$ will vary depending on $\lambda$ because the \citeauthor{marchenko1967distribution} law is asymmetrical. The precise improvement will depend on the particular value of $\lambda$. For example, when $\lambda = 1$, the largest eigenvalue of $J$ will be $2\beta + o(1)$ asymptotically almost surely, and the diagonal entry will be $\beta/2$. Applying our result with $\eta \approx 0.25$ will improve the guaranteed rapid mixing threshold from around $\beta = 0.5$ to around $\beta \approx 0.54$.  

\paragraph{$O(N)$ model with bounded row norms.} 
The following discussion is related to a conjecture of \textcite{dyson1978phase}; see Remark D.1 there. Suppose that the interaction matrix $J$ has small row norms off the diagonal; more specifically, let
\[ R = \norm{J}_{\infty \to \infty} = \max\set*{\sum_{j : j \ne i} \abs{J_{ij}}\given i\in [n]}. \]
We can assume the diagonal entries of $J$ all equal $R$ without loss of generality; then by Gershgorin's circle theorem, $J$ is positive semidefinite with operator norm at most $2R$. Applying \cref{thm:ON-main}, we therefore obtain $O(n \log n)$ time mixing for all $R$ up to $N s_{\eta}(1/2)/2 \approx 0.5874 N$.
In general, for every value of $N$ it was proved by \textcite{dyson1978phase} that the covariance matrix has operator norm $O(1)$, i.e., satisfies a bound independent of the number of sites $n$, as long as $R < 0.5 N$, and they conjectured that, at least in nice cases like ferromagnetic models on lattices, this bound can be improved to hold when $R < N$. \Textcite{bauerschmidt2019very} improved the covariance bound result to a log-Sobolev inequality up to the same threshold. Our result improves the threshold by an $N$-independent multiplicative constant without making any further assumptions on $J$. 

\paragraph{Antiferromagnetic Ising on random graphs.}
For random $d$-regular graphs on $n$ vertices, as long as $\beta< \frac{1-\delta}{8 \sqrt{d-1}},$ we can sample from the antiferromagnetic Ising model with parameter $\beta$ using the \emph{polarized walk}\Tag{; see \cref{cor:random d regular}}. Our runtime is nearly linear in the number of edges, thus optimal up to log factors. In \Tag{\cref{cor:glauber ising low degree}}\Tag<sigconf>{the full version of this paper}, we show that the Glauber dynamics mixes in $O\parens*{e^{O(\sqrt{d})} \cdot n \log n}$ steps. Our results also apply to other random graph models such as Erd\H{o}s-R\'enyi.

\paragraph{Fixed magnetization (ferromagnetic or antiferromagnetic) Ising on random graphs.}
For random $d$-regular graphs on $n$ vertices, as long as $\beta< \frac{1-\delta}{8 \sqrt{d-1}}$, we can sample from the fixed magnetization (ferromagnetic or antiferromagnetic) Ising model with parameter $\beta$ at an arbitrary magnetization level $k$ using $O(\delta^{-1} k \log n)$ steps of the down-up walk\Tag{ (\cref{cor:d regular expander fixed magnetization})}. \Textcite{Bauerschmidt2023KawasakiDB} concurrently and independently gave a bound on the mixing time of the down-up walk in the same parameter regime, but their bound has an exponential dependency on the degree $d$, i.e., $\exp(O(\sqrt{d}))$; see Lemma 2.6 in \cite{Bauerschmidt2023KawasakiDB}. The main goal in \cite{Bauerschmidt2023KawasakiDB} is to bound the mixing time of the Kawasaki dynamics, a localized version of the down-up walk when one can only exchange the spin of neighboring vertices. They do so by comparing Kawasaki dynamics with the down-up walk, incurring an extra $\exp(O(\sqrt{d} + h)) \log^4 n$  factor \cite[Lemma 4.1]{Bauerschmidt2023KawasakiDB} and thus showing that the Kawasaki dynamics mixes in $ O(\exp(O(\sqrt{d}) + h)\cdot n\log^6 n)$ steps. The extra $\log^5 n \cdot \exp(O(\sqrt{d}+ h))$ is unlikely to be necessary; our mixing time bound for the down-up walk is the first step to remove this dependency.

Using the equivalence between sampling and counting \cite{jerrum1986random}, we have an FPRAS for the partition function of the fixed magnetization antiferromagnetic and ferromagnetic Ising at any magnetization level and up to the same threshold for $\beta$. The sum of these partition functions over all magnetization levels is precisely the partition function for the Ising model, thus we also obtain FPRAS for the partition function of Ising models, and thus algorithms to sample from these models using counting to sampling reduction. This will work even in the case that the external field has inconsistent signs, which is \Class{\#BIS}-hard without further assumptions \cite{goldberg2007complexity}. In the ferromagnetic case and antiferromagnetic with fixed degree $d$ cases, similar results were obtained in prior work of \textcite{koehler2022sampling} with a different algorithm; in the antiferromagnetic case, the polarized walk gives another possible algorithm with nearly linear runtime. 

Our results also apply to other random graph models such as Erd\H{o}s-R\'enyi\Tag{ (\cref{cor:fixed magnetization erdos renyi})}, and more generally to expanders\Tag{ (\cref{prop:fixed magnetization expander general})}. 

\Tag{\subsection{Other related work}}\Tag<sigconf>{\subsection{Other Related Work}}
There have been many works studying Glauber dynamics in continuous state spaces. For example, in convex bodies Glauber dynamics is the well-known ``coordinate hit-and-run'' Markov chain, see, e.g., \cite{narayanan2022mixing,laddha2023convergence}. Glauber dynamics has also been analyzed in spin systems \cite{wu2006poincare,marton2013inequality} under conditions similar to Dobrushin's condition. 

The recent works \cite{alaoui2022sampling,celentano2022sudakov} (see also \cite{montanari2023posterior}) developed and analyzed a sampler for the SK model based on AMP (Approximate Message Passing) and stochastic localization. An advantage of this approach is that it provably works up to $\beta < 1$, which is the entire replica-symmetric regime of the model with zero external field. On the other hand, this method ends up sampling in a much weaker sense (achieving sublinear Wasserstein distance), does not apply to related models like diluted spin glasses, and does not imply structural properties of the measure like subgaussian concentration. It is worth noting that the work \cite{alaoui2022sampling} also shows an impossibility result for sampling via a class of Lipschitz algorithms in the replica symmetry-breaking regime ($\beta > 1$); it is possible that this is an impossible task for all polynomial time algorithms. Besides the SK model, there have also been recent works on mixing in spin glasses with higher order interactions ($p$-spin models) on the sphere and hypercube at sufficiently high temperature \cite{gheissari2019spectral,pspin,anari2023universality}.  

Also specifically in the context of the SK model, recent works \cite{alaoui2022bounds,brennecke2022two,brennecke2023operator} have obtained $O(1)$ bounds on the operator norm of the covariance matrix of the SK model when $\beta < 1$, improving polylogarithmic bounds due to \citet{talagrand2011mean}. To contrast with our analyses, we establish (and crucially need) bounds which \emph{hold uniformly over all external fields $h$} for a fixed/``quenched'' realization of the interaction matrix $J$. This is because an $O(1)$ bound on the operator norm of the covariance matrix is, by itself, obviously not a sufficient condition for rapid mixing, Lipschitz concentration, and so on to hold for a probability measure.  Because of the difficulties involved in handling all external fields $h$ at once, there is unfortunately no obvious way to apply the techniques based on the cavity method and TAP equations used in those works.

Recently, \citet{kunisky2023optimality} gave some evidence that the constant in the spectral condition used to guarantee polynomial time mixing of the Glauber dynamics in \cite{eldan2021spectral,anari2021entropic} is sharp not just for Glauber dynamics (where it is tight in the Curie-Weiss model) but in fact for all polynomial time algorithms. It would be interesting if anything along these lines can be said for the more sophisticated spectral conditions established in \cref{thm:ising-main} and \cref{thm:semilogconcave}.

\subsection{Organization}
\Tag{In \cref{sec:preliminaries}, we state general facts and preliminaries which are used in the following sections. In \cref{sec:trickledown}, we formulate the general trickle-down equation for linear-tilt localization schemes, which is used in the remainder of the body of the paper. In \cref{sec:ising}, we prove \cref{thm:ising-main} concerning rapid mixing of the Glauber dynamics in Ising models. In \cref{sec:semi} we prove the analogous results with semi-log-concave base measures.
In \cref{sec:ON}, we apply the semi-log-concave theory to obtain our results for both the Langevin and Glauber dynamics in the $O(N)$ model. \cref{sec:expander} proves rapid mixing of the polarized walk. Finally, in \cref{a:glauber-lowdegree} we prove rapid mixing of the Glauber dynamics on low-degree expanders.}
\Tag<sigconf>{
	In \cref{sec:preliminaries}, we state general facts and preliminaries about Markov kernels. In \cref{sec:trickledown}, we formulate the general trickle-down equation for linear-tilt localization schemes, which is used in the full version of this paper to derive all of the results in \cref{sec:intro}.
}

\Tag{
	\subsection*{Acknowledgment}
	This work was supported by NSF CAREER Award CCF-2045354. Thuy-Duong Vuong was supported by a Microsoft Research Ph.D.\ Fellowship. Frederic Koehler was supported in part by NSF award CCF-1704417, NSF award IIS-1908774, and N. Anari’s Sloan Research Fellowship, and thanks the participants of the Simons Institute program on Probability, Geometry, and Computation in High Dimensions for related discussions.

}
    \section{Preliminaries}
\label{sec:preliminaries}

\Tag{

In this section, we summarize, in a mostly self-contained fashion, the relevant facts about Markov chains, localization schemes, geometry of polynomials, etc., which are needed to prove our results. Some of the facts we state  --- in particular, \cref{lem:factorization-rhs-preserve} --- appear to be more general than what has previously appeared in the literature, and we will need this level of generality.
}

We use $[n]$ to denote the set of $n$ elements $\set{1,\dots,n}$. We also use $[\bar{n}]$ to denote $\set{\bar{1},\dots,\bar{n}}$, where we think of $\bar{i}$ as an element distinct from $i$. We use $2^{[n]}$ to denote the family of subsets of $[n]$ and $\binom{[n]}{k}$ to denote subsets of size $k$. We use $\1$ to denote the all-ones vector, where the dimension is inferred from context. For a set $S\subseteq [n]$, we use $\1_S\in \R^n$ to denote the indicator vector of the set $S$. We use $\1_1,\1_2,\dots,\1_n$ to denote the standard basis vectors in $\R^n$.

By default, we treat vectors as column vectors. For vectors $u, v \in \R^n$ we use $\dotprod{u, v}\in \R$ to denote the inner product and $u\otimes v =uv^\intercal \in \R^{n\times n}$ to denote the outer product. We use $u^{\otimes 2}$ to abbreviate $u\otimes u=uu^\intercal$.

For a probability measure $\mu$ on a space $\Omega$ and function $f:\Omega\to \R^d$ we use the shorthands $\E_\mu{f}$ and $\mu f$ to denote the expectation $\E_{x\sim \mu}{f(x)}$.

\begin{definition}[Mean and covariance]
	Suppose that $\mu$ is a probability measure on (a subset of) $\R^n$. We use $\mean{\mu}$ to denote $\E_{x\sim \mu}{x}$. Similarly, we define the covariance matrix as follows:
	\[ \cov{\mu}=\E_{x\sim \mu}{x^{\otimes 2}}-\E_{x\sim \mu}{x}^{\otimes 2}. \]
\end{definition}
With some abuse of notation, for a random variable $X$, we use $\mean{X}$ and $\cov{X}$ to denote the mean and covariance of the law of $X$. We also naturally allow conditioning in $\mean{}$ and $\cov{}$, which result in vector/matrix-valued random variables. So for a $\sigma$-algebra $\F$ and random variable $X$, we have $\cov{X\given \F}=\E{X^{\otimes 2}\given \F}-\E{X\given \F}^{\otimes 2}$.

We use $\Normal{m, \Sigma}$ to denote the Gaussian distribution with mean $m$ and covariance matrix $\Sigma$. We use $\Ber{p}$ to denote the Bernoulli distribution on $\set{0, 1}$ having mean $p$ and use the shorthand $\Ber{}$ to denote $\Ber{1/2}$. We use $\Berpm{m}$ to denote the distribution on $\set{\pm 1}$ whose mean is $m$ and use the shorthand $\Berpm{}$ to denote $\Berpm{0}$.

\Tag{\subsection{Markov kernels}}\Tag<sigconf>{\subsection{Markov Kernels}}

\begin{definition}[Markov kernel] \label{def:markov-kernel}
A Markov kernel $\kappa$ from $\Omega$ to $\Omega'$ assigns to every point $x\in \Omega$ a probability measure $\kappa(x,\cdot)$ on $\Omega'$.
\end{definition}
We assume $\Omega$ and $\Omega'$ are measurable spaces equipped with $\sigma$-algebras $\F, \F'$, which we usually omit for the sake of brevity. The kernel $\kappa$ would be a function $\Omega\times \F'\to [0, 1]$ such that for each $x$, $\kappa(x,\cdot)$ is a probability measure on $(\Omega',\F')$, and for each $S\in \F'$ the function $x\mapsto \kappa(x,S)$ is measurable w.r.t.\ $(\Omega,\F$).

In the literature, sometimes Markov kernels are called channels. For finite spaces $\Omega, \Omega'$, we view $\kappa$ as a row-stochastic matrix. Note that when $\Omega'=\Omega$, $\kappa$ could be viewed as defining the transitions of a Markov chain on $\Omega$.

A Markov kernel $\kappa$ from $\Omega$ to $\Omega'$ can be combined with a Markov kernel $\kappa'$ from $\Omega'$ to $\Omega''$ in order to give a Markov kernel $\kappa\kappa'$ from $\Omega$ to $\Omega''$:
\[\kappa\kappa'(x, S)=\int_{y\in \Omega'} \kappa'(y, S)\kappa(x, dy)=\E_{y\sim \kappa(x, \cdot)}{\kappa'(y, S)}. \]
For finite spaces, this corresponds to matrix multiplication. We can also take the direct product of two kernels $\kappa$ and $\gamma$ from $\Omega$ to $\Omega'$ and from $\Omega$ to $\Omega''$ respectively, and define $\kappa\times\gamma$ to be a kernel from $\Omega$ to $\Omega'\times\Omega''$ with $\kappa\times\gamma(x,\cdot)=\kappa(x,\cdot)\times\gamma(x,\cdot)$.

A measure $\mu$ on $\Omega$ can be combined with a Markov kernel $\kappa$ from $\Omega$ to $\Omega'$ to get another measure $\mu\kappa$ on $\Omega'$:
\[ \mu\kappa(S)=\int_{x\in \Omega} \kappa(x, S)\mu(dx). \]
For finite spaces, this corresponds to a vector-matrix multiplication, viewing $\mu$ as a row vector. Note that if $\mu$ is a probability measure, i.e., if $\mu(\Omega)=1$, then so is $\mu\kappa$.

While distributions multiply on the l.h.s.\ of a Markov kernel, functions multiply on the r.h.s. Given a Markov kernel $\kappa$ from $\Omega$ to $\Omega'$ and a measurable function $f:\Omega'\to \R^d$, define $\kappa f:\Omega\to \R^d$ as
\[ \kappa f(x)=\int_{y\in \Omega'} f(y)\kappa(x, dy)=\E_{y\sim \kappa(x,\cdot)}{f(y)}. \]

\begin{definition}[Semidirect product]
    Consider a distribution $\mu$ on $\Omega$ and a Markov kernel $\kappa$ from $\Omega$ to $\Omega'$. We can generate random variables $X, Y$, by first sampling $X\sim \mu$ and then, conditioned on $X$, sampling $Y\sim \kappa(X, \cdot)$. Note that the marginal distribution of $Y$ is $\mu\kappa$. The joint distribution of $(X, Y)$ is the semidirect product $\mu \rtimes \kappa$. We also use $\kappa \ltimes \mu$ to denote the distribution of $(Y, X)$.
\end{definition}

\begin{definition}[Time-reversal]
	We say that a Markov kernel $\kappa'$ is the time-reversal of the Markov kernel $\kappa$ w.r.t.\ the distribution $\mu$ if $\kappa'(Y, \cdot)$ is the (regular) conditional probability distribution of $X$ given $Y$, when $(X, Y)\sim \mu\rtimes \kappa$. A shorthand definition is that $\mu$ and $\kappa$ define the same joint distribution as $\mu\kappa$ and $\kappa'$ define:
	\[ \mu \rtimes \kappa = \kappa' \ltimes (\mu\kappa) \]
	Although time-reversal, being a conditional probability, is technically not necessarily unique, with slight abuse of notation, we use the following convenient shorthand to denote $\kappa'(y, \cdot)$:
	\[\mu\observe{\kappa}y\]
\end{definition}

In Bayesian terminology, $\mu$ would be a prior distribution, and $y$ an observation obtained by passing a sample from $\mu$ through the channel $\kappa$; $\mu \observe{\kappa} y$ is simply the posterior distribution, given this observation. For finite spaces, we can write this posterior as
\[ (\mu\observe{\kappa} y)(x)  = \frac{\mu(x) \kappa(x,y)}{\mu \kappa(y)}. \] 
If $\gamma$ is another kernel from $\Omega$ to $\Omega''$, then observations of $\kappa$ and $\gamma$ commute with each other:
\[ (\mu\observe{\kappa}y)\observe{\gamma}z=(\mu\observe{\gamma}z)\observe{\kappa}y. \]
This is the posterior of $X\sim \mu$ given that independent observations $Y\sim \kappa(X,\cdot)$ and $Z\sim \gamma(X, \cdot)$ were $Y=y, Z=z$.

\Tag{
While a kernel $\kappa$ allows us to map measures from $\Omega$ to measures on $\Omega'$, the densities of these measures are mapped according to the time-reversal kernel $\kappa'$.

\begin{proposition}
	Suppose that $\mu$ is a probability measure on $\Omega$ and $\kappa$ is Markov kernel from $\Omega$ to $\Omega'$ whose time-reversal is $\kappa'$. If $\nu$ is another measure on $\Omega$ absolutely continuous w.r.t.\ $\mu$, then
	\[ \frac{d(\nu\kappa)}{d(\mu\kappa)}=\kappa' \frac{d\nu}{d\mu}. \]
\end{proposition}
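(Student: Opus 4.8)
The plan is to verify the identity by testing both sides against an arbitrary bounded measurable test function $g : \Omega' \to \R$, i.e., by showing that $\int_{\Omega'} g \cdot \frac{d(\nu\kappa)}{d(\mu\kappa)} \, d(\mu\kappa) = \int_{\Omega'} g \cdot \bigl(\kappa' \frac{d\nu}{d\mu}\bigr) \, d(\mu\kappa)$. The left-hand side is, by definition of the Radon--Nikodym derivative, simply $\int_{\Omega'} g \, d(\nu\kappa)$. So the whole statement reduces to checking that
\[ \int_{\Omega'} g \, d(\nu\kappa) = \int_{\Omega'} \Bigl(\kappa' \tfrac{d\nu}{d\mu}\Bigr)(y)\, g(y) \, (\mu\kappa)(dy). \]

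First I would rewrite the left-hand side using the definition of $\nu\kappa$: since pushing a measure through $\kappa$ and then integrating $g$ is the same as integrating $\kappa g$ against the original measure, $\int g \, d(\nu\kappa) = \int_\Omega (\kappa g)(x) \, \nu(dx)$. Then, writing $\nu = \frac{d\nu}{d\mu}\mu$, this equals $\int_\Omega \frac{d\nu}{d\mu}(x)\,(\kappa g)(x)\,\mu(dx)$; in the notation of the preliminaries this is $\mu\bigl[\frac{d\nu}{d\mu}\cdot \kappa g\bigr]$. Now I would bring in the time-reversal relation $\mu \rtimes \kappa = \kappa' \ltimes (\mu\kappa)$: this says precisely that for any bounded measurable $\Phi$ on $\Omega \times \Omega'$, $\E_{(X,Y)\sim\mu\rtimes\kappa}[\Phi(X,Y)] = \E_{(X,Y)\sim\mu\rtimes\kappa}[\Phi(X,Y)]$ computed either by conditioning on $X$ first or on $Y$ first. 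Applying this with $\Phi(x,y) = \frac{d\nu}{d\mu}(x)\, g(y)$: conditioning on $X$ first gives $\int_\Omega \frac{d\nu}{d\mu}(x)\bigl(\int_{\Omega'} g(y)\kappa(x,dy)\bigr)\mu(dx) = \mu\bigl[\frac{d\nu}{d\mu}\cdot\kappa g\bigr]$, and conditioning on $Y$ first gives $\int_{\Omega'} g(y)\bigl(\int_\Omega \frac{d\nu}{d\mu}(x)\,\kappa'(y,dx)\bigr)(\mu\kappa)(dy) = \int_{\Omega'} g(y)\,\bigl(\kappa'\frac{d\nu}{d\mu}\bigr)(y)\,(\mu\kappa)(dy)$. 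Equating these two expressions gives exactly the displayed identity, and since $g$ was arbitrary (and one checks the relevant functions are integrable because $\frac{d\nu}{d\mu}\in L^1(\mu)$ and $g$ is bounded), the two Radon--Nikodym derivatives agree $(\mu\kappa)$-almost everywhere.

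I would also remark on why $\nu\kappa \ll \mu\kappa$, so that the left-hand side makes sense: if $(\mu\kappa)(S) = 0$ then $\kappa(x,S) = 0$ for $\mu$-a.e.\ $x$, hence for $\nu$-a.e.\ $x$ by absolute continuity, so $(\nu\kappa)(S) = 0$. The only real subtlety is the measure-theoretic bookkeeping around the time-reversal kernel $\kappa'$, which is only defined up to a $(\mu\kappa)$-null set (as the paper already flags); but since the conclusion is itself an almost-everywhere statement with respect to $\mu\kappa$, any choice of regular conditional distribution works and this causes no genuine difficulty. The main ``obstacle'' is thus purely notational --- correctly unwinding the definitions of $\mu\kappa$, $\kappa g$, $\kappa' \frac{d\nu}{d\mu}$, and the semidirect-product/time-reversal identity $\mu\rtimes\kappa = \kappa'\ltimes(\mu\kappa)$ --- after which the proof is a one-line application of Fubini-type disintegration.
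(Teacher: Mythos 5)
Your proposal is correct and follows essentially the same route as the paper: test against an arbitrary function on $\Omega'$, rewrite one side via the joint law $\mu\rtimes\kappa=\kappa'\ltimes(\mu\kappa)$, and conclude that $\kappa'\tfrac{d\nu}{d\mu}$ integrates against $\mu\kappa$ exactly as $\nu\kappa$ does. The extra remarks on $\nu\kappa\ll\mu\kappa$ and the null-set ambiguity of $\kappa'$ are fine but do not change the argument.
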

\begin{proof}
	For any test function $f$ on $\Omega'$, we have
	\begin{align*} \E*_{y\sim \mu \kappa}{\parens*{\kappa'\frac{d\nu}{d\mu}}(y)\cdot f(y)}&=\E*_{y\sim \mu \kappa}{\E*_{x\sim \kappa'(y, \cdot)}{\frac{d\nu}{d\mu}(x)}\cdot f(y)}=\E*_{x\sim \mu}{\frac{d\nu}{d\mu}(x)\cdot \E*_{y\sim \kappa(x,\cdot)}{f(y)}} \\ &=\int_{x\in \Omega} \E_{y\sim \kappa(x, \cdot)}{f(y)}\; \nu(dx)=\int_{y\in \Omega'}f(y)\;\nu\kappa(dy).	
	\end{align*}
	This proves that $\kappa'\frac{d\nu}{d\mu}$ is the density of $\nu\kappa$ w.r.t.\ $\mu\kappa$.
\end{proof}

\begin{definition}[Mixture decomposition]
	A mixture decomposition of a distribution $\mu$ on $\Omega$ is a collection of distributions $\mu_\theta$ indexed by $\theta\in \Omega'$, together with a distribution $\pi$ on $\Omega'$, such that for all events $S\subseteq \Omega$, $\mu_\theta(S)$ is measurable in $\theta$ and $\mu(S)=\E_{\theta\sim \pi}{\mu_\theta(S)}$. We abbreviate this as
	\[\mu=\E_{\theta\sim \pi}{\mu_\theta}.\]
\end{definition}
Mixture decompositions are in direct correspondence with Markov kernels from $\Omega$ to $\Omega'$. Given a mixture decomposition we define a Markov kernel $\kappa$: first let the kernel $\kappa'$ be $\kappa'(\theta,\cdot)=\mu_\theta$, and note that $\pi\kappa'=\mu$. Now let $\kappa$ be the time-reversal of $\kappa'$ w.r.t.\ $\pi$. Going the opposite direction, for any Markov kernel $\kappa$ from $\Omega$ to $\Omega'$, we can let $\pi=\mu\kappa$, and $\kappa'$ be the time-reversal of $\kappa$ w.r.t.\ $\mu$, and define $\mu_\theta=\kappa'(\theta,\cdot)$. We automatically get $\pi\kappa'=\mu$, i.e., $\E_{\theta\sim \pi}{\mu_\theta}=\mu$.
}

\Tag{
\subsection{Entropy contraction}

\begin{definition}[$\varphi$-entropy and $\varphi$-divergence]
	Given a convex function $\varphi:\R_{\geq 0}\to \R$ and measure $\mu$ on space $\Omega$, $\varphi$-entropy is defined as an operator that takes functions $f:\Omega\to \R_{\geq 0}$ and outputs
	\[ \Ent_\mu^\varphi{f}=\E_\mu{\varphi\circ f}-\varphi\parens*{\E_\mu{f}}. \]
	We also define the related notion of a $\varphi$-divergence.\footnote{$f$-divergence is more commonly used in the literature, but for consistency with $\varphi$-entropies, we use $\varphi$-divergence.} For a measure $\nu$ that has density $d\nu/d\mu$ w.r.t.\ $\mu$ we define the $\varphi$-divergence as
	\[ \D_{\varphi}{\nu\river \mu}=\Ent*_\mu^\varphi{\frac{d\nu}{d\mu}}. \]
\end{definition}
Note that $\varphi$-entropy and $\varphi$-divergence are always nonnegative, by Jensen's inequality. Two special cases of $\varphi$ give the familiar variance and entropy functionals:

\begin{definition}[Variance and $\chis{}$-divergence]
	Specializing to $\varphi(x)=x^2$, we define variance as
	\[ \Var_\mu{f}=\Ent_\mu^\varphi{f}=\E_\mu{f^2}-\E_\mu{f}^2, \]	
	and the $\chis{}$-divergence as
	\[ \chis{\nu\river \mu}=\D_\varphi{\nu\river \mu}=\Var*{\frac{d\nu}{d\mu}}. \]
\end{definition}

\begin{definition}[Relative entropy and KL-divergence]
	Specializing to $\varphi(x)=x\log x$, we define the relative entropy as
	\[\Ent_\mu{f} = \Ent_\mu^\varphi{f}=\E_{\mu}{f\log f}-\E_{\mu} {f} \log (\E_{\mu} {f} ),\]
	and the Kullback–Leibler (KL) divergence as
	\[\DKL{\nu \river \mu} = \Ent*_{\mu}{\frac{d\nu}{d\mu}}.\] 
\end{definition}

\begin{definition}[Entropy contraction] \label{def:entropy-contraction}
    We say a Markov kernel $\kappa$ from $\Omega$ to $\Omega'$ has $(1-\rho)$ contraction of $\varphi$-entropy w.r.t.\ a distribution $\mu$ on $\Omega$ if for all measures $\nu$ absolutely continuous w.r.t.\ $\mu$ we have
	\[ \D_{\varphi}{\nu \kappa\river \mu \kappa} \leq (1-\rho) \D_{\varphi}{\nu \river \mu}, \]
	or equivalently, assuming $\kappa'$ is the time-reversal of $\kappa$ w.r.t.\ $\mu$, if for all density functions $f$ on $\Omega$,
	\[ \Ent_{\mu\kappa}^\varphi{\kappa'f}\leq (1-\rho)\Ent_\mu^\varphi{f}. \]
\end{definition}

We will use the following useful characterization of shrinkage of $\varphi$-entropies under $\kappa$:
\begin{proposition} \label{prop:data-processing-slack}
	Suppose that $\nu$ is a measure absolutely continuous w.r.t.\ a probability measure $\mu$ on $\Omega$, and $\kappa$ is a Markov kernel from $\Omega$ to $\Omega'$. Then
	\[ \D_{\varphi}{\nu\river \mu}-\D_{\varphi}{\nu\kappa\river \mu\kappa}=\Ent*_\mu^\varphi{\frac{d\nu}{d\mu}}-\Ent*_{\mu\kappa}^\varphi{\frac{d(\nu\kappa)}{d(\mu\kappa)}}= \E*_{y\sim \mu\kappa}{\Ent*^\varphi_{\mu\observe{\kappa} y}{\frac{d\nu}{d\mu}}}. \]
\end{proposition}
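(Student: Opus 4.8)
The plan is to reduce the second equality (the first is just the definition of $\varphi$-divergence applied to $\nu$ and to $\nu\kappa$) to the density identity from the preceding proposition together with the tower property of conditional expectation. Write $f=\frac{d\nu}{d\mu}$ and let $\kappa'$ be the time-reversal of $\kappa$ with respect to $\mu$, so that $\kappa'(y,\cdot)=\mu\observe{\kappa}y$ and, by that proposition, $\frac{d(\nu\kappa)}{d(\mu\kappa)}=\kappa'f$. I would first note that the two ``centering'' terms cancel: sampling $(X,Y)\sim\mu\rtimes\kappa=\kappa'\ltimes(\mu\kappa)$ gives $Y\sim\mu\kappa$ and $X\mid Y\sim\mu\observe{\kappa}Y$, hence $\E_{\mu\kappa}{\kappa'f}=\E_\mu{f}$ and therefore $\varphi\parens*{\E_{\mu\kappa}{\kappa'f}}=\varphi\parens*{\E_\mu{f}}$. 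This leaves
\[ \Ent*_\mu^\varphi{f}-\Ent*_{\mu\kappa}^\varphi{\kappa'f}=\E_\mu{\varphi\circ f}-\E_{\mu\kappa}{\varphi\circ(\kappa'f)}. \]

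Next I would expand both terms on the right via the tower property along $(X,Y)$. For the first, $\E_\mu{\varphi\circ f}=\E*_{y\sim\mu\kappa}{\E*_{x\sim\mu\observe{\kappa}y}{\varphi(f(x))}}$. For the second, by the definition of $\kappa'f$ one has $(\kappa'f)(y)=\E*_{x\sim\mu\observe{\kappa}y}{f(x)}$, so $\E_{\mu\kappa}{\varphi\circ(\kappa'f)}=\E*_{y\sim\mu\kappa}{\varphi\parens*{\E*_{x\sim\mu\observe{\kappa}y}{f(x)}}}$. Subtracting under the outer expectation over $y\sim\mu\kappa$, the integrand at each $y$ becomes exactly $\E_{\mu\observe{\kappa}y}{\varphi\circ f}-\varphi\parens*{\E_{\mu\observe{\kappa}y}{f}}=\Ent^\varphi_{\mu\observe{\kappa}y}{f}$, which is the claimed identity.

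I do not expect a genuine obstacle here: the whole argument is bookkeeping with conditional expectations, and it is essentially the classical computation underlying the data-processing inequality for $\varphi$-divergences (combined with Jensen's inequality, $\Ent^\varphi_{\mu\observe{\kappa}y}{f}\ge 0$ recovers data processing). The only points that need care are routine measure-theoretic ones --- the existence of a regular conditional distribution $\mu\observe{\kappa}y=\kappa'(y,\cdot)$ and the measurability of $y\mapsto\Ent^\varphi_{\mu\observe{\kappa}y}{f}$ --- which are covered by the standing assumptions on $\Omega,\Omega'$ already invoked to define the time-reversal, together with the usual care to ensure no $\infty-\infty$ ambiguity arises when $\varphi\circ f$ fails to be $\mu$-integrable (in which case all three expressions are $+\infty$).
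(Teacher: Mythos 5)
Your argument is correct and matches the paper's proof essentially step for step: both use the density identity $\frac{d(\nu\kappa)}{d(\mu\kappa)}=\kappa'f$ from the preceding proposition, cancel the centering terms (your $\E_{\mu\kappa}{\kappa'f}=\E_\mu{f}$ is the paper's $\nu\kappa(\Omega')=\nu(\Omega)$), and then apply the tower property to identify the remaining difference with $\E_{y\sim\mu\kappa}{\Ent^\varphi_{\mu\observe{\kappa}y}{f}}$. No gaps beyond the routine measurability caveats you already flag.
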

\begin{proof}
	Let $f=\frac{d\nu}{d\mu}$. If $\kappa'$ is the time-reversal of $\kappa$ w.r.t.\ $\mu$, we have $\frac{d(\nu\kappa)}{d(\mu\kappa)}=\kappa'f$, so
	\[ \D_{\varphi}{\nu\kappa\river \mu\kappa}=\Ent_{\mu\kappa}^\varphi{\kappa'f}=\E_{\mu\kappa}{\varphi\circ \kappa'f}-\varphi(\nu\kappa(\Omega'))=\E_{\mu\kappa}{\varphi\circ \kappa'f}-\varphi(\nu(\Omega)), \]
	where we used the fact that $\nu\kappa(\Omega')=\nu(\Omega)$. Similarly, we can write
	\[ \D_{\varphi}{\nu\river \mu}=\Ent_\mu^\varphi{f}=\E_{\mu}{\varphi\circ f}-\varphi(\nu(\Omega))=\E_{y\sim \mu\kappa}{\E_{x\sim \kappa'(y,\cdot)}{\varphi\circ f}}-\varphi(\nu(\Omega)). \]
	Subtracting we get
	\[ \D_{\varphi}{\nu\river \mu}-\D_{\varphi}{\nu\kappa\river \mu\kappa}=\E*_{y\sim \mu\kappa}{\E_{x\sim \kappa'(y, \cdot)}{\varphi\circ f}-\varphi(\kappa' f(y))}=\E*_{y\sim \mu\kappa}{\Ent_{\kappa'(y,\cdot)}^\varphi{f}}, \]
	which finishes the proof.
\end{proof}
Note that \cref{prop:data-processing-slack} implies this difference is $\geq 0$ for convex $\varphi$, since $\Ent^\varphi{}$ is always $\geq 0$. In other words, we always have (weak) contraction of $\varphi$-entropy with $\rho\geq 0$. This is known as the data processing inequality. Using \cref{prop:data-processing-slack} we can write $(1-\rho)$ contraction of $\varphi$-entropy in a useful form in terms of density functions:
\begin{proposition}
	Suppose $\mu$ is a probability measure on $\Omega$ and $\kappa$ is a Markov kernel from $\Omega$ to $\Omega'$. Then we have $(1-\rho)$ contraction of $\varphi$-entropy iff for all densities $f$,
	\[ \E*_{y\sim \mu\kappa}{\Ent*^\varphi_{\mu\observe{\kappa} y}{f}}\geq \rho\cdot \Ent_\mu^\varphi{f}. \]	
\end{proposition}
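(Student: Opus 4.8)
The plan is to deduce this directly from \cref{prop:data-processing-slack}, with essentially no additional work beyond unwinding definitions. First I would rewrite the definition of $(1-\rho)$ contraction: by \cref{def:entropy-contraction}, it asserts $\D_{\varphi}{\nu\kappa \river \mu\kappa}\leq (1-\rho)\D_{\varphi}{\nu\river \mu}$ for every measure $\nu$ absolutely continuous w.r.t.\ $\mu$, which after rearranging is equivalent to
\[ \D_{\varphi}{\nu\river \mu}-\D_{\varphi}{\nu\kappa\river \mu\kappa}\geq \rho\cdot \D_{\varphi}{\nu\river \mu} \]
holding for all such $\nu$.

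Next I would apply \cref{prop:data-processing-slack} with $f=\frac{d\nu}{d\mu}$. That proposition identifies the left-hand side above with $\E_{y\sim \mu\kappa}{\Ent^\varphi_{\mu\observe{\kappa} y}{f}}$, while by the definition of $\varphi$-divergence the right-hand side is $\rho\cdot \Ent_\mu^\varphi{f}$. Hence, for a fixed $\nu$, the contraction inequality is literally the claimed inequality for the corresponding density $f$.

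Finally I would observe that the two quantifiers agree: as $\nu$ ranges over all measures absolutely continuous w.r.t.\ $\mu$, the Radon--Nikodym derivative $f=d\nu/d\mu$ ranges over all nonnegative densities on $\Omega$ (conversely, given $f\geq 0$ take $\nu = f\mu$), so ``for all $\nu$'' and ``for all densities $f$'' are the same, and the equivalence holds in both directions simultaneously. There is no real obstacle here; the only minor point to keep in mind is that $\Ent^\varphi$ requires the relevant integrals to be finite, but this restriction applies identically on both sides and does not affect the equivalence.
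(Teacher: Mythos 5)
Your argument is correct and is exactly the route the paper intends: the proposition is stated as an immediate consequence of \cref{prop:data-processing-slack}, obtained by rearranging the contraction inequality and identifying the entropy decrement with $\E_{y\sim \mu\kappa}{\Ent^\varphi_{\mu\observe{\kappa} y}{f}}$ for $f = d\nu/d\mu$, together with the bijection between measures $\nu \ll \mu$ and nonnegative densities $f$. Nothing further is needed.
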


\begin{remark}
    Entropy contraction for a Markov kernel $\kappa$ and \emph{all} probability measures $\mu$ is referred to as a strong data processing inequality \cite[see, e.g.,][]{polyanskiy2017strong}. In this work, we are concerned primarily with entropy contraction w.r.t.\ a fixed probability measure $\mu$. 
\end{remark}

We now prove an important concavity property of the $\varphi$-entropy decrement.
\begin{lemma}\label{lem:factorization-rhs-preserve}
	Let $\kappa$ be a Markov kernel from $\Omega$ to $\Omega'$, and let $f$ be a density function on $\Omega$. The following functional on distributions $\mu$ is concave:
	\[ \mu \mapsto \E*_{y\sim \mu\kappa}{\Ent*^\varphi_{\mu\observe{\kappa} y}{f}}. \]
	In other words, if we have a mixture decomposition for $\mu$, given by $\mu=\E_{\theta\sim \pi}{\mu_\theta}$, then
	\[ \E*_{y\sim \mu\kappa}{\Ent*^\varphi_{\mu\observe{\kappa} y}{f}}\geq \E*_{\theta\sim \pi}{\E*_{y\sim \mu_\theta\kappa}{\Ent*^\varphi_{\mu_\theta\observe{\kappa} y}{f}}}. \]
\end{lemma}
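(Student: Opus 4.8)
The plan is to reduce the claim to a known convexity property of the $\varphi$-entropy decrement viewed as a function of the prior, by rewriting the functional in terms of the joint distribution $\mu \rtimes \kappa$ and invoking \cref{prop:data-processing-slack}. First I would apply \cref{prop:data-processing-slack} with the fixed density $f = d\nu/d\mu$ (equivalently, let $\nu$ be the measure with $d\nu/d\mu = f$, rescaled to be a probability measure, which does not change $\Ent^\varphi$ up to the harmless constant $\varphi(\nu(\Omega))$) to identify
\[ \E*_{y \sim \mu\kappa}{\Ent*^\varphi_{\mu\observe{\kappa} y}{f}} = \D_\varphi{\nu \river \mu} - \D_\varphi{\nu\kappa \river \mu\kappa}. \]
Now the point is that \emph{both} $\mu$ and $\nu$ depend affinely on the choice of prior: if $\mu = \E_{\theta\sim\pi}{\mu_\theta}$, then defining $\nu_\theta$ by $d\nu_\theta/d\mu_\theta = f$ (so $\nu_\theta = f\cdot\mu_\theta$ as a measure) we get $\nu = \E_{\theta\sim\pi}{\nu_\theta}$ as well, and similarly $\mu\kappa = \E_\theta{\mu_\theta\kappa}$ and $\nu\kappa = \E_\theta{\nu_\theta\kappa}$ since $\kappa$ acts linearly on measures. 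So the whole functional is $(\mu,\nu) \mapsto \D_\varphi{\nu\river\mu} - \D_\varphi{\nu\kappa\river\mu\kappa}$ evaluated along an affine path, and it suffices to show this map is concave jointly in $(\mu,\nu)$.

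For the concavity of $\D_\varphi{\nu\river\mu} - \D_\varphi{\nu\kappa\river\mu\kappa}$ in $(\mu,\nu)$, the key observation is that $\D_\varphi{\nu\river\mu}$ equals a difference of two pieces: $\E_\mu{\varphi\circ(d\nu/d\mu)}$ is jointly convex in $(\mu,\nu)$ (this is the standard joint convexity of $\varphi$-divergences / the perspective function $(\mu,\nu)\mapsto \mu\,\varphi(\nu/\mu)$), while the subtracted term $\varphi(\nu(\Omega))$ is affine in $\nu$ and hence both convex and concave. The same holds after pushing through $\kappa$: $\E_{\mu\kappa}{\varphi\circ(d(\nu\kappa)/d(\mu\kappa))}$ is jointly convex in $(\mu\kappa,\nu\kappa)$, hence in $(\mu,\nu)$ by linearity of $\kappa$, and $\varphi((\nu\kappa)(\Omega')) = \varphi(\nu(\Omega))$ is affine. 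Therefore
\[ \D_\varphi{\nu\river\mu} - \D_\varphi{\nu\kappa\river\mu\kappa} = \Bigl(\E_\mu{\varphi\circ\tfrac{d\nu}{d\mu}} - \E_{\mu\kappa}{\varphi\circ\tfrac{d(\nu\kappa)}{d(\mu\kappa)}}\Bigr) + \underbrace{\bigl(\varphi((\nu\kappa)(\Omega')) - \varphi(\nu(\Omega))\bigr)}_{=0} \]
and — careful — this is convex minus convex, which is \emph{not} obviously concave. So the naive splitting does not work directly; the subtracted convex term has to be controlled. The cleaner route, which I expect to be the actual argument, is to go back to the middle expression in \cref{prop:data-processing-slack}, namely $\Ent^\varphi_\mu{f} - \Ent^\varphi_{\mu\kappa}{\kappa'f}$, and expand it (using the proof of that proposition) as $\E_{y\sim\mu\kappa}{\E_{x\sim\kappa'(y,\cdot)}{\varphi\circ f} - \varphi(\kappa' f(y))}$. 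The outer expectation $\E_{y\sim\mu\kappa}{\E_{x\sim\kappa'(y,\cdot)}{\varphi\circ f}} = \E_{x\sim\mu}{\varphi\circ f}$ is \emph{linear} in $\mu$, so it contributes nothing to concavity/convexity. The remaining term is $-\E_{y\sim\mu\kappa}{\varphi(\kappa' f(y))}$, and here $\kappa'f(y) = d(\nu\kappa)/d(\mu\kappa)(y)$, so $\E_{y\sim\mu\kappa}{\varphi(\kappa'f(y))} = \E_{\mu\kappa}{\varphi\circ\tfrac{d(\nu\kappa)}{d(\mu\kappa)}}$ is jointly convex in $(\mu\kappa,\nu\kappa)$, hence in $\pi$ along the affine family; negating it gives concavity.

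So the key steps, in order, are: (1) fix $\nu$ with $d\nu/d\mu = f$ and use the middle identity of \cref{prop:data-processing-slack} to write the target functional as $\E_\mu{\varphi\circ f} - \E_{\mu\kappa}{\varphi\circ(d(\nu\kappa)/d(\mu\kappa))}$; (2) along the mixture decomposition $\mu = \E_\theta{\mu_\theta}$, observe $\nu = \E_\theta{\nu_\theta}$ where $\nu_\theta = f\mu_\theta$, so $\mu\kappa$ and $\nu\kappa$ are the corresponding averages of $\mu_\theta\kappa$, $\nu_\theta\kappa$; (3) note the first term is linear in $\theta$, hence preserved exactly under averaging; (4) apply joint convexity of the map $(\mu',\nu')\mapsto \E_{\mu'}{\varphi\circ(d\nu'/d\mu')}$ (perspective-function convexity of $\varphi$-divergences) to the second term with $\mu' = \mu\kappa$, $\nu' = \nu\kappa$, to get that subtracting it is concave; (5) combine to conclude the displayed inequality. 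The one subtlety I want to flag as the main obstacle is step (4): one must be slightly careful that $f$ may not be a probability density (i.e. $\E_\mu{f}$ need not be $1$), but this only shifts everything by the affine-in-$\theta$ constant $\varphi(\E_\mu{f}) = \varphi(\nu(\Omega))$ — which is unaffected by averaging since $\nu(\Omega) = \E_\theta{\nu_\theta(\Omega)}$ and... actually even that equality shows it's affine not constant, but affine-in-$\theta$ is exactly what we need, so it drops out of the concavity statement. Making this bookkeeping precise, and citing the right form of joint convexity of $\varphi$-divergences, is the only real content beyond \cref{prop:data-processing-slack}.
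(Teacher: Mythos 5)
Your proof is correct, but it takes a genuinely different route from the paper's. The paper encodes the mixture decomposition by a Markov kernel $\gamma$ (so that $\pi=\mu\gamma$ and $\mu_\theta$ is the posterior of $\mu$ given $\theta$), uses that observations through $\kappa$ and $\gamma$ commute to swap the two conditionings, and then applies \cref{prop:data-processing-slack} conditionally on $y$ (i.e.\ the data processing inequality) to discard the extra decomposition by $\gamma$ — everything stays internal to the machinery already built. You instead apply \cref{prop:data-processing-slack} once to rewrite the functional as $\mathbb{E}_\mu[\varphi\circ f]$ minus $\mathbb{E}_{\mu\kappa}\bigl[\varphi\circ d(\nu\kappa)/d(\mu\kappa)\bigr]$ (the $\varphi(\nu(\Omega))$ terms cancel because $\nu\kappa(\Omega')=\nu(\Omega)$), observe that the first term is linear in $\mu$ while $(\mu,\nu)\mapsto(\mu\kappa,\nu\kappa)$ depends affinely on the mixture with $\nu_\theta=f\mu_\theta$, and then invoke the classical joint convexity of $\varphi$-divergences (perspective-function convexity) for the second term; your correction of the naive ``convex minus convex'' splitting is exactly the right fix, and the direction of the final inequality checks out. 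The trade-off: your argument is shorter once joint convexity is imported as a known fact, whereas the paper's is self-contained and handles abstract (continuum) mixtures cleanly through the kernel correspondence; on your route, passing from finite to general mixtures needs one more step (a common dominating measure or disintegration, plus Jensen applied to the perspective function), which you should spell out. At bottom the two proofs are cousins, since joint convexity of $\varphi$-divergences is itself usually derived from the very data processing inequality the paper reuses.
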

\begin{proof}
	A mixture decomposition can be described by a Markov kernel $\gamma$ such that $\pi=\mu\gamma$, and $\mu_\theta=\mu\observe{\gamma}\theta$. This means we can write the r.h.s.\ of the desired inequality as
	\[ \E*_{\theta\sim \mu\gamma}{\E*_{y\sim (\mu\observe{\gamma}\theta)\kappa}{\Ent*^\varphi_{(\mu\observe{\gamma}\theta)\observe{\kappa} y}{f}}}=\E*_{y\sim \mu\kappa}{\E*_{\theta\sim (\mu\observe{\kappa}y)\gamma}{\Ent_{(\mu\observe{\kappa}y)\observe{\gamma} \theta}^\varphi{f}}}. \]
	Here we switched the order of expectation by using the fact that both sides take expectation w.r.t.\ $(y,\theta)\sim \mu\cdot (\kappa\times \gamma)$. For each $y$, by applying \cref{prop:data-processing-slack} to the distribution $\mu'=\mu\observe{\kappa}y$ and Markov kernel $\gamma$, and using nonnegativity of $\Ent^\varphi{}$, we get
	\[ \Ent^\varphi_{\mu'}{f}\geq \E*_{\theta\sim \mu'\gamma}{\Ent^\varphi_{\mu'\observe{\gamma}\theta}{f}}. \]
	Taking expectation w.r.t.\ $y\sim \mu\kappa$, and expanding $\mu'=\mu\observe{\kappa}y$ finishes the proof:
	\[ \E*_{y\sim \mu\kappa}{\Ent^\varphi_{\mu\observe{\kappa}y}{f}}\geq \E*_{y\sim \mu\kappa}{\E*_{\theta\sim (\mu\observe{\kappa}y)\gamma}{\Ent^\varphi_{(\mu\observe{\kappa}y)\observe{\gamma}\theta}{f}}}.\qedhere \]
\end{proof}

\subsection{Functional inequalities and mixing}\label{sec:fi-and-mixing}

\paragraph{Glauber Dynamics and ATE.} Our results for the Glauber dynamics go via establishing a fundamental information-theoretic inequality called \emph{Approximate Tensorization of Entropy} (ATE). Let $\Omega=\Omega_1\times\cdots\Omega_n$ be a product space. For $i\in [n]$, define the Markov kernel $D_{\sim i}$ as one that maps $x=(x_1,\dots,x_n)\in \Omega$ deterministically to $x_{\sim i}$ by erasing its $i$th component, i.e., 
\[ x_{\sim i}\coloneq (x_1,\dots,x_{i-1},\varnothing,x_{i+1},\dots,x_n).\]

\begin{definition}[Approximate Tensorization of Entropy, \cite{marton2013inequality,caputo14}]\label{def:ate}
Let $\mu$ be a probability measure on $\R^n$. We say $\mu$ satisfies Approximate Tensorization of Entropy (ATE) with constant $C$ if for every other measure $\nu$ absolutely continuous w.r.t.\ $\mu$,
\[ \DKL{\nu \river \mu} \leq C \sum_{i = 1}^n \E*_{x\sim \nu}{\DKL*{(\nu\observe{D_{\sim i}} x_{\sim i}) \river (\mu\observe{D_{\sim i}}x_{\sim i})}}, \]
or equivalently, for all nonnegative functions $f$,
\[ \Ent_{\mu}{f} \leq C \sum_{i = 1}^n \E*_{x\sim \mu}{\Ent_{\mu\observe{D_{\sim i}}x_{\sim i}} f}. \]
\end{definition}
This inequality has a third equivalent reformulation. It asserts that
\[ \DKL{\nu D_{n \to n - 1} \river \mu D_{n \to n - 1}} \leq \parens*{1 - \frac{1}{Cn}} \DKL{\nu \river \mu} \]
where $D_{n \to n - 1}$ is the Markov kernel that picks a coordinate $i \in [n]$ uniformly at random and erases it (``down operator''), i.e., the average of $D_{\sim 1},\dots,D_{\sim n}$. This can be seen from the equivalence described in \cref{def:entropy-contraction}.

ATE is a very strong functional inequality, which yields an array of useful consequences ``for free'' once established. We do not go into full details about all of these consequences because we do not explicitly need them in this work, but give a brief overview below. They are fully explored in the references (with additional important references to prior work). First, we recall its implications for concentration of measure and mixing: 
\begin{proposition}[Fact 3.5 of \cite{chen2021optimal}]
If ATE for measure $\mu$ holds with constant $C$ then:
\begin{enumerate}
    \item The MLSI (Modified Log-Sobolev Inequality) for Glauber dynamics holds with constant $1/Cn$.
    \item As a consequence of the MLSI, on a discrete space, defining $\mu_{\min} = \min\set{\mu(x)\given x\in\supp(\mu)}$, the $\epsilon$-mixing time is $O(n(\log\log(1/\mu_{\min}) + \log(1/\epsilon))).$
    \item As a consequence of the MLSI, a $W_1$ entropy-transport inequality holds with constant $C$ and this is equivalent to subgaussian concentration of Lipschitz functions with constant $C$ and with respect to the Hamming metric.
    \item As a consequence of the MLSI, the Poincar\'e inequality for Glauber dynamics holds with constant $1/Cn$.
    \item If $\mu$ is defined on a discrete space and additionally $b$-marginally bounded, i.e., $\P_{X\sim \mu}{X_i=x_i\given X_{\sim i}=x_{\sim i}}\geq b$ for all $x$, then the log-Sobolev inequality for Glauber dynamics holds with constant $O_b(1/Cn)$. This is furthermore equivalent to hypercontractivity of the semigroup.
\end{enumerate}
\end{proposition}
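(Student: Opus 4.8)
The plan is to derive all five items by chaining together standard implications between functional inequalities, with ATE entering only through the \emph{discrete-time} entropy contraction it encodes. By the third reformulation of ATE in \cref{def:ate} (via the equivalence in \cref{def:entropy-contraction}), ATE with constant $C$ is exactly the assertion that the averaged down operator $D_{n\to n-1}$ has $\parens*{1-\frac{1}{Cn}}$-contraction of KL divergence w.r.t.\ $\mu$. Writing one step of Glauber dynamics as $P = D_{n\to n-1}U$, where $U$ is the time-reversal of $D_{n\to n-1}$ w.r.t.\ $\mu$ (so $\mu P = \mu$), the data processing inequality applied to $U$ (\cref{prop:data-processing-slack}) gives
\[ \DKL{\nu P\river \mu} \le \DKL{\nu D_{n\to n-1}\river \mu D_{n\to n-1}} \le \parens*{1-\tfrac{1}{Cn}}\DKL{\nu\river\mu}, \]
so the discrete-time Glauber chain contracts relative entropy by the factor $1-1/(Cn)$ per step. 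For item~1 I would pass to continuous time: since $f\mapsto\Ent_\mu{f}$ is convex on the cone of densities and $e^{t(P-I)}f = e^{-t}\sum_{k\ge 0}\frac{t^k}{k!}P^kf$ is a convex combination of the $P^kf$, Jensen together with the per-step bound yields $\Ent_\mu{e^{t(P-I)}f} \le e^{-t}\sum_k\frac{t^k}{k!}\parens*{1-\frac{1}{Cn}}^k\Ent_\mu{f} = e^{-t/(Cn)}\Ent_\mu{f}$, which is precisely the modified log-Sobolev inequality for the Glauber semigroup with constant $1/(Cn)$.

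Given the MLSI, items~2 and~4 are routine. For the mixing time, MLSI gives $\DKL{\delta_x P_t\river \mu}\le e^{-t/(Cn)}\DKL{\delta_x\river \mu}\le e^{-t/(Cn)}\log(1/\mu_{\min})$, and Pinsker's inequality $\dTV{\cdot,\cdot}^2\le\tfrac12\DKL{\cdot\river\cdot}$ converts this into total variation; solving $\tfrac12 e^{-t/(Cn)}\log(1/\mu_{\min})\le \epsilon^2$ gives $t = O(Cn(\log\log(1/\mu_{\min}) + \log(1/\epsilon)))$, i.e.\ the stated bound when $C=O(1)$. For the Poincar\'e inequality, linearize: apply the entropy-decay inequality to $f = 1 + \varepsilon g$ and let $\varepsilon\to 0$, using $\Ent_\mu{1+\varepsilon g} = \tfrac{\varepsilon^2}{2}\Var_\mu{g}+o(\varepsilon^2)$; one obtains $\Var_\mu{P_t g}\le e^{-t/(Cn)}\Var_\mu{g}$, i.e.\ a spectral gap of order $1/(Cn)$ for the Glauber generator.

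For item~3 the most transparent route is through a transport-entropy inequality: combine ATE (tensorization of KL over the $n$ coordinates) with the trivial single-coordinate bound $W_1(\rho_i,\mu_i)^2\le\tfrac12\DKL{\rho_i\river\mu_i}$ --- valid because each conditional marginal lives on a set of Hamming diameter $1$ --- and run Marton's tensorization argument to obtain a global $W_1$ transport-entropy inequality with constant $C$ for the Hamming metric; the Bobkov--G\"otze duality then shows this is equivalent to subgaussian concentration of Hamming-Lipschitz functions with constant $C$. (Alternatively one can extract the concentration directly from the modified log-Sobolev inequality by a Herbst-type argument, but this is more delicate in the discrete setting.) For item~5, the $b$-marginal boundedness hypothesis lets one upgrade the MLSI to a genuine $\Ent_\mu{f^2}$-type log-Sobolev inequality: each Glauber update changes only a single coordinate whose conditional marginal is $\ge b$, so the local oscillation of $f$ along updates is controlled and the comparison between the functionals $\Ent_\mu{f^2}$ and $\Ent_\mu{f}$ costs only a factor depending on $b$; this gives an LSI with constant $O_b(1/(Cn))$, and Gross's theorem then furnishes the claimed equivalence with hypercontractivity of the semigroup.

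The only genuinely non-formal points are (i) the discrete-to-continuous passage in item~1, where one must keep track of conventions --- lazy versus non-lazy chains, and the factor of $n$ relating the rate of $P = D_{n\to n-1}U$ to the total-rate-$n$ Glauber generator $\sum_i (P_i - I)$; and (ii) item~3, since the classical Herbst argument is tailored to the full $\Ent_\mu{f^2}$ log-Sobolev inequality rather than its modified form, so one really wants the transport/Marton route together with Bobkov--G\"otze duality. Everything else (items~2,~4,~5 granted the above, and Gross's theorem) is classical; since the statement is quoted verbatim as Fact~3.5 of \cite{chen2021optimal}, in the paper the cleanest option is simply to cite that reference together with the standard sources for each implication.
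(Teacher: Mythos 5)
The paper does not prove this proposition at all: it is quoted as a known black box (Fact~3.5 of \cite{chen2021optimal}), and the surrounding text explicitly declines to give details because none of these consequences are used directly in the arguments of this paper. So your reconstruction is, by necessity, a different route from the paper's, which is simply a citation. On the merits, your sketch is the standard one and is essentially correct where it is concrete: ATE is indeed equivalent to $(1-\tfrac{1}{Cn})$-contraction of KL under the down operator, the data-processing step through the up operator and the convexity/Jensen argument over the Poisson weights of $e^{t(P-I)}$ give entropy decay and hence the MLSI (item~1, modulo the lazy/rate conventions you correctly flag); items~2 and~4 follow routinely from the MLSI exactly as you describe. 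Two points are thinner than you acknowledge. For item~3, the Marton-style tensorization from ATE is not automatic: Marton's coupling argument needs control of conditionals given \emph{prefixes} (a sequential chain-rule decomposition of KL), whereas ATE controls conditionals given the \emph{complement} of a single coordinate, and bridging the two is not a formality; the route matching the proposition's own phrasing ("as a consequence of the MLSI") is the Sammer--Tetali implication MLSI $\Rightarrow$ $W_1$ transport-entropy inequality, after which Bobkov--G\"otze duality gives the equivalence with subgaussian concentration as you say. For item~5, the upgrade from MLSI to the full log-Sobolev inequality under $b$-marginal boundedness is a single-site comparison of $\operatorname{Ent}[f^2]$ with the entropy functional whose cost is a factor of order $\log(1/b)$; this is standard but deserves a citation rather than the oscillation heuristic you give. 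Your closing suggestion --- cite \cite{chen2021optimal} and the standard sources for each implication --- is precisely what the paper does.
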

Here are some other useful consequences:
\begin{enumerate}
    \item As a consequence of the MLSI, reverse hypercontractivity for the Glauber semigroup holds with an appropriate constant \cite{mossel2013reverse}.
    \item As a consequence of the Poincar\'e inequality, $\mu^{\hom}$ is $C$-spectrally independent \cite{anari2023universality}. Spectral independence is defined below in \cref{subsec:poly}.
    \item If ATE holds uniformly over all external fields (which is the case for all of the results in this paper), then spectral independence holds under all external fields by the previous point, which is equivalent to fractional log-concavity of the generating polynomial (see \cref{subsec:poly}). In turn, this implies entropic independence/subadditivity of entropy/Brascamp-Lieb type inequalities \cite{carlen2009subadditivity,anari2021entropic}.
    \item Block tensorization of entropy holds with an appropriate constant --- see \cite{lee2023parallelising} for details.
    \item ATE yields non-asymptotic bounds on the statistical performance of pseudolikelihood estimation \cite{besag1975statistical,koehler2022statistical}.
    \item ATE yields strong guarantees for identity testing in an appropriate oracle model \cite{BCSV21}.
    \item For bounded degree and marginally bounded graphical models, ATE yields KKL and Talagrand isoperimetric inequalities \cite{koehler2023influences}.
    \item On the hypercube $\set{\pm 1}^n$, ATE implies the log-Sobolev inequality for a different continuous-time dynamics: the inequality concretely takes on the form $\Ent{f} \lesssim C \sum_i \norm{\partial_i \sqrt{f}}_2^2$ with $\partial_i g(x) = g(x_1,\dots,x_{i-1},+1,x_{i+1},\dots,x_n) - g(x_1,\dots,x_{i-1},-1,x_{i+1},\dots,x_n)$. The corresponding semigroup can be understood as a variant of Glauber with a variable transition rate (that can become much larger than one in some situations). This type of inequality was studied in, e.g.,\cite{martinelli1999lectures,bauerschmidt2019very}; see \cite{eldan2021spectral} for more discussion.  
\end{enumerate}

\paragraph{Riemannian Langevin dynamics.} Just like in Euclidean space, the Langevin diffusion on a Riemannian manifold is a continuous random walk. Given a probability distribution with relative density $d\mu \propto e^{f(x)}dx$ on the manifold, the corresponding Langevin dynamics to sample from $\mu$ can be written as the solution of a manifold-valued stochastic differential equation:
\[ dX_t = \grad f(X_t)dt + \sqrt{2} dW_t \]
where $\grad$ denotes the Riemannian gradient, and $W_t$ is Brownian motion on the manifold. See \cite{li2020riemannian,cheng2022efficient} for more about this equation and discretization schemes, and \cite{hsu2002stochastic} for a textbook treatment of stochastic calculus on manifolds. 

For our results, all we need to know about this process is the form of the log-Sobolev inequality for the Langevin semigroup when the manifold is a product of $n$ many $N - 1$ dimensional spheres.  As a special case of the general definition, we say the log-Sobolev inequality for the manifold Langevin dynamics for distribution $\mu$ holds with constant $C$ if for all nonnegative functions $f$,
\begin{equation}\label{eqn:lsi-sphere}
	\Ent_{\mu}{f} \leq C \sum_{i = 1}^n \E*_{\mu}{\norm*{\grad_i \sqrt{f}}^2}, 
\end{equation}
where $\grad_i$ is the spherical gradient w.r.t.\ site $i$. This is the same inequality studied in the previous work \cite{bauerschmidt2019very} on the $O(N)$ model. Via the Herbst argument, it implies subgaussian concentration of Lipschitz functions.  See \cite{bakry2014analysis,hsu2002stochastic} for more background on the log-Sobolev inequality and diffusion processes. 

\subsection{Stochastic localization}\label{subsec:sl}
Given a measure $\nu_0$ on $\R^n$, a standard $n$-dimensional Brownian motion $W_t$ and an adapted process $C_t$ valued in positive semidefinite $n \times n$ matrices, define the \emph{Stochastic Localization} (SL) process to be the solution to the stochastic differential equation (SDE)
\begin{equation}\label{eqn:sl} dF_t(x) = F_t(x)\cdot \dotprod{ x - \mean{\nu_t}, C_t dW_t }, 
\end{equation}
where $d\nu_t=F_td\nu_0$ and $F_0 \equiv 1$. See, e.g., \cite{eldan2013thin,eldan2020clt,eldan2021spectral} for the proof of the existence and uniqueness of this SDE for a sufficiently regular process $C_t$ --- in this work, we will only need the case where the driving matrix $C_t$ is constant. The stochastic process $\nu_t$ is a probability-measure-valued martingale, which means that $\nu_t(A)$ is a martingale for any measurable event $A$. 

We will use the following Gaussian channel interpretation (see \cite{el2022information,klartag2021spectral}) of stochastic localization with a time-invariant driving matrix.  
\begin{theorem}[Theorem 2 of \cite{el2022information}]\label{thm:gci}
	Let $C$ be a positive definite $n \times n$ matrix and $\nu_0$ a distribution over $\R^n$. Let $X^* \sim \nu_0$, let $B_t$ be an independent standard Brownian motion, and define
	\[ y_t = t X^* + C^{-1} B_t. \]
	Define $d\nu_t = F_t d\nu_0$ where the Radon-Nikodym derivative $F_t$ is defined so that $\E_{\nu_0}{F_t} = 1$ and
	\[ F_t(x) \propto \exp\parens*{\dotprod{ C^2 y_t, x } - \frac{t}{2} \dotprod{ x, C^2 x } }. \]
	Then there exists a Brownian motion $W_t$ adapted to the filtration generated by the $y_t$ process such that $F_t$ is a solution of the SDE \eqref{eqn:sl} with $C_t = C$. 
\end{theorem}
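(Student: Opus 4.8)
The plan is to verify directly, by It\^o calculus, that the explicitly given Radon--Nikodym derivative $F_t$ solves the SDE \eqref{eqn:sl} with $C_t = C$, and to exhibit the driving Brownian motion $W_t$ as the \emph{innovations process} of the observation path $(y_s)_{s \le t}$. \emph{Step 1 (identify $\nu_t$ as a posterior).} I would regard $dy_s = X^*\, ds + C^{-1}\, dB_s$ as an observation of the time-independent signal $X^*$ through a Gaussian channel with per-unit-time noise covariance $C^{-2}$. By the Cameron--Martin/Girsanov change of measure, the likelihood of the observed path up to time $t$ under ``$X^* = x$'', relative to the driftless reference path $C^{-1}B_s$, equals $\exp\parens*{\dotprod{C^2 y_t, x} - \tfrac{t}{2}\dotprod{x, C^2 x}}$; crucially, this depends on the path only through $(t, y_t)$. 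Bayes' rule then gives that the conditional law of $X^*$ given $\F_t^y$ is proportional to $\exp\parens*{\dotprod{C^2 y_t, x} - \tfrac{t}{2}\dotprod{x, C^2 x}}\,\nu_0(dx)$, which is exactly $\nu_t = F_t\,\nu_0$. In particular $\mean{\nu_t} = \E*{X^* \given \F_t^y}$, which I abbreviate $\hat X_t$; it is $\F_t^y$-adapted.

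\emph{Step 2 (the innovations Brownian motion).} Define $W_t$ by $dW_t = C\, dy_t - C\hat X_t\, dt = dB_t + C(X^* - \hat X_t)\, dt$. By the innovations theorem of nonlinear filtering, $W_t$ is a standard $n$-dimensional Brownian motion adapted to $\F_t^y$: it is continuous and $\F_t^y$-adapted (since $\hat X_t$ is), the process $y_t - \int_0^t \hat X_s\, ds$ is an $\F_t^y$-martingale by the defining property of conditional expectation, and its quadratic covariation is $[W]_t = C\, C^{-2}\, C\, t = tI$, so L\'evy's characterization applies. Equivalently, $dy_t = \hat X_t\, dt + C^{-1}\, dW_t$.

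\emph{Step 3 (It\^o on the explicit formula).} Write $F_t(x) = G_t(x)/Z_t$ with $G_t(x) = \exp\parens*{\dotprod{C^2 y_t, x} - \tfrac{t}{2}\dotprod{x, C^2 x}}$ and $Z_t = \E_{\nu_0}{G_t}$. Substituting $dy_t = \hat X_t\, dt + C^{-1}dW_t$ into $\log G_t(x)$ and applying It\^o's formula, the It\^o correction from the quadratic variation $\dotprod{x, C^2 x}\, dt$ of the martingale part exactly cancels the drift $-\tfrac12\dotprod{x, C^2 x}\, dt$ produced by the $-\tfrac{t}{2}$ factor, leaving
\[ dG_t(x) = G_t(x)\parens*{\dotprod{C^2\hat X_t, x}\, dt + \dotprod{Cx, dW_t}}. \]
Averaging over $x \sim \nu_0$ and using $\mean{\nu_t} = \hat X_t$ yields $dZ_t = Z_t\parens*{\dotprod{\hat X_t, C^2\hat X_t}\, dt + \dotprod{C\hat X_t, dW_t}}$. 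Applying the It\^o quotient rule to $F_t(x) = G_t(x)/Z_t$, the $dt$ terms sum to $\dotprod{C^2\hat X_t, x} - \dotprod{\hat X_t, C^2\hat X_t} - \dotprod{x, C^2\hat X_t} + \dotprod{\hat X_t, C^2\hat X_t} = 0$ by symmetry of $C$, while the martingale part is $\dotprod{Cx, dW_t} - \dotprod{C\hat X_t, dW_t} = \dotprod{x - \hat X_t, C\, dW_t}$. Therefore
\[ dF_t(x) = F_t(x)\, \dotprod{x - \mean{\nu_t}, C\, dW_t}, \]
which is precisely \eqref{eqn:sl} with $C_t = C$, and $W_t$ is adapted to the filtration generated by $y_t$ by construction.

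\emph{Main obstacle.} The substantive part is the rigorous justification underlying Step 2: measurability and adaptedness of $\hat X_t$ as a path functional of the observation, the integrability hypotheses needed to apply Girsanov and to interchange $\E_{\nu_0}{\cdot}$ with the stochastic differential, and the innovations theorem itself (that $W_t$ is genuinely Brownian in the observation filtration). For this paper these are cited as standard facts from nonlinear filtering and stochastic calculus; the only genuinely new content is the algebraic verification in Step 3 that the SDE obtained coincides with \eqref{eqn:sl}.
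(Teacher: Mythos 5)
Your proposal is correct: the paper imports this statement as Theorem~2 of \textcite{el2022information} without reproving it, and your argument --- Girsanov/Bayes identification of $\nu_t$ as the posterior of $X^*$ given the observation path, the innovations process $dW_t = C\,dy_t - C\,\mean{\nu_t}\,dt$ shown to be Brownian in the observation filtration via L\'evy's characterization, and the It\^o/quotient-rule computation in which the drift terms cancel by symmetry of $C$ --- is essentially the same proof as in that reference. The algebra in Step 3 checks out (the It\^o correction $\tfrac12\dotprod{x,C^2x}\,dt$ cancels the $-\tfrac{t}{2}$ term, and the quotient rule leaves exactly $F_t(x)\dotprod{x-\mean{\nu_t},C\,dW_t}$), and the caveats you flag (integrability, interchange of $\E_{\nu_0}$ with the stochastic differential, the innovations theorem) are precisely the standard filtering-theory ingredients the cited proof relies on.
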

In this setting, the SL process is closely related to the F\"ollmer drift (see, e.g., \cite{klartag2021spectral}) and the Polchinski (renormalization group) equation, see \cite{bauerschmidt2022log,chen2022localization}. We remark that in Ising models, if we look only at time $t = 1$ and choose driving matrix $C = J^{1/2}$, then $y_1$ is just the output of the Hubbard-Stratonovich transform. 

\paragraph{Entropic stability.} 
We also need to use some results concerning the key concept of \emph{entropic stability} which we now recall. As a reminder, entropic stability itself arises as a generalization of the concept of entropic independence \cite{anari2021entropic}, well-suited for simplicial localization schemes, to other linear-tilt localization schemes \cite{chen2022localization}. In particular, it tells us the exact analogue of entropic independence for stochastic localization. 
\begin{definition}[Exponential tilt]\label{def:exponential-tilts}
	Given a probability measure $\mu$ on $\R^n$, we let $\tilt_w \mu$ denote the \emph{exponential tilt} by $w$. This is the probability measure with Radon-Nikodym derivative 
	\[ \frac{d\mathcal T_w \mu}{d\mu}(x) \propto e^{\dotprod{w,x}} \]
	provided that $\E*_{x \sim \mu}{e^{\dotprod{w,x}}} < \infty$. 
\end{definition}
In the literature, an exponential tilt is sometimes called an \emph{external field}.
\begin{definition}[Entropic stability, Definition 29 of \cite{chen2022localization}]
	For $\mu$, a probability measure on $\R^n$, a function $\psi : \R^n \times \R^n \to \R_{\geq 0}$, and $\alpha > 0$ we say that $\mu$ is $\alpha$-\emph{entropically stable} with respect to $\psi$ if for every $w \in \R^n$, the exponential tilt $\tilt_w \mu$ exists and
	\[ \psi\parens*{\mean{\tilt_w \mu}, \mean{\mu}} \leq \alpha \DKL{\tilt_w \mu \river \mu}. \]
\end{definition}
Entropic stability with respect to a quadratic functional induced by the driving matrix implies the following conservation of entropy estimate for stochastic localization.
\begin{proposition}[Proposition 39 of \cite{chen2022localization}]\label{prop:conservation-of-entropy}
	Let $\nu_t$ be the stochastic localization process with driving matrix $C_t$, and suppose that for some $T > 0$ and almost surely for all $t \in [0,T]$ that $\nu_t$ is $\alpha_t$-entropically stable with respect to the function $\psi(x,y) = \frac{1}{2} \norm{C_t (x - y)}_2^2$. Then for all nonnegative functions $f$,
	\[ \E*{\Ent_{\nu_T}f}  \ge \exp\parens*{-\int_0^T \alpha_t dt}\cdot \Ent_{\nu_0}{f}. \]
\end{proposition}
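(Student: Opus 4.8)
The plan is to show that $\Phi_t := \E{\Ent_{\nu_t}{f}}$ satisfies the differential inequality $\frac{d}{dt}\Phi_t \ge -\alpha_t\,\Phi_t$ for a.e.\ $t\in[0,T]$, so that Gr\"onwall's inequality gives $\Phi_T \ge \exp\parens*{-\int_0^T \alpha_t\,dt}\,\Phi_0$, which is the claim. Since $\Ent$ is $1$-homogeneous we may normalize $\E_{\nu_0}{f}=1$. Then $M_t := \E_{\nu_t}{f}$ and $G_t := \E_{\nu_t}{f\log f}$ are real-valued martingales --- each is a fixed linear functional evaluated along the measure-valued martingale $\nu_t$ --- with $\E{M_t}=1$ and $\E{G_t}=\E_{\nu_0}{f\log f}$. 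Because $\Ent_{\nu_t}{f} = G_t - M_t\log M_t$, we get $\Phi_t = \E_{\nu_0}{f\log f} + \E{-M_t\log M_t}$, so the whole question reduces to controlling the drift of the scalar process $-M_t\log M_t$.

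Next I would compute the dynamics of $M_t$. Integrating the stochastic localization SDE $dF_t(x) = F_t(x)\dotprod{x - \mean{\nu_t}, C_t\,dW_t}$ against $f\,d\nu_0$ gives $dM_t = \dotprod{v_t, C_t\,dW_t}$ with $v_t := \E_{\nu_t}{f\,(x-\mean{\nu_t})}$, hence $d\langle M\rangle_t = \norm{C_t v_t}^2\,dt$. Applying It\^o's formula to $\phi(m)=-m\log m$ and taking expectations --- the local-martingale part $\int (-\log M_s - 1)\,dM_s$ being a genuine martingale after the standard truncation argument --- yields $\frac{d}{dt}\Phi_t = -\frac{1}{2}\,\E{\norm{C_t v_t}^2/M_t}$. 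Now introduce the probability measure $\hat\nu_t$ with $\frac{d\hat\nu_t}{d\nu_t} = f/M_t$; then $v_t/M_t = \mean{\hat\nu_t} - \mean{\nu_t}$, so with $\psi_t(x,y) := \frac{1}{2}\norm{C_t(x-y)}^2$ we have $\norm{C_t v_t}^2/M_t = 2\,M_t\,\psi_t(\mean{\hat\nu_t},\mean{\nu_t})$ and therefore $\frac{d}{dt}\Phi_t = -\E{M_t\,\psi_t(\mean{\hat\nu_t},\mean{\nu_t})}$.

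The crux is the pointwise bound $\psi_t(\mean{\hat\nu_t},\mean{\nu_t}) \le \alpha_t\,\DKL{\hat\nu_t \river \nu_t}$. This is not immediate, since $\alpha_t$-entropic stability only asserts this when $\hat\nu_t$ is an exponential tilt $\tilt_w\nu_t$, whereas here $\hat\nu_t$ is a reweighting of $\nu_t$ by the general function $f/M_t$. The resolution is that $\psi_t$ sees $\hat\nu_t$ only through its mean: choose $w$ with $\mean{\tilt_w\nu_t} = \mean{\hat\nu_t}$ (possible whenever $\mean{\hat\nu_t}$ lies in the relative interior of the convex hull of $\supp(\nu_t)$, with a routine limiting argument at the boundary). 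Then entropic stability gives $\psi_t(\mean{\hat\nu_t},\mean{\nu_t}) = \psi_t(\mean{\tilt_w\nu_t},\mean{\nu_t}) \le \alpha_t\,\DKL{\tilt_w\nu_t\river\nu_t}$, while the Donsker--Varadhan variational principle shows that, among all reweightings of $\nu_t$ with a prescribed mean, the exponential tilt minimizes the KL divergence to $\nu_t$, so $\DKL{\tilt_w\nu_t\river\nu_t} \le \DKL{\hat\nu_t\river\nu_t}$. Finally the identity $M_t\,\DKL{\hat\nu_t\river\nu_t} = G_t - M_t\log M_t = \Ent_{\nu_t}{f}$ (a one-line computation) converts the drift formula into $\frac{d}{dt}\Phi_t \ge -\alpha_t\,\E{\Ent_{\nu_t}{f}} = -\alpha_t\,\Phi_t$, and Gr\"onwall completes the proof.

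I expect the main obstacle to be exactly this last step --- recognizing that entropic stability against a functional that depends on the tilted measure only through its mean automatically upgrades from exponential tilts to arbitrary reweightings --- together with the (routine but not entirely trivial) stochastic-analysis bookkeeping: justifying differentiation under the expectation, verifying integrability of the It\^o correction term (truncating $f$ if necessary), and allowing the driving matrices $C_t$ to be merely positive semidefinite and adapted rather than constant and positive definite. These can all be handled by standard truncation and approximation arguments, so the essential content is the entropic-stability upgrade and the reduction to the three scalar processes $M_t$, $G_t$, and $-M_t\log M_t$.
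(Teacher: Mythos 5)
Your proposal is correct and follows essentially the same route as the proof of Proposition 39 in \cite{chen2022localization}, which this paper simply imports: reduce to the scalar martingale $M_t=\E_{\nu_t}{f}$, apply It\^o's formula to $-M_t\log M_t$ to identify the drift as $-\E{M_t\,\psi_t(\mean{\hat\nu_t},\mean{\nu_t})}$, and close with Gr\"onwall. Your ``upgrade'' step is also the same one used there: since $\psi_t$ depends on the reweighted measure only through its mean and the exponential tilt is the KL-minimizer among measures with a prescribed barycenter, entropic stability against tilts automatically applies to the reweighting by $f/M_t$.
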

The following connects entropic stability with the covariance matrix of tilted measures. 
\begin{lemma}[Lemma 40 of \cite{chen2022localization}]\label{lem:entropic-stability}
	Let $\mu$ be a probability measure on $\mathbb R^n$, and let $A$ and $C$ be positive definite matrices. If $\cov{\tilt_w \mu} \preceq A$ for every $w \in \mathbb R^n$, then $\mu$ is $\alpha$-entropically stable with respect to $\psi(x,y) = \frac{1}{2} \norm{C (x - y)}_2^2$ for $\alpha = \opnorm{CAC}$.
\end{lemma}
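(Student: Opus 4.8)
The plan is to argue via the convex duality between the cumulant generating function of $\mu$ and relative entropy: I will rewrite $\DKL{\tilt_w\mu\river\mu}$ as the value of the Legendre conjugate $\Lambda^*$ at the tilted mean, lower bound $\Lambda^*$ by a quadratic form using its curvature (which the hypothesis $\cov{\tilt_w\mu}\preceq A$ controls), and then compare that quadratic form against $\psi$ by a one-line matrix inequality.

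Concretely, set $\Lambda(w) = \log\E_{x\sim\mu}{e^{\dotprod{w,x}}}$. Since $\tilt_w\mu$ exists for every $w$, $\Lambda$ is finite and smooth on all of $\R^n$, with $\nabla\Lambda(w) = \mean{\tilt_w\mu}$ and $\nabla^2\Lambda(w) = \cov{\tilt_w\mu}\preceq A$. From $\frac{d\tilt_w\mu}{d\mu}(x) = e^{\dotprod{w,x}-\Lambda(w)}$ one computes $\DKL{\tilt_w\mu\river\mu} = \dotprod{w,\nabla\Lambda(w)} - \Lambda(w)$, which is exactly $\Lambda^*(\nabla\Lambda(w))$ (the supremum defining $\Lambda^*(\nabla\Lambda(w))$ being attained at $w$). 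Assume first that $\supp\mu$ is full-dimensional, so $\Lambda$ is strictly convex and $\nabla\Lambda$ maps $\R^n$ bijectively onto the open convex set $\mathcal M = \operatorname{int}(\operatorname{conv}(\supp\mu))$ (standard exponential-family theory); then $\Lambda^*$ is smooth on $\mathcal M$ with $\nabla^2\Lambda^*(\theta) = [\nabla^2\Lambda((\nabla\Lambda)^{-1}(\theta))]^{-1}\succeq A^{-1}$, and it is minimized at $\theta_0 := \nabla\Lambda(0) = \mean{\mu}$, where $\Lambda^*(\theta_0) = 0$ and $\nabla\Lambda^*(\theta_0) = 0$.

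Now let $\theta := \mean{\tilt_w\mu} = \nabla\Lambda(w) \in \mathcal M$; since $\mathcal M$ is convex the segment $[\theta_0,\theta]$ lies in $\mathcal M$, and integrating the Hessian bound along it (Taylor's theorem with integral remainder) gives
\[ \DKL{\tilt_w\mu\river\mu} = \Lambda^*(\theta) \ge \tfrac12\dotprod{\theta-\theta_0,\, A^{-1}(\theta-\theta_0)}. \]
To finish I use the matrix fact $C^2 \preceq \opnorm{CAC}\cdot A^{-1}$: conjugating by $A^{1/2}$ it is equivalent to $\opnorm{A^{1/2}C^2A^{1/2}} \le \opnorm{CAC}$, and both sides equal $\opnorm{CA^{1/2}}^2$ because, with $M = CA^{1/2}$, we have $CAC = MM^\intercal$ and $A^{1/2}C^2A^{1/2} = M^\intercal M$. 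Writing $\delta = \theta - \theta_0 = \mean{\tilt_w\mu} - \mean{\mu}$, this yields
\[ \psi(\mean{\tilt_w\mu},\mean{\mu}) = \tfrac12\norm{C\delta}_2^2 = \tfrac12\dotprod{\delta, C^2\delta} \le \tfrac12\opnorm{CAC}\dotprod{\delta, A^{-1}\delta} \le \opnorm{CAC}\cdot\DKL{\tilt_w\mu\river\mu}, \]
which is $\alpha$-entropic stability with $\alpha = \opnorm{CAC}$. The case of a degenerate $\mu$ is handled by a routine reduction: run the same argument intrinsically on the affine hull of $\supp\mu$, where the tilted covariances are nondegenerate, and transport the resulting subspace bound back to $A^{-1}$ using the variational identity $\dotprod{\delta, A^{-1}\delta} = \max_u[2\dotprod{u,\delta} - \dotprod{u,Au}]$.

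I expect the one genuinely delicate point to be the regularity bookkeeping in the middle step --- verifying that $\Lambda^*$ is $C^2$ with $\nabla^2\Lambda^*\succeq A^{-1}$ precisely along the segment $[\theta_0,\theta]$ (which is why the convexity of the image of $\nabla\Lambda$ matters), and cleanly dispatching the degenerate-support case. The duality identity and the matrix inequality are both routine.
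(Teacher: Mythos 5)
Your main (nondegenerate) argument is correct: the identity $\DKL{\tilt_w\mu \river \mu}=\dotprod{w,\nabla\Lambda(w)}-\Lambda(w)=\Lambda^*(\nabla\Lambda(w))$, the Hessian facts $\nabla^2\Lambda(w)=\cov{\tilt_w\mu}\preceq A$ and $\nabla^2\Lambda^*\succeq A^{-1}$ on the convex image of $\nabla\Lambda$, the Taylor lower bound $\Lambda^*(\theta)\ge\frac12\dotprod{\theta-\theta_0,A^{-1}(\theta-\theta_0)}$, and the norm identity $\opnorm{A^{1/2}C^2A^{1/2}}=\opnorm{CAC}$ all check out and give exactly the claimed constant. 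Note that the paper itself gives no proof of this statement --- it imports it verbatim as Lemma 40 of \cite{chen2022localization} --- so there is no in-paper argument to compare routes with.

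The one genuine gap is the degenerate-support reduction, which as sketched does not go through. If $\supp\mu$ spans only a proper affine subspace $x_0+L$, the ``intrinsic'' argument on $L$ inherits from the hypothesis only the compressed bound $A_L$ (the restriction of $A$ to $L$) on the tilted covariances, yielding $\DKL{\tilt_w\mu\river\mu}\ge\frac12\dotprod{\delta,A_L^{-1}\delta}$; since $A_L^{-1}\preceq (A^{-1})_L$ (inverse of a compression is dominated by the compression of the inverse), this is \emph{weaker} than the bound $\frac12\dotprod{\delta,A^{-1}\delta}$ you need, and the final comparison with $C^2$ can genuinely fail for the compressed matrix. Concretely, take $n=2$, $L=\operatorname{span}(e_1)$, $A=\left(\begin{smallmatrix}1&\rho\\ \rho&1\end{smallmatrix}\right)$, $C^2=\left(\begin{smallmatrix}1&-\rho\\ -\rho&1\end{smallmatrix}\right)$: then $AC^2=(1-\rho^2)I$, so $\opnorm{CAC}=1-\rho^2$, while the inequality the transported bound would require, $\dotprod{e_1,C^2e_1}\le\opnorm{CAC}\,\dotprod{e_1,A_L^{-1}e_1}$, reads $1\le 1-\rho^2$. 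Your variational identity also points the wrong way here: its maximizer $u=A^{-1}\delta$ need not lie in $L$, and restricting $u$ to $L$ only shows $\dotprod{\delta,A_L^{-1}\delta}\le\dotprod{\delta,A^{-1}\delta}$, the unhelpful direction.

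Two fixes. (a) Use the full strength of the hypothesis: for $\Sigma\succeq 0$ with range in $L$, $\Sigma\preceq A$ is equivalent (by a Schur complement computation) to $\Sigma|_L\preceq\bigl((A^{-1})_L\bigr)^{-1}$, and running your intrinsic argument with this bound transports back to exactly $\DKL{\tilt_w\mu\river\mu}\ge\frac12\dotprod{\delta,A^{-1}\delta}$. (b) Cleaner, and it also removes the exponential-family regularity bookkeeping you flagged in the main case: for fixed $u$ the function $t\mapsto\Lambda(tu)$ has second derivative $\dotprod{u,\cov{\tilt_{tu}\mu}u}\le\dotprod{u,Au}$, so $\Lambda(u)\le\dotprod{u,\mean{\mu}}+\frac12\dotprod{u,Au}$ for every $u\in\R^n$ regardless of degeneracy; then the Donsker--Varadhan inequality with test function $x\mapsto\dotprod{u,x}$ gives
\[ \DKL{\tilt_w\mu\river\mu}\;\ge\;\dotprod{u,\mean{\tilt_w\mu}}-\Lambda(u)\;\ge\;\dotprod{u,\delta}-\tfrac12\dotprod{u,Au}, \]
and taking the supremum over $u$ --- precisely your variational identity --- yields $\DKL{\tilt_w\mu\river\mu}\ge\frac12\dotprod{\delta,A^{-1}\delta}$ in full generality, after which your matrix inequality $C^2\preceq\opnorm{CAC}A^{-1}$ finishes the proof with no case analysis.
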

The property of having bounded covariance under all tilts is known as \emph{semi-log-concavity} \cite{eldan2022log}; it is a weakening of the assumption of strong log-concavity of probability measures on $\mathbb R^n$ due to the Brascamp-Lieb inequality \cite{brascamp2002some}. This property is equivalent to the assumption of the previous lemma after an appropriate change of basis, and weaker than fractional log-concavity of the generating polynomial defined in the next section.
\begin{definition}[Semi-log-concavity]
Let $\mu$ be a probability measure on $\mathbb R^n$. We say that $\mu$ is $\beta$-semi-log-concave if
$\cov{\tilt_w \mu} \preceq \beta I$ for every $w \in \mathbb R^n$.
\end{definition}
Related to semi-log-concavity, defining the following semigroup on probability measures will simplify the notation in several of our arguments. It is related to the action of the heat semigroup on the Fourier transform, but it is nonlinear due to the presence of the normalization factor. Note that it commutes with the exponential tilt operation $\mathcal T_w$, and that it acts as the identity operator for measures whose support lies on a sphere of constant radius.  
\begin{definition}
Given a probability measure $\mu$ on $\mathbb R^N$ and $s \in \mathbb R$, define probability measure $\mathcal G_s \mu$ by its relative density
\[ \frac{d\mathcal G_s \mu}{d\mu}(x) \propto \exp(-s \|x\|^2/2) \]
assuming that the integral defining the normalizing constant exists (this is automatically true when $s \ge 0$).
\end{definition}
\paragraph{Supermartingale property of relative entropy.}
Finally, we give an application of \cref{lem:factorization-rhs-preserve} in the context of stochastic localization. We will use this along with entropic stability to prove entropy contraction (i.e., approximate tensorization of entropy) estimates for down-up walks as well as the polarized walk. The very general form of the estimate is crucial for the latter application. For the Glauber dynamics, this was previously observed for the variance functional in \cite{eldan2021spectral}, and a special case of this result with entropy was previously used by \textcite{lee2023parallelising} to prove approximate tensorization of entropy estimates. 
\begin{lemma}\label{lem:supermartingale}
	Let $\nu_t$ be the stochastic localization process with driving matrix $C_t$. For all Markov kernels $\kappa$ and nonnegative functions $f$, the stochastic process (indexed by time $t \ge 0$)
	\[ \E*_{y\sim \nu_t \kappa}{\Ent_{\nu_t \observe{\kappa} y}{f}}   \]
	is a supermartingale. Equivalently, for all $0 \le s < t$ we have
	\[ \E*{\E*_{y \sim \nu_t \kappa}{ \Ent_{\nu_t\observe{\kappa}y}{f}  } \given \F_s} \leq \E*_{y \sim \nu_s \kappa} {\Ent_{\nu_s\observe{\kappa}y}{f}}, \]
	 where $\F_s$ is the filtration generated by the underlying Brownian motion in stochastic localization.
\end{lemma}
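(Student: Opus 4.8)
The plan is to derive the statement as an immediate consequence of the concavity assertion of \cref{lem:factorization-rhs-preserve}, specialized to $\varphi(x) = x\log x$ (so that $\Ent^\varphi = \Ent$), combined with the defining martingale property of stochastic localization. It suffices to establish the second (equivalent) formulation, i.e.\ the one-step inequality $\E*{\E*_{y\sim\nu_t\kappa}{\Ent_{\nu_t\observe{\kappa}y}{f}} \given \F_s} \le \E*_{y\sim\nu_s\kappa}{\Ent_{\nu_s\observe{\kappa}y}{f}}$ for all $0 \le s < t$: the process in question is a nonnegative functional of the $\F_t$-measurable random measure $\nu_t$, hence adapted, and it lies in $L^1$ because by the data-processing part of \cref{prop:data-processing-slack} it is bounded above by $\Ent_{\nu_t}{f}$, whose expectation is in turn at most $\Ent_{\nu_0}{f}$ (apply \cref{lem:factorization-rhs-preserve} with the trivial kernel onto a one-point space, which shows $\mu\mapsto\Ent_\mu{f}$ is concave, and then use $\nu_0 = \E{\nu_t}$ from the measure-valued martingale property recorded in \cref{subsec:sl}).

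The one structural input is that stochastic localization, for each pair $s < t$, realizes $\nu_s$ as a mixture of the time-$t$ measures conditioned on $\F_s$. Concretely: conditioned on $\F_s$, the increments $(W_u - W_s)_{u\in[s,t]}$ form an independent Brownian motion which drives the SDE \eqref{eqn:sl} forward from the (now fixed) initial data $\nu_s$; thus the conditional law of $\nu_t$ given $\F_s$ is a probability distribution $\pi_s$ on the space of probability measures on $\R^n$, and since $\nu_t$ is a probability-measure-valued martingale (equivalently $\E{F_t\given\F_s} = F_s$, where $d\nu_t = F_t\,d\nu_0$) we have $\E_{\mu'\sim\pi_s}{\mu'(A)} = \nu_s(A)$ for every event $A$. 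In the language of the mixture-decomposition notion introduced above, this is precisely a mixture decomposition $\nu_s = \E_{\mu'\sim\pi_s}{\mu'}$ (with the path space of the increments playing the role of the index space).

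With this in hand, I would apply \cref{lem:factorization-rhs-preserve} to the density $f$ (rescaling to a probability density if needed, which scales both sides by the same positive factor as $\Ent$ is $1$-homogeneous), the kernel $\kappa$, and the mixture decomposition $\nu_s = \E_{\mu'\sim\pi_s}{\mu'}$, working conditionally on $\F_s$. The lemma yields
\[ \E*_{y\sim\nu_s\kappa}{\Ent*_{\nu_s\observe{\kappa}y}{f}} \;\ge\; \E*_{\mu'\sim\pi_s}{\E*_{y\sim\mu'\kappa}{\Ent*_{\mu'\observe{\kappa}y}{f}}} \;=\; \E*{\E*_{y\sim\nu_t\kappa}{\Ent*_{\nu_t\observe{\kappa}y}{f}} \given \F_s}, \]
where the last equality merely rewrites the average over $\pi_s$ as the conditional expectation given $\F_s$. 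This is exactly the claimed supermartingale inequality, and together with adaptedness and integrability it gives the supermartingale property.

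The main obstacle will be purely measure-theoretic bookkeeping: checking that the conditional law of $\nu_t$ given $\F_s$ is a bona fide regular probability kernel valued in probability measures on $\R^n$, that $\mu'\mapsto\mu'(A)$ and the associated entropy functionals are measurable on that space so that \cref{lem:factorization-rhs-preserve} genuinely applies, and that the expectations involved are finite (arranged as above). None of this requires ideas beyond well-posedness and the continuity/adaptedness of the stochastic localization SDE already invoked in \cref{subsec:sl}; once the mixture structure is set up, the supermartingale property is immediate and involves no further computation.
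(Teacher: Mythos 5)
Your proposal is correct and follows essentially the same route as the paper: the paper's proof likewise invokes the measure-valued martingale property of stochastic localization to write $\nu_s = \E{\nu_t \given \F_s}$ as a mixture decomposition and then applies \cref{lem:factorization-rhs-preserve} directly. The extra adaptedness/integrability bookkeeping you include is fine but not part of the paper's (two-line) argument.
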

\begin{proof}
	By the martingale property of stochastic localization, we have the mixture decomposition
	\[ \nu_s = \E{\nu_t \mid \mathcal{F}_s}, \]
	so the result follows immediately from \cref{lem:factorization-rhs-preserve}.
\end{proof}

\subsection{Geometry of polynomials}\label{subsec:poly}

\begin{definition}[Strongly log-concave polynomials \cite{gurvits2009multivariate}]
	We say that a multivariate polynomial $f\in \R[z_1,\dots,z_n]$ with nonnegative coefficients is strongly log-concave if for each of its derivatives $g=\partial_{i_1}\cdots\partial_{i_k}f$, either $g=0$, or $\log g$ is concave over $\R_{>0}^n$.
\end{definition}
When $f$ is homogeneous, strong log-concavity is equivalent to complete log-concavity \cite[Theorem 3.2]{anari2018log}, also known as Lorentzianity \cite[Theorem 2.30]{branden2020lorentzian}. See, e.g., \cite{anari2018log,anari2019log,gurvits2009multivariate,branden2020lorentzian} for more background.

We now recall a few key facts we will need about strongly log-concave polynomials.  We specialize some facts to the homogeneous case since this is what we will use in this paper. 
\begin{proposition}[Proposition 2.7 of \cite{gurvits2009multivariate}, Proposition 5.1 of \cite{anari2018log}]\label{prop:slc-coeff}
	A homogeneous bivariate polynomial $f(y,z) = \sum_{k = 0}^n c_k y^{n - k} z^k$ is strongly log-concave iff its coefficients form an ultra log-concave sequence, that is $\set{k\given c_k>0}$ is a contiguous interval of integers, and for all $1 \le k \leq n - 1$,
	\[ \parens*{\frac{c_k}{\binom{n}{k}}}^2 \geq \frac{c_{k - 1}}{\binom{n}{k-1}}\cdot \frac{c_{k + 1}}{\binom{n}{k+1}}. \] 
\end{proposition}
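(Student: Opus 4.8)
The plan is to prove the two implications separately; the forward direction (strong log-concavity $\Rightarrow$ the two stated conditions) is the routine one, and the reverse is where the work lies.

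\textbf{Forward direction.} Since $f$ is homogeneous of degree $n$, its order-$(n-2)$ partial derivatives are the quadratic forms $g_j=\partial_y^{\,n-2-j}\partial_z^{\,j}f=\alpha_j y^2+\beta_j yz+\gamma_j z^2$ for $0\le j\le n-2$, where $\alpha_j,\beta_j,\gamma_j$ equal $c_j,c_{j+1},c_{j+2}$ multiplied by explicit positive combinatorial factors. By strong log-concavity each $g_j$ is either $0$ or has $\log g_j$ concave on $\R_{>0}^2$; for a quadratic form with nonnegative coefficients the latter holds iff $\beta_j^2\ge 4\alpha_j\gamma_j$ (equivalently, its matrix has at most one positive eigenvalue), and cancelling the combinatorial factors turns this into precisely the ultra-log-concavity inequality at $k=j+1$ (when $g_j\equiv0$ the inequality reads $0\ge0$). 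For contiguity of the support I would argue by contradiction: if $c_L,c_R>0$ while $c_k=0$ for $L<k<R$, then $\partial_y^{\,n-R}\partial_z^{\,L}f$ equals $Ay^{m}+Bz^{m}$ with $A,B>0$ and $m=R-L\ge2$, and $(Ay^m+Bz^m)^{1/m}$ is a nontrivial $\ell^m$-type (hence convex, not concave) function, so its logarithm is not concave on $\R_{>0}^2$ — contradicting strong log-concavity, since for a positive homogeneous function $G$ of degree $m$ one has $\log G$ concave iff $G^{1/m}$ concave.

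\textbf{Reverse direction.} The key reduction I would use is that, for homogeneous $f$ of degree $n\ge 2$ positive on $\R_{>0}^2$, Euler's identity applied to $\partial_y f,\partial_z f$ gives $\nabla f=\frac{1}{n-1}H_f(y,z)^{\intercal}$, whence $\det\!\big(fH_f-\nabla f\,\nabla f^{\intercal}\big)=-\frac{f^2}{n-1}\det H_f$; since the radial direction already witnesses $fH_f-\nabla f\,\nabla f^{\intercal}$ failing to be positive semidefinite, it follows that $\log f$ is concave on $\R_{>0}^2$ iff $\det H_f\le 0$ there. Now induct on $n$: for $n\le1$ the claim is trivial, and for $n\ge2$ one first checks that $\partial_y f$ and $\partial_z f$ again have contiguous support and ultra-log-concave coefficients — after normalizing by binomials, the coefficient sequence of $\partial_z f$ is $n$ times the contiguous sub-block $(c_k/\binom{n}{k})_{k\ge1}$ of the original log-concave sequence, and symmetrically for $\partial_y f$. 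So by the inductive hypothesis $\partial_y f,\partial_z f$ are strongly log-concave, which makes every order-$\ge1$ derivative of $f$ either $0$ or log-concave; by the reduction the only remaining task is $\det H_f\le0$ on $\R_{>0}^2$, and for $n=2$ this is literally the single ultra-log-concavity inequality, so the induction closes once this pointwise Hessian inequality is established in general.

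\textbf{The main obstacle.} That last step, $\partial_{yy}f\cdot\partial_{zz}f\le(\partial_{yz}f)^2$ at every point of the positive orthant, is the crux. For $n=3$ it follows by inspection — $\det H_f$ is a quadratic form whose three coefficients are all $\le0$, the two diagonal ones directly from ultra-log-concavity and the off-diagonal one by multiplying the two adjacent ultra-log-concavity inequalities — but for $n\ge4$ the Hessian entries are genuine higher-degree polynomials and I expect to need a Cauchy--Schwarz / Hodge-index-type inequality: using that the order-$(n-2)$ derivatives of $f$ are Lorentzian quadratic forms and that $\partial_y f,\partial_z f$ are strongly log-concave, together with contiguity of the support, to conclude that $H_f$ has at most one positive eigenvalue at each point of $\R_{>0}^2$. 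This Cauchy--Schwarz-type lifting — morally the bivariate instance of the same phenomenon driving trickle-down and the recursive structure of Lorentzian polynomials — is the one nonroutine ingredient.
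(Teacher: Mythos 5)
Your forward direction and your two reductions are fine: the computation identifying the order-$(n-2)$ derivatives as quadratics whose log-concavity is exactly the ultra-log-concavity inequalities, the contiguity argument via $Ay^m+Bz^m$, the fact that ultra log-concavity passes to $\partial_y f$ and $\partial_z f$, and the Euler-identity reduction showing that, for a bivariate homogeneous $f>0$ on $\mathbb{R}_{>0}^2$, concavity of $\log f$ is equivalent to $\det H_f\le 0$ pointwise, are all correct. But the reverse implication — the actual content of the proposition — is not proved. You reduce it to the pointwise inequality $\partial_{yy}f\cdot\partial_{zz}f\le(\partial_{yz}f)^2$ on $\mathbb{R}_{>0}^2$ and then, for $n\ge 4$, you only say that you ``expect to need a Cauchy--Schwarz / Hodge-index-type inequality.'' That expectation is exactly right, and that is the gap: the inductive hypothesis you have available (that $\partial_y f$ and $\partial_z f$ are strongly log-concave, plus contiguous support) does not by itself yield this determinant inequality. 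Knowing that each of the three second-order derivatives is individually log-concave, or even that each first-order derivative satisfies its own Hessian inequality, gives no pointwise comparison between $\partial_{yy}f\,\partial_{zz}f$ and $(\partial_{yz}f)^2$; one genuinely needs a Hodge--Riemann/eigenvalue-type argument showing that $H_f$ has at most one positive eigenvalue at each point, which is the heart of the cited results (Gurvits, Prop.~2.7; Anari--Liu--Oveis Gharan--Vinzant, Prop.~5.1; equivalently the bivariate instance of the equivalence between strong log-concavity and Lorentzianity recalled in the preliminaries). So as written, your argument establishes the easy direction and a correct strategy for the hard one, but closes the induction only for $n\le 3$.

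For context, the paper does not reprove this statement at all — it is quoted from the literature — so there is no internal proof to compare against; the standard route to finish your step is precisely the Lorentzian-polynomial machinery: show that ultra log-concavity plus contiguity forces the (constant) Hessians of all order-$(n-2)$ derivatives to have exactly one positive eigenvalue, and then propagate the ``at most one positive eigenvalue'' property down to $H_f$ by an induction that uses irreducibility/connectivity of the support (this is where contiguity enters a second time), rather than by citing log-concavity of the derivatives alone. Without that ingredient, or an explicit verification of $\det H_f\le 0$ for general $n$, the proposal is incomplete.
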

Recall that the \emph{elementary symmetric polynomial} $e_k$ in variables $z_1,\ldots,z_n$ is 
\[ e_k(z) = \sum_{S\in \binom{[n]}{k}} \prod_{i\in S} z_i. \]
The elementary symmetric polynomials play a key role in \emph{polarization}, which we define next. Polarization was first studied in the context of stable polynomials, i.e., polynomials that are root-free in a half-plane of the complex plane \cite{Borcea2008TheLA}.
\begin{definition}[Polarization]
	Suppose that $g=\sum_{\alpha\in \Z_{\geq 0}^n} c_{\alpha} \prod_{i = 1}^n z_i^{\alpha_i}$ is a polynomial that has degree at most $\kappa_i$ in variable $z_i$. Define the polarization of $g$ to be the multi-affine polynomial in variables $(z_{ij})$ for $i \in [n]$ and $j \in [\kappa_i]$ defined by
	\[ \polar_{\kappa}(g) = \sum_{\alpha} c_{\alpha} \prod_{i = 1}^n \frac{e_{\alpha_i}(z_{i1},\dots, z_{i\kappa_i})}{\binom{\kappa_i}{a_i}}. \]
\end{definition}
When $\kappa$ is clear from context, we simply drop it and write $\polar$ for polarization.

\begin{proposition}[Proposition 18 of \cite{anari2021log}, \cite{branden2020lorentzian}]\label{prop:polarization}
	If a homogeneous polynomial $g$ is strongly log-concave, then so is its polarization $\polar_{\kappa}(g)$.
\end{proposition}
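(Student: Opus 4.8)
The plan is to deduce this from the inductive characterization of Lorentzian polynomials, after reducing to polarization of a single variable. Since $g$ is homogeneous, being strongly log-concave is the same as being completely log-concave/Lorentzian (Theorem 3.2 of \cite{anari2018log}, \cite{branden2020lorentzian}), and $\polar_\kappa(g)$ is again homogeneous of the same degree, so it suffices to show that polarization preserves the Lorentzian property. Polarizations of distinct variables commute, and polarizing one variable does not affect the monomial structure in the others, so I would assume a single variable $z$ with $m := \deg_z g$ is split into new variables $z_1,\dots,z_m$; write $\tilde g := \polar(g)$ for this single-variable polarization. That $\tilde g$ has nonnegative coefficients is immediate, since each coefficient of $\tilde g$ is an average of coefficients of $g$.

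First I would record the combinatorial fact that polarization sends an M-convex support to an M-convex support: $\mathrm{supp}(\tilde g)$ is obtained from $\mathrm{supp}(g)$ by replacing, in each exponent vector, the $z$-coordinate $k$ by an arbitrary $0/1$ vector of Hamming weight $k$ in the coordinates $z_1,\dots,z_m$, and M-convexity is preserved under this operation; this is standard in the theory of M-convex sets and is the combinatorial half of the argument.

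Then I would induct on $d := \deg g$. For $d \le 1$ there is nothing to prove. For $d = 2$ the only nontrivial case is $m = 2$: writing $g = c\,z^2 + z\,\ell(w) + q(w)$ with $\ell, q$ linear resp.\ quadratic in the remaining variables $w$, we get $\tilde g = c\,z_1 z_2 + \tfrac12(z_1 + z_2)\ell(w) + q(w)$. Passing to coordinates $u = (z_1+z_2)/\sqrt 2$, $u' = (z_1-z_2)/\sqrt 2$, the Hessian of $\tilde g$ decomposes as a $1\times 1$ block equal to $-c \le 0$ in the $u'$-direction, plus a block in $(u,w)$ that is congruent to the Hessian of $g$ (via the diagonal scaling $\mathrm{diag}(1/\sqrt 2, I)$ matching the $u$-axis to the $z$-axis). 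Hence $\tilde g$ has at most as many positive Hessian eigenvalues as $g$, i.e.\ at most one, so $\tilde g$ is a Lorentzian quadratic. For $d \ge 3$ I invoke the inductive characterization: a homogeneous polynomial with nonnegative coefficients and degree $\ge 3$ is Lorentzian iff its support is M-convex and all of its first partial derivatives are Lorentzian. The support is M-convex by the previous step. For a non-polarized variable $w$, $\partial_w \tilde g = \polar(\partial_w g)$ (polarizing the same variable $z$, now possibly into more copies than its degree strictly requires — harmless, as the extra elementary symmetric polynomials vanish), and $\partial_w g$ is Lorentzian of degree $d-1$, so this is Lorentzian by the inductive hypothesis. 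For a polarized variable $z_j$, using $\partial_{z_j} e_k(z_1,\dots,z_m) = e_{k-1}(z_1,\dots,\widehat{z_j},\dots,z_m)$ and $\binom{m}{k} = \tfrac{m}{k}\binom{m-1}{k-1}$, a short computation gives $\partial_{z_j}\tilde g = \tfrac1m\,\polar'(\partial_z g)$, where $\polar'$ splits $z$ into the $m-1$ variables $\{z_i : i \ne j\}$; since $\partial_z g$ is Lorentzian of degree $d-1$, the inductive hypothesis again applies. This closes the induction.

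I expect the main obstacle to be the M-convexity step, together with making the bookkeeping in the inductive step fully precise — in particular tracking the polarization data $\kappa$ when the degree in a polarized variable drops after differentiation, so that one ends up polarizing into more copies than needed. The genuinely analytic content, namely preservation of the Hessian signature, is by the induction concentrated entirely in the elementary degree-$2$ computation above.
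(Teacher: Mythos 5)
Since the paper itself gives no proof of this proposition --- it is imported verbatim from \cite{anari2021log,branden2020lorentzian} --- your argument has to stand on its own, and its skeleton is sound: reducing to polarization of a single variable, using the equivalence of strong log-concavity and Lorentzianity for homogeneous polynomials, invoking the recursive characterization of Lorentzian polynomials (M-convex support plus Lorentzian first derivatives in degree $\ge 3$), and pushing everything down to a degree-two Hessian computation is exactly the kind of proof that works. I checked your two key computations: the identity $\partial_{z_j}\polar(g)=\tfrac1m\polar'(\partial_z g)$ follows from $\binom{m-1}{k-1}=\tfrac km\binom mk$ as you say, and in the $m=2$ base case the congruence via $\diag(1/\sqrt2,I)$ together with the $-c$ block in the $(z_1-z_2)$-direction is correct. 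The assertion that splitting a coordinate preserves M-convexity of the support is also true (it is the classical ``parallel extension'' fact for integer polymatroids, noted in \cite{branden2020lorentzian}), though in a self-contained write-up you would still need to verify the exchange axiom.

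The one genuine gap is the scope of your base case. The paper's $\polar_\kappa$ allows $\kappa_i$ to exceed $\deg_{z_i}g$, and, more importantly, your own inductive step forces this situation even if you start with $m=\deg_z g$: after differentiating in $w$ or in some $z_j$, the $z$-degree may drop while the number of polarization variables stays at $m$ or $m-1$. So when the induction bottoms out at total degree two you need the claim for every $m\ge 2$ with $\deg_z\le 2$, and the statement ``the only nontrivial case is $m=2$'' is false precisely in the setting your induction creates. Fortunately the same computation closes it: for $\tilde g = c\,e_2(z_1,\dots,z_m)/\binom m2 + \ell(w)\,e_1(z_1,\dots,z_m)/m + q(w)$, the mixed $z$--$w$ block of the Hessian is $\tfrac1m\mathbf 1\,(\nabla\ell)^\intercal$, which annihilates $\mathbf 1^\perp$; on $\mathbf 1^\perp$ the Hessian is $-\tfrac{2c}{m(m-1)}I\preceq 0$; and on $\operatorname{span}(\mathbf 1)\oplus\R^{w}$ it equals $D^\intercal(\nabla^2 g)D$ with $D=\diag(1/\sqrt m, I)$, so Sylvester's law of inertia again gives at most one positive eigenvalue (the case $c=0$, i.e.\ $\deg_z\le 1$, is included). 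State the induction hypothesis uniformly for all $m\ge\deg_z$ and prove this general base case, and your argument is complete.
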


Strong log-concavity and similar properties of polynomials extend to distributions through the concept of generating polynomials.
\begin{definition}[Generating polynomial]
	If $\mu$ is a probability distribution on $2^{[n]}$, we define its generating polynomial to be
	\[ g_{\mu}(z_1,\ldots,z_n) =\E*_{S\sim \mu}{\prod_{i\in S}z_i} = \sum_{S \subseteq [n]} \mu(S) \prod_{i \in S} z_i. \]
\end{definition}
Next, we define a generalization of strong log-concavity.
\begin{definition}[Fractional log-concavity]
	For $\alpha \in (0,1]$, we say a multi-affine polynomial $g\in \R[z_1,\dots,z_n]$ with nonnegative coefficients is $\alpha$-fractionally-log-concave ($\alpha$-FLC) if $\log g(z_1^{\alpha},\dots,z_n^{\alpha})$ is concave on $\R_{>0}^n$.
\end{definition}
We note that if a multi-affine $g$ is $\alpha$-FLC, so are its derivatives, since they can be derived as limits:
\[ \partial_i g = \lim_{z_i\to \infty} g/z_i. \]
In particular, for multi-affine polynomials, $1$-FLC is the same as strong log-concavity. We say a probability distribution on $2^{[n]}$ is $\alpha$-FLC if its generating polynomial is $\alpha$-FLC.
\begin{definition}[Homogenization]
	For a multi-affine polynomial $g\in \R[z_1,\dots,z_n]$, we define its multi-affine homogenization to be $g^{\hom}\in \R[z_1,\dots,z_n,\bar{z}_1,\dots,\bar{z}_n]$ defined as
	\[ g^{\hom}(z_1,\dots,z_n,\bar{z}_1,\dots,\bar{z}_n)=\bar{z}_1\cdots\bar{z}_n g\parens*{\frac{z_1}{\bar{z}_1},\dots,\frac{z_n}{\bar{z}_n}}. \]
	Similarly, for a distribution $\mu$ on $2^{[n]}$, we define $\mu^{\hom}$ to be the distribution whose generating polynomial is $g_{\mu}^{\hom}$. In other words, $\mu^{\hom}$ will be a distribution supported on $\binom{[n]\sqcup[\bar{n}]}{n}\subset 2^{[n]\sqcup[\bar{n}]}$, where for each $S\subseteq [n]$ we let
	\[\mu^{\hom}\parens*{\set{i\given i\in S}\sqcup \set{\bar{i}\given i\notin S}}=\mu(S).\]
\end{definition}

We will also often deal with distributions $\mu$ on $\set{\pm 1}^n$. Identifying $\set{\pm 1}^n$ with $2^{[n]}$, we define the generating polynomial of such a $\mu$ as
\[g_{\mu} (z_1, \dots, z_n) = \E*_{x\sim \mu}{\prod_{i:x_i=+1} z_i}=\sum_{x\in \set{\pm 1}^n} \mu(x)\prod_{i:x_i=+1}z_i, \]
and its homogenization as
\[g_{\mu}^{\hom} (z_1, \dots, z_n, \bar{z}_1, \dots, \bar{z}_n) = \E*_{x\sim \mu}{\prod_{i:x_i=+1} z_i\cdot \prod_{i:x_i=-1}\bar{z}_i}=\sum_x \mu(x) \prod_{i: x_i=+1} z_i\prod_{i: x_i =-1} \bar{z}_i.\]

We also associate another homogeneous multi-affine polynomial with $\mu$ by first homogenizing with a single additional variable, followed by polarization:
\begin{align*} \polar g_\mu^{\hom}(z_1,\dots,z_n,y,\dots,y)&=\polar \E*_{x\sim \mu}{y^{\card{\set{i\given x_i =-1}}}\cdot \prod_{i:x_i=1} z_i } \\ &= \E*_{x\sim \mu}{\frac{e_{\card{\set{i\given x_i =-1}}}(y_1,\dots,y_n)}{\binom{n}{\card{\set{i\given x_i=-1}}}} \cdot \prod_{i:x_i=1}z_i}. \end{align*}
We define the corresponding distribution as the \emph{polar homogenization} of $\nu$.
\begin{definition}[Polar homogenization]\label{def:polarize}
	For a given $n$, let $\Pi$ denote the Markov kernel from $2^{[n]}$ to $\binom{[n]\sqcup[\bar{n}]}{n}$, which maps $S\subseteq [n]$ to $S\sqcup T$, where $T\in \binom{[\bar{n}]}{n-\card{S}}$ is chosen uniformly at random. For a distribution $\mu$ on $2^{[n]}$ we call $\mu\Pi$ the polar homogenization of $\mu$. This is a distribution supported on sets $S\sqcup T$ where $S\subseteq [n]$ and $T\subseteq [\bar{n}]$ and $\card{S}+\card{T}=n$:
\[\mu\Pi(S \sqcup T) = \frac{\mu(S)}{\binom{n}{n-\card{S}}}=\frac{\mu(S)}{\binom{n}{\card{S}}}.\]
\end{definition}

We extend the definition of exponential tilts, \cref{def:exponential-tilts}, to distributions $\mu$ on $2^{[n]}$ by identifying $2^{[n]}$ with $\set{0,1}^n\subset \R^n$. In other words, 
\[ \tilt_w\mu(S) = \frac{\mu(S)\prod_{i \in S} e^{w_i}}{g_{\mu}(e^{w_1},\dots,e^{w_n})} \]
where $g_{\mu}$, the generating polynomial of $\mu$, gives us the normalizing constant. Note that $\alpha$-fractional-log-concavity is closed under exponential tilts.

\begin{definition}[Correlation matrix] \label{def:corr}
	Let $\mu$ be a probability distribution over $2^{[n]}$. Its correlation matrix $\corMat_{\mu} \in \R^{n\times n}$  is defined by
\[\corMat_{\mu} (i,j) = \P_{S\sim \mu}{j\in S\given i\in S}-\P_{S\sim \mu}{i\in S}. \]
\end{definition}

\begin{definition}[Spectral independence]
For $\eta \geq 0$, a distribution 
$\mu$ on $2^{[n]}$ is said to be $\eta$-spectrally independent if
\[\lambda_{\max}(\corMat_{\mu}) \leq \eta.\]
\end{definition}

\begin{remark}
The original definition of spectral independence in \cite{ALO20} imposes this requirement on $\mu$ as well as all of its ``links.'' Here, we follow the convention in \cite{anari2021entropic} and use the term spectral independence to refer only to a spectral norm bound on the correlation matrix of $\mu$.
\end{remark}

\begin{proposition}[Remark 70 of \cite{alimohammadi2021fractionally}]\label{fact:si-flc}
	A distribution $\mu$ on $2^{[n]}$ is $\eta$-spectrally independent iff 
	\[ \eval{\nabla^2 \log g_{\mu}\parens*{z_1^{1/\eta},\ldots,z_n^{1/\eta}}}_{z = \1} = (1/\eta)^2 D \corMat_{\mu} - (1/\eta) D \preceq 0, \]
	where $D$ is the diagonal matrix with entries $D_{ii} = \P_{S\sim \mu}{i\in S}$. Moreover, $\mu$  is $1/\eta$-FLC iff $\tilt_w \mu$ is $\eta$-spectrally independent for all $w \in \R^n$.
\end{proposition}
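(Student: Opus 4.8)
\emph{Proof plan.} The plan is to (i) prove the displayed Hessian identity by a direct chain-rule computation, (ii) read off the spectral-independence characterization from it by elementary linear algebra, and (iii) obtain the fractional-log-concavity statement by noting that evaluating the Hessian of $\log g_\mu(z_1^{1/\eta},\dots,z_n^{1/\eta})$ at an arbitrary point of $\R_{>0}^n$ is, up to congruence by an invertible diagonal matrix, the same as evaluating it at $\1$ after an exponential tilt.

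\emph{Step 1 (the Hessian identity).} Write $h(z) = \log g_\mu(z_1^{1/\eta},\dots,z_n^{1/\eta})$, put $p_i = \P_{S\sim\mu}{i\in S}$ and $D = \diag(p_1,\dots,p_n)$; coordinates with $p_i = 0$ contribute zero rows and columns to every matrix below and may be discarded, so assume $D \succ 0$. Since $g_\mu$ is multiaffine we have $\partial_{z_i}^2 g_\mu \equiv 0$, and since $\mu$ is a probability measure, $g_\mu(\1) = 1$, $\partial_{z_i} g_\mu(\1) = p_i$, and $\partial_{z_i}\partial_{z_j} g_\mu(\1) = \P_{S\sim\mu}{i,j\in S}$ for $i \ne j$. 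Differentiating $h$ by the chain rule through the substitution $\xi_i(z_i) = z_i^{1/\eta}$ (with $\xi_i'(1) = 1/\eta$ and $\xi_i''(1) = \tfrac1\eta(\tfrac1\eta - 1)$) gives, at $z = \1$, the value $\tfrac1{\eta^2}(\P_{S\sim\mu}{i,j\in S} - p_ip_j)$ for the $(i,j)$ entry with $i\ne j$, and $\tfrac1{\eta^2}(p_i - p_i^2) - \tfrac1\eta p_i$ for the $(i,i)$ entry, where the trailing $-\tfrac1\eta p_i$ is exactly the contribution of $\xi_i''$ (the term $\partial_{u_i}^2 \log g_\mu(\1) = -p_i^2$ comes from multiaffineness). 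Since $(D\corMat_\mu)_{ij}$ equals $\P_{S\sim\mu}{i,j\in S} - p_ip_j$ for $i\ne j$ and $p_i - p_i^2$ for $i=j$ — that is, $D\corMat_\mu = \cov{\mu}$ under the identification $2^{[n]}\cong\set{0,1}^n$, a symmetric positive semidefinite matrix — multiplying out shows this Hessian equals $(1/\eta)^2 D\corMat_\mu - (1/\eta)D$ on and off the diagonal.

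\emph{Step 2 (spectral independence).} With $M := D\corMat_\mu \succeq 0$ and $D \succ 0$ we have $(1/\eta)^2 M - (1/\eta)D \preceq 0 \iff M \preceq \eta D \iff D^{-1/2}MD^{-1/2} \preceq \eta I$. But $D^{-1/2}MD^{-1/2} = D^{1/2}\corMat_\mu D^{-1/2}$ is a symmetric matrix similar to $\corMat_\mu$, hence with the same (real) spectrum, so the inequality holds iff $\lambda_{\max}(\corMat_\mu) \le \eta$, i.e.\ iff $\mu$ is $\eta$-spectrally independent. Combined with Step 1 this is the first claim.

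\emph{Step 3 (fractional log-concavity), and the main obstacle.} Assume $\eta \ge 1$ so that $1/\eta \in (0,1]$ and $1/\eta$-FLC is defined. For any $w\in\R^n$, $g_{\tilt_w\mu}(z) = g_\mu(e^{w_1}z_1,\dots,e^{w_n}z_n)/g_\mu(e^w)$ and $e^{w_i}z_i^{1/\eta} = (e^{\eta w_i}z_i)^{1/\eta}$, so $\log g_{\tilt_w\mu}(z_1^{1/\eta},\dots,z_n^{1/\eta})$ equals $h(e^{\eta w_1}z_1,\dots,e^{\eta w_n}z_n)$ up to an additive constant; differentiating, its Hessian at $z=\1$ is $D_w\bigl( \nabla^2 h|_{z = e^{\eta w}} \bigr) D_w$ with $D_w = \diag(e^{\eta w_1},\dots,e^{\eta w_n})$ invertible. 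As $w$ ranges over $\R^n$ the point $e^{\eta w}$ ranges over all of $\R_{>0}^n$, and congruence by the invertible matrix $D_w$ preserves negative semidefiniteness; hence $h$ is concave on $\R_{>0}^n$ — i.e.\ $\mu$ is $1/\eta$-FLC — iff for every $w$ the Hessian of $\log g_{\tilt_w\mu}(z^{1/\eta})$ at $\1$ is $\preceq 0$, which by Steps 1 and 2 applied to $\tilt_w\mu$ in place of $\mu$ is precisely the assertion that $\tilt_w\mu$ is $\eta$-spectrally independent for all $w$. I expect the only step requiring genuine care to be the bookkeeping in Step 1 — in particular tracking the $\xi_i''$ term, which produces the $-(1/\eta)D$ summand and is exactly what separates fractional log-concavity from plain multiplicative log-concavity of $g_\mu$; everything else reduces to congruence transformations and to the fact that exponential tilts act by invertible monomial substitutions.
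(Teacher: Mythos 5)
Your proposal is correct, and it is essentially the standard argument: the paper itself gives no proof of this proposition (it is imported as Remark~70 of \textcite{alimohammadi2021fractionally}), and your chain-rule computation of $\nabla^2\log g_\mu(z^{1/\eta})$ at $z=\1$, the congruence/similarity reduction to $\lambda_{\max}(\corMat_\mu)\le\eta$, and the observation that tilts act by positive monomial substitutions so that concavity on $\R_{>0}^n$ is equivalent to the Hessian condition at $\1$ under all tilts, is exactly the intended justification (note you correctly read $D\corMat_\mu=\cov{\mu}$, as the paper itself asserts). One cosmetic nit: the $\xi_i''$ term contributes $(1/\eta)\bigl((1/\eta)-1\bigr)p_i=(1/\eta)^2p_i-(1/\eta)p_i$, so the trailing $-(1/\eta)p_i$ is only part of that contribution; this does not affect the final identity.
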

With some abuse of notation, for distributions $\mu$ on $2^{[n]}$ we use $\cov{\mu}$ and $\mean{\mu}$ to denote $\cov{\1_S}$ and $\mean{\1_S}$ where $S\sim \mu$. We note that $D \corMat_{\mu}$ is the same as the $\cov{\mu}$.

Spectral independence is frequently used to obtain $\Var{}$ contraction rates for \emph{down kernels}.}
\begin{definition}\label{def:down-kernel}
	For a fixed ground set $[n]$ and $0\leq \l\leq k\leq n$, we let $D_{k\to \l}$ denote the Markov kernel from $\binom{[n]}{k}$ to $\binom{[n]}{\l}$ defined as follows: given a set $S$, we map it to a uniformly random subset $T\in \binom{S}{\l}$.
\end{definition}
\Tag{
For a distribution $\mu$ on $\binom{[n]}{k}$, we have $\eta/k$ contraction of $\Var{}$ under the kernel $D_{k\to 1}$ iff $\mu$ is $\eta$-spectrally independent \cite[see, e.g.,]{alimohammadi2021fractionally}.}

    \section{Trickle-down equation in linear-tilt localization schemes}\label{sec:trickledown}
\subsection{Trickle-down equation}
\paragraph{Linear-tilt localization scheme.}
Let $\nu_0$ be a probability measure. Suppose there exists a sequence of random vectors $Z_{t}$ generating filtrations $\F_t = \sigma(Z_0,\ldots, Z_{t})$ so that $\E{Z_{t + 1} \given \F_t} = 0$, i.e., they form a martingale difference sequence. Define the corresponding measure-valued stochastic process $(\nu_t)_t$ by its Radon-Nikodym derivative
\[ \frac{d\nu_{t + 1}}{d\nu_t}(x) = 1 + \dotprod{x - \mean{\nu_t}, Z_{t + 1} }. \]
Note that 
\[ \E*_{\nu_t}{\frac{d\nu_{t + 1}}{d\nu_t}}  = 1 + \dotprod{ \mean{\nu_t} - \mean{\nu_t}, Z_{t + 1} } =  1, \]
so, provided we check nonnegativity, $\nu_{t + 1}$ is indeed a probability measure (almost surely). To ensure nonnegativity and guarantee $\nu_{t + 1}$ is a probability measure, we make the assumption
\begin{equation}
\P_{x \sim \nu_t}{\dotprod{x-\mean{\nu_t}, Z_{t + 1} }  \ge -1} = 1.
\end{equation}
Also, observe that for any measurable set $S$
\[ \E*{\nu_{t + 1}(S) \given \F_t} = \nu_t(S), \]
because $\E{Z_{t + 1} \given \F_t} = 0$. So $(\nu_t)_t$ is a measure-valued martingale. 

We say this process is a \emph{linear-tilt localization scheme} if additionally, for any measurable event $S$, $\nu_t(S)$ almost surely converges to either $0$ or $1$ as $t \to \infty$. This is consistent with the terminology of Section 2.4 of \cite{chen2022localization}. However, the calculations we perform next do not require this additional assumption.

\paragraph{Trickle-down equation.} Now we formulate a general trickle-down recursion.
Recall that we have the following \emph{law of total variance} for $\nu_t$:
\begin{align*}
\cov{\nu_t}
&= \E_{x\sim \nu_t}{x^{\otimes 2}} - \E_{x\sim \nu_t}{x}^{\otimes 2} =\E_{x\sim \nu_t}{x^{\otimes 2}}-\mean{\nu_t}^{\otimes 2} \\
&= \E*{\E_{x\sim \nu_{t + 1}}{x^{\otimes 2}} - \mean{\nu_{t+1}}^{\otimes 2} \given \F_t} +\Tag<sigconf>{\\ &\quad}\E*{\mean{\nu_{t + 1}}^{\otimes 2} - \mean{\nu_t}^{\otimes 2} \given \F_t} \\
&= \E{\cov{\nu_{t + 1}} \mid \F_t} + \cov{\mean{\nu_{t+1}}\given \F_t}.
\end{align*}
Observe that by definition
\begin{align*} 
\mean{\nu_{t+1}}-\mean{\nu_t}
&=\E_{x \sim \nu_t}{x\cdot \dotprod{x - \mean{\nu_t}, Z_{t + 1}}} \\
&= \E_{x \sim \nu_t}{x\otimes (x-\mean{\nu_t})}Z_{t+1} \Tag<sigconf>{\\ &} = \cov{\nu_t} Z_{t + 1}
\end{align*}
so
\begin{equation}\label{eqn:trickledown-1}
\begin{aligned}
\cov{\nu_t} 
&= \E{\cov{\nu_{t + 1}} \given \F_t} +\Tag<sigconf>{\\ &\quad} \E*{(\mean{\nu_{t+1}}-\mean{\nu_t})^{\otimes 2} \given \F_t} \\
&= \E{\cov{\nu_{t + 1}} \given \F_t} + \cov{\nu_t} \E*{Z_{t + 1}^{\otimes 2} \given \F_t} \cov{\nu_t} \\
&= \E{\cov{\nu_{t + 1}} \given \F_t} + \cov{\nu_t} \cov{Z_{t + 1} \given \F_t} \cov{\nu_t}
\end{aligned}
\end{equation}
or rearranging, we have
\[ \cov{\nu_t} - \cov{\nu_t} \cov{Z_{t + 1} \given \F_t} \cov{\nu_t} = \E{\cov{\nu_{t + 1}} \given \F_t}. 
\]
We can recursively apply \cref{eqn:trickledown-1} up to any time $T \ge t$ to yield the more general integral formula
\Tag{\begin{equation}\label{eqn:trickledown-2}
	\cov{\nu_t} = \E*{\cov{\nu_{T}} \given \F_t} +  \sum_{s = t}^{T-1} \E*{\cov{\nu_s} \cov{Z_{s + 1} \given \F_s} \cov{\nu_s} \given \F_t}
\end{equation}}%
\Tag<sigconf>{\begin{multline}\label{eqn:trickledown-2}
	\cov{\nu_t} = \E*{\cov{\nu_{T}} \given \F_t} +\\  \sum_{s = t}^{T-1} \E*{\cov{\nu_s} \cov{Z_{s + 1} \given \F_s} \cov{\nu_s} \given \F_t}
\end{multline}}%
which will be very useful in the present work.
\subsection{Examples}
\paragraph{Example: stochastic localization.} 
Recall first that stochastic localization is the SDE with driving matrix $C_t$ defined as
\[ dF_t(x) = F_t(x) \dotprod{ x - \mean{\nu_t}, C_t dW_t } \]
where $W_t$ is a brownian motion and $d\nu_t = F_t d\nu_0$. 

For $\Delta > 0$, which we will take to zero, we can parameterize time in multiples of $\Delta$. The Euler-Murayama discretization of the SDE for stochastic localization with driving matrix $C_t$ would be
\[ \frac{d\nu_{t + \Delta}}{d\nu_t}(x) = 1 + \dotprod{x - \mean{\nu_t},  Z_{t + \Delta} } \]
where $Z_{t + \Delta} \sim \Normal{0, \Delta C_t^2}$. Taking the limit of $\Delta \to 0$, \cref{eqn:trickledown-2} becomes
\begin{equation}\label{eqn:sl-trickledown}
	\cov{\nu_t} = \E{\cov{\nu_T} \given \F_t} + \int_t^T \E{\cov{\nu_s} C_s^2 \cov{\nu_s} \given \F_t} ds 
\end{equation}
which is the trickle-down equation for stochastic localization. To be precise, the discretized process with fixed $\Delta > 0$ is not a localization scheme since it can assign events negative probabilities; however, it is a valid localization scheme in the limit $\Delta \to 0$. For completeness, we include a more formal proof of \cref{eqn:trickledown-2} below.
\begin{theorem}
Let $\nu_0$ be a probability measure on $\R^n$ and let $(\nu_t)_{t \ge 0}$ be the stochastic localization process with driving matrix $C_t$. Then \cref{eqn:sl-trickledown} holds for any $0 \le t \le T$.
\end{theorem}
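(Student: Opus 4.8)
The plan is to obtain \cref{eqn:sl-trickledown} directly from the defining SDE of stochastic localization by a short It\^o-calculus computation; this is cleaner than pushing the Euler--Maruyama discretization to its limit, although that route also works. Throughout I would assume whatever regularity is needed for the moment functionals below to be finite and for the relevant local martingales to be genuine martingales. For all applications in this paper $\nu_0$ has compact support and $C_t$ is bounded (indeed constant), so this is automatic.

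The first step is to differentiate the mean. Writing $\mean{\nu_t} = \int x\, F_t(x)\, d\nu_0(x)$, using $dF_t(x) = F_t(x) \dotprod{x - \mean{\nu_t}, C_t\, dW_t}$, and interchanging the It\^o differential with $\int \cdot\, d\nu_0$ (a stochastic Fubini step), one gets
\[ d\mean{\nu_t} = \int x\, F_t(x) \dotprod{x - \mean{\nu_t}, C_t\, dW_t}\, d\nu_0(x) = \cov{\nu_t}\, C_t\, dW_t, \]
since $\E_{x \sim \nu_t}{x \otimes (x - \mean{\nu_t})} = \cov{\nu_t}$. In particular $\mean{\nu_t}$ is a local martingale with quadratic covariation $d[\mean{\nu_t}, \mean{\nu_t}] = \cov{\nu_t}\, C_t^2\, \cov{\nu_t}\, dt$, using that $C_t$ and $\cov{\nu_t}$ are symmetric. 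Next, because $\nu_t$ is a measure-valued martingale, the second-moment matrix $M_t = \E_{x \sim \nu_t}{x^{\otimes 2}}$ is itself a local martingale; equivalently $dM_t = \int x^{\otimes 2}\, dF_t(x)\, d\nu_0(x)$ carries a $dW_t$ term only, because the $F_t$-SDE is driftless. Applying It\^o's product rule to $\cov{\nu_t} = M_t - \mean{\nu_t}^{\otimes 2}$, the only $dt$ contribution is the negative of the covariation just computed, so
\[ \cov{\nu_t} + \int_0^t \cov{\nu_s}\, C_s^2\, \cov{\nu_s}\, ds \]
is a local martingale, and hence --- under the regularity assumption --- a genuine martingale in $t$. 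Applying this martingale property between times $t$ and $T \ge t$, moving the conditional expectation inside the time integral by Fubini, and rearranging gives exactly \cref{eqn:sl-trickledown}.

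The main obstacle is not the algebra but its justification: interchanging the It\^o differential with $\int \cdot\, d\nu_0$, and promoting the local martingales above to true martingales so that the conditional-expectation identity is legitimate. In the compactly supported setting all moments of $\nu_t$ are uniformly bounded for $t \in [0,T]$, and these points are routine; more generally one would use a localizing sequence of stopping times together with a uniform-integrability estimate. Alternatively, one can justify the identity by matching the two second-order Taylor expansions in the $\Delta \to 0$ limit of the discrete recursion \cref{eqn:trickledown-2} applied with $Z_{t + \Delta} \sim \Normal{0, \Delta C_t^2}$, as sketched before the statement.
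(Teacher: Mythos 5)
Your proposal is correct and follows essentially the same route as the paper: compute $d\mean{\nu_t}=\cov{\nu_t}C_t\,dW_t$, apply It\^o to $\mean{\nu_t}^{\otimes 2}$, use that the second-moment matrix is a martingale so the only drift is $-\cov{\nu_t}C_t^2\cov{\nu_t}\,dt$, then integrate and take conditional expectations. Your added remarks on regularity and the martingale/local-martingale distinction are a reasonable supplement to what the paper treats as a standard calculation.
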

\begin{proof}
This is the same proof as above, except executed rigorously in the limit $\Delta \to 0$ using stochastic calculus. It is also a standard calculation in the literature on stochastic localization, see e.g.\ \cite{eldan2020taming}.
We have
\Tag{\[ d\mean{\nu_t} = \E_{x\sim \nu_t}{x\cdot \dotprod{x-\mean{\nu_t}, C_tdW_t}} = \E_{x\sim \nu_t}{x\otimes (x-\mean{\nu_t})}C_tdW_t=\cov{\nu_t}C_tdW_t. \]}%
\Tag<sigconf>{\begin{multline*} d\mean{\nu_t} = \E_{x\sim \nu_t}{x\cdot \dotprod{x-\mean{\nu_t}, C_tdW_t}} =\\ \E_{x\sim \nu_t}{x\otimes (x-\mean{\nu_t})}C_tdW_t=\cov{\nu_t}C_tdW_t. \end{multline*}}%
Applying the It\^o formula, we get
\begin{align*}
	d(\mean{\nu_t}^{\otimes 2}) &=\cov{\nu_t}C_t^2\cov{\nu_t}dt+\Tag<sigconf>{\\ &\quad}\mean{\nu_t}\otimes (d\mean{\nu_t})+\Tag<sigconf>{\\ &\quad}(d\mean{\nu_t})\otimes \mean{\nu_t}\\
	&= \cov{\nu_t}C_t^2\cov{\nu_t}dt+\text{martingale},
\end{align*}
where martingale denotes a pure diffusion term, i.e., one with no drift. Since $\cov{\nu_t}=\E_{x\sim \nu_t}{x^{\otimes 2}}-\mean{\nu_t}^{\otimes 2}$, and $\E_{x\sim \nu_t}{x^{\otimes 2}}$ is a martingale, we have
\Tag{\[ d\cov{\nu_t}=d\E_{x\sim \nu_t}{x^{\otimes 2}}-d(\mean{\nu_t}^{\otimes 2})=\text{martingale}-\cov{\nu_t}C_t^2\cov{\nu_t}dt. \]}%
\Tag<sigconf>{\begin{multline*} d\cov{\nu_t}=d\E_{x\sim \nu_t}{x^{\otimes 2}}-d(\mean{\nu_t}^{\otimes 2})=\\ \text{martingale}-\cov{\nu_t}C_t^2\cov{\nu_t}dt. \end{multline*}}%
Integrating and taking the conditional expectation, which cancels the martingale terms, it follows that for any $T\geq t$,
\[  \cov{\nu_t} = \E{\cov{\nu_T}  \given \F_t} + \int_{t}^{T} \E{\cov{\nu_s} C_s^2 \cov{\nu_s} \given \F_t} ds\]
as claimed. 
\end{proof}

\paragraph{Example: pinning in set systems (pure simplicial complex).} Suppose that $\nu_0$ is a probability measure on $\binom{[n]}{k}$ which we embed in $\set{0,1}^n\subset \R^n$ by mapping $S\in \binom{[n]}{k}$ to $\1_s$. Following the high-dimensional expanders paradigm, we can naturally define a sequence of measures in the following way: let $S \sim \nu_0$, let $(a_1,\ldots,a_k)$ be the elements of $S$ ordered according to a uniformly random permutation, and for $t \in \set{0,\dots, k}$ define 
\[ \nu_t = \nu_0 \observe{D_{k\to t}}\set{a_1,\dots, a_t}, \]
where $D_{k\to t}$ is the down kernel from \cref{def:down-kernel}. In other words, $\nu_t$ is the posterior distribution of $T\sim \nu_0$ given the observation $\set{a_1,\dots,a_t}\subseteq T$. We can optionally define $\nu_t = \nu_k$ for every $t > k$ to be consistent with the definition of localization schemes. Also, it is worth noting that the ``link'' in high-dimensional expanders terminology will not be $\nu_t$, but the law of $S \setminus \set{a_1,\dots,a_t}$ for $S \sim \nu_t$.

This is a linear-tilt localization scheme\footnote{This observation generalizes one from Section 2.4.1 of \cite{chen2022localization} made in the context of spin systems.} w.r.t.\ the filtration $\F_t = \sigma(\set{a_1,\dots,a_t})$. This is because
\begin{align*} 
\nu_{t + 1}(T) 
&= \frac{\nu_t(T)\cdot \1[a_{t + 1} \in T]}{\P_{R \sim \nu_t}{a_{t + 1} \in R}}\Tag<sigconf>{\\ &} = \nu_t(T) \cdot \parens*{1 + \frac{\1[a_{t + 1} \in T] - \P_{R\sim \nu_t}{a_{t + 1} \in R}}{\P_{R\sim \nu_t}{a_{t + 1} \in R}}} \\
&= \nu_t(T) \cdot \parens*{1 + \dotprod*{ \1_T - \mean{\nu_t}, \frac{\1_{a_{t + 1}}}{\P_{R\sim \nu_t}{a_{t + 1} \in R}}} } \\
&= \nu_t(T) \cdot \parens*{1 + \dotprod*{ \1_T - \mean{\nu_t}, Z_{t + 1} }}
\end{align*}
where we define $Y_{t+1}=\1_{a_{t+1}}/\P_{R\sim \nu_t}{a_{t+1}\in R}$ and $Z_{t + 1} = Y_{t+1}-\E{Y_{t+1}\given \F_t}$. Here we used $\dotprod*{ \1_T - \mean{\nu_t},  \E*{Y_{t+1} \given \F_t} } = 0$, which follows from \cref{eqn:helpful} below and observing that for any $R$ in the support of $\nu_t$ we have $\dotprod{\1_R,\1_{[n]\setminus\set{a_1,\dots,a_t}}}=k-t$, which also means $\dotprod{\mean{\nu_t},\1_{[n]\setminus\set{a_1,\dots,a_t}}}=k-t$.

From the definition of the random vectors $Z_t$, it is clear that they generate the same filtration $\mathcal{F}_t$ as the variables $a_t$ and that they form a martingale difference sequence. We observe that the conditional law of $a_{t + 1}$ given $\F_t$ is the same as picking an element uniformly at random from $T \setminus \set{a_1,\dots,a_t}$ where $T \sim \nu_t$. Using this we have
\Tag{\begin{equation}\label{eqn:helpful} \E*{Y_{t+1} \given \F_t} = \sum_{e\in [n]\setminus{a_1,\dots,a_t}}\frac{\P_{R\sim \nu_t}{e\in R}}{k-t}\cdot \frac{\1_e}{\P_{R\sim \nu_t}{e\in R}} = \frac{\1_{[n]\setminus \set{a_1,\dots,a_t}}}{k - t}. 
\end{equation}}%
\Tag<sigconf>{\begin{multline}\label{eqn:helpful} \E*{Y_{t+1} \given \F_t} = \sum_{e\in [n]\setminus{a_1,\dots,a_t}}\frac{\P_{R\sim \nu_t}{e\in R}}{k-t}\cdot \frac{\1_e}{\P_{R\sim \nu_t}{e\in R}} =\\  \frac{\1_{[n]\setminus \set{a_1,\dots,a_t}}}{k - t}. 
\end{multline}}%
So
\begin{align*}
	\cov{Z_{t+1}\given \F_t} &=\E*{Y_{t+1}^{\otimes 2}\given \F_t}-\E*{Y_{t+1}\given F_t}^{\otimes 2} \\ &=\frac{1}{k-t}\cdot \parens*{\sum_{e\in [n]\setminus\set{a_1,\dots,a_t}}\frac{\1_e^{\otimes 2}}{\P_{R\sim \nu_t}{e\in R}}}-\Tag<sigconf>{\\ &\quad}\parens*{\frac{\1_{[n]\setminus \set{a_1,\dots,a_t}}}{k-t}}^{\otimes 2}.
\end{align*}
Again, the vector $\1_{[n]\setminus\set{a_1,\dots,a_t}}$ is orthogonal to all $\1_T-\mean{\nu_t}$, and thus is in the kernel of $\cov{\nu_t}$. So \cref{eqn:trickledown-2} simplifies a bit, and we obtain the equation
\Tag{\begin{equation}\label{eqn:trickledown-pinning}
\cov{\nu_t} = \E{\cov{\nu_{T}} \given \F_t} +  \sum_{s = t}^{T - 1} \frac{1}{k - s} \E{\cov{\nu_s} N_s^{-1} \cov{\nu_s} \given \F_t}
\end{equation}}%
\Tag<sigconf>{\begin{multline*}\label{eqn:trickledown-pinning}
\cov{\nu_t} = \E{\cov{\nu_{T}} \given \F_t} +\\  \sum_{s = t}^{T - 1} \frac{1}{k - s} \E{\cov{\nu_s} N_s^{-1} \cov{\nu_s} \given \F_t}
\end{multline*}}%
where $N_s$ is a diagonal matrix encoding the marginals of the link at $\set{a_1,\dots,a_s}$; in other words,  $N_s=\diag(\mean{\nu_s}-\1_{\set{a_1,\dots,a_s}})$. When we set $T = t + 1$, this is a familiar equation from which the trickle-down equation of \textcite{Opp18} and matrix trickle-down \cite{abdolazimi2022matrix} can both be derived. \Tag{For the reader's convenience, we illustrate how this works for Oppenheim's trickle-down in \cref{a:oppenheim}.} We also give another example in a similar spirit in \cref{a:basic-trickledown} illustrating trickle-down of semi-log-concavity.
    \Tag{\section{Rapid mixing of Glauber dynamics in Ising models}\label{sec:ising}
In this section, we prove the main result about rapid mixing of Ising models. The supporting lemmas are given after the proof of the main theorem. Some elements of the analysis can be done in a more general setting, but to keep the presentation concrete we hold off on making most generalizations until the next section of the paper. 
\subsection{Covariance bound and approximate tensorization}
\begin{theorem}\label{thm:ising}
Suppose that $\nu$ is a probability measure on $\set{\pm 1}^n$ defined by
	\[ \nu(x) \propto \exp\parens*{\frac{1}{2} \dotprod{ x, J x } + \dotprod{ h, x }} \]
for some external field $h \in \R^n$ and interaction matrix $J$ satisfying $J \succeq 0$. Without loss of generality, suppose that the diagonal of $J$ is constant, so there exists $\alpha$ such that $J_{ii} = \alpha \ge 0$. Let $\eta = \alpha/\opnorm{J} \in [0,1]$. 
Then
\[ \opnorm{\cov{\nu}} \leq q_{\eta}(\opnorm{J}) \]
where $q_{\eta} : \R_{\geq 0} \to \R_{\geq 0} \cup \set{\infty}$ solves the Volterra integral equation
\[ q_{\eta}(z) = \parens*{1 - r\parens*{\eta z}} + \int_0^{z} q(y)^2 dy \]
where
\[ r(a) = \E*_{\sigma \sim \Berpm{\tanh(0)}, g \sim \Normal{0,1}}{\tanh^2(a\sigma + \sqrt{a} g)}.\]
Furthermore, $\nu$ satisfies Approximate Tensorization of Entropy with constant at most
\[ \exp\parens*{\int_0^{\opnorm{J}} q_{\eta}(z) dz }. \]
\end{theorem}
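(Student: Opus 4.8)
The plan is to run stochastic localization on $\nu$ with the constant driving matrix $C=J^{1/2}$ (using $J\succeq 0$), analyze it through the trickle-down identity for stochastic localization, and close the recursion with a Grönwall-type comparison against the Volterra equation. By the Gaussian-channel description (\cref{thm:gci}), at time $t$ the measure $\nu_t$ has relative density proportional to $\exp\parens*{\tfrac{1-t}{2}\dotprod{x,Jx}+\dotprod{h+Jy_t,x}}$ on $\set{\pm 1}^n$; thus $\nu_t$ is again an Ising model, with interaction matrix $(1-t)J$ (PSD, constant diagonal $(1-t)\alpha$, operator norm $(1-t)\opnorm{J}$, hence the same ratio $\eta$), and with a random external field — and $\nu_1$ is a product of tilted $\Berpm{0}$ measures, so $\cov{\nu_1}$ is diagonal. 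The key claim I would establish is that, almost surely, $\opnorm{\cov{\nu_t}}\le q_\eta\parens*{(1-t)\opnorm{J}}$ for all $t\in[0,1]$, and the theorem is the case $t=0$.

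\textbf{The covariance bound.} Apply the stochastic-localization trickle-down identity \cref{eqn:sl-trickledown} between times $t$ and $1$ with $C_s^2=J$:
\[ \cov{\nu_t}=\E*{\cov{\nu_1}\given \F_t}+\int_t^1 \E*{\cov{\nu_s}\,J\,\cov{\nu_s}\given \F_t}\,ds. \]
Taking operator norms, using $\opnorm{AJA}\le\opnorm{A}^2\opnorm{J}$ for $A\succeq 0$ and convexity of the operator norm, and substituting $y=(1-s)\opnorm{J}$ in the integral, one gets $\opnorm{\cov{\nu_t}}\le \opnorm{\E{\cov{\nu_1}\given\F_t}}+\int_0^{(1-t)\opnorm{J}}\E{\opnorm{\cov{\nu_s}}^2}\,dy$ (after bounding $\opnorm{\cov{\nu_s}}$ along the way). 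Since $\cov{\nu_1}$ is diagonal, matters reduce to the \emph{terminal bound} $\E{\cov{\nu_1}_{ii}\given\F_t}\le 1-r\parens*{(1-t)\alpha}$ for each $i$, which would give $\opnorm{\E{\cov{\nu_1}\given\F_t}}\le 1-r(\eta\cdot(1-t)\opnorm{J})$; plugging in, the function $\psi(z):=q_\eta(z)$ satisfies exactly the equality version of the resulting integral inequality, and a routine comparison argument for Volterra integral inequalities (bootstrapping from $z=0$, where the bound is $\le 1=q_\eta(0)$, and applying Grönwall to the difference up to the blow-up point of $q_\eta$) yields $\opnorm{\cov{\nu_t}}\le q_\eta\parens*{(1-t)\opnorm{J}}$ whenever $q_\eta(\opnorm{J})<\infty$. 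Note this argument is automatically uniform over the external field, which is essential since the fields of the $\nu_t$ are random.

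\textbf{The terminal bound (the crux).} This is where the hypercube, rather than a Gaussian base measure, enters. Fix a site $i$ and write $\beta$ for the (constant) diagonal of the interaction matrix of $\nu_t$, i.e.\ $\beta=(1-t)\alpha$. The external field of the product measure $\nu_1$ at site $i$, conditioned on $\F_t$, equals $W_i+\beta X_i^\ast+\sqrt{\beta}\,g$, where $g\sim\Normal{0,1}$ is independent, $X^\ast\sim\nu_t$, and $W_i$ is the conditional external field of $\nu_t$ at $i$; conditioning further on $X^\ast_{\sim i}$ makes $X_i^\ast\sim\Berpm{\tanh W_i}$, so the field at $i$ is \emph{exactly} a one-dimensional stochastic localization started from $\Berpm{\tanh W_i}$ and run for time $\beta$ (this is the promised correspondence between the multi-spin and single-spin processes). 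Hence $\E{\cov{\nu_1}_{ii}\given X^\ast_{\sim i},\F_t}$ is the expected terminal variance of that single-spin process, and the terminal bound reduces to the one-dimensional estimate: for every $\abs{m}<1$,
\[ \E*_{\sigma\sim\Berpm{m},\,g\sim\Normal{0,1}}{1-\tanh^2\parens*{\operatorname{arctanh}(m)+\beta\sigma+\sqrt{\beta}\,g}}\ \le\ \E*_{\sigma\sim\Berpm{0},\,g\sim\Normal{0,1}}{1-\tanh^2\parens*{\beta\sigma+\sqrt{\beta}\,g}}=1-r(\beta), \]
i.e.\ the expected variance of a single spin under stochastic localization is largest when the spin is unbiased. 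This sharpens the crude ``with probability $\ge 1/2$ the field has magnitude at least $J_{ii}$'' observation from the introduction (which only gives the weaker bound $\tfrac12(1+\text{that quantity})$). I expect the proof of this one-dimensional inequality — which requires analyzing the smoothed profile $\theta\mapsto\E_{g}{\operatorname{sech}^2(\theta+\sqrt{\beta}\,g)}$ together with the $\tanh$-weighting of the two branches $\sigma=\pm1$ — to be the main obstacle; the rest is bookkeeping and invocations of established machinery.

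\textbf{Approximate tensorization of entropy.} Combine entropic stability with conservation of entropy. Since the covariance bound applies to $\nu_t$ and all of its exponential tilts (all Ising models of ratio $\eta$ and operator norm $(1-t)\opnorm{J}$), we have $\cov{\tilt_w\nu_t}\preceq q_\eta\parens*{(1-t)\opnorm{J}}\,I$ for all $w$, so by \cref{lem:entropic-stability} (with $C=J^{1/2}$) each $\nu_t$ is $\alpha_t$-entropically stable with respect to $\psi(x,y)=\tfrac12\norm{J^{1/2}(x-y)}_2^2$ with $\alpha_t=\opnorm{J}\,q_\eta\parens*{(1-t)\opnorm{J}}$. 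Then \cref{prop:conservation-of-entropy} gives $\E{\Ent_{\nu_1}{f}}\ge\exp\parens*{-\int_0^1\alpha_t\,dt}\Ent_\nu{f}=\exp\parens*{-\int_0^{\opnorm{J}}q_\eta(z)\,dz}\Ent_\nu{f}$ after substituting $z=(1-t)\opnorm{J}$. On the other hand $\nu_1$ is a product measure, hence satisfies ATE with constant $1$: $\Ent_{\nu_1}{f}\le\sum_i\E*_{x\sim\nu_1}{\Ent_{\nu_1\observe{D_{\sim i}}x_{\sim i}}{f}}$; taking expectations over the stochastic-localization randomness and applying the supermartingale property \cref{lem:supermartingale} with $\kappa=D_{\sim i}$ bounds the right-hand side by $\sum_i\E*_{x\sim\nu}{\Ent_{\nu\observe{D_{\sim i}}x_{\sim i}}{f}}$. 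Chaining the two displays yields $\Ent_\nu{f}\le\exp\parens*{\int_0^{\opnorm{J}}q_\eta(z)\,dz}\sum_i\E*_{x\sim\nu}{\Ent_{\nu\observe{D_{\sim i}}x_{\sim i}}{f}}$, which is exactly ATE with the claimed constant.
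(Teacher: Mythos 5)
Your overall architecture is exactly the paper's: stochastic localization with constant driving matrix $J^{1/2}$, the trickle-down identity \cref{eqn:sl-trickledown} between times $t$ and $1$, the Gaussian-channel reduction of the terminal covariance to a single-spin localization started from $\Berpm{\tanh(W_i)}$ and run for time $(1-t)\alpha$, a Volterra comparison argument taken uniformly over external fields, and then entropic stability plus conservation of entropy plus the supermartingale property for ATE. All of that matches the paper's proof of \cref{thm:ising} (via \cref{lem:tbound,lem:elementary-lb,lem:integral-equation}) essentially step for step.

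The genuine gap is the one-dimensional inequality you single out and defer: that for every bias $m$,
\[
\E*_{\sigma\sim\Berpm{m},\,g\sim\Normal{0,1}}{1-\tanh^2\parens*{\operatorname{arctanh}(m)+\beta\sigma+\sqrt{\beta}\,g}}\ \le\ 1-r(\beta),
\]
i.e.\ that the expected terminal single-spin variance under stochastic localization is maximized at zero bias. You correctly identify this as the crux, but you do not prove it, and it is not routine: the integrand weights the two branches $\sigma=\pm 1$ by $\tfrac{1\pm m}{2}$ while simultaneously shifting the argument by $\operatorname{arctanh}(m)$, and a direct analysis of $b\mapsto\E_{\sigma\sim\Berpm{\tanh b},g}{\tanh^2(b+\beta\sigma+\sqrt{\beta}g)}$ (your ``smoothed profile plus $\tanh$-weighting'' sketch) is exactly what the paper avoids. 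Instead the paper proves this as \cref{lem:ising-optimal-b} by a coupling argument (\cref{lem:rotation-coupling-cons,lem:rotation-coupling-sl}): two single-spin stochastic localization processes started from tilts of ordered norm are coupled so that their tilt norms remain ordered for all time --- run them independently until the norms meet, then rotate (for $N=1$, reflect) the driving Brownian motion of one onto the other, using the spherical symmetry of the base measure (\cref{lem:mean-under-orthogonal}) --- and then the monotonicity and symmetry of $\tanh^2$ transfer the comparison from time $\beta$ back to time $0$. Without this lemma (or a substitute proof of the displayed inequality) your argument only yields the weaker anticoncentration-style bound you mention, not the claimed $1-r(\eta z)$ forcing term, so the Volterra equation and hence the stated $q_\eta$ are not established. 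Everything else in your write-up --- the uniform-in-$h$ comparison, the change of variables, and the ATE chain --- is correct and matches the paper.
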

\begin{proof}
Let $\nu_t$ be the stochastic localization process with driving matrix $C_t = J^{1/2}$ and $\nu_0 = \nu$, and define $\Sigma_t = \cov{\nu_t}$. 
Recall from \cref{eqn:sl-trickledown} that
\[ 
\Sigma_{t} = \E{\Sigma_{1} \given \mathcal{F}_t} + \int_t^1 \E{\Sigma_{s} J \Sigma_{s} \given \mathcal{F}_{t}} ds 
\]
so applying \cref{lem:tbound} we have
\[ \Sigma_{t} \preceq \diag(1 - r(\alpha(1 - t))) + \int_t^1 \E{\Sigma_{s} J \Sigma_{s} \given \mathcal{F}_{t}} ds. \]
Define $\beta(t) = \sup_{h} \opnorm{\cov{\mu_{(1 - t)J, h}}}$,
where $\mu_{(1 - t) J, h}$ denotes an Ising model with interaction matrix $(1 - t) J$ and external field $h$. Then, by considering the stochastic localization process started from this model, we have that $\beta(1) = 1$ and
\[ \beta(t) \le [1 - r(\alpha(1 - t))] + \opnorm{J} \int_t^1 \beta(s)^2 ds \]
so we can apply \cref{lem:integral-equation}. Therefore \[ \opnorm{\Sigma_0} \le \gamma(1) \]
where $\gamma(t)$ is the solution to \eqref{eqn:gamma} with $K = \opnorm{J}$ and $f(t) = 1 - r(\alpha t)$,
so
\[ \gamma(t) = [1 - r(\alpha t)] + \opnorm{J} \int_0^t \gamma(s)^2 ds. \]
Making the change of variables $q(z) = \gamma(z /\opnorm{J})$ (i.e., $z = \opnorm{J}t$) yields the integral equation
\[ q_{\eta}(z) = [1 - r(\alpha z/\opnorm{J})] + \opnorm{J} \int_0^{z/\opnorm{J}} \gamma(s)^2 ds = [1 - r(\alpha z/\opnorm{J})] + \int_0^{z} q_{\eta}(y)^2 ds\]
where we made the corresponding change of variables $y = \opnorm{J}s$ inside the integral. Hence
\[ \opnorm{\Sigma_0} \le q_{\eta}(\opnorm{J}) \]
which proves the bound on the covariance matrix.

By \cref{lem:entropic-stability}, this implies that every such $\nu$ is $\opnorm{J} q_{\eta}(\opnorm{J})$-entropically stable with respect to the function $\psi(x,y) = \frac{1}{2} \norm{J^{1/2} (x - y)}_2^2$. The measure $\nu_1$ is a product measure and therefore satisfies ATE with constant $1$. Therefore, for any nonnegative function $f$
\begin{align*} \Ent_{\nu_0}{f} 
&\le \exp\parens*{\int_0^1 q_{\eta}(t \opnorm{J}) \opnorm{J} dt} \E {\Ent_{\nu_1}{f}} \\
&\le \exp\parens*{\int_0^1 q_{\eta}(t \opnorm{J}) \opnorm{J} dt} \sum_i \E {\E_{X_{\sim i} \sim \nu_1}{\Ent_{\nu_1(\cdot \mid X_{\sim i})}{f}}} \\
&\le \exp\parens*{\int_0^1 q_{\eta}(t \opnorm{J}) \opnorm{J} dt} \sum_i \E_{X_{\sim i} \sim \nu_0}{\Ent_{\nu_0(\cdot \mid X_{\sim i})}{f}}
\end{align*}
where in the first inequality we used \cref{prop:conservation-of-entropy}, the second inequality is tensorization for the product measure, and the last inequality is the supermartingale property (\cref{lem:supermartingale}).
Making the change of variables $z = t \opnorm{J}$ proves the result. 

\end{proof}

\begin{lemma}\label{lem:integral-equation}
Suppose $f(t) \ge 0$ for $t \ge 0$ and  $K \ge 0$. 
Either there exists a unique solution for all time, or
there exists $T > 0$ and a solution on $[0,T)$ to the integral equation 
\begin{equation}\label{eqn:gamma}
\gamma(t) = f(t) + K \int_0^t \gamma(s)^2 ds,
\end{equation}
such that the solution is unique and satisfies $\gamma(t) \to \infty$ as $t \to T$. 
Suppose $\alpha(t)$ is a function 
satisfying 
\[ \alpha(t) \le f(t) + K \int_0^t \alpha(s)^2 ds. \]
Then $\alpha(t) \le \gamma(t)$. 
\end{lemma}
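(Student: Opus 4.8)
The plan is to view \cref{eqn:gamma} as a Volterra integral equation whose nonlinearity $\gamma \mapsto \gamma^2$ is locally Lipschitz and to run the standard local-existence/uniqueness/continuation/comparison machinery adapted to this setting. We may and do assume $f$ is continuous, as it is in all our applications (where $f(t) = 1 - r(\alpha t)$). First I would prove local existence and uniqueness: on an interval $[0,\delta]$ with $M := \sup_{[0,\delta]} f < \infty$, consider the operator $\Phi[\gamma](t) := f(t) + K\int_0^t \gamma(s)^2\,ds$ on the closed ball of radius $2M$ in $C([0,\delta])$. Since $\abs{\gamma(s)^2 - \tilde\gamma(s)^2} \le (\abs{\gamma(s)} + \abs{\tilde\gamma(s)})\,\abs{\gamma(s) - \tilde\gamma(s)}$, for $\delta$ small enough (depending only on $K$ and $M$) the map $\Phi$ sends this ball into itself and is a contraction, so Banach's fixed point theorem produces a unique solution on $[0,\delta]$.

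Next I would patch these local solutions together and pass to the maximal interval of existence $[0,T^*)$ with $T^* \in (0,\infty]$; uniqueness on this interval follows from the local uniqueness by the usual ``supremum of agreement times'' argument. If $T^* = \infty$ we are in the first case. If $T^* < \infty$, I would establish the blow-up alternative: were $\gamma$ bounded on $[0,T^*)$, then $\int_0^{T^*}\gamma(s)^2\,ds < \infty$, so $\gamma$ would extend continuously to $T^*$ (using continuity of $f$), and restarting the fixed-point construction from $T^*$ would contradict maximality; hence $\gamma$ is unbounded near $T^*$, and since $\gamma - f$ is nondecreasing and $f$ is bounded, in fact $\gamma(t) \to \infty$ as $t \to T^*$, so $T := T^*$ works.

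The comparison statement is the step I expect to be the main obstacle, and it is the one actually used in the proof of \cref{thm:ising}. There is one subtlety: the conclusion genuinely needs $\alpha \ge 0$ (a deep negative excursion of $\alpha$ inflates $\int \alpha^2$ and could otherwise push a later value of $\alpha$ above $\gamma$), which is harmless since in every application $\alpha$ is an operator norm. Given $\alpha \ge 0$, I would first reduce to continuous $\alpha$: the majorant $\tilde\alpha(t) := f(t) + K\int_0^t \alpha(s)^2\,ds$ satisfies $\tilde\alpha \ge \alpha \ge 0$, has $\tilde\alpha - f$ absolutely continuous (hence inherits continuity from $f$), and from $0 \le \alpha \le \tilde\alpha$ one gets $\alpha^2 \le \tilde\alpha^2$, so $\tilde\alpha(t) \le f(t) + K\int_0^t \tilde\alpha(s)^2\,ds$ as well, and it suffices to bound $\tilde\alpha$. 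Then, for $\epsilon > 0$, let $\gamma_\epsilon$ solve $\gamma_\epsilon(t) = f(t) + \epsilon + K\int_0^t \gamma_\epsilon(s)^2\,ds$ on its maximal interval. Since $\alpha(0) \le f(0) < \gamma_\epsilon(0)$ and both functions are continuous, if $\alpha$ ever reached $\gamma_\epsilon$ there would be a first such time $t_0$, with $\alpha \le \gamma_\epsilon$ on $[0,t_0]$ and $\alpha(t_0) = \gamma_\epsilon(t_0)$; but then $\alpha(s)^2 \le \gamma_\epsilon(s)^2$ on $[0,t_0]$ gives $\alpha(t_0) \le f(t_0) + K\int_0^{t_0}\alpha^2 \le f(t_0) + K\int_0^{t_0}\gamma_\epsilon^2 < \gamma_\epsilon(t_0)$, a contradiction. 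Hence $\alpha < \gamma_\epsilon$ throughout its interval; applying the same argument with $\alpha$ replaced by $\gamma_{\epsilon'}$ for $\epsilon' < \epsilon$ shows $\gamma_\epsilon$ is monotone in $\epsilon$ and bounded below by $\gamma$, and a standard Grönwall estimate for continuous dependence of the Volterra solution on the additive forcing identifies $\lim_{\epsilon \downarrow 0} \gamma_\epsilon = \gamma$ locally uniformly, with the interval of existence of $\gamma_\epsilon$ exhausting $[0,T^*)$. This yields $\alpha \le \gamma$ on $[0,T^*)$, as claimed.
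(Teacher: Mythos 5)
Your argument is correct, but it takes a different route from the paper: the paper's entire proof of \cref{lem:integral-equation} is a citation to Chapter 5 of \cite{lakshmikantham1969differential}, invoking Theorem 5.2.1 for existence, Theorem 5.4.4 for uniqueness, and Theorem 5.3.1 for the comparison inequality, whereas you give a self-contained treatment (Picard--Banach contraction on $C([0,\delta])$, continuation to a maximal interval, the blow-up alternative, and a comparison proof via the $\epsilon$-perturbed forcing $f+\epsilon$ with a first-crossing argument and a Gr\"onwall-type continuous-dependence limit as $\epsilon \downarrow 0$). What your route buys is transparency about the hypotheses the textbook theorems quietly require: the nonlinearity $x \mapsto Kx^2$ is monotone only on $\R_{\ge 0}$, so the comparison conclusion genuinely needs $\alpha \ge 0$ (your counterexample-style remark is right), and one needs continuity of $f$ and local integrability of $\alpha^2$ for the majorant $\tilde\alpha$ to make sense; all of these hold in the application inside \cref{thm:ising}, where $f(t) = 1 - r(\alpha t)$ is continuous and $\alpha(t)$ is a supremum of operator norms of covariance matrices, but they are left implicit in the lemma statement. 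What the paper's route buys is brevity and the full generality/rigor of the reference, including the measure-theoretic bookkeeping you compress into phrases like ``a standard Gr\"onwall estimate'' (the locally uniform convergence $\gamma_\epsilon \to \gamma$ and the exhaustion of $[0,T^*)$ by the intervals of existence of $\gamma_\epsilon$ deserve the bootstrap argument spelled out: on $[0,t_1]$ with $\gamma \le M_1$, as long as $\gamma_\epsilon - \gamma \le 1$ one has $\gamma_\epsilon - \gamma \le \epsilon e^{K(2M_1+1)t_1}$, which closes the bootstrap for small $\epsilon$). With those two small elaborations your proof would stand on its own as a replacement for the citation.
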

\begin{proof}
See Chapter 5 of \cite{lakshmikantham1969differential} --- Theorem 5.3.1 establishes the comparison inequality, Theorem 5.2.1 establishes existence, and Theorem 5.4.4 establishes uniqueness. 
\end{proof}
We recall the following basic fact.
\begin{lemma}\label{lem:bernoulli}
Suppose that $\mu$ is a probability measure on $\set{\pm 1}$ such that $\mu(x) \propto \exp(hx)$. Then $\E_{\mu} X = \tanh(h)$ and $\Var_{\mu} X = 1 - \tanh^2(h)$. 
\end{lemma}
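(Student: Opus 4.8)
The plan is to compute both quantities directly from the explicit two-point form of $\mu$. First I would pin down the normalizing constant: since the only two atoms are $\pm 1$ and $\mu(x) \propto \exp(hx)$, normalization forces $\mu(1) = e^{h}/(e^{h} + e^{-h})$ and $\mu(-1) = e^{-h}/(e^{h} + e^{-h})$.

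For the mean, I would then write $\E_{\mu} X = 1\cdot \mu(1) + (-1)\cdot \mu(-1) = (e^{h} - e^{-h})/(e^{h} + e^{-h})$, which is exactly the definition of $\tanh(h)$. For the variance, the key observation is that every point of the support satisfies $x^{2} = 1$, so $\E_{\mu} X^{2} = 1$ with no computation needed; hence $\Var_{\mu} X = \E_{\mu} X^{2} - (\E_{\mu} X)^{2} = 1 - \tanh^{2}(h)$.

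There is essentially no obstacle here — the statement is a one-line exponential-family computation. The only things to keep straight are the identity $\tanh(h) = (e^{h} - e^{-h})/(e^{h} + e^{-h})$ and the elementary formula $\Var = \E X^{2} - (\E X)^{2}$. As a sanity check one may note that $1 - \tanh^{2}(h)$ equals $\tfrac{d}{dh}\tanh(h)$, consistent with the general principle that for the one-parameter exponential family $\mu(x) \propto e^{hx}$ the variance is the second derivative of the log-partition function $\log(e^{h} + e^{-h})$; but the direct evaluation above is the cleanest route, and it is exactly the form in which the lemma is used downstream (e.g.\ in the definition of $r(a)$ via $\tanh^{2}$ in \cref{thm:ising}).
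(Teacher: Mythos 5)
Your proposal is correct and matches the paper's proof, which simply notes that the claim follows from the definitions together with the identity $\tanh(h) = \frac{e^h - e^{-h}}{e^h + e^{-h}}$; your explicit normalization and the observation that $X^2 = 1$ on the support fill in exactly those routine details.
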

\begin{proof}
This follows from the definitions using that 
$\tanh(h) = \frac{e^h - e^{-h}}{e^h + e^{-h}}$. 
\end{proof}
\begin{lemma}\label{lem:tbound}
Under the assumptions of \cref{thm:ising}, let $\nu_t$ be the stochastic localization process with driving matrix $C_t = J^{1/2}$. Then for any $t \in [0,1]$,
\[ \E{\Sigma_{1} \given \mathcal{F}_t} \preceq \diag([1 - r(\alpha \cdot (1 - t))]_{i \in [n]}) \]
where $r$ is defined in \cref{lem:elementary-lb} and  $\Sigma_1 = \cov{\nu_1}$.
\end{lemma}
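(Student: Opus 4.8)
The plan is to make $\nu_1$ explicit, recognize it as a product measure, and then isolate a single coordinate so that the claim reduces to a one‑dimensional statement about single‑spin stochastic localization. First I would invoke the Gaussian channel interpretation (\cref{thm:gci}) with $C = J^{1/2}$: writing $X^*\sim\nu$ for the target and $B_t$ for the driving Brownian motion (independent of $X^*$), we have $y_t = tX^* + J^{-1/2}B_t$ and $\nu_t(x)\propto\exp\parens*{\dotprod{Jy_t+h,x}+\tfrac{1-t}{2}\dotprod{x,Jx}}$. At $t=1$ the quadratic term vanishes, so $\nu_1(x)\propto\exp(\dotprod{w,x})$ with $w:=Jy_1+h$ is a product measure on $\set{\pm1}^n$; by \cref{lem:bernoulli}, $\Sigma_1=\cov{\nu_1}$ is the diagonal matrix with entries $1-\tanh^2(w_i)$. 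Hence $\E{\Sigma_1\given\F_t}$ is diagonal, and the lemma is equivalent to the scalar bound $\E{\tanh^2(w_i)\given\F_t}\ge r(\alpha(1-t))$ for each $i$.

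Next I would split $w$ along the Brownian increment after time $t$. Since $y_1-y_t=(1-t)X^*+J^{-1/2}(B_1-B_t)$, the $i$-th coordinate is $w_i = m_i + (1-t)\alpha\,X^*_i + G_i$, where $m_i := (Jy_t+h)_i + (1-t)\dotprod{J_{i,\sim i}, X^*_{\sim i}}$ and $G_i := \parens*{J^{1/2}(B_1-B_t)}_i$; one checks that $m_i$ is precisely the conditional external field felt by spin $i$ under $\nu_t$ (the $J_{ii}x_i^2=\alpha$ term being constant). The two facts I would use are: (i) the posterior law of $X^*$ given $\F_t$ is $\nu_t$, so given $(\F_t, X^*_{\sim i})$ the coordinate $X^*_i$ is distributed as $\Berpm{\tanh m_i}$; and (ii) given $\F_t$, the increment process $(B_s-B_t)_{s\ge t}$ is a Brownian motion independent of $X^*$ (being independent of $\sigma(X^*,(B_s)_{s\le t})\supseteq\F_t$), so $G_i$ is independent of $X^*$ given $\F_t$ and $G_i\sim\Normal{0,(1-t)\alpha}$ because $\cov{J^{1/2}(B_1-B_t)}=(1-t)J$. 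Combining (i) and (ii), conditionally on $(\F_t,X^*_{\sim i})$ we get $w_i\overset{d}{=}m_i+(1-t)\alpha\,\sigma+\sqrt{(1-t)\alpha}\,g$ with $\sigma\sim\Berpm{\tanh m_i}$ and $g\sim\Normal{0,1}$ independent --- which is exactly the external field of a single‑spin stochastic localization on the base measure $\Berpm{\tanh m_i}$ run for time $(1-t)\alpha$.

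It then remains to invoke the scalar inequality $\E_{\sigma\sim\Berpm{\tanh m},\,g\sim\Normal{0,1}}{\tanh^2(m+a\sigma+\sqrt a\,g)}\ge r(a)$, valid for all $m\in\R$ and $a\ge0$, which is \cref{lem:elementary-lb} (the statement in which $r$ is defined). Applying it with $a=(1-t)\alpha$ gives $\E{\tanh^2(w_i)\given\F_t,X^*_{\sim i}}\ge r(\alpha(1-t))$, and taking the expectation over $X^*_{\sim i}\sim\nu_t$ by the tower property yields $\E{\tanh^2(w_i)\given\F_t}\ge r(\alpha(1-t))$, i.e.\ $\E{\Sigma_1\given\F_t}\preceq\diag\parens*{[1-r(\alpha(1-t))]_{i\in[n]}}$, as desired. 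I expect the main obstacle to be the measure‑theoretic bookkeeping in the middle paragraph --- carefully justifying that, after conditioning on both $\F_t$ and $X^*_{\sim i}$, the pair $(X^*_i,G_i)$ is an independent draw from $\Berpm{\tanh m_i}\otimes\Normal{0,(1-t)\alpha}$, and correctly identifying the resulting one‑dimensional object as the single‑spin process so that \cref{lem:elementary-lb} applies verbatim. The genuinely hypercube‑specific content (the gap between this and the analogous Gaussian computation) is entirely confined to \cref{lem:elementary-lb}.
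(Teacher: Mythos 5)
Your proposal is correct and follows essentially the same route as the paper's proof: via the Gaussian-channel interpretation, $\nu_1$ is conditionally on $\F_t$ a product measure whose field at site $i$ splits into the $\nu_t$-conditional field $m_i$ plus $(1-t)\alpha X^*_i$ plus an independent $\mathcal{N}(0,(1-t)\alpha)$ term, and the worst-case-over-$m_i$ bound yields $r(\alpha(1-t))$, exactly as in the paper's reduction to the single-spin statement. The only nit is attribution: the scalar inequality you invoke (that the minimum over the bias $b$ is attained at $b=0$) is \cref{lem:ising-optimal-b}, while \cref{lem:elementary-lb} is the Ising-model statement whose proof you have effectively inlined by conditioning on $X^*_{\sim i}$.
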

\begin{proof}
We first prove the result when $t = 0$ and then generalize. 
Recall from \cref{thm:gci} that the measure $\nu_1$ is equal in law to the product measure
\[ \mu(x) \propto \exp\parens*{\dotprod{ J Y_1 + h, x }}\]
where $Y_1 = X^* + J^{-1/2} G$ is generated by sampling $X^* \sim \nu_0$ and independently $G \sim \Normal{0,I}$, so that
\[ J Y_1 + h = J X^* + h + J^{1/2} G. \]
Hence $\Sigma_1$ is diagonal and furthermore by applying \cref{lem:bernoulli} we have
\[ \E{(\Sigma_1)_{ii}} = 1 - \E_{X^* \sim \nu_0,g \sim \Normal{0,1}}{\tanh^2(\dotprod{ J_i, X^* } + h_i + \sqrt{J_{ii}} g)} \le 1 - r(J_{ii}) \]
where $r$ is defined in \cref{lem:elementary-lb} which we applied to get the final bound. This proves the result in the special case $t = 0$.

We now consider the case where $t > 0$. We know, either by explicit calculation or by \cref{thm:gci}, that $\nu_t$ is an Ising model with interaction matrix $(1 - t) J$ and external field $h_t$. Furthermore, conditional on $\mathcal F_t$ the measure $\nu_1$ is equal in law to a product measure
\[ \mu(x) \propto \exp(\dotprod{ J Y'_1 + h_t, x })\]
where $X' \sim \nu_t$
\[ Y'_1 = (1 - t) X^* + \sqrt{1 - t}\, J^{-1/2} G' \]
with $G' \sim \Normal{0,I}$ so that
\[ (1 - t) J Y'_1 + h_t = (1 - t) J X' + h_t + \sqrt{1 - t} J^{1/2} G. \]
Hence applying \cref{lem:elementary-lb} in the same way proves the result in general. 
\end{proof}
\begin{lemma}\label{lem:elementary-lb}
Suppose that $\nu$ is an Ising model on $n$ sites with interaction matrix $J$ and external field $h$. Then for any $i \in [n]$
\[ \E_{X^* \sim \nu_0,g \sim \Normal{0,1}}{\tanh^2(\dotprod{ J_i, X^* } + h_i + \sqrt{J_{ii}} g)} \ge \inf_{b} r(J_{ii})   \] 
where
\[ r(a) = \E_{\sigma \sim \operatorname{Ber}_{\pm}(0), g \sim \Normal{0,1}}{\tanh^2(a\sigma + \sqrt{a} g)}\]
and $\operatorname{Ber}_{\pm}(z)$ is the law of a random variable valued in $\set{\pm 1}$ with mean $z$. 
\end{lemma}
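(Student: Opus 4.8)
The plan is to reduce the bound to a one‑spin statement and then prove that statement by an exact computation. Fix $i \in [n]$ and set $a = J_{ii}$; we may assume $a>0$, since for $a=0$ the left side equals $\E_{X^*\sim\nu}{\tanh^2(\dotprod{J_{i,\sim i}, X^*_{\sim i}} + h_i)} \ge 0 = r(0)$. Write $\dotprod{J_i, X^*} + h_i = a X^*_i + B$ with $B := \dotprod{J_{i,\sim i}, X^*_{\sim i}} + h_i$. From the Gibbs formula, and since $x_i^2 = 1$ on $\set{\pm 1}$, the conditional law of $X^*_i$ given $X^*_{\sim i}$ (equivalently, given $B$) is $\Berpm{\tanh B}$ — this is the identity already used in the proof of \cref{thm:ising} together with \cref{lem:bernoulli} — and $g$ is independent of $X^*$. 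Conditioning on $X^*_{\sim i}$ therefore gives
\[ \E_{X^* \sim \nu,\, g \sim \Normal{0,1}}{\tanh^2(\dotprod{J_i, X^*} + h_i + \sqrt a\, g)} = \E_{B}{\phi(B)}, \qquad \phi(b) := \E_{\sigma \sim \Berpm{\tanh b},\, g \sim \Normal{0,1}}{\tanh^2(a\sigma + b + \sqrt a\, g)}. \]
Substituting $\tanh 0 = 0$ shows $\phi(0) = r(a)$, so it suffices to prove the pointwise bound $\phi(b) \ge \phi(0)$ for every $b\in\R$; then $\E_B{\phi(B)} \ge r(a)$, and in particular $\inf_b \phi(b) = r(a)$ is attained at $b=0$, which is the form asserted by the lemma. (Conceptually, $\phi(b)$ is $1$ minus the expected time‑$1$ covariance of a single‑spin stochastic localization process started from $\Berpm{\tanh b}$ with driving scalar $\sqrt a$; the claim is that this covariance is largest when the starting spin is unbiased.)

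The heart of the argument is a closed form for $1-\phi(b)$. Using $1-\tanh^2 = 1/\cosh^2$, write $1-\phi(b) = \E{1/\cosh^2(b+W)}$, where $W := a\sigma + \sqrt a\, g$ with $\sigma \sim \Berpm{\tanh b}$ and $g\sim\Normal{0,1}$ independent; thus $W$ is a mixture of $\Normal{a,a}$ and $\Normal{-a,a}$ with weights $\P(\sigma = \pm 1) = e^{\pm b}/(2\cosh b)$. Expanding the two Gaussian exponents, the $w$‑dependent pieces $\pm w - a/2$ combine with these weights into a single hyperbolic cosine, so the density of $W$ is
\[ f_W(w) = \frac{e^{-a/2}}{\sqrt{2\pi a}\,\cosh b}\, e^{-w^2/(2a)}\, \cosh(b+w). \]
Integrating $1/\cosh^2(b+w)$ against $f_W$ and using the elementary cancellation $\cosh(z)/\cosh^2(z) = 1/\cosh(z)$, exactly one power of $\cosh(b+w)$ disappears and we are left with a plain Gaussian smoothing of $1/\cosh$:
\[ 1-\phi(b) = \frac{e^{-a/2}}{\cosh b}\, \E_{g\sim\Normal{0,1}}{\frac{1}{\cosh(b + \sqrt a\, g)}} = \frac{e^{-a/2}}{\cosh b}\, h(b), \qquad h(b) := \E_{g\sim\Normal{0,1}}{\frac{1}{\cosh(b + \sqrt a\, g)}}. \]
In particular $1-\phi(0) = e^{-a/2}\, h(0)$.

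Two trivial facts now finish the proof. First, $\cosh b \ge \cosh 0 = 1$, so $1/\cosh b \le 1$. Second, $z \mapsto 1/\cosh z$ is even and log‑concave — its logarithm $-\log\cosh z$ is concave, since $(\log\cosh)'' = 1/\cosh^2 > 0$ — hence its Gaussian convolution $h$ is again even and log‑concave, and therefore maximized at the origin: $h(b)\le h(0)$ for all $b$. Since also $h>0$,
\[ 1-\phi(b) = \frac{e^{-a/2}}{\cosh b}\, h(b) \le e^{-a/2}\, h(b) \le e^{-a/2}\, h(0) = 1-\phi(0), \]
that is, $\phi(b) \ge \phi(0) = r(a)$. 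Averaging over $B$ proves \cref{lem:elementary-lb}; the general‑$t$ instance invoked in \cref{lem:tbound} is this same lemma applied to the Ising measure $\nu_t$, whose interaction matrix is $(1-t)J$, so the role of $J_{ii}$ is played by $(1-t)\alpha$.

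I expect the monotonicity $\phi(b) \ge \phi(0)$ in $b$ to be the only genuinely delicate point, and the obvious attempts to establish it fail: $1-\phi$ is not concave (already $1/\cosh$ is not concave), so one cannot get $\E_B{\phi(B)}\ge\phi(0)$ from a midpoint‑convexity or Jensen argument, and the convolution $h$ is not concave either. What rescues the bound is the exact identity $\cosh/\cosh^2 = 1/\cosh$ after writing down the density of $W$: it collapses $1-\phi(b)$ into $e^{-a/2}(\cosh b)^{-1}$ times a Gaussian smoothing of $1/\cosh$, reducing the whole claim to the two one‑line facts above.
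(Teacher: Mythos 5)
Your proof is correct, and the reduction step (conditioning on $B=\dotprod{J_{i,\sim i}, X^*_{\sim i}}+h_i$, noting the conditional law of $X^*_i$ is $\Berpm{\tanh B}$, and reducing to the pointwise claim $\phi(b)\ge\phi(0)$) is exactly the paper's; but the key monotonicity — which is precisely the content of \cref{lem:ising-optimal-b} — is proved by a genuinely different route. The paper establishes it via a coupling of two single-spin stochastic localization processes (\cref{lem:rotation-coupling-cons,lem:rotation-coupling-sl}): using spherical symmetry of the base measure, it couples the processes started from tilts $v$ and $w$ with $\norm{v}\le\norm{w}$ by running them independently until the tilt norms meet and then rotating one Brownian motion onto the other, so that the covariance started from the smaller tilt stochastically dominates; this argument is stated for arbitrary spherically symmetric measures on $\R^N$ and never computes anything explicitly. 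You instead exploit the explicit structure of the $\set{\pm 1}$ spin: writing $1-\phi(b)=\E{1/\cosh^2(b+W)}$, computing the density of the Gaussian mixture $W$, and using the cancellation $\cosh(b+w)/\cosh^2(b+w)=1/\cosh(b+w)$ to collapse everything to $1-\phi(b)=e^{-a/2}\,h(b)/\cosh b$ with $h$ the Gaussian convolution of $1/\cosh$; then $\cosh b\ge 1$ together with evenness and log-concavity of $h$ (convolution of even log-concave functions) gives $\phi(b)\ge\phi(0)=r(a)$. Your identity I verified line by line and it is correct. What each approach buys: yours is shorter, self-contained, and even yields a closed form for $t_a(b)$, but it is tied to the two-point spin space; the paper's coupling argument sacrifices explicitness for generality, applying verbatim to any spherically symmetric single-spin measure (the setting relevant to the semi-log-concave and $O(N)$ extensions), where no such hyperbolic cancellation is available.
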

\begin{proof}
Using that
\[ \E{X^*_i \given X^*_{\sim i}} = \tanh(J_{i,\sim i} \cdot X^*_{\sim i} + h_i) \]
and considering $b = \dotprod{ J_{i,\sim i}, X^*_{\sim i} } + h_i$,
we can lower bound
\begin{align*}  
\E_{X^* \sim \nu_0,g \sim \Normal{0,1}}{\tanh^2(\dotprod{ J_i, X^* } + h_i + \sqrt{J_{ii}} g)} 
&= \E_{X^* \sim \nu_0,g \sim \Normal{0,1}}{\E{\tanh^2(\dotprod{ J_i, X^* } + h_i + \sqrt{J_{ii}} g) \mid b}} \\
&\ge \inf_{b \in \R} \E_{\sigma \sim \operatorname{Ber}_{\pm}(\tanh(b)), g \sim \Normal{0,1}}{\tanh^2(b + J_{ii} \sigma + \sqrt{J_{ii}} g)}.
\end{align*}
By Lemma~\ref{lem:ising-optimal-b}, the infimum on the right hand side is attained at $b = 0$.
\end{proof}

\subsection{A monotonicity inequality via coupling stochastic localizations}
In this section we prove the following lemma as a consequence of a more general construction of a coupling of two stochastic localization processes, which we elaborate upon below.
\begin{lemma}\label{lem:ising-optimal-b}
For any $a \ge 0$, the global minimum of the function $t_a : \mathbb{R} \to \mathbb R$ defined by
\[ t_a(b) = \E_{\sigma \sim \operatorname{Ber}_{\pm}(\tanh(b)), g \sim \Normal{0,1}}{\tanh^2(b + a \sigma + \sqrt{a} g)} \]
is attained at $b = 0$.
\end{lemma}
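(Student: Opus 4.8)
The plan is to recognize $t_a(b)$ as one minus the expected time-$1$ covariance of a single-spin stochastic localization process, and then to prove that $b=0$ is the minimizer by coupling the processes attached to different external fields. Applying the Gaussian channel interpretation (\cref{thm:gci}) to the one-site measure $\nu_0=\Berpm{\tanh(b)}$ with constant scalar driving coefficient $C=\sqrt a$, and using that $x^2=1$ on $\set{\pm 1}$ so that the quadratic term in $F_t$ disappears, the localized measure at time $1$ is $\nu_1=\Berpm{\tanh(b+aX^*+\sqrt a\,B_1)}$ with $X^*\sim\nu_0$ and $B_1\sim\Normal{0,1}$. By \cref{lem:bernoulli}, $\cov{\nu_1}=1-\tanh^2(b+aX^*+\sqrt a\,B_1)$, so
\[ t_a(b)=\E*{\tanh^2(b+aX^*+\sqrt a\,B_1)}=1-\E*{\cov{\nu_1}}. \]
Hence the lemma is equivalent to the statement that the expected time-$1$ covariance of single-spin stochastic localization is maximized, over the choice of initial external field, at field $0$. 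I would also record the elementary symmetry $t_a(-b)=t_a(b)$ (flip $X^*\mapsto-X^*$ and $B_1\mapsto-B_1$), so that it suffices to prove $t_a(b)\ge t_a(0)$ for $b\ge0$.

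Next I would couple the stochastic localization processes started from $\Berpm{\tanh(b)}$, $\Berpm{0}$ and $\Berpm{-\tanh(b)}$ by driving them with a common Brownian motion $W_t$. Passing to the effective-external-field parametrization $\theta_t$ (so that $\mean{\nu_t}=\tanh(\theta_t)$), Itô's formula turns the SL equation $d\mean{\nu_t}=\sqrt a\,\cov{\nu_t}\,dW_t$ into the scalar diffusion with constant noise
\[ d\theta_t=a\tanh(\theta_t)\,dt+\sqrt a\,dW_t,\qquad \theta_0\in\set{b,0,-b}. \]
Since the drift $a\tanh(\cdot)$ is $a$-Lipschitz, the one-dimensional SDE comparison theorem preserves the initial ordering, giving $\theta_1^{(-b)}\le\theta_1^{(0)}\le\theta_1^{(b)}$ almost surely; moreover, because the diffusion coefficient is constant it drops out of the linearized equation, so the stochastic flow $\theta_0\mapsto\theta_t$ is nondecreasing with derivative $\exp\bigl(a\int_0^t(1-\tanh^2\theta_s)\,ds\bigr)\ge1$. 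This common-noise coupling --- a special case of the general construction of a coupling of two stochastic localization processes developed below --- is the engine of the argument.

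It then remains to combine the pathwise ordering with the symmetry. Since $t_a(b)=\E{\tanh^2(\theta_1^{(b)})}$, $t_a(0)=\E{\tanh^2(\theta_1^{(0)})}$, and $\E{\tanh^2(\theta_1^{(b)})}=\E{\tanh^2(\theta_1^{(-b)})}$, the desired bound becomes $\E{\tanh^2(\theta_1^{(0)})}\le\frac12\E{\tanh^2(\theta_1^{(b)})+\tanh^2(\theta_1^{(-b)})}$ under the constraint $\theta_1^{(-b)}\le\theta_1^{(0)}\le\theta_1^{(b)}$. I expect this to be the main obstacle: $\tanh^2$ is even, so the pathwise ordering of the $\theta$'s does not by itself order the $\tanh^2$-values, and on paths where $\theta_1^{(0)}$ lands near $0$ while $\theta_1^{(\pm b)}$ are bounded away from $0$ the inequality is false pointwise; one must argue in expectation, using the expansion property of the flow together with control of where the flow places $\theta_1^{(0)}$ relative to $\theta_1^{(\pm b)}$ to absorb these bad paths. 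Equivalently, one can make the obstruction fully explicit by writing $t_a(b)-t_a(0)=p_+\bigl(L(a)-L(a+b)\bigr)+p_-\bigl(L(a)-L(a-b)\bigr)$ with $p_\pm=\frac12(1\pm\tanh b)$ and $L(u)=\E_{g\sim\Normal{0,1}}{1-\tanh^2(u+\sqrt a\,g)}$, which is an even function maximized at $u=0$ (being the Gaussian convolution of the log-concave function $1-\tanh^2$); the coupling construction amounts precisely to a proof that this expression is $\ge0$ for all $a,b\ge0$, the only nontrivial range being $0< b\le 2a$.
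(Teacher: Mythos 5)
Your setup is correct and matches the paper's starting point: via \cref{thm:gci} the quantity $1-t_a(b)$ is the expected covariance of a single-spin stochastic localization run for time $a$ from $\operatorname{Ber}_{\pm}(\tanh b)$, the symmetry $t_a(-b)=t_a(b)$ holds, and your identity $t_a(b)-t_a(0)=p_+\bigl(L(a)-L(a+b)\bigr)+p_-\bigl(L(a)-L(a-b)\bigr)$ is right. But the argument stops exactly where the content of the lemma lies. The synchronous (common-noise) coupling only preserves the \emph{signed} order $\theta_1^{(-b)}\le\theta_1^{(0)}\le\theta_1^{(b)}$, and since the observable $\tanh^2$ is even, this order yields nothing pointwise; as you acknowledge, the pointwise inequality fails on paths of positive probability (in fact the bad event is the opposite of the one you describe: it is when $\theta_1^{(0)}$ ends up large in magnitude while the path started from the opposite field lands near $0$). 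The promise to ``absorb these bad paths'' via the expansion of the flow is never carried out, and the closing claim that ``the coupling construction amounts precisely to a proof'' of the displayed inequality is not true of the coupling you constructed: as written, your argument only covers the trivial range $b>2a$, where both terms are nonnegative because $L$ is even and decreasing on $\R_{\ge 0}$. The inequality $p_+\bigl(L(a)-L(a+b)\bigr)\ge p_-\bigl(L(a-b)-L(a)\bigr)$ on $0<b\le 2a$, which is the lemma, is left unproved; this is a genuine gap.

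The paper closes this gap with a different coupling. In \cref{lem:rotation-coupling-sl}, the two processes started from fields $0$ and $b$ are driven by \emph{independent} Brownian motions until the stopping time at which the magnitudes of their effective fields first coincide; from then on the second process is driven by the suitably reflected (in higher dimension, rotated) increments of the first, so that $|v_t|\le|w_t|$ holds for all $t$ almost surely. Because $1-\tanh^2$ is symmetric and decreasing in the magnitude of the field, the time-$a$ covariances are ordered pointwise under this coupling, and taking expectations (\cref{lem:rotation-coupling-cons}) gives $1-t_a(0)\ge 1-t_a(b)$. The idea your proposal is missing is that the coupling must control the \emph{absolute values} of the effective fields, which a common-noise coupling cannot do. Alternatively, your analytic reformulation can be finished without any coupling, but it needs two facts beyond unimodality of $L$: (i) $L$ is log-concave (it is the Gaussian convolution of the log-concave function $1-\tanh^2$), which for the midpoint $a$ of $a\pm b$ gives $L(a)^2\ge L(a+b)L(a-b)$, equivalently $\frac{L(a-b)-L(a)}{L(a)-L(a+b)}\le\frac{L(a)}{L(a+b)}$; and (ii) $\log L$ is $2$-Lipschitz, since $\bigl|\tfrac{d}{du}(1-\tanh^2 u)\bigr|\le 2(1-\tanh^2 u)$ pointwise, so $\frac{L(a)}{L(a+b)}\le e^{2b}=\frac{1+\tanh b}{1-\tanh b}=\frac{p_+}{p_-}$. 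Chaining these yields exactly the required inequality, but neither ingredient appears in your write-up.
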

\begin{proof}
It is equivalent to show that the global maximum of $1 - t_a$ is attained at $b = 0$. Using the Gaussian channel interpretation (Theorem~\ref{thm:gci}), $1 - t_a(b)$ can be interpreted as the operator norm of the covariance matrix of stochastic localization initialized at the measure $\operatorname{Ber}_{\pm}(\tanh(b))$ and run for time $a$. Since $\tanh^2$ is monotonically increasing on $\mathbb{R}_{\ge 0}$ and symmetric, the assumption of \cref{lem:rotation-coupling-cons} is satisfied, and so the desired conclusion follows.
\end{proof}
It remains to establish the desired fact about stochastic localization.
Note that the stochastic localization process used here is different from the one in the previous section (the process here is operating on a single ``spin'' of the system, without interacting with the other spins).
Informally, the general lemma established below allows us to lift certain monotonicity inequalities from the final measure arrived at in stochastic localization back to the initial measure.

The argument takes advantage of symmetry in a key way. 
\begin{definition}
We say a probability measure $\mu$ on $\mathbb R^N$ is spherically symmetric if for every orthogonal matrix $R$ and measurable set $S$, $\mu(R S) = \mu(S)$.
\end{definition}
For example, when $N = 1$ spherical symmetry means that the measure is unchanged under the reflection map $x \mapsto -x$. We can now state a more general monotonicity result.

\begin{lemma}\label{lem:rotation-coupling-cons}
Suppose that $\mu$ is a probability measure in $\mathbb R^N$ which is spherically symmetric. Let $T \ge 0$.
Suppose that the following reverse monotonicity inequality holds: for any $v,w\in \mathbb R^N$ such that $\|v\| \le \|w\|$,
\[ \|\cov{\mathcal T_v \mathcal G_T \mu}\|_{OP} \ge \|\cov{\mathcal T_w \mathcal G_T \mu}\|_{OP}. \]
Then the following reverse monotonicity inequality holds. Let  $v,w \in \mathbb R^N$ be such that $\|v\| \le \|w\|$.
Suppose that for $h \in \{v,w\}$, the law of stochastic process $\mu_t(h)$ for $t \ge 0$ is stochastic localization with identity driving matrix initialized at $\mu_0(h) = \mathcal T_h \mu$, i.e. initialized at
\[ \frac{d\mu_0(h)}{d\mu}(x) \propto e^{\dotprod{h,x}}. \]
Then
\[ \E{\|\cov{\mu_T(v)}\|_{OP}} \ge \E{\|\cov{\mu_T(w)}\|_{OP}}. \]
\end{lemma}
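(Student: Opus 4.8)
The plan is to reduce the asserted reverse monotonicity to a one-dimensional comparison for the \emph{radial part} of the external-field process that drives stochastic localization. First, I would invoke the Gaussian channel interpretation (\cref{thm:gci}) with $C=I$ and initial measure $\mathcal T_h\mu$ and use that $\mathcal G_t$ commutes with the tilt operators to write the process as $\mu_t(h)=\mathcal T_{u_t(h)}\mathcal G_t\mu$ with $u_0(h)=h$, where $u_t(h)$ is the observation process ($y_t+h$ in the notation of \cref{thm:gci}). The standard innovations representation then gives the SDE $du_t(h)=\mean{\mu_t(h)}\,dt+dW_t=\mean{\mathcal T_{u_t(h)}\mathcal G_t\mu}\,dt+dW_t$ for the $N$-dimensional Brownian motion $W$ from \cref{thm:gci}. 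In particular $\|\cov{\mu_T(h)}\|_{OP}=\|\cov{\mathcal T_{u_T(h)}\mathcal G_T\mu}\|_{OP}$, so everything is a function of $u_T(h)$.

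Next I would extract two consequences of spherical symmetry. Since $\mu$, and hence $\mathcal G_t\mu$, is spherically symmetric, $\mathcal T_{Ra}\mathcal G_t\mu=R_\ast(\mathcal T_a\mathcal G_t\mu)$ for every rotation $R$; taking $R$ in the stabilizer of $a$ forces $\mean{\mathcal T_a\mathcal G_t\mu}$ to be parallel to $a$, so $\mean{\mathcal T_a\mathcal G_t\mu}=c_t(\|a\|)\,a/\|a\|$ for a scalar function $c_t\ge 0$ (nonnegativity by integrating $\cov{\mathcal T_{sa}\mathcal G_t\mu}\succeq 0$ along the segment from $0$ to $a$, and $c_t$ is in fact smooth because it is a radial component of the gradient of the log-Laplace transform of $\mathcal G_t\mu$, finite for all fields since the tilts are assumed to exist). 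The same symmetry forces $\|\cov{\mathcal T_a\mathcal G_T\mu}\|_{OP}$ to depend only on $\|a\|$, and the hypothesis of the lemma says exactly that this defines a \emph{non-increasing} function $\phi$ with $\|\cov{\mathcal T_a\mathcal G_T\mu}\|_{OP}=\phi(\|a\|)$. Plugging the radial form of the drift into the SDE for $u_t(h)$ and applying It\^o's formula to $x\mapsto\|x\|$ — smooth away from the origin, which the radial component a.s.\ avoids for $N\ge 2$ since it behaves near $0$ like a Bessel process of dimension $N$, while for $N=1$ one uses Tanaka's formula and picks up a local-time term at $0$ — I obtain that $\rho_t(h):=\|u_t(h)\|$ satisfies the \emph{autonomous} one-dimensional equation $d\rho_t(h)=\parens*{c_t(\rho_t(h))+\tfrac{N-1}{2\rho_t(h)}}dt+d\beta_t$ (plus $dL^0_t$ when $N=1$), with $\rho_0(h)=\|h\|$, where $\beta_t=\int_0^t\dotprod{u_s(h)/\|u_s(h)\|,dW_s}$ is a standard one-dimensional Brownian motion by L\'evy's characterization. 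Thus $\rho_\cdot(h)$ is a time-inhomogeneous one-dimensional diffusion whose law depends on $h$ only through $\|h\|$.

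To finish, I would couple the two radial diffusions started from $\|v\|$ and $\|w\|$ by driving them with the \emph{same} Brownian motion $\beta$ (and identical reflection at $0$ if $N=1$). Because $\|v\|\le\|w\|$ and the two equations have identical, sufficiently regular coefficients, the standard comparison principle for one-dimensional diffusions gives $\rho_T(v)\le\rho_T(w)$ almost surely in this coupling; since $\phi$ is non-increasing, $\phi(\rho_T(v))\ge\phi(\rho_T(w))$ pointwise, and as the radial SDE reproduces the correct marginal law of each of $\rho_\cdot(v),\rho_\cdot(w)$, taking expectations yields $\E{\|\cov{\mu_T(v)}\|_{OP}}=\E{\phi(\rho_T(v))}\ge\E{\phi(\rho_T(w))}=\E{\|\cov{\mu_T(w)}\|_{OP}}$, which is the claim.

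The main obstacle is the third and fourth steps made rigorous: one must verify that $u_t(h)$ a.s.\ avoids the origin when $N\ge 2$ so that It\^o's formula for $\|\cdot\|$ genuinely applies, correctly handle the $N=1$ case with local time at $0$, and confirm that the radial drift $r\mapsto c_t(r)+(N-1)/(2r)$ is regular enough — locally Lipschitz on $(0,\infty)$ with the Bessel-type singularity controlled in the usual way — for pathwise uniqueness and the comparison theorem to be legitimate. Everything else is bookkeeping with \cref{thm:gci} and the symmetry of $\mu$.
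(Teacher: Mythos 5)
Your argument is correct, but it takes a genuinely different route from the paper. The paper proves the stronger coupling statement \cref{lem:rotation-coupling-sl}: it runs the two $N$-dimensional localization processes with independent Brownian motions until the stopping time $\tau$ at which the tilt norms meet, then drives the $w$-process after $\tau$ by a rotated copy of the $v$-process's Brownian motion, so that $w_t = R v_t$ (using \cref{lem:mean-under-orthogonal}) and hence $\norm{v_t} \le \norm{w_t}$ for all $t$ almost surely; the expectation inequality then follows exactly as in your last step, since the hypothesis makes $\opnorm{\cov{\tilt_a \mathcal{G}_T \mu}}$ a non-increasing function of $\norm{a}$. You instead project to the radial part: spherical symmetry makes the drift $\mean{\tilt_{u_t}\mathcal{G}_t\mu}$ parallel to $u_t$, so $\rho_t = \norm{u_t}$ solves an autonomous one-dimensional SDE with Bessel-type drift, and the classical comparison theorem with a common driving Brownian motion gives $\rho_T(v) \le \rho_T(w)$. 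What your route buys is a reduction to standard one-dimensional comparison machinery with no need to invent a coupling; what it costs is exactly the technical overhead you flag: justifying It\^o's formula for $\norm{\cdot}$ (origin avoidance for $N \ge 2$, Tanaka/local time for $N = 1$, which is the case actually used in the Ising application), and verifying local Lipschitzness of $c_t$ and pathwise uniqueness near the Bessel singularity so that the comparison theorem legitimately applies. The paper's rotation-after-crossing coupling sidesteps all of these issues (it only needs uniqueness for the full SDE, which is cited from standard theory), and for that reason is shorter; but your radial reduction is sound, and in the paper's applications (compactly supported base measures such as $\Berpm{}$ and spheres) the regularity conditions you list do hold, so your plan would go through.
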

\begin{proof}
This follows directly from \cref{lem:rotation-coupling-sl}, since it constructs a coupling where $\|\cov{\mu_T(v)}\|_{OP} \ge \|\cov{\mu_T(w)}\|_{OP}$ holds almost surely (in other words, one stochastically dominates the other). Taking expectation on both sides proves the desired result.
\end{proof}

Now we state and prove the crucial coupling result. 

\begin{lemma}\label{lem:rotation-coupling-sl}
Suppose that $\mu$ is a probability measure in $\mathbb R^N$ which is rotationally symmetric. For any $v,w$ with $\|v\| \le \|w\|$ there exists a joint coupling of stochastic processes $\mu_t(v), \mu_t(w)$ indexed by $t \ge 0$ such that:
\begin{enumerate}
    \item The marginal law of the stochastic process $(\mu_t(v))_t$ is stochastic localization initialized at $\mu_0(v) = \mathcal T_v \mu$ with driving matrix $C_t = I$.
    \item Likewise, the marginal law of $(\mu_t(w))_t$ is stochastic localization initialized at $\mu_0(w) = \mathcal T_w \mu$ with driving matrix $C_t = I$.
    \item There exist adapted $\mathbb R^N$-valued stochastic processes $v_t,w_t$ such that $\mu_t(v) = \mathcal{T}_{v_t} \mathcal{G}_t \mu$, $\mu_t(w) = \mathcal{T}_{w_t} \mathcal{G}_t \mu$, and $\|v_t\| \le \|w_t\|$ for all times $t$ almost surely.
\end{enumerate}
\end{lemma}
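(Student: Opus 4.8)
The plan is to realize both localization processes in their \emph{tilt parametrization}, reduce via the spherical symmetry of $\mu$ to the case where $v$ and $w$ are parallel, and then couple the two processes by synchronizing their radial parts while matching their angular parts through a time change.

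\emph{Step 1: tilt representation and reduction to parallel $v,w$.} Using \cref{thm:gci} with driving matrix $C = I$, stochastic localization initialized at $\mathcal{T}_h\mu$ can be written as $\mu_t(h) = \mathcal{T}_{h_t}\mathcal{G}_t\mu$, where $h_t = h + y_t$ and, by the innovations representation $dy_t = \mean{\mu_t(h)}\,dt + d\tilde W_t$, the tilt vector solves the autonomous SDE $dh_t = \mean{\mathcal{T}_{h_t}\mathcal{G}_t\mu}\,dt + dW_t$ with $h_0 = h$, $W_t$ the driving Brownian motion of \eqref{eqn:sl}; here one uses that $\mathcal{G}_t$ commutes with tilts and that along the channel $d\mu_t(h)/d\mu(x)\propto\exp(\dotprod{h + y_t,x} - \tfrac t2\norm{x}^2)$. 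Since $\mu$, hence $\mathcal{G}_t\mu$, is spherically symmetric, the drift field $F_t(h) := \mean{\mathcal{T}_h\mathcal{G}_t\mu}$ is rotation equivariant, so $F_t(h) = m_t(\norm h)\,h/\norm h$ for a scalar $m_t$ with $m_t(0) = 0$, and $\norm{\cov{\mathcal{T}_h\mathcal{G}_t\mu}}_{OP}$ depends on $h$ only through $\norm h$. Using equivariance once more, it suffices to build the coupling when $v = a e_1$, $w = b e_1$ with $0 \le a \le b$: given such a coupling, we push each process forward by a fixed rotation taking $a e_1 \mapsto v$, resp. $b e_1 \mapsto w$, which preserves the law of each stochastic localization process (invariance of $\mu$ and $\mathcal{G}_t\mu$) and preserves the norms of the tilt vectors, so property 3 transfers.

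\emph{Step 2: the coupling for parallel $v,w$ (case $N\ge 2$).} Build $v_t$ as stochastic localization from $\mathcal{T}_{ae_1}\mu$ with driving Brownian motion $W_t$, and write $\rho_t := \norm{v_t}$, $\hat u_t := v_t/\rho_t$. Applying It\^o to $\norm{v_t}$ and to $v_t/\norm{v_t}$, and using that the drift of $v_t$ is radial, one finds that $\rho_t$ solves the autonomous one-dimensional SDE $d\rho_t = \bigl(m_t(\rho_t) + \tfrac{N-1}{2\rho_t}\bigr)dt + d\beta_t$ with $\beta_t := \int_0^t \dotprod{\hat u_s, dW_s}$ a standard Brownian motion, and that $\hat u_t$ is a Brownian motion $U(\cdot)$ on $S^{N-1}$ run on the random clock $\int_0^t \rho_s^{-2}\,ds$, driven by the component of $W$ orthogonal to $\hat u$ and hence with $\dotprod{\beta,U} \equiv 0$. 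Now let $\rho_t'$ solve the \emph{same} one-dimensional SDE with the \emph{same} $\beta_t$ but started at $\rho_0' = b \ge a$; the coefficients are locally Lipschitz on $(0,\infty)$ and neither process reaches $0$ (Bessel-type comparison, using $m_t(0)=0$), so the standard one-dimensional comparison theorem gives $\rho_t' \ge \rho_t$ for all $t$ a.s. Set $\hat u_t' := U\bigl(\int_0^t (\rho_s')^{-2}\,ds\bigr)$ — the same spherical Brownian motion on the slower clock — and define $w_t := \rho_t'\,\hat u_t'$. A short It\^o computation, in which the $\pm\tfrac{N-1}{2\rho_t'}\hat u_t'\,dt$ terms from the radial drift and from the spherical curvature cancel, yields $dw_t = m_t(\rho_t')\,\hat u_t'\,dt + dW_t'$ where $W_t' := \int_0^t \hat u_s'\,d\beta_s + (\text{tangential part})$ has $\langle W'\rangle_t = tI$, hence is a Brownian motion. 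Thus $w_t$ is the tilt process of stochastic localization from $\mathcal{T}_{we_1}\mu$, while $\mu_t(v) = \mathcal{T}_{v_t}\mathcal{G}_t\mu$ and $\mu_t(w) = \mathcal{T}_{w_t}\mathcal{G}_t\mu$ with $\norm{v_t} = \rho_t \le \rho_t' = \norm{w_t}$; properties 1 and 2 hold by construction.

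\emph{Step 3: the case $N = 1$ and the main obstacle.} When $N = 1$ there is no angular motion, but $\rho_t = \abs{v_t}$ is now a diffusion on $[0,\infty)$ reflected at the origin (its It\^o expansion acquires a local-time term), so one instead couples the reflected SDEs for $\abs{v_t}$ and $\abs{w_t}$ driven by the same Brownian motion — monotonicity of the Skorokhod map gives $\abs{v_t}\le\abs{w_t}$ for all $t$ — and recovers genuine solutions $v_t,w_t$ of the one-dimensional localization SDE from their absolute values by assigning independent signs to excursions. The step I expect to be the main obstacle is the verification that the reassembled process $w_t$ is genuinely a stochastic localization process with identity driving matrix, i.e.\ that $W_t'$ is a Brownian motion and is the noise driving \eqref{eqn:sl}; this requires careful bookkeeping of the It\^o/curvature terms and the orthogonality of the radial and angular noises (and, in the $N=1$ case, of the excursion-sign reconstruction). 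By comparison, the radial comparison inequality and the reduction to parallel $v,w$ are routine.
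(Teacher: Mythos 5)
Your construction is essentially sound but follows a genuinely different, and considerably heavier, route than the paper. The paper's proof never decomposes the tilt process into radial and angular parts. It writes both tilt processes via the same SDE $dh_t = dW_t + \mean{\mu_t(h)}\,dt$ (your Step 1), runs $\mu_t(v)$ and $\mu_t(w)$ with \emph{independent} Brownian motions up to the stopping time $\tau = \inf\{t : \|v_t\| \ge \|w_t\|\}$, notes $\|v_\tau\| = \|w_\tau\|$ by continuity, and then, choosing an orthogonal $R$ with $Rv_\tau = w_\tau$, switches the noise driving the $w$-process to $B'_t = R(W_t - W_\tau)\1[t>\tau] + B_{\min(t,\tau)}$. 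Rotation-equivariance of the drift (\cref{lem:mean-under-orthogonal}) shows $Rv_t$ solves the $w$-equation after $\tau$, so pathwise uniqueness gives $w_t = Rv_t$ there; hence $\|v_t\| \le \|w_t\|$ before $\tau$ by definition and $\|v_t\| = \|w_t\|$ after. This handles every $N \ge 1$ uniformly, needs only SDE uniqueness and equivariance of the mean, and avoids the skew-product machinery, the one-dimensional comparison theorem, and all filtration bookkeeping for the time-changed spherical Brownian motion. Your approach buys a pathwise comparison driven by a single radial noise, but at the cost of exactly the verifications you flag (that the reassembled $W'_t$ is a Brownian motion via L\'evy's characterization, orthogonality of the radial and tangential noises, behaviour of $\rho_t$ near $0$).

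The one place where your proposal has a real gap rather than a routine verification is $N = 1$, which is precisely the case invoked in \cref{lem:ising-optimal-b} for the Ising bound ($\mu = \Berpm{}$ on $\R$). There the Tanaka/reflection coupling gives $|v_t| \le |w_t|$, but recovering a genuine solution of the one-dimensional tilt SDE from $|w_t|$ by attaching independent signs to excursions requires an excursion decomposition for a \emph{time-inhomogeneous} diffusion (the drift $m_t$ depends on $t$ through $\mathcal{G}_t\mu$), and you do not justify that the sign-reassembled process solves the SDE for some Brownian motion. This can surely be patched, but the cleanest patch is exactly the paper's device: run the two one-dimensional processes independently until $|v_t| = |w_t|$ and then glue with $R = \pm 1$, which makes the $N=1$ case no harder than $N \ge 2$. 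I would recommend either adopting that stopping-time/rotation gluing outright, or at minimum replacing your $N=1$ excursion argument with it.
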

\begin{proof}
    For simplicity, we give the proof below in the case that $\mu$ is a measure on a discrete set, but the same argument works in general with only minor changes.
     
    The construction is a multi-step process. First we construct $\mu_t(v)$ for all times as stochastic localization driven by an $N$-dimensional Brownian motion $W_t$.
    Recall that the definition of stochastic localization driven by Brownian motion $W_t$ is that
    \[ d\mu_t(v)(x) = \mu_t(v)(x) \langle x - \mean{\mu_t(v)}, dW_t \rangle \]
 so that by Ito's formula
    \begin{align*} d\log \mu_t(v)(x) 
    &= \langle x - \mean{\mu_t(v)}, dW_t \rangle - \frac{1}{2} \|x - \mean{\mu_t(v)}\|^2 dt \\
    &= \langle x, dW_t \rangle + \langle x, \mean{\mu_t(v)} \rangle dt - \frac{1}{2} \|x\|^2 dt - \frac{1}{2} \|\mean{\mu_t(v)}\|^2 dt -\langle \mean{\mu_t(v)}, dW_t\rangle .  
    \end{align*}
    where we note that the last two terms are independent of $x.$
    Hence if we define $v_t$ by $v_0 = v$ and $dv_t = dW_t + \mean{\mu_t(v)} dt$ then we have $\mu_t(v) = \mathcal{T}_{v_t} \mathcal{G}_t \mu$.

    Now to construct $\mu_t(w)$, let $B_t$ be an independent $N$-dimensional Brownian motion and construct $\mu_t$ and $w_t$ in the same way as stochastic localization driven by Brownian motion $B_t$, for all times $t$ up to stopping time 
    \[ \tau = \inf \{ t \ge 0 : \|v_t\| \ge \|w_t\| \}. \]
    Observe by continuity that $\|v_{\tau}\| = \|w_{\tau}\|$ almost surely since initially $\|v\| \le \|w\|$.
    From time $\tau$ onward, we define $\mu_t(w)$ in the following way. Let $R$ be an orthogonal operator such that $R v_{\tau} = w_{\tau}$. For $t > \tau$, define
    \[ B'_t = R(W_t - W_{\tau})\1[t > \tau]  + B_{\min(t,\tau)} \]
    and observe that the law of the process $B'_t$ is that of a standard $N$-dimensional Brownian motion. Now for all times $t$ we can define
    \[ d\mu_t(w)(x) = \mu_t(w)(x) \langle x - \mean{\mu_t(w)}, dB'_t \rangle \]
    to be the stochastic localization process driven by $B'_t$, which extends the definition we gave before past the stopping time $\tau$. Similarly, we can now define for all times that $w_t$ is the corresponding tilt given by the SDE $dw_t = dB'_t + \mean{\mu_t(w)}dt$.

    On the other hand, for $t > \tau$ observe that
    \[ d (R v_t) = R dv_t = R dW_t + R \mean{\mathcal T_{v_t} \mathcal G_t \mu} = dB'_t + \mean{\mathcal T_{R v_t} \mathcal G_t \mu} \]
    by \cref{lem:mean-under-orthogonal}. This is exactly the SDE defining $w_t$, so by standard uniqueness theory \cite{karatzas2014brownian} we have that $w_t = R v_t$ for $t > \tau$. 

    In conclusion, from the definition of the stopping time we have that $\|v_t\| \le \|w_t\|$ for $t < \tau$, and by our construction above we have that $\|v_t\| = \|w_t\|$ for $t \ge \tau$. This proves the final claim.
\end{proof}
\begin{lemma}\label{lem:mean-under-orthogonal}
Suppose that $\mu$ is a spherically symmetric distribution in $\mathbb R^N$. Then for any orthogonal matrix $R$ and vector $v \in \mathbb{R}^N$ such that $\mathcal T_v \mu$ exists, $R \mean{\mathcal T_v \mu} = \mean{\mathcal T_{R v} \nu}$.
\end{lemma}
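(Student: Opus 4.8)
The plan is to unwind the definition of the exponential tilt and push the orthogonal matrix through the expectation by a change of variables, using spherical symmetry to identify the pushforward of $\mu$ under $R$ with $\mu$ itself. First I would record the key consequence of spherical symmetry: if $X \sim \mu$ then $RX \sim \mu$ for every orthogonal $R$ (equivalently, the pushforward $R_*\mu = \mu$). This is just the measurable-set definition of spherical symmetry rephrased in terms of random variables.

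Next I would write, from \cref{def:exponential-tilts},
\[ \mean{\tilt_v \mu} = \frac{\E_{x \sim \mu}{x\, e^{\dotprod{v,x}}}}{\E_{x \sim \mu}{e^{\dotprod{v,x}}}}, \]
so that
\[ R\mean{\tilt_v \mu} = \frac{\E_{x \sim \mu}{(Rx)\, e^{\dotprod{v,x}}}}{\E_{x \sim \mu}{e^{\dotprod{v,x}}}}. \]
Now substitute $y = Rx$, i.e. $x = R^\intercal y$. Since $R_*\mu = \mu$, the law of $y$ is again $\mu$, and $\dotprod{v,x} = \dotprod{v, R^\intercal y} = \dotprod{Rv, y}$. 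Hence the numerator becomes $\E_{y\sim\mu}{y\, e^{\dotprod{Rv,y}}}$ and the denominator $\E_{y\sim\mu}{e^{\dotprod{Rv,y}}}$, and the ratio is exactly $\mean{\tilt_{Rv}\mu}$.

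Along the way I would note that the same change of variables shows $\E_{x\sim\mu}{e^{\dotprod{v,x}}} = \E_{y\sim\mu}{e^{\dotprod{Rv,y}}}$, so $\tilt_v\mu$ exists if and only if $\tilt_{Rv}\mu$ exists, which makes the hypothesis symmetric and the statement well-posed. There is no real obstacle here — the only thing to be slightly careful about is justifying the change of variables at the level of measures (rather than densities) in the non-discrete case, but this is standard since $R$ is a measure-preserving bijection of $\R^N$ under $\mu$; in the discrete setting used in the surrounding proofs it is immediate.
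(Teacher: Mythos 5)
Your proposal is correct and follows essentially the same route as the paper's proof: expand the tilted mean as a ratio of integrals, change variables $y = Rx$, and use spherical symmetry of $\mu$ together with $\dotprod{v, R^{\intercal}y} = \dotprod{Rv, y}$, noting as the paper does that the normalizing constant is rotationally invariant. No gaps.
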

\begin{proof}
Expanding the definition, we have that
\begin{align*} 
R \mean{\mathcal T_v \mu} 
&= \frac{1}{Z_v} \int R x e^{\langle v, x \rangle} d\mu(x)  \\
&= \frac{1}{Z_v} \int y e^{\langle v, R^{-1} y \rangle} d\mu(y) \\
&= \frac{1}{Z_v} \int y e^{\langle R v, y \rangle} d\mu(y) \\
&= \mean{\mathcal T_{R v} \mu}
\end{align*}
where $Z_v = \int e^{\langle v, x \rangle} d\mu(x)$ is the (rotationally invariant) moment generating function, in the second step
we made a change of variables $y = R x$ and used rotational invariance of $\mu$ i.e. $ d\mu(Rx) = d\mu (x)$, and in the third step we again used that $R$ is an orthogonal transformation i.e. $ \langle v, R^{-1} y\rangle = v^\intercal R^{-1} y = v^\intercal R^\intercal y = \langle Rv, y\rangle$.
\end{proof}
    \Tag{\section{Rapid mixing of Glauber dynamics in spin systems over semi-log-concave base measures}\label{sec:semi}
In this section, we establish a version of our analysis for spin systems where the base measure is semi-log-concave. This greatly generalizes the setting of the Ising models. The general result is generally less tight than specializing the technique to the particular measure of interest, but as we will show the result is easy to apply, and interestingly the resulting integral equation often admits a closed-formula solution.

We will need to use the following standard notation in the argument: $A \otimes B$ denotes the Kronecker product of matrices $A$ and $B$. Explicitly, if $A$ is a matrix of dimension $m \times n$ then 
\[ A \otimes B = \begin{bmatrix} 
A_{11} B & \cdots & A_{1n} B \\
\vdots & \ddots & \vdots \\ A_{m1} B & \cdots & A_{mn} B\end{bmatrix}. \]

\begin{theorem}\label{thm:semilogconcave}
Suppose that $\mu = \bigotimes_{i = 1}^n \mu^{(i)}$ is a product measure where each $\mu^{(i)}$ is a probability measure on $\mathbb R^N$. Let $J \succeq 0$ with $J_{ii} = \alpha$ for all $i \in [n]$, and define the measure $\nu$ by its density
\[ \frac{d\nu}{d\mu}(x) \propto \exp\left(\frac{1}{2} \sum_{i,j} J_{ij} \langle x_i, x_j \rangle \right). \]
Let $\eta = \alpha/\|J\|_{OP}$.
Suppose that for all $i \in [n]$ and $s \in [0,\alpha]$ the measure $\mathcal{G}_{-s} \mu^{(i)}$ defined by
\begin{equation}\label{eqn:strong-slc}
\frac{d\mathcal{G}_{-s} \mu^{(i)}}{d\mu^{(i)}}(x) \propto \exp(s\|x\|^2/2) 
\end{equation}
exists and is $\rho(s) > 0$ semi-log-concave. Then $\|\cov{\nu}\|_{OP} \le q_{\eta,\rho}(\|J\|_{OP})$ where $q_{\eta,\rho}$ solves the integral equation
\begin{equation}\label{eqn:qsemilogconcave}
q_{\eta,\rho}(z) = \frac{1}{\eta z + [\rho(\eta z)]^{-1}} + \int_0^z q_{\eta,\rho}(s)^2 ds
\end{equation}
and ATE holds with constant at most
\[ \exp\left(\int_0^{\|J\|_{OP}} q_{\eta,\rho}(z) dz\right).  \]
\end{theorem}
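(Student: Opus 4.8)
The plan is to run the same stochastic-localization / trickle-down argument that proves \cref{thm:ising}, but on the product space $(\mathbb R^N)^n\cong\mathbb R^{Nn}$, replacing the two Ising-specific inputs — the $\tanh$ identity and the ``optimal external field'' computation (\cref{lem:elementary-lb,lem:ising-optimal-b}) — by facts about semi-log-concave measures. Write the interaction as $\tfrac12\dotprod{x,(J\otimes I_N)x}$ and run stochastic localization on $\nu$ with the constant driving matrix $C=J^{1/2}\otimes I_N$, so $C^2=J\otimes I_N$ and $\opnorm{C^2}=\opnorm{J}$; set $\Sigma_t=\cov{\nu_t}$. By the Gaussian channel interpretation (\cref{thm:gci}) the quadratic part of the tilt $F_t$ cancels the quadratic in $\nu$, so $\nu_1$ is a product measure $\bigotimes_i\tilt_{w_i}\mu^{(i)}$ for a random field $w$, and $\nu_t$ is again a measure of the same type with interaction $(1-t)J$. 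The trickle-down equation \cref{eqn:sl-trickledown} becomes
\[ \Sigma_t=\E{\Sigma_1\given\F_t}+\int_t^1\E{\Sigma_s(J\otimes I_N)\Sigma_s\given\F_t}\,ds. \]

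The core step is the analogue of \cref{lem:tbound}: conditionally on $\F_t$,
\[ \E{\Sigma_1\given\F_t}\preceq\frac{1}{(1-t)\alpha+[\rho((1-t)\alpha)]^{-1}}\,(I_n\otimes I_N). \]
To see this, fix a site $i$, condition also on all other spins, and let $b\in\mathbb R^N$ be the effective external field they induce at $i$; then the conditional law of spin $i$ under $\nu_t$ is $\tilt_b\mathcal G_{-(1-t)\alpha}\mu^{(i)}$, which exists and is $\rho((1-t)\alpha)$-semi-log-concave by hypothesis since $(1-t)\alpha\le\alpha$. Continuing the localization from time $t$ to time $1$, the coefficient $(1-t)\alpha/2$ multiplying $\norm{x_i}^2$ is driven to zero while a Gaussian of covariance $(1-t)\alpha\,I_N$ is added to the field at $i$; matching these scales shows that, conditionally on $b$, the random measure $\tilt_{w_i}\mu^{(i)}$ is distributed exactly as the time-$(1-t)\alpha$ marginal of a one-spin stochastic localization (identity driving matrix) started from $\tilt_b\mathcal G_{-(1-t)\alpha}\mu^{(i)}$ — this is the promised ``multi-spin vs.\ single-spin'' correspondence, and $\mathcal G_{(1-t)\alpha}$ of that starting measure is itself a tilt of $\mu^{(i)}$. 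Let $P_s$ be the expected covariance of this one-spin process at time $s$. The one-spin instance of \cref{eqn:sl-trickledown} shows $P_s'\preceq-\E{Q_s^2}$ with $Q_s$ the (random) covariance at time $s$; operator convexity of $X\mapsto X^2$ and Jensen give $\E{Q_s^2}\succeq P_s^2$, hence $P_s'\preceq-P_s^2$ with $P_0\preceq\rho((1-t)\alpha)\,I_N$. Following $\lambda_{\max}(P_s)$ and using Cauchy--Schwarz reduces this to the scalar inequality whose comparison solution is $s\mapsto 1/(s+[\rho((1-t)\alpha)]^{-1})$; evaluating at $s=(1-t)\alpha$ and averaging over $b$ and over $i$ proves the bound.

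The rest is assembled exactly as in \cref{thm:ising}. Put $\beta(t)=\sup_h\opnorm{\cov{\pi_{(1-t)J,h}}}$, where $\pi_{(1-t)J,h}$ is the measure with base $\mu$, interaction $(1-t)J$, and external field $h$ (all covered by the hypothesis since $(1-t)\alpha\le\alpha$). Running the localization above from any such model and bounding $\opnorm{\Sigma_s(J\otimes I_N)\Sigma_s}\le\opnorm{J}\opnorm{\Sigma_s}^2$ gives
\[ \beta(t)\le\frac{1}{(1-t)\alpha+[\rho((1-t)\alpha)]^{-1}}+\opnorm{J}\int_t^1\beta(s)^2\,ds. \]
Reversing time ($\gamma(t)=\beta(1-t)$, $K=\opnorm{J}$, $f(t)=1/(\alpha t+[\rho(\alpha t)]^{-1})$) turns this into $\gamma(t)\le f(t)+K\int_0^t\gamma(s)^2\,ds$, and \cref{lem:integral-equation} bounds $\opnorm{\cov{\nu}}=\beta(0)=\gamma(1)$ by the value at $1$ of the solution to $\widetilde\gamma(t)=f(t)+\opnorm{J}\int_0^t\widetilde\gamma(s)^2\,ds$; the substitution $q_{\eta,\rho}(z)=\widetilde\gamma(z/\opnorm{J})$ (so $\alpha z/\opnorm{J}=\eta z$) turns this into \cref{eqn:qsemilogconcave} and yields $\opnorm{\cov{\nu}}\le q_{\eta,\rho}(\opnorm{J})$. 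For ATE, the covariance bound holds uniformly over external fields and hence along the whole localization path, so \cref{lem:entropic-stability} makes each $\nu_t$ entropically stable with parameter $\opnorm{J}\,q_{\eta,\rho}(\opnorm{J}(1-t))$ for $\psi(x,y)=\tfrac12\norm{C(x-y)}^2$; \cref{prop:conservation-of-entropy} then transfers entropy from the product measure $\nu_1$ back to $\nu$ with factor $\exp\parens*{\int_0^{\opnorm{J}}q_{\eta,\rho}(z)\,dz}$, and combining tensorization of $\nu_1$ with the supermartingale property (\cref{lem:supermartingale}) for the coordinate-erasing kernels $D_{\sim i}$ gives ATE with exactly that constant.

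The main obstacle is the one-spin identification in the core step: one must verify carefully that, after conditioning on the neighbor field $b$, the $i$-th marginal of $\nu_1$ really is a time-$(1-t)\alpha$ stochastic localization of $\tilt_b\mathcal G_{-(1-t)\alpha}\mu^{(i)}$ — in particular that the signal and noise accumulated at site $i$ between times $t$ and $1$ have the matching magnitudes $(1-t)\alpha\cdot(\text{spin})$ and $\Normal{0,(1-t)\alpha I_N}$, which is precisely where the constant $J_{ii}=\alpha$ enters, and that the cross-correlations of the localization noise between site $i$ and the other sites are irrelevant because only the $i$-th block of $\Sigma_1$ is examined. A minor additional point is passing from the matrix differential inequality $P_s'\preceq-P_s^2$ to the scalar comparison bound, and accommodating the possibility that $q_{\eta,\rho}$ blows up at a finite argument — which is expected (it is the threshold $s(\eta)$ in the constant-$\rho$ case) and is exactly what \cref{lem:integral-equation} is built to handle.
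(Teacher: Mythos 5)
Your proposal is correct and takes essentially the same route as the paper: stochastic localization with driving matrix $(J\otimes I_N)^{1/2}$, the trickle-down equation, the conditional identification of each site's time-$1$ marginal with a single-spin localization started from a tilt of $\mathcal G_{-(1-t)\alpha}\mu^{(i)}$ (this is exactly the paper's decoupling step, \cref{lem:decoupling-sl}), followed by the same comparison-ODE change of variables and the entropic-stability/supermartingale argument for ATE. The only cosmetic difference is that where the paper cites the covariance-decay estimate \cref{lem:cov-decay} from \cite{eldan2020taming}, you re-derive it from the one-spin trickle-down equation via Jensen/Cauchy--Schwarz and a scalar comparison, which is essentially that lemma's standard proof.
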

\begin{remark}
The assumption that the reweighted measure \eqref{eqn:strong-slc} is $\rho(s)$-semi-log-concave lets us obtain much better results than if we only assume $\rho(0)$-semi-log-concavity for $\mu^{(i)}$. For example, if $\mu^{(i)}$ is $\gamma$-strongly log concave, then this assumption is satisfied with $\rho(s) = \frac{1}{\gamma - s}$ by the Brascamp-Lieb theorem \cite{brascamp2002some}. In fact, the same bound holds for all $\gamma$-semi-log-concave measures, see \cref{thm:trickle-down-semi-logconcave}. In our main applications, $\rho$ is constant, which is much better than the generic guarantee, and also in this case we will show that the integral equation is analytically solvable.
\end{remark}
In the next sections, we prove this theorem and also show how to derived the aforementioned analytical solution.
\subsection{Decoupling argument for stochastic localization}
Our analysis of the Ising model involved a key reduction from arguing about a joint stochastic localization process to a more tractable \emph{single-site} stochastic localization process. As a first step in our argument, we revisit this idea and formalize it as a general decoupling principle.
\begin{lemma}\label{lem:decoupling-sl}
Suppose that $\mu = \bigotimes_{i = 1}^n \mu^{(i)}$ is a product measure where each $\mu^{(i)}$ is a probability measure on $\mathbb R^N$. Let $J \succeq 0$ with $J_{ii} = \alpha$ for all $i$, and define the measure $\nu_0$ by its density
\[ \frac{d\nu_0}{d\mu}(x) \propto \exp\left(\frac{1}{2} \sum_{i,j} J_{ij} \langle x_i, x_j \rangle \right). \]
Let $\nu_t$ for $t \ge 0$ be the stochastic localization process with driving matrix $C_t = (J \otimes I_N)^{1/2}$, and for $i \in [n]$ let $\nu_{t}^{(i)}  = \mathcal{L}_{X \sim \nu_t}(X_i)$ denote the (random) marginal law of $X_i$ under $\nu_t$. 
For every $i \in [n]$ there exists a probability measure $\lambda_i$ on $\mathbb{R}^N$ such that the marginal law of the random measure $\nu_1^{(i)}$ satisfies
\[ \mathcal{L}\left(\nu_1^{(i)}\right) = \mathcal{L}\left(\tilde{\nu}^{(i)}_1\right) \]
where the random measure $\tilde{\nu}^{(i)}_1$ is constructed as follows:
\begin{enumerate}
\item Let $H_i \sim \lambda_i$ and $\tilde W_i = (\tilde W_{i,t})_{t}$ is a standard Brownian motion in $\mathbb R^N$ independent of $H_i$.
\item Let \[ \frac{d\tilde{\nu}^{(i)}_0}{d\mu^{(i)}}(x^*_i) \propto \exp(J_{ii} \|x^*_i\|^2 + \dotprod{ H_i, x^*_i }). \]
\item Then, $(\tilde{\nu}^{(i)}_t)_{t \ge 0}$ is the stochastic localization process generated by $\tilde W$ with driving matrix $C_t = J_{ii} I_N$ starting from $\tilde{\nu}^{(i)}_0$.
\end{enumerate}
Furthermore, we have the following block-diagonal decomposition of the average covariance:
\[ \E{\cov{\nu_1}} = \begin{bmatrix} \E*{\cov*{\tilde{\nu}^{(1)}_1}} \\ 
& \ddots \\ & & \E*{\cov*{\tilde{\nu}^{(n)}_1}} \end{bmatrix}. \]
\end{lemma}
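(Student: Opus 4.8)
The approach is to run the joint $(nN)$-dimensional stochastic localization all the way to time $1$, where the Gaussian channel interpretation (\cref{thm:gci}) turns $\nu_1$ into an explicit \emph{random product measure}, and then to recognize each single-spin marginal of that product measure as the time-$1$ law of a single-site stochastic localization. Concretely, I would apply \cref{thm:gci} with $C = (J\otimes I_N)^{1/2}$ (taking $J\succ 0$; the degenerate case follows by a routine limiting argument): it shows $\nu_1$ is equal in law to the measure with density proportional to $\exp(\dotprod{(J\otimes I_N)y_1, x})$ with respect to $\mu$, where $(J\otimes I_N)y_1 = (J\otimes I_N)X^* + (J\otimes I_N)^{1/2}B_1$ with $X^*\sim\nu_0$ and $B_1\sim\Normal{0,I_{nN}}$ independent. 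Because $\mu = \bigotimes_i\mu^{(i)}$ and the exponent splits as $\sum_i\dotprod{w_i,x_i}$ with $w_i := [(J\otimes I_N)y_1]_i = \sum_j J_{ij}X^*_j + N_i$ and $N_i := [(J\otimes I_N)^{1/2}B_1]_i$, the measure $\nu_1$ is almost surely a product measure $\bigotimes_i\tilt_{w_i}\mu^{(i)}$, so $\nu_1^{(i)} = \tilt_{w_i}\mu^{(i)}$; one checks $N_i\sim\Normal{0,J_{ii}I_N}=\Normal{0,\alpha I_N}$, independent of $X^*$.

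Next, I would split off the self-interaction: $w_i = H_i + \alpha X^*_i + N_i$ with $H_i := \sum_{j\ne i}J_{ij}X^*_j$. Conditioning on $X^*_{\sim i}$, the hypothesis $J_{ii}=\alpha$ lets one extract the term $\tfrac{\alpha}{2}\|x_i\|^2 + \dotprod{H_i,x_i}$ from $\tfrac12\sum_{j,k}J_{jk}\dotprod{x_j,x_k}$, which identifies the conditional law of $X^*_i$ as $\tilt_{H_i}\mathcal{G}_{-\alpha}\mu^{(i)}$ --- exactly $\tilde\nu_0^{(i)}$ with external field $H_i$ --- while $N_i$ remains $\Normal{0,\alpha I_N}$ and independent of $X^*_i$. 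It then remains to observe that $\tilt_{H_i + \alpha X^*_i + N_i}\mu^{(i)}$ is precisely the time-$1$ law of single-site stochastic localization started from $\tilde\nu_0^{(i)}$: applying \cref{thm:gci} once more, in dimension $N$ with isotropic driving matrix, the time-$1$ measure is $\tilt_{\alpha X^* + \sqrt{\alpha}\tilde B_1}\mathcal{G}_{\alpha}\bigl(\tilt_{H_i}\mathcal{G}_{-\alpha}\mu^{(i)}\bigr) = \tilt_{H_i + \alpha X^* + \sqrt{\alpha}\tilde B_1}\mu^{(i)}$ with $X^*\sim\tilt_{H_i}\mathcal{G}_{-\alpha}\mu^{(i)}$ and $\sqrt{\alpha}\tilde B_1\sim\Normal{0,\alpha I_N}$, using that the $\mathcal{G}$-operators add in their index and commute with exponential tilts (the precise normalization of the driving matrix in the statement only reparametrizes time). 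Integrating over $H_i\sim\lambda_i:=\mathcal{L}\bigl(\sum_{j\ne i}J_{ij}X^*_j\bigr)$ with $X^*\sim\nu_0$ gives $\mathcal{L}(\nu_1^{(i)}) = \mathcal{L}(\tilde\nu_1^{(i)})$.

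Finally, the block-diagonal formula for $\E{\cov{\nu_1}}$ is immediate from the product structure of the first step: $\cov{\nu_1}$ is almost surely block-diagonal with $i$-th diagonal block $\cov{\nu_1^{(i)}}$, so taking expectations and substituting $\mathcal{L}(\nu_1^{(i)})=\mathcal{L}(\tilde\nu_1^{(i)})$ block-by-block yields the claim.

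I expect no conceptual obstacle --- this is a ``decoupling via Gaussian channel'' computation --- but the points needing genuine care are: (i) verifying $N_i = [(J\otimes I_N)^{1/2}B_1]_i\sim\Normal{0,\alpha I_N}$ and that it is independent of $X^*$ (hence, after conditioning on $X^*_{\sim i}$, a fresh isotropic Gaussian independent of $X^*_i$); (ii) the clean extraction of the self-interaction, where $J_{ii}=\alpha$ is used to make the conditional law of $X^*_i$ equal to $\tilde\nu_0^{(i)}$; and (iii) the second application of \cref{thm:gci} for the single-site process, including the time/normalization reparametrization and the accompanying $\mathcal{G}$-semigroup bookkeeping. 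The Kronecker-product indexing and the two nested invocations of \cref{thm:gci} are the main sources of potential error.
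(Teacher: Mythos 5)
Your proposal is correct and follows essentially the same route as the paper's proof: one application of \cref{thm:gci} to the joint process to exhibit $\nu_1$ as a random product of exponential tilts of the $\mu^{(i)}$ by $w_i = H_i + \alpha X^*_i + (\text{noise}\sim \Normal{0,\alpha I_N})$, identification of the conditional law of $X^*_i$ given $H_i$ with $\tilde{\nu}^{(i)}_0$, a second application of \cref{thm:gci} for the single-site localization, and block-diagonality of $\cov{\nu_1}$ from the product structure. The only differences are points of bookkeeping (the exact marginal law of the noise block, and the $\alpha/2$ versus $\alpha$ normalization of the self-interaction, which you absorb into a time reparametrization) that the paper treats informally.
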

\begin{proof}
The first step is to apply \cref{thm:gci} --- let $B_t$ and $W_t$ be the Brownian motions defined there.
By \cref{thm:gci}, at time one stochastic localization cancels out the quadratic term in the log-density yielding
\[ \frac{d\nu_1}{d\mu}(x) \propto \exp\parens*{\dotprod{ (J \otimes I_N) y_1, x }} \]
where $y_1 = X^* + (J \otimes I_N)^{-1/2} B_1 \in \mathbb{R}^{Nn}$ and $X^* \sim \nu_0$. Defining $h^1 = (J \otimes I_N) y_1$ and writing $h^1 = (h^1_i)_{i = 1}^n$ we have that for any $i \in [n]$
\[ h^1_i = \sum_{j = 1}^n J_{ij} X^*_j + \sqrt{J_{ii}} G_i \]
where $G_i = \frac{(B_1)_i}{\sqrt{J_{ii}}} \sim \Normal{0, I_N}$ is independent of $X^*$. Furthermore,
\[ \frac{d\nu_{1}^{(i)}}{\mu^{(i)}}(x_1) \propto \exp(h^1_i x_1). \]
Define $H_i = \sum_{j \ne i} J_{ij} X^*_j$ so that 
\[ h^1_i = H_i + J_{ii} X^*_i + \sqrt{J_{ii}} G_i. \]
Let $\tilde{\nu}^{(i)}_0$ denote the conditional law of $X^*_i$ given $H_i$ 
and observe that its relative density with respect to the base measure is
\[ \frac{d\tilde{\nu}^{(i)}_0}{d\mu^{(i)}}(x^*_i) \propto \exp\left(J_{ii} \|x^*_i\|^2 + \dotprod{ H_i, x^*_i }\right). \]
Let $\tilde{\nu}^{(i)}_t$ be defined as the stochastic localization process with driving matrix $\tilde{C}_t = J_{ii} I_N$ generated by a fresh $N$-dimensional Brownian motion $\tilde W_i$. By appealing to \cref{thm:gci} again to determine the law of $\tilde{\nu}^{(i)}_1$ conditional on $H_i$, we find that the two constructions of random measures on spin $i$ via stochastic localization have the same conditional law:
\[ \mathcal{L}(\nu_1^{(i)} \mid H_i) = \mathcal{L}(\tilde{\nu}^{(i)}_1 \mid H_i). \]
Taking the expectation over $H_i$ yields the stated equality in law, where $\lambda_i$ is the law of $H_i$. From this, the equality in covariances follows since $\nu_1$ is a product measure.
\end{proof}

\subsection{Proof of covariance bound and approximate tensorization}
The decoupled stochastic localization process can be controlled using the following lemma.
\begin{lemma}[Eqn. (16) of \cite{eldan2020taming}]\label{lem:cov-decay}
Let $\mu_t$ be the stochastic localization process initialized at $\mu_0$ with time-homogeneous driving matrix $C_t = Q$. Then
\[ \E{Q\cov{\mu_t}Q} \preceq \left(tI + (Q\cov{\mu_0}Q)^{-1}\right)^{-1} \]
\end{lemma}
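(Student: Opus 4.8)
The plan is to recover this covariance-decay bound from the trickle-down identity for stochastic localization, \cref{eqn:sl-trickledown}, specialized to the time-homogeneous case $C_t \equiv Q$. Since the initial measure $\mu_0$ is deterministic, the infinitesimal (It\^o) form of that identity --- exactly the computation carried out in the proof of \cref{eqn:sl-trickledown} --- gives the matrix ODE $\frac{d}{dt}\E{\cov{\mu_t}} = -\E{\cov{\mu_t}Q^2\cov{\mu_t}}$. Conjugating both sides by $Q$ and setting $M_t := Q\cov{\mu_t}Q$, the right-hand side becomes $-\E{M_t^2}$, because $Q\cov{\mu_t}Q^2\cov{\mu_t}Q = (Q\cov{\mu_t}Q)^2$ (here $Q^2$ is the driving matrix $C_t^2$; no invertibility of $Q$ is needed for this identity). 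Writing $N_t := \E{M_t}$, we therefore have $N_0 = Q\cov{\mu_0}Q$ and $\frac{d}{dt}N_t = -\E{M_t^2}$.

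Next I would lower-bound $\E{M_t^2}$ by $N_t^2$: for every vector $v$, $\langle v, \E{M_t^2}v\rangle = \E{\|M_t v\|^2} \ge \|\E{M_t}\,v\|^2 = \langle v, N_t^2 v\rangle$ by Cauchy--Schwarz (equivalently, $A \mapsto A^2$ is operator convex), so $\E{M_t^2}\succeq N_t^2$ and hence $\frac{d}{dt}N_t \preceq -N_t^2$. I would then integrate this Riccati-type differential inequality by differentiating the inverse: on any interval where $N_t$ is positive definite, $\frac{d}{dt}(N_t^{-1}) = -N_t^{-1}\bigl(\frac{d}{dt}N_t\bigr)N_t^{-1} \succeq N_t^{-1}N_t^2 N_t^{-1} = I$, using that conjugation by $N_t^{-1}$ preserves the Loewner order. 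This yields $N_t^{-1}\succeq N_0^{-1} + tI$, and inverting (which reverses the order) gives $N_t \preceq (tI + N_0^{-1})^{-1} = (tI + (Q\cov{\mu_0}Q)^{-1})^{-1}$, the claimed estimate.

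The only point requiring care is the possibility that $N_t = \E{Q\cov{\mu_t}Q}$ degenerates at some finite time: note $\E{\cov{\mu_t}}$ is Loewner-non-increasing (its derivative $-\E{(\cov{\mu_t}Q)(\cov{\mu_t}Q)^\intercal}$ is negative semidefinite), so its kernel only grows, and it does vanish as $t\to\infty$ since $\mu_t$ concentrates. Under the hypothesis of the lemma $N_0$ is invertible, so $N_t \succ 0$ at least for small $t$, and to extend the bound one either runs the inverse argument on the maximal subinterval where $N_t \succ 0$ and passes to the limit along the monotone range filtration, or invokes the standard matrix comparison principle for $\frac{d}{dt}N_t \preceq -N_t^2$ against its exact solution $(tI+N_0^{-1})^{-1}$. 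I expect this bookkeeping, rather than any of the three inequalities above, to be the only mildly delicate step; alternatively, the bound is exactly Eqn.\ (16) of \textcite{eldan2020taming} and may be cited directly.
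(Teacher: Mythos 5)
Your proposal is correct, and it is essentially the argument the paper has in mind: the paper does not reprove this lemma but cites it as Eqn.\ (16) of \textcite{eldan2020taming}, remarking immediately afterwards that it has a short proof ``by combining Gronwall's inequality with the SDE for the covariance matrix'' --- which is exactly your derivation (covariance SDE, Jensen to pass from the expected square to the square of the expectation, then the Riccati/Gronwall comparison via the derivative of the inverse). The only delicate point, possible degeneracy of the expected conjugated covariance, is the bookkeeping you already flag and can be handled by the standard matrix comparison principle or a perturbation of the initial data, so no gap remains.
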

Note that this result upper bounds the average covariance matrix at time $t$ by a function of its covariance at time $0$ --- so in a sense, if the trickledown concept is based upon backwards induction, this inequality is a complementary application of forward induction. The lemma has a short proof by combining Gronwall's inequality with the SDE for the covariance matrix. See also \cite{el2022information} for a related proof of this lemma without direct reference to stochastic localization.

\begin{proof}[Proof of \cref{thm:semilogconcave}]
Some steps in this proof are similar to the analysis for the Ising model, so we elaborate in the most detail on the parts which differ.
We view the spin system as a distribution on $\mathbb R^{Nn}$, and let $\nu_t$ be the stochastic localization process with the time-invariant driving matrix $C_t = (J \otimes I_N)^{1/2}$. By \cref{thm:gci}, or by a direct computation, this yields
\[ \frac{d\nu_t}{d\mu}(x) \propto \exp\parens*{\langle h_t, x_i \rangle + \frac{1 - t}{2}\sum_{i,j} J_{ij} \langle x_i, x_j \rangle} \]
for some adapted process $h_t$ in $\mathbb{R}^{Nn}$.
Recall the trickle-down equation
\[ 	\cov{\nu_t} = \E{\cov{\nu_1} \given \F_t} + \int_t^1 \E{\cov{\nu_s} J \cov{\nu_s} \given \F_t} ds 
\]
and applying the triangle inequality yields
\[ \|\cov{\nu_t}\|_{OP} \le \left\|\E{\cov{\nu_1} \given \F_t}\right\|_{OP} + \int_t^1 \|\E{\cov{\nu_s} J \cov{\nu_s} \given \F_t}\|_{OP} ds. \]
By the decoupling argument from Lemma~\ref{lem:decoupling-sl}, the covariance matrix of $\nu_1$ is block-diagonal, and each block can be reinterpreted as the covariance resulting from running stochastic localization with identity covariance in $\mathbb R^N$ for time $J_{ii}(1 - t)$ starting from the measure $\mathcal{G}_{-J_{ii}(1 - t)} \mu^{(i)}$.
By \cref{lem:cov-decay} and the assumption that $J_{ii} = \alpha$ we get that for each $i$, the corresponding block of the covariance satisfies
\[ \E{\cov{\nu_1(X_i = \cdot)} \mid \mathcal F_t} \preceq \left[\alpha(1 - t) I + \cov{\nu_t(X_i = \cdot)}^{-1}\right]^{-1} \preceq \frac{1}{\alpha(1 - t) + 1/\rho(\alpha (1 - t))} I\]
where in the last step we used the semi-logconcavity assumption. We therefore have the inequality
\[ \|\cov{\nu_t}\|_{OP} \le \frac{1}{\alpha(1 - t) + 1/\rho(\alpha (1 - t))} + \|J\|_{OP} \int_t^1 \E{\|\cov{\nu_s}\|_{OP}^2 \given \F_t} ds. \]
As in the argument for the Ising model, by considering at every time the worst-case value of $\|\cov{\nu_t}\|_{OP}$ and making a comparison argument, this yields the inequality $\|\cov{\nu_t}\|_{OP} \le f(t)$ where $f$ solves the integral equation
\[ f(t) = \frac{1}{\alpha(1 - t) + 1/\rho(\alpha (1 - t))} + \|J\|_{OP} \int_t^1 f(s)^2 ds. \]
Recalling that $\eta = \alpha/\|J\|_{OP}$, so $\alpha = \eta \|J\|_{OP}$, we can rewrite this as
\[ f(t) = \frac{1}{\eta \|J\|_{OP}(1 - t) + 1/\rho(\alpha (1 - t))} + \int_{0}^{\|J\|_{OP}(1 - t)} f(t + s/\|J\|_{OP})^2 ds. \]
Letting $z = \|J\|_{OP}(1 - t)$ and $q_{\eta,\rho}(z) = f(1 - z/\|J\|_{OP})$ yields
\begin{align*} 
q_{\eta,\rho}(z) 
&= f(1 - z/\|J\|_{OP}) \\
&= \frac{1}{\eta z + [\rho(\eta z)]^{-1}} + \int_0^z f(1 - (z - s)/\|J\|_{OP})^2 ds \\
&=  \frac{1}{\eta z + [\rho(\eta z)]^{-1}} + \int_0^z q_{\eta, \rho}(z - s)^2 ds \\
&=   \frac{1}{\eta z + [\rho(\eta z)]^{-1}} + \int_0^z q_{\eta, \rho}(s)^2 ds.
\end{align*}
Approximate Tensorization of Entropy (ATE) follows from the covariance bound in the same way as in the Ising case (using entropic stability and the super-martingale property). 
\end{proof}
\subsubsection{Aside: A general distribution-specific bound}
In some cases we expect to obtain tighter results by avoiding the generic bound from \cref{lem:cov-decay}, e.g. as we did in the Ising case. 
In this spirit and for completeness, we record the following result which follows from the ideas we have already explained. Although computing the bound reduces to evaluating quantities which are independent of the number of spins $n$, in practice it may not be as easy to apply as the bound based upon semi-log-concavity. We do not build upon this result in the remainder of this paper.
\begin{theorem}\label{thm:hard-to-use}
Suppose $N \ge 2, n \ge 1$ and that $\nu$ is the probability distribution on $\mathbb R^{Nn}$ with  density
\[ \frac{d\nu}{d\mu}(x) \propto \exp\parens*{\frac{1}{2} \sum_{1 \le i,j \le n} J_{ij} \dotprod{ x_i, x_j } + \sum_{i = 1}^n \dotprod{ h_i, x_i } } \]
with respect to the probability measure $\mu=\otimes_{i = 1}^n \mu_0$ where $\mu_0$ is a probability measure on $\mathbb R^N$.
Here $J$ and $h$ are parameters and we assume the interaction matrix $J$ satisfies $J \succeq 0$. Suppose, without loss of generality, that the diagonal of $J$ is constant, so there exists $\alpha$ such that $J_{ii} = \alpha \ge 0$. Let $\eta = \alpha/\opnorm{J} \in [0,1]$. 
Then
\[ \opnorm{\cov{\nu}} \le q_{\eta,\mu_0}(\opnorm{J}) \]
where $q_{\eta,\mu_0} : \R_{\ge 0} \to \R_{\ge 0} \cup \set{\infty}$ solves the Volterra integral equation
\[ q_{\eta,\mu_0}(z)=r_{\mu_0}(\eta z)+\int_{0}^z q_{\eta,\mu_0}(y)^2dy, \]
where
\[ r_{\mu_0}(t)=\sup_{b \in \mathbb R^N} \opnorm*{\E*_{x\sim \mu_0, g\sim \Normal{0, I}}{\cov*{\mathcal G_{-t}\tilt_{b + tx+\sqrt{t}g}\mu_0}}}. \]
Furthermore, Approximate Tensorization of Entropy (ATE) is satisfied with constant at most
\[ \exp\parens*{\int_0^{\opnorm{J}} q_{\eta, N}(z) dz }. \]
\end{theorem}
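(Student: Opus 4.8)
The plan is to run the same machinery that proves \cref{thm:ising} and \cref{thm:semilogconcave}, but keeping the single-site covariance in its exact form rather than bounding it via semi-log-concavity or the generic \cref{lem:cov-decay}. Let $\nu_t$ be the stochastic localization process started at $\nu_0 = \nu$ with time-invariant driving matrix $C_t = (J\otimes I_N)^{1/2}$, and write $\Sigma_t = \cov{\nu_t}$. The trickle-down equation \cref{eqn:sl-trickledown} gives
\[ \Sigma_t = \E{\Sigma_1 \given \F_t} + \int_t^1 \E{\Sigma_s\,(J\otimes I_N)\,\Sigma_s \given \F_t}\,ds, \]
so, taking operator norms and applying the triangle inequality, it suffices to (i) bound $\E{\Sigma_1 \given \F_t} \preceq r_{\mu_0}(\alpha(1-t))\,I$, and (ii) run the standard worst-case comparison argument.

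For step (i), apply the decoupling lemma \cref{lem:decoupling-sl}: conditionally on $\F_t$, the measure $\nu_1$ is a product measure whose $i$-th marginal has the same law as the time-$1$ output of a single-site stochastic localization process, so $\E{\Sigma_1 \given \F_t}$ is block-diagonal and each block is an averaged single-site covariance. The content is to identify that covariance. Using the Gaussian channel interpretation (\cref{thm:gci}) to write $\nu_1$'s $i$-th marginal as a tilt of $\mu_0$, splitting the external field into the contribution $b$ of the other spins (the ``environment'') and the contribution $\alpha(1-t)\,x + \sqrt{\alpha(1-t)}\,g$ of spin $i$ together with the localization noise, and invoking \cref{thm:gci} a second time to recognize the residual single-site process as $\mathcal{G}_{-\alpha(1-t)}\tilt_{b+\alpha(1-t)x+\sqrt{\alpha(1-t)}g}\mu_0$ in law, one obtains
\[ \E{\Sigma_1 \given \F_t} \preceq r_{\mu_0}(\alpha(1-t))\cdot I, \qquad r_{\mu_0}(\tau) = \sup_{b\in\R^N}\opnorm*{\E*_{x\sim\mu_0,\,g\sim\Normal{0,I}}{\cov*{\mathcal{G}_{-\tau}\tilt_{b+\tau x+\sqrt{\tau}g}\mu_0}}}, \]
where the supremum over $b$ absorbs the unknown environment uniformly over all sites and all realizations of the other spins — the same role played by $\beta(t)=\sup_h\opnorm{\cov{\mu_{(1-t)J,h}}}$ in the proof of \cref{thm:ising}. (Unlike the Ising case, where \cref{lem:ising-optimal-b} pins the optimal $b$ at $0$, here we simply keep the supremum.)

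Step (ii) is then a mechanical repetition of the Ising argument: with $\beta(t)=\sup_h\opnorm{\cov{\mu_{(1-t)J,h}}}$ over Ising-type models on base measure $\mu$, we get $\beta(t)\le r_{\mu_0}(\alpha(1-t))+\opnorm{J}\int_t^1\beta(s)^2\,ds$, and \cref{lem:integral-equation} together with the change of variables $z=\opnorm{J}(1-t)$, $q_{\eta,\mu_0}(z)=\beta(1-z/\opnorm{J})$, $\eta=\alpha/\opnorm{J}$ yields the stated Volterra equation $q_{\eta,\mu_0}(z)=r_{\mu_0}(\eta z)+\int_0^z q_{\eta,\mu_0}(y)^2\,dy$ and $\opnorm{\cov{\nu}}\le q_{\eta,\mu_0}(\opnorm{J})$. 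ATE follows exactly as in the Ising case: the covariance bound gives $\opnorm{J}q_{\eta,\mu_0}(\opnorm{J})$-entropic stability w.r.t.\ $\psi(x,y)=\tfrac12\norm{(J\otimes I_N)^{1/2}(x-y)}^2$ by \cref{lem:entropic-stability}, the terminal product measure $\nu_1$ satisfies ATE with constant $1$, and combining \cref{prop:conservation-of-entropy} with the supermartingale property \cref{lem:supermartingale} gives ATE with constant $\exp(\int_0^{\opnorm{J}}q_{\eta,\mu_0}(z)\,dz)$.

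The main obstacle is step (i): precisely matching the block of $\E{\Sigma_1 \given \F_t}$ to the claimed $r_{\mu_0}$. In the Ising proof the base measure lives on the unit sphere, so the $\mathcal{G}$ semigroup acts trivially and never appears; in \cref{thm:semilogconcave} one hides the difficulty inside a semi-log-concavity hypothesis. Here, by contrast, one must faithfully propagate the quadratic reweightings $\mathcal{G}_{\pm s}$ through the decoupling and through both applications of the Gaussian channel, and check that a single worst-case external field $b$ simultaneously dominates every site and every environment. Once the single-site measure is correctly identified as $\mathcal{G}_{-\alpha(1-t)}\tilt_{b+\alpha(1-t)x+\sqrt{\alpha(1-t)}g}\mu_0$, the remainder is routine.
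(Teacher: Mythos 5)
Your overall plan is the paper's: the published proof of \cref{thm:hard-to-use} is a one-line reduction to the proof of \cref{thm:semilogconcave} in which \cref{lem:cov-decay} is simply not applied, and your step (ii), together with the ATE argument via \cref{lem:entropic-stability}, \cref{prop:conservation-of-entropy} and \cref{lem:supermartingale}, reproduces that faithfully. The gap sits exactly where you say the content is, in step (i), and the identification you assert there is not what the computation gives. Carrying out the Gaussian-channel step: conditionally on $\F_t$, the $i$-th marginal of $\nu_1$ is the \emph{pure} tilt $\tilt_{b+\tau X_i'+\sqrt{\tau}g}\,\mu_0$ with $\tau=\alpha(1-t)$ --- the whole point of the driving matrix $(J\otimes I_N)^{1/2}$ and terminal time $1$ is that the quadratic self-interaction cancels, so no $\mathcal{G}_{-\tau}$ survives inside the covariance --- while the spin $X_i'$ entering the field is distributed according to its conditional law given the environment $b$, namely $\mathcal{G}_{-\tau}\tilt_b\mu_0$, not according to $\mu_0$. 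Hence the quantity that ``remains in the final bound'' when \cref{lem:cov-decay} is dropped is
\[ \sup_{b\in\R^N}\ \opnorm{\E_{x\sim \mathcal{G}_{-\tau}\tilt_b\mu_0,\ g\sim\Normal{0,I}}{\cov{\tilt_{b+\tau x+\sqrt{\tau}g}\,\mu_0}}}, \]
which is precisely the quantity appearing in the Ising analysis (\cref{lem:tbound,lem:elementary-lb}, where $\mathcal{G}$ acts trivially and the supremum over $b$ is resolved by \cref{lem:ising-optimal-b}). Your asserted single-site law $\mathcal{G}_{-\tau}\tilt_{b+\tau x+\sqrt{\tau}g}\mu_0$ with $x\sim\mu_0$ matches the displayed formula for $r_{\mu_0}$ but not the derivation you invoke: for instance, with $\mu_0=\Normal{0,I}$ and $\tau<1$, the marginal of $\nu_1$ has identity covariance, whereas $\mathcal{G}_{-\tau}\tilt_w\Normal{0,I}$ has covariance $\frac{1}{1-\tau}I$; correspondingly the exact quantity above equals $1$ while the displayed $r_{\mu_0}(\tau)=\frac{1}{1-\tau}$, so the two expressions genuinely differ.

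Consequently the proof as written does not close. Either you work with the exact expression above in place of $r_{\mu_0}$ --- in which case the remainder of your argument (the worst-case comparison via \cref{lem:integral-equation}, the change of variables to the Volterra equation, and the ATE step) goes through verbatim, exactly as in \cref{thm:ising} --- or, to obtain the theorem with the displayed $r_{\mu_0}$, you must in addition prove a domination of the exact quantity by the displayed one, uniformly over $b$ and over the base measure. That comparison is not obvious (it is true in the Gaussian toy example above only because the displayed quantity happens to be larger there), and it is precisely the kind of step that required the dedicated coupling argument \cref{lem:ising-optimal-b} in the Ising case. Asserting the identification ``in law'' without this argument leaves the crux of step (i) unproven.
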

\begin{proof}
The proof is the same as that of \cref{thm:semilogconcave}, except that we do not apply \cref{lem:cov-decay} in which case the quantity $r_{\mu_0}$ remains in the final bound.
\end{proof}
\subsection{Analytical solution using the Riccati equation}
The following argument allows us to explicitly solve the integral equation in many cases of interest, where the semi-logconcavity bound is constant.
\begin{theorem}\label{thm:fixed-semilogconcave}
Suppose that the assumption of \cref{thm:semilogconcave} is satisfied for a constant function $\rho(s) = \rho > 0$. Then the solution of \eqref{eqn:qsemilogconcave} exists, and is finite and unique, for $z \in [0, s(\eta)/\rho)$ and is given by
\[ q_{\eta,\rho}(z) = \rho Q_{\eta}(\rho z) \]
where $Q_{\eta} : [0,s(\eta)) \to \mathbb{R}_{\ge 0}$ is given by
\[ Q_{\eta}(z) = \eta \frac{-\lambda_1 \lambda_2^2 (\eta z + 1)^{\lambda_1 - \lambda_2} + \lambda_1^2 \lambda_2}{\lambda_2^2(\eta z + 1)^{\lambda_1 - \lambda_2 + 1} - \lambda_1^2 (\eta z + 1)}, \]
with $\lambda_1 > 0 > \lambda_2$ the two roots of the quadratic equation 
$\eta \lambda^2 - \eta \lambda - 1 = 0$, explicitly
\[ \lambda_1 = \lambda_1(\eta) = \frac{\eta + \sqrt{\eta^2 + 4\eta}}{2 \eta}, \qquad \lambda_2 = \lambda_2(\eta) = \frac{\eta - \sqrt{\eta^2 + 4\eta}}{2 \eta}, \]
and where 
\[ s(\eta) = \frac{1}{\eta} \left[\left|\frac{\lambda_1}{\lambda_2}\right|^{2/(\lambda_1 - \lambda_2)} - 1\right]. \]
\end{theorem}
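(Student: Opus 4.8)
The plan is to reduce to the case $\rho=1$, turn the resulting Volterra integral equation into a \emph{Riccati} ODE, linearize it, and solve the associated Euler (equidimensional) equation in closed form.

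First I would check that the scaling ansatz $q_{\eta,\rho}(z)=\rho\,Q_\eta(\rho z)$ is self-consistent: substituting $u=\rho s$ gives $\int_0^z q_{\eta,\rho}(s)^2\,ds=\rho^2\int_0^z Q_\eta(\rho s)^2\,ds=\rho\int_0^{\rho z}Q_\eta(u)^2\,du$, while $\tfrac{1}{\eta z+1/\rho}=\rho\cdot\tfrac{1}{\eta(\rho z)+1}$. Hence it suffices to solve $Q_\eta(z)=\tfrac{1}{\eta z+1}+\int_0^z Q_\eta(s)^2\,ds$ with maximal interval $[0,s(\eta))$, and then rescale to $[0,s(\eta)/\rho)$. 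By the fundamental theorem of calculus this integral equation is equivalent to the Riccati equation $Q_\eta'=Q_\eta^2-\eta/(\eta z+1)^2$ with initial condition $Q_\eta(0)=1$.

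Next I would apply the standard Riccati substitution $Q_\eta=-u'/u$, which produces the linear equation $u''=\tfrac{\eta}{(\eta z+1)^2}\,u$. The change of variables $w=\eta z+1$ turns this into the Euler equation $w^2 u_{ww}-\eta^{-1}u=0$, whose indicial equation $\lambda(\lambda-1)=1/\eta$ is exactly $\eta\lambda^2-\eta\lambda-1=0$, with roots $\lambda_1>0>\lambda_2$ as in the statement. Thus $u=A(\eta z+1)^{\lambda_1}+B(\eta z+1)^{\lambda_2}$, and the condition $Q_\eta(0)=1$ reads $u'(0)=-u(0)$; using Vieta's relations $\lambda_1+\lambda_2=1$, $\lambda_1\lambda_2=-1/\eta$ together with $\eta\lambda_i^2=\eta\lambda_i+1$, this pins the ratio to $A:B=\lambda_2^2:(-\lambda_1^2)$. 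Substituting $u=\lambda_2^2(\eta z+1)^{\lambda_1}-\lambda_1^2(\eta z+1)^{\lambda_2}$ into $Q_\eta=-u'/u$ and clearing the common factor $(\eta z+1)^{\lambda_2-1}$ yields the displayed formula for $Q_\eta$; evaluating at $z=0$ and using $\lambda_1+\lambda_2=1$, $\eta\lambda_1\lambda_2=-1$ confirms $Q_\eta(0)=1$.

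Finally, for the interval of validity and uniqueness: $Q_\eta$ is finite exactly where $u\neq 0$, and $u(z)=0$ iff $(\eta z+1)^{\lambda_1-\lambda_2}=\lambda_1^2/\lambda_2^2$, i.e.\ at $z=\tfrac{1}{\eta}\big[|\lambda_1/\lambda_2|^{2/(\lambda_1-\lambda_2)}-1\big]=s(\eta)$; since $\lambda_1>1$ forces $|\lambda_1|>|\lambda_2|>0$, this is strictly positive and is the first zero, so $u$ has constant sign on $[0,s(\eta))$, $Q_\eta$ is finite there (a short sign check of numerator and denominator gives $Q_\eta\ge 0$), and $Q_\eta\to\infty$ as $z\to s(\eta)^-$. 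Uniqueness on $[0,s(\eta))$ then follows from local Lipschitz dependence of the Riccati right-hand side on $Q_\eta$ (standard ODE theory) and transfers back to the integral equation \eqref{eqn:qsemilogconcave} via the equivalence noted above. I do not expect a genuine obstacle; the only parts needing care are the bookkeeping that extracts the correct coefficient ratio $A:B$ and simplifies to the exact closed form, and the verification that $s(\eta)$ — not some earlier time — is where $u$ first vanishes.
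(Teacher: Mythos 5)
Your proposal is correct and follows essentially the same route as the paper: differentiate the Volterra equation to a Riccati ODE, linearize via $q=-u'/u$, solve the resulting Euler-type equation (the paper uses the equivalent log-substitution to a constant-coefficient equation with the same characteristic polynomial $\eta\lambda^2-\eta\lambda-1=0$), fix the coefficient ratio from the initial condition, and read off the blow-up time $s(\eta)$ as the first zero of $u$. The only differences are cosmetic: you normalize $\rho=1$ by scaling and invoke Picard--Lindel\"of for uniqueness, while the paper carries $\rho$ throughout, rules out $c_1=0$ by a short contradiction, and cites Volterra integral-equation theory for uniqueness.
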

\begin{figure}
    \centering
    \includegraphics[width=0.6\linewidth]{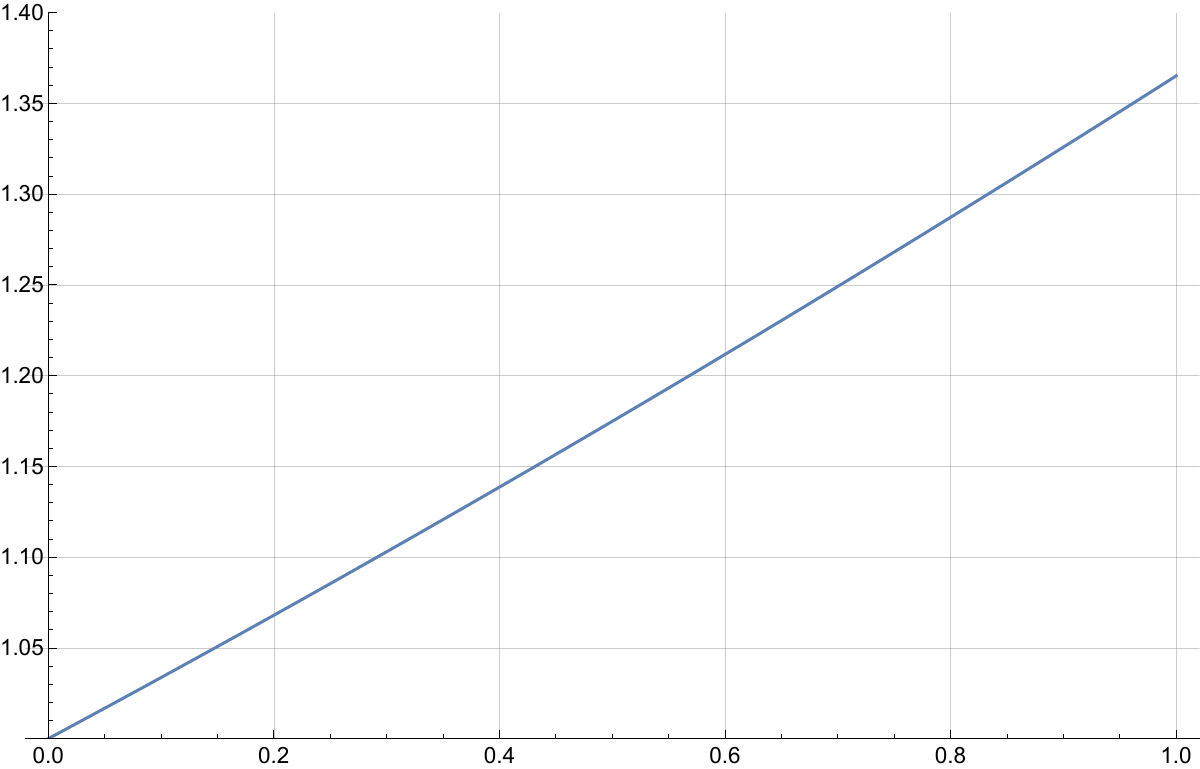}
    \caption{Plot of $s(\eta)$  as a function of $\eta$ varies from $0$ to $1$. The function is very close to, but not exactly, affine on this domain.}
    \label{fig:zplot}
\end{figure}
\begin{proof}
In this case we have
\begin{equation}\label{eqn:integral-form-semilogconcave-fixed}
q_{\eta,\rho}(z) = \frac{1}{\eta z + \rho^{-1}} + \int_0^z q_{\eta,\rho}(s)^2 ds. 
\end{equation}
We now show how to exhibit a solution of this equation (we already know it will be unique, see \cref{lem:integral-equation}). 
Differentiating with respect to $z$ yields
\[ q'_{\eta,\rho}(z) = \frac{-\eta}{(\eta z + \rho^{-1})^2} + q_{\eta,\rho}(z)^2. \]
This is a Riccati equation so we can solve it by making an appropriate substitution (see e.g.\ \cite{reid1972riccati}).
Explicitly, making the substitution $q_{\eta, \rho} = -u'/u$ yields a differential equation
\[ \frac{d^2 u}{dz^2} - \frac{\eta u}{(\eta z + \rho^{-1})^2} = 0. \]
Making the substitution $x = \log(\eta \rho z + 1)$ and simplifying yields the differential equation
\[ \eta u'' - \eta u' - u = 0. \]
We can guess particular solutions by observing that $u = e^{\lambda x}$ yields the quadratic equation
\[ \eta \lambda^2 - \eta \lambda - 1 = 0 \]
which has solutions
\[ \lambda = \frac{\eta \pm \sqrt{\eta^2 + 4 \eta}}{2\eta} \]
and so in general for some $c_1,c_2 \in \mathbb R$ the solution must be of the form
\[ u = c_1 e^{\lambda_1 x} + c_2 e^{\lambda_2 x} = c_1 (\eta \rho z + 1)^{\lambda_1} + c_2 (\eta \rho z + 1)^{\lambda_2} \]
where $\lambda_1 > 0,\lambda_2 < 0$ are the two roots. Substituting back, this gives 
\begin{align*} 
q_{\eta,\rho}
&= \eta \rho\  \frac{-c_1 \lambda_1 (\eta \rho z + 1)^{\lambda_1 - 1} - c_2 \lambda_2 (\eta \rho z + 1)^{\lambda_2 - 1}}{c_1 (\eta \rho z + 1)^{\lambda_1} + c_2 (\eta \rho z + 1)^{\lambda_2}} \\
&=  \eta \rho\ \frac{-c_1 \lambda_1 (\eta \rho z + 1)^{\lambda_1 - \lambda_2} - c_2 \lambda_2}{c_1 (\eta \rho z + 1)^{\lambda_1 - \lambda_2 + 1} + c_2 (\eta \rho z + 1)}.
\end{align*}
Assume for the purpose of contradiction that $c_1 = 0$, then $q_{\eta,\rho}(z) = -\eta \rho \lambda_2/(\eta \rho z + 1)$ so $q_{\eta,\rho}$ is a solution for all $z \ge 0$ and $q_{\eta,\rho} \to 0$ as $z \to \infty$. However, from \eqref{eqn:integral-form-semilogconcave-fixed} we can see $q_{\eta,\rho}(0) = \rho > 0$, so the integral on the right hand side of \cref{eqn:integral-form-semilogconcave-fixed} and thereby $q_{\eta,\rho}$ are bounded below by a positive constant for all $z$. By contradiction, $c_1 \ne 0$.

Therefore, if we define $C = -c_2/c_1$ we can write
\begin{align}\label{eqn:q-C}
q_{\eta,\rho}(z)
&=  \eta \rho \frac{-\lambda_1 (\eta \rho z + 1)^{\lambda_1 - \lambda_2} + C \lambda_2}{(\eta \rho z + 1)^{\lambda_1 - \lambda_2 + 1} - C (\eta \rho z + 1)}. 
\end{align}
In the case $z = 0$, we therefore have
\[ \rho = q_{\eta}(0) = \eta \rho \frac{-\lambda_1 + C \lambda_2}{1 - C} \]
so $1 - C = -\eta \lambda_1 + C \eta \lambda_2$, i.e.
\[ C = \frac{1 + \eta \lambda_1}{1 + \eta \lambda_2} = \frac{\lambda_1^2}{\lambda_2^2} \]
where we used that $\lambda_1,\lambda_2$ are roots of $\eta \lambda^2 = 1 + \eta \lambda$.
So multiplying through by $\lambda_2^2$, we have
\[ q_{\eta,\rho}(z) =  \eta \rho \frac{-\lambda_1 \lambda_2^2 (\eta \rho z + 1)^{\lambda_1 - \lambda_2} + \lambda_1^2 \lambda_2}{\lambda_2^2(\eta \rho z + 1)^{\lambda_1 - \lambda_2 + 1} - \lambda_1^2 (\eta \rho z + 1)} \]
This formula is only valid for $z$ from $0$ to the first positive root $z_1$ of the denominator (since e.g. the right hand side is typically infinite and therefore no longer a solution of the ODE at time $z_1$). Since $\eta \rho z + 1 > 0$ for $z \ge 0$, the positive roots must satisfy
\[ (\eta\rho z + 1)^{\lambda_1 - \lambda_2} = C. \]
So in fact there is exactly one positive root and it is given by
\[ \frac{1}{\eta \rho} \left[C^{1/(\lambda_1 - \lambda_2)} - 1\right] = s(\eta)/\rho. \]
\end{proof}
}    
    \Tag{\section{Rapid mixing of Langevin and Glauber dynamics in \texorpdfstring{$O(N)$}{O(N)} model}\label{sec:ON}
For any $N \ge 1$, let $S_{N - 1}$ be the unit sphere in $\R^N$. Recall that for $N \ge 2$, this is a connected smooth submanifold of $\R^N$ with dimension $N - 1$. 
\subsection{Covariance bounds and rapid mixing of Langevin}
We start with the following result which gives the semi-log-concavity estimate.
\begin{theorem}[Theorem D.2 of \cite{dyson1978phase}]\label{thm:dyson-sphere}
For $N \ge 1$, define $f_N(h)$ to be the cumulant generating function for the $N - 1$ dimensional unit sphere, so
\[ f_N(h) = \log \int_{s \in S_{N - 1}} \exp(h \cdot s) \mu(dA)  \]
where $\mu$ is the uniform measure on the sphere.
Then:
\begin{enumerate}
    \item $f$ is spherically symmetric, so $f_N(h) = F_N(\norm{h}_2)$ where $F_N : \R_{\ge 0} \to \R$. 
    \item The function $g_N(h) = F'_N(h)$ solves the differential equation
    \[ g'_N(h) + g_N(h)^2 + (N - 1) g_N(h)/h = 1 \]
    on $\R_{> 0}$ with boundary conditions $g_N(0) = 0$ and $g'_N(0) = 1/N$. 
    \item We have $g''_N(0) = 0$ and $g''_N(h) < 0$ for all $h > 0$. In particular, $g_N$ is concave on $\R_{\ge 0}$.
    \item For $h \ne 0$, the eigenvalues of $\nabla^2 f_N(h)$ are $g'_N(\norm{h}_2)$ with multiplicity one and $g_N(\norm{h}_2)/\norm{h}_2$ with multiplicity $N - 1$. In particular, for all $h$ we have $\opnorm{\nabla^2 f_N} \le \max_{h \ge 0} g'_N(h) = 1/N$.
\end{enumerate}
\end{theorem}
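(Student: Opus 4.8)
The plan is to treat parts (1), (2), and (4) as routine bookkeeping with the cumulant generating function, and to concentrate the effort on the concavity claim in part (3). For part (1), the uniform measure $\mu$ on $S_{N-1}$ is invariant under the orthogonal group $O(N)$, so for any orthogonal $R$ the substitution $s \mapsto R^{-1} s$ gives $f_N(Rh) = \log \int \exp(\dotprod{h, R^{-1}s}) \mu(ds) = f_N(h)$; hence $f_N$ depends only on $\norm{h}_2$, and we set $F_N(r) = f_N(r \1_1)$. Since $\int \exp(\dotprod{h,s}) \mu(ds)$ is entire and strictly positive in $h$, $F_N$ is real-analytic, and it is even in $r$ because of the reflection symmetry $s_1 \mapsto -s_1$, so $g_N = F_N'$ is odd with $g_N(0) = 0$.

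For parts (2) and (4), I would use the standard cumulant identities $\nabla f_N(h) = \mean{\tilt_h \mu}$ and $\nabla^2 f_N(h) = \cov{\tilt_h \mu}$, where $\tilt_h$ is the exponential tilt of \cref{def:exponential-tilts} applied to the sphere measure $\mu$. Differentiating $F_N(\norm{h}_2)$ by the chain rule and evaluating at $h = r \1_1$ gives $\nabla f_N(r\1_1) = g_N(r) \1_1$ and $\nabla^2 f_N(r\1_1) = \diag(g_N'(r), g_N(r)/r, \dots, g_N(r)/r)$; rotating to an arbitrary nonzero $h$ yields the eigenvalue description in part (4). The ODE of part (2) then falls out of the pointwise identity $\sum_{i} s_i^2 \equiv 1$ on $S_{N-1}$: taking expectations under $\tilt_{r \1_1} \mu$ and writing each second moment $\E{s_i^2}$ as the variance of $s_i$ plus $(\E{s_i})^2$, the constraint $\sum_i \E{s_i^2} = 1$ becomes $g_N'(r) + g_N(r)^2 + (N-1) g_N(r)/r = 1$. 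The boundary conditions come from $g_N(0) = F_N'(0) = 0$ together with $\cov{\mu} = \frac{1}{N} I$ (isotropy of $\mu$ and $\E{\norm{s}^2} = 1$), or equivalently by letting $r \to 0$ in the ODE, which forces $N g_N'(0) = 1$. Finally, once part (3) is established, the eigenvalues $g_N'(\norm{h}_2)$ and $g_N(\norm{h}_2)/\norm{h}_2$ of $\nabla^2 f_N(h)$ both lie in $[0, 1/N]$ --- the first because $\nabla^2 f_N \succeq 0$ and $g_N'$ is nonincreasing with $g_N'(0) = 1/N$, the second by the chord bound $g_N(r) \le g_N(0) + r\, g_N'(0) = r/N$ --- so $\opnorm{\nabla^2 f_N} \le 1/N = \max_{h \ge 0} g_N'(h)$.

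The crux is part (3). My plan is to work directly from the ODE: multiplying it by $h$, differentiating once, and substituting $1 - g_N^2 = g_N' + (N-1) g_N/h$ back in yields the identity $h^2 g_N'' = (N-1)(g_N - h g_N') - 2 h^2 g_N g_N'$, and differentiating once more gives $h^2 g_N''' = -(N+1) h g_N'' - 4 h g_N g_N' - 2 h^2 (g_N')^2 - 2 h^2 g_N g_N''$. The two sign inputs I need are $g_N > 0$ and $g_N' > 0$ on $(0,\infty)$: the former because tilting toward $+s_1$ strictly increases the mean of $s_1$ above its value $0$ under $\mu$ (a monotone-coupling argument), the latter because $g_N'$ is the variance of $s_1$ under the nondegenerate measure $\tilt_h \mu$. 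Consequently, at any point $h_0 > 0$ with $g_N''(h_0) = 0$ the second identity forces $h_0^2 g_N'''(h_0) = -4 h_0 g_N(h_0) g_N'(h_0) - 2 h_0^2 g_N'(h_0)^2 < 0$. On the other hand, the odd Taylor expansion $g_N(r) = r/N + \gamma r^3 + O(r^5)$, plugged into the ODE, gives $(N+2)\gamma + 1/N^2 = 0$, so $\gamma = -1/(N^2(N+2)) < 0$; hence $g_N''(r) = 6\gamma r + O(r^3) < 0$ for all small $r > 0$, while $g_N''(0) = 0$. If $g_N''$ vanished again on $(0,\infty)$, let $h_0$ be its first positive zero; then $g_N'' < 0$ on $(0, h_0)$ together with $g_N''(h_0) = 0$ forces $g_N'''(h_0) \ge 0$, contradicting the strict negativity just derived. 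Therefore $g_N'' < 0$ on all of $(0,\infty)$.

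I expect the main obstacle to be precisely this last part: one must combine the ODE-differentiation bookkeeping, the Taylor-coefficient computation that rules out $g_N'' \equiv 0$ near the origin, and the first-positive-zero comparison argument, the last of which genuinely relies on the strict positivity of $g_N$ and $g_N'$ away from $0$. An alternative route for part (3) that sidesteps the Taylor computation is to note that the ODE together with the Bessel identity $\frac{d}{dh}[h^{-\nu} I_\nu(h)] = h^{-\nu} I_{\nu+1}(h)$ identifies $g_N(h) = I_{N/2}(h)/I_{N/2-1}(h)$, whereupon one can invoke known concavity properties of ratios of modified Bessel functions; but the self-contained ODE argument above seems cleanest.
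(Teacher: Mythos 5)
The paper does not prove this statement at all: it is imported verbatim as Theorem~D.2 of the cited work \cite{dyson1978phase}, so there is no internal proof to compare against, and the relevant question is simply whether your self-contained reconstruction is sound. It is. Parts (1), (2), (4) are handled exactly as one should: orthogonal invariance gives $f_N(h)=F_N(\norm{h}_2)$; the cumulant identities $\nabla f_N(h)=\mean{\tilt_h\mu}$, $\nabla^2 f_N(h)=\cov{\tilt_h\mu}$ together with the constraint $\sum_i s_i^2\equiv 1$ on the sphere yield the Riccati-type ODE and the boundary conditions (via $\cov{\mu}=I/N$), and the radial/tangential eigenvalue split of $\nabla^2 f_N$ at $h=r\1_1$ gives part (4) once concavity is in hand (the chord bound $g_N(r)\le r g_N'(0)$ and monotonicity of $g_N'$ are the right way to close it). For the crux, part (3), I checked your two identities: differentiating the ODE and substituting it back indeed gives $h^2 g_N''=(N-1)(g_N-hg_N')-2h^2 g_N g_N'$, and differentiating once more gives $h^2 g_N'''=-(N+1)hg_N''-4hg_Ng_N'-2h^2(g_N')^2-2h^2g_Ng_N''$; your Taylor computation $\gamma=-1/(N^2(N+2))$ is also correct, so $g_N''<0$ near $0^+$ while $g_N''(0)=0$ by oddness. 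The first-positive-zero argument then works as stated, since at such a zero $h_0$ the second identity forces $g_N'''(h_0)<0$ (using $g_N>0$ and $g_N'>0$ on $(0,\infty)$, both of which you justify — positivity of $g_N$ by symmetrization under $s_1\mapsto -s_1$, positivity of $g_N'$ as a nondegenerate variance), contradicting $g_N'''(h_0)\ge 0$ from $g_N''<0$ on $(0,h_0)$. Analyticity of $F_N$ makes all the differentiations legitimate, and the case $N=1$ is covered too (there the first identity alone gives $h^2g_1''=-2h^2g_1g_1'<0$). In spirit this is the same ODE-based argument as the original Dyson--Lieb--Simon appendix; your write-up is a valid standalone proof of the quoted theorem, and the Bessel-ratio identification $g_N=I_{N/2}/I_{N/2-1}$ you mention is a legitimate alternative but not needed.
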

\begin{remark}
The differential equation defining $g_N$ is a Riccati equation which actually admits an explicit solution in terms of Bessel functions. The solution can be found using the same techniques from the proof of \cref{thm:fixed-semilogconcave}. 
\end{remark}
\begin{theorem}\label{thm:ON}
Suppose $N \ge 2, n \ge 1$ and that $\nu$ is the probability distribution on $S_{N - 1}^{\otimes n}$ with probability density
\[ \frac{d\nu}{d\mu}(x) \propto \exp\parens*{\frac{1}{2} \sum_{1 \le i,j \le n} J_{ij} \dotprod{ x_i, x_j } + \sum_{i = 1}^n \dotprod{ h_i, x_i } } \]
with respect to the uniform measure $\mu$ on $S_{N - 1}^{\otimes n}$. Here $J,h$ are parameters and we assume the interaction matrix $J$ satisfying $J \succeq 0$. Suppose (without loss of generality) that the diagonal of $J$ is constant, so there exists $\alpha$ such that $J_{ii} = \alpha \ge 0$. Let $\eta = \alpha/\opnorm{J} \in [0,1]$. 

Then
\[ \opnorm{\cov{\nu}} \le q_{\eta,1/N}(\opnorm{J}) \]
where $q_{\eta,1/N}$ is as defined in \cref{thm:fixed-semilogconcave}.
Furthermore, the log-Sobolev constant of the Langevin dynamics is at least
\[ C_N \exp\parens*{-\int_0^{\opnorm{J}} q_{\eta,1/N}(z) dz } \]
where $C_N > 0$ is a constant depending only on $N$, inherited from \cite{zhang2011uniform}. 
\end{theorem}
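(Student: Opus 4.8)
The plan is to obtain the covariance bound as an immediate consequence of \cref{thm:semilogconcave} together with the explicit Riccati solution in \cref{thm:fixed-semilogconcave}, and then to upgrade its approximate-tensorization conclusion to a genuine log-Sobolev inequality for the manifold Langevin dynamics by a stochastic-localization transfer argument, parallel to the ATE proof but carried out with the Dirichlet form $\sum_i \E_{\nu}{\norm{\grad_i\sqrt f}^2}$ in place of the block-entropy form.

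For the covariance bound, the first step is to check the hypotheses of \cref{thm:semilogconcave} for the base measure $\mu=(\sigma^{N-1})^{\otimes n}$. Since $\sigma^{N-1}$ is supported on the unit sphere, the relative density $\propto\exp(s\norm{x}^2/2)$ defining $\mathcal{G}_{-s}\sigma^{N-1}$ is constant on the support, so $\mathcal{G}_{-s}\sigma^{N-1}=\sigma^{N-1}$ for every $s\ge 0$; and for any $w$ the covariance $\cov{\tilt_w\sigma^{N-1}}$ is the Hessian of the sphere's cumulant generating function at $w$, which by part (4) of \cref{thm:dyson-sphere} is $\preceq(1/N)I$. Hence the hypothesis of \cref{thm:semilogconcave} holds with the constant function $\rho(s)\equiv 1/N$, giving $\opnorm{\cov{\nu}}\le q_{\eta,1/N}(\opnorm{J})$, and because $\rho$ is constant \cref{thm:fixed-semilogconcave} identifies $q_{\eta,1/N}$ with the stated rational expression (equivalently $q_{\eta,1/N}(z)=(1/N)Q_\eta(z/N)$). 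The same reasoning applied to the interaction matrices $(1-t)J$ and arbitrary external fields will also be needed: it shows $\opnorm{\cov{\tilt_w\nu_t}}\le q_{\eta,1/N}((1-t)\opnorm{J})$ for every $w$, where $\nu_t$ is the stochastic localization process for $\nu$ with the time-invariant driving matrix $C_t=(J\otimes I_N)^{1/2}$.

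For the log-Sobolev inequality I would run that same process. By \cref{thm:gci} the quadratic part of the log-density is cancelled at time $t=1$, so $\nu_1$ is, conditionally on the filtration, a product measure over the $n$ spins, each factor being a tilted uniform measure $\propto e^{\dotprod{h^1_i,x_i}}\,d\sigma^{N-1}$. By \cite{zhang2011uniform} every such factor satisfies the Langevin log-Sobolev inequality with a constant $C_N$ uniform over the external field; since log-Sobolev constants are preserved under tensor products, $\nu_1$ satisfies $\Ent_{\nu_1}{f}\le C_N^{-1}\sum_i\E_{\nu_1}{\norm{\grad_i\sqrt f}^2}$ for every nonnegative $f$. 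Next, combining the covariance bound with \cref{lem:entropic-stability} shows $\nu_t$ is $\alpha_t$-entropically stable with respect to $\psi(x,y)=\tfrac12\norm{C_t(x-y)}^2$ for $\alpha_t\le\opnorm{J}\,q_{\eta,1/N}((1-t)\opnorm{J})$, so \cref{prop:conservation-of-entropy} gives $\Ent_{\nu}{f}\le\exp\bigl(\int_0^1\alpha_t\,dt\bigr)\E{\Ent_{\nu_1}{f}}$ with $\int_0^1\alpha_t\,dt\le\int_0^{\opnorm{J}}q_{\eta,1/N}(z)\,dz$ after the substitution $z=(1-t)\opnorm{J}$. Finally, for fixed $f$ the quantity $\E_{\nu_t}{\norm{\grad_i\sqrt f}^2}$ is a martingale (it is the expectation of a fixed function against the measure-valued martingale $\nu_t$), so $\E{\E_{\nu_1}{\norm{\grad_i\sqrt f}^2}}=\E_{\nu}{\norm{\grad_i\sqrt f}^2}$; chaining the three displays yields $\Ent_{\nu}{f}\le C_N^{-1}\exp\bigl(\int_0^{\opnorm{J}}q_{\eta,1/N}(z)\,dz\bigr)\sum_i\E_{\nu}{\norm{\grad_i\sqrt f}^2}$, i.e. the log-Sobolev constant is at least $C_N\exp\bigl(-\int_0^{\opnorm{J}}q_{\eta,1/N}(z)\,dz\bigr)$.

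The covariance bound is routine once one notices that $\mathcal{G}_{-s}$ acts trivially on a sphere measure, so the real content is the log-Sobolev step. The quantitatively delicate ingredient there is the uniform-in-external-field log-Sobolev inequality for tilted uniform measures on $S^{N-1}$: a naive perturbation estimate (Holley--Stroock) degrades like $e^{2\norm{h}}$ and is useless for large fields, and one instead has to exploit the sharp concentration of $\tilt_w\sigma^{N-1}$ as $\norm{w}\to\infty$ (equivalently, the decay of the Hessian eigenvalues $g_N'(\norm{w})$ and $g_N(\norm{w})/\norm{w}$ from \cref{thm:dyson-sphere}), which is exactly what \cite{zhang2011uniform} (also used in \cite{bauerschmidt2019very}) supplies. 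The other point worth flagging is that, unlike the block-entropy decrement in the discrete ATE argument, the Langevin Dirichlet form is linear in the measure and hence an exact martingale along stochastic localization, so transferring it from $\nu_1$ back to $\nu$ costs nothing. A minor technical point is to first prove the inequality for $f$ bounded and bounded away from $0$ and then pass to general nonnegative $f$ by approximation, as is standard for log-Sobolev inequalities.
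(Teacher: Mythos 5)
Your proposal is correct and follows essentially the same route as the paper: the covariance bound is obtained by feeding the $1/N$-semi-log-concavity of the (tilted) sphere measure from \cref{thm:dyson-sphere} into \cref{thm:semilogconcave} and \cref{thm:fixed-semilogconcave}, and the log-Sobolev bound by running the same stochastic localization with entropic stability, using that the Dirichlet form in \eqref{eqn:lsi-sphere} is a martingale along the localization and invoking the uniform log-Sobolev inequality of \cite{zhang2011uniform} for the product base case at time one. The additional details you supply (triviality of $\mathcal{G}_{-s}$ on the sphere, the tilted covariance bound for $\nu_t$, the change of variables in $\int_0^1\alpha_t\,dt$) are exactly the steps the paper leaves implicit.
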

\begin{proof}
The covariance bound follows from \cref{thm:semilogconcave}, \cref{thm:fixed-semilogconcave}, and the fact that the spherical measure is $1/N$-semi-log-concave from \cref{thm:dyson-sphere}.

The log-Sobolev bound also follows from an completely analogous argument (using the same stochastic localization process as in \cref{thm:semilogconcave}, entropic stability, and the fact that the right hand side of \eqref{eqn:lsi-sphere} is a martingale under stochastic localization), except that to cover the base case we appeal to the uniform log-sobolev inequality from \cite{zhang2011uniform}.
\end{proof}
\subsection{Nearly linear time sampling via Glauber dynamics}
\begin{theorem}\label{thm:ON glauber}
In the same setting as \cref{thm:ON}, the probability measure $\nu$ satisfies ATE with constant at most
\[ \exp\parens*{\int_0^{\opnorm{J}} q_{\eta,1/N}(z) dz }. \]
Furthermore, a single step of the Glauber dynamics can be $\epsilon$-approximately implemented using $O(\log(1/\epsilon)(\log n + \log \log (1+B))$ arithmetic operations, where 
$B = \max_i \sum_j |J_{ij}| + \|h_i\|$.
In other words, we can sample from a distribution that is $\epsilon$-close in total variation distance to the conditional distribution at the spin chosen to be updated. Therefore, to sample from a distribution $\varepsilon$-close to $\nu$ in total variation distance, we run the Glauber dynamics for $T=n \log(n/\varepsilon)  $ steps and set $\epsilon =\frac{\varepsilon}{T }$. The total runtime is $ O(n \log(n/\varepsilon) (\log N + \log \log (1+B)) ).$
\end{theorem}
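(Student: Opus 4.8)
The plan is to prove the two assertions separately --- the functional inequality (ATE with constant $\exp(\int_0^{\opnorm J} q_{\eta,1/N}(z)\,dz)$) and the per-step implementation cost --- and then deduce the mixing-time/runtime statement by feeding the ATE constant into the general MLSI-based mixing estimate together with a union bound over the approximate updates. For ATE I would run the argument already used for the Ising model in the proof of \cref{thm:ising} essentially verbatim, with $q_\eta$ replaced by $q_{\eta,1/N}$. First, the covariance bound of \cref{thm:ON}, although stated for $\nu$, holds for every exponential tilt $\tilt_w\nu$, since tilting only shifts the external field $h$ and leaves $J$ (hence $\eta$ and $\opnorm J$) unchanged; so $\opnorm{\cov{\tilt_w\nu}}\le q_{\eta,1/N}(\opnorm J)$ uniformly in $w$. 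Run stochastic localization on $\nu$ with the constant driving matrix $C_t=(J\otimes I_N)^{1/2}$. By \cref{thm:gci} each $\nu_t$ is again an $O(N)$ model, with interaction matrix $(1-t)J$ and a random external field, so \cref{thm:ON} applied to it gives $\opnorm{\cov{\nu_t}}\le q_{\eta,1/N}((1-t)\opnorm J)$ (using scale-invariance of $\eta$). Then \cref{lem:entropic-stability} with $A=q_{\eta,1/N}((1-t)\opnorm J)\,I$ and $C=(J\otimes I_N)^{1/2}$ shows $\nu_t$ is $\alpha_t$-entropically stable for $\psi(x,y)=\tfrac12\norm{(J\otimes I_N)^{1/2}(x-y)}^2$ with $\alpha_t=\opnorm J\,q_{\eta,1/N}((1-t)\opnorm J)$, and $\int_0^1\alpha_t\,dt=\int_0^{\opnorm J}q_{\eta,1/N}(z)\,dz$ after the substitution $z=(1-t)\opnorm J$. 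Since $\nu_1$ is a product of tilted spherical measures it tensorizes entropy with constant $1$; chaining \cref{prop:conservation-of-entropy}, this product tensorization, and the supermartingale property \cref{lem:supermartingale} in the same order as in \cref{thm:ising} yields $\Ent_\nu{f}\le\exp\!\big(\int_0^{\opnorm J}q_{\eta,1/N}(z)\,dz\big)\sum_i\E_{X_{\sim i}\sim\nu}{\Ent_{\nu(\cdot\mid X_{\sim i})}{f}}$, which is ATE with the claimed constant.

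For the per-step implementation, fix a site $i$ and a configuration $x_{\sim i}$. Since $\norm{x_i}=1$ is forced, the conditional law $\nu(x_i=\cdot\mid x_{\sim i})$ equals the tilted uniform spherical measure $\tilt_w\sigma^{N-1}$ with $w=h_i+\sum_{j\ne i}J_{ij}x_j$, a sum of $d$ vectors in $\R^N$ with $\norm w\le\sum_{j\ne i}\abs{J_{ij}}+\norm{h_i}\le B$. By spherical symmetry this is the von Mises--Fisher law with direction $\hat w=w/\norm w$ and concentration $\kappa=\norm w$, so I would write a sample as $x_i=t\,\hat w+\sqrt{1-t^2}\,u$, with $u$ uniform on the unit sphere of $\hat w^\perp$ (the standard Gaussian-then-normalize construction, costing $O(N)$) and $t\in[-1,1]$ independent with density $\propto e^{\kappa t}(1-t^2)^{(N-3)/2}$. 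The only nontrivial piece is sampling the scalar latitude $t$, for which I would use a rejection sampler of Ulrich/Wood type: propose $t$ as an explicit affine image of a $\operatorname{Beta}(\tfrac{N-1}{2},\tfrac{N-1}{2})$ variable and accept with a probability whose expectation is bounded below by a universal positive constant, uniformly in $N$ and in $\kappa\le B$. Running $O(\log(1/\epsilon))$ independent trials then succeeds except with probability $\le\epsilon$, and emitting an arbitrary point on failure produces a law within $\epsilon$ in total variation. Each trial needs $O(1)$ $\operatorname{Beta}$ and uniform draws plus a bounded number of evaluations of $\exp$, $\log$, and the power $(1-t^2)^{(N-3)/2}$; in a finite-precision count the $\operatorname{Beta}$/power evaluations contribute the $\log N$ factor, exponentiating an argument of magnitude up to $\kappa\le B$ contributes the $\log\log(1+B)$ factor, and computing $w$ and writing out the $N$ coordinates of $x_i$ contribute the additive $dN$ term, which collects to the claimed per-step bound.

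For the mixing conclusion, the ATE constant $C=\exp(\int_0^{\opnorm J}q_{\eta,1/N}(z)\,dz)$ --- which is finite, and $O(1)$ as a function of $n$, precisely in the non-degenerate regime $\opnorm J<Ns(\eta)$ where the integral converges --- implies an MLSI with constant $\tfrac1{Cn}$ for the Glauber semigroup. By a standard argument this drives the KL divergence from a fixed starting configuration below $\varepsilon^2/2$ after $T=n\log(n/\varepsilon)$ steps (up to absorbing a $\log$ of the initial divergence, which is $O(\log n+\log B)$ for bounded $B$), Pinsker's inequality converts this to total-variation distance $\le\varepsilon/2$ for the idealized chain, and a union bound absorbs the $\le\varepsilon/2$ error accumulated over the $T$ approximate updates implemented with per-step error $\epsilon=\varepsilon/T$. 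Multiplying the per-step cost by $T$ gives the stated overall runtime.

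The ATE half is essentially a transcription of the proof of \cref{thm:ising}, so the real work lies in the per-step sampler: one must exhibit an explicit envelope for the latitude density $e^{\kappa t}(1-t^2)^{(N-3)/2}$ whose acceptance ratio is bounded below \emph{simultaneously for all dimensions $N$ and all concentrations $\kappa\le B$} --- the regimes $\kappa$ small, $\kappa\asymp N$, and $\kappa\gg N$ are qualitatively different and need a unified proposal --- and then perform an honest finite-precision arithmetic count to extract the precise $\log N$ and $\log\log(1+B)$ dependence. That uniformity of the acceptance bound, not the probabilistic content of the mixing statement, is the delicate step.
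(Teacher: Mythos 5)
Your ATE half is not a new route: the paper obtains it by invoking \cref{thm:semilogconcave} and \cref{thm:fixed-semilogconcave} together with the $1/N$-semi-log-concavity of the sphere, and the proof of \cref{thm:semilogconcave} is exactly the chain you transcribe from \cref{thm:ising} --- stochastic localization with driving matrix $(J\otimes I_N)^{1/2}$, the covariance bound under arbitrary tilts, \cref{lem:entropic-stability}, \cref{prop:conservation-of-entropy}, tensorization for the product measure $\nu_1$, and \cref{lem:supermartingale} --- so that part matches. Where you genuinely diverge is the per-step sampler. Both you and the paper rotate so that the conditional field is $b e_1$ with $b\le B$, reduce via \cref{fact:sphere first coord} to the latitude density $\propto e^{by}(1-y^2)^{(N-3)/2}$, and spend $O(N)$ time on the orthogonal $(N-2)$-sphere; but you then propose the Ulrich--Wood von Mises--Fisher envelope (an affine image of a symmetric $\operatorname{Beta}(\frac{N-1}{2},\frac{N-1}{2})$ proposal) and rest everything on an acceptance probability bounded below uniformly in $N$ and $\kappa\le B$. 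As you concede, that uniformity across the regimes $\kappa\ll N$, $\kappa\asymp N$, $\kappa\gg N$ is the entire technical content of your route, and in your write-up it is asserted rather than proved; it is true (it is essentially Wood's analysis of the vMF sampler), but it would need a proof or a precise citation, plus an account of the cost of Beta sampling. The paper sidesteps any vMF-specific envelope: for $N\ge 4$ it recenters the potential $V(y)=-by-\frac{N-3}{2}\log(1-y^2)$ at its mode, uses $V''\ge N-3$ globally together with an explicit upper bound on $V''$ near the mode, and applies the general one-dimensional log-concave rejection sampler of \cref{lem:log concave sample 1 dim} (a modification of \cite{Chewi2021TheQC}), treating $N=2$ (restriction to a quarter period plus rejection) and $N=3$ (explicit inverse CDF) separately. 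Note also a difference in where the $\log N + \log\log(1+B)$ factor comes from: in the paper it is the doubling search over $O(\log(\max\set{\abs{a},\abs{b}}\sqrt{\kappa}))$ scales used to build the envelope, not finite-precision evaluation of transcendentals as in your accounting; if your uniform-acceptance claim were established, each trial would cost $O(1)$ arithmetic operations and the claimed per-step bound would follow with room to spare. The final deduction (MLSI with constant $1/Cn$ from ATE, Pinsker, and a union bound over the $T=n\log(n/\varepsilon)$ approximate updates with $\epsilon=\varepsilon/T$) is the same standard argument in both, and both treatments are equally informal about the initial divergence in the continuous state space.
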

\begin{proof}
ATE follows from \cref{thm:fixed-semilogconcave} given the semi-log-concavity bound previously established.
What remains is to discuss the implementation of Glauber dynamics.

Each step of the Glauber dynamics sample the spin $ x_i\in S^{N-1}$ conditional on the remaining spins $x_j$ being set at $u_j.$  The density $\pi(x_i)$ at $x_i$ is $\exp( \langle h_i+ \sum J_{ij} u_j, x_i\rangle ).$ By applying a suitable rotation matrix, we can assume without loss of generality that 
\[ h_i+ \sum J_{ij} u_j = b e_1 \]
where $b\in \R_{\geq 0}$ and $e_1 $ is the basis vector with the first coordinate being $1$ and the rest of the coordinates being $0$. By the triangle inequality, we have $b \le B$ where $B$ is as in the theorem statement. 

If $b=0$ then $x_i$ is uniformly distributed on the sphere $S^{N-1}$, thus we can sample $x_i$ in $O(n)$ time. Below, suppose $b> 0.$
If $N=1$ then $\pi$ is a Bernoulli distribution, so sampling can be done in $O(1)$ time. (This is the case of the Ising model.) Below, we assume $n\geq 2.$

Let $ y = \langle x_i, e_1 \rangle ,$ then $y\in [-1,1].$ Note that if we can sample $y$ exactly, then to sample $x_i,$ we only need to uniformly sample from the set $\set{u\in S^{N-1} \langle u, e_1\rangle = y}$, which is isomorphic to the sphere $S^{n-2}$, and this can be done in $O(N)$ time.

By \cref{fact:sphere first coord},   the density of $ y$ induced by $\pi$ is 
\[ \pi(y) \propto \rho(y) := \exp(b y) (1-y^2)^{(N-3)/2}. \]

\paragraph{Case $N = 2$.}
If $N = 2$, then we can do a change of variable by writing $y= \cos(\theta)$ for $\theta \in [-\pi, \pi).$ Then $\rho$ induces a density $\varphi$ supported on $[-\pi, \pi)$ defined by $\varphi(\theta) = \exp(b\cos(\theta) ).$ We consider the restriction $\tilde{\varphi}$ of $\varphi$ to $[-\pi/4, \pi/4),$ which satisfies the preconditions of \cref{lem:log concave sample 1 dim} with $V(\theta) = b(1-\cos(\theta)) $ and $V''(\theta)= b \cos (\theta)\in [b/\sqrt{2}, b] $ for $\theta \in [-\pi/4,\pi/4).$ 
Next, we sample from $\varphi$ by rejection sampling from $ \tilde{\varphi}:$
\begin{itemize}
    \item Sample $u$ uniformly from $\set{0,1,2,3}$
    \item Sample $\tilde{\theta}  \in [-\pi/4,\pi/4)$ from $\tilde{\varphi}$ then output $\theta =\tilde{\theta} +u\pi/2$ with probability $\exp(b(\cos(\theta) - \cos(\tilde{\theta}))\leq 1 .$ 
\end{itemize}
This rejection sampling algorithm succeeds with probability $ \frac{1}{4} \sum_{u=0}^3\int_{-\pi/4}^{\pi/4} \exp(b(\cos(\tilde{\theta} + u\pi/2) - \cos(\tilde{\theta})) d\tilde{\theta}\geq 1/4.$ Combined this with \cref{lem:log concave sample 1 dim}, we have an algorithm that runs in $O(\log(1/\epsilon))$ time and output a sample from $ \hat{\varphi}$ s.t. $d_{TV} (\hat{\varphi}, \varphi)\leq\epsilon.$

\paragraph{Case $N = 3$.} If $N = 3$, then the cumulative distribution of $\rho$ can be exactly computed: 
\[F(y) = \int_{-1}^y \rho (u) du = b^{-1} (\exp(by) - \exp(-b))\] 
thus one can sample from $\rho$ in $O(1)$ time by inverse-CDF sampling. Explicitly, we uniformly sample $r$ from $[0, F(1)]$ and then output the unique $y$ s.t. $r=F(y).$

\paragraph{Remaining cases ($N \ge 4$).}
We now deal with the four and higher-dimensional cases. 
Note that $\rho(y) = \exp(-V(y))$ with 
\begin{align*}
V(y) &= -by - \frac{N-3}{2} \log(1-y^2)
\end{align*}
and observe that
\begin{align*}
\nabla V(y) = - b + (N-3) \frac{y}{1-y^2}, \qquad
\nabla^2 V(y) =  \frac{(N-3)(1+y^2)}{(1-y^2)^2} \geq (N-3) > 0.
\end{align*}
Let $c =  \frac{N-3}{b} .$ Observe that the unique solution to $ \nabla V(y) =0$ in $ [-1,1]$ is $ y_0 = \frac{ \sqrt{c^2 + 4}-c}{2}\in (0,1].$ 
Let 
\[ \tilde{V}(y) = V(\frac{y}{\sqrt{N-3}} +y_0) - V(y_0) \] 
and observe that $\tilde{V}$ is supported on $[\sqrt{N-3} (-1-y_0) , \sqrt{N-3} (1-y_0)], $  $\tilde{V}(0)=\tilde{V}'(0)=0$  and $\tilde{V}''(0) \geq 1.$ Note that if $ y^2 \geq 1 - \exp(-(\frac{1}{2} +b+V(y_0)) )  =1 - (1-y_0^2)^{ b (1-y_0) +  1/2} $ then
\[V(y) - V(y_0) \geq -\log(1-y^2) - b - V(y_0) \geq \frac{1}{2}\]
and if $ y^2 \leq 1 - \exp(-(\frac{1}{2} +b+V(y_0)) )$ then
$V''(y) \leq 2(N-3) \exp(2 (\frac{1}{2} +b+V(y_0)) ) .$

Thus, \cref{lem:log concave sample 1 dim} applies for $\tilde{V}$ with $a = \sqrt{N-3} (-1-y_0), b = \sqrt{N-3} (1-y_0),$ and 
\begin{align*}
\frac{a'}{\sqrt{N-3}} 
&=-\sqrt{1- (1-y_0^2)^{ b (1-y_0) +  1/2} }-y_0 , \\
\frac{b'}{\sqrt{N-3}} 
&=\sqrt{1- (1-y_0^2)^{ b (1-y_0) +  1/2} }-y_0, \\
\kappa &= 2\exp(2 (\frac{1}{2} +b+V(y_0)) ) .
\end{align*}
Note that  
\[ 0\leq 2b(1-y_0) = 2(b - \frac{\sqrt{(N-3)^2 + 4b^2} - (N-3) }{2}) \leq (N-3) \] and 
\[ \frac{1}{1-y_0^2} = \frac{\sqrt{c^2 + 4}}{2c}+ \frac{1}{2} \leq \frac{1}{c} + 1 = \frac{b}{N-3} +1, \]
thus
\begin{align*}
  \log\log (\max \set{\abs{a}, \abs{b}} \kappa )
  & \leq \log \log (N \exp(1 + b + V(y_0)))\\ 
  &= \log\log \left(N \left(\frac{1}{1-y_0^2}\right)^{2b (1-y_0)+1}\right)\\
  &=\log\left(\log N + N \log\frac{1}{1-y_0^2}\right)
  = O(\log N + \log\log (1+b))  
\end{align*}
where the second line follows from the definition of $V.$
\end{proof}

\begin{fact}[Well-known, see e.g.\ \cite{zhang2011uniform}]\label{fact:sphere first coord}
Let $p(x_1)$ be the probability density of the first coordinate $X_1$ of a random vector $X$ sampled uniformly at random from the $N - 1$ dimensional unit sphere $S^{N-1}$ in $\mathbb R^N$ centered at the origin. Then $p$ is supported on $[-1,1]$ with density 
\[ p(x_1) \propto (1 - x_1^2)^{(N - 3)/2}. \]
\end{fact}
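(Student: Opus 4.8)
The plan is to identify the law of $X_1$ as the pushforward of the uniform surface measure $\sigma^{N-1}$ on $S^{N-1}$ under the projection $x \mapsto x_1$, which I would compute via the spherical-coordinate decomposition of $\sigma^{N-1}$. Concretely, writing $x_1 = \cos\theta$ with $\theta \in [0,\pi]$ and letting $x' = (x_2,\dots,x_N)/\sin\theta$ range over the equatorial sphere $S^{N-2}$, one has the standard factorization $d\sigma^{N-1} \propto \sin^{N-2}\theta \, d\theta \, d\sigma^{N-2}(x')$; the case $N = 2$ degenerates gracefully, with $S^{N-2} = S^0$ the two-point set. Integrating out $x'$ shows the angle $\theta$ has density $\propto \sin^{N-2}\theta$ on $[0,\pi]$.

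Next I would change variables back to $x_1 = \cos\theta$. Since $dx_1 = -\sin\theta\, d\theta$ and $\sin\theta = \sqrt{1-x_1^2} \ge 0$ on $[0,\pi]$, the Jacobian factor contributes exactly one extra power of $(1-x_1^2)^{-1/2}$, so the density of $X_1$ on $[-1,1]$ is $\propto \sin^{N-2}\theta \cdot |d\theta/dx_1| = \sin^{N-3}\theta = (1-x_1^2)^{(N-3)/2}$, which is the claimed formula. The normalization is then recovered, if desired, from the Beta integral $\int_{-1}^{1}(1-x_1^2)^{(N-3)/2}\,dx_1$, but the statement only requires the proportionality.

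For a self-contained alternative that avoids quoting the spherical-coordinate formula, I would instead use the Gaussian representation $X = G/\|G\|$ with $G \sim \Normal{0, I_N}$, split $G = (G_1, G')$ into independent $\Normal{0,1}$ and $\Normal{0, I_{N-1}}$ parts, note that $X_1^2 = G_1^2/(G_1^2 + \|G'\|^2)$ is $\mathrm{Beta}(1/2,(N-1)/2)$-distributed as a ratio of independent chi-squared variables, hence has density $\propto y^{-1/2}(1-y)^{(N-3)/2}$, and then push this forward through $y \mapsto \pm\sqrt{y}$ using that $\operatorname{sgn}(G_1)$ is an independent uniform sign; the extra factor $x_1$ from that change of variables cancels the $(x_1^2)^{-1/2}$ and leaves $(1-x_1^2)^{(N-3)/2}$. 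Either route is routine, and there is no real obstacle; the only points needing a little care are bookkeeping that the coarea/Jacobian factor contributes precisely the $(1-x_1^2)^{-1/2}$ and handling the $N = 2$ endpoint case, where the exponent is $-1/2$ and the density is the arcsine law.
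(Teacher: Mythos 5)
Your proposal is correct. Note, however, that the paper does not prove this statement at all: it is recorded as a ``Fact'' labelled well-known, with a pointer to \cite{zhang2011uniform}, so there is no in-paper argument to compare against. Both of your routes are standard and sound. The spherical-coordinate argument correctly tracks that the co-area/Jacobian factor $|d\theta/dx_1| = (1-x_1^2)^{-1/2}$ converts the angular density $\propto \sin^{N-2}\theta$ into $(1-x_1^2)^{(N-3)/2}$, and your remark about $N=2$ (arcsine law, exponent $-1/2$) handles the only delicate endpoint behaviour. The Gaussian route is equally fine: $X_1^2 \sim \mathrm{Beta}(1/2,(N-1)/2)$ as a ratio of independent $\chi^2_1$ and $\chi^2_{N-1}$ variables, and pushing forward through $y \mapsto \pm\sqrt{y}$ with the independent sign of $G_1$ cancels the $y^{-1/2}$ factor exactly as you say. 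Either write-up would serve as a self-contained justification of the cited fact; the second has the minor advantage of not invoking the surface-measure factorization, while the first generalizes immediately to the joint law of several coordinates.
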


In the implementation of each Glauber dynamics step, we need to sample from a distribution supported on the sphere $S^{n-1},$ which can be reduced to sampling from a log-concave distribution supported on a real interval. The following shows how to do the latter task using a modification of the technique in \cite{Chewi2021TheQC}. 
While \cite{Chewi2021TheQC} requires the potential function(s) to be supported on the entire real line and to have uniformly bounded second derivative, our result only requires bounded second derivative in a large enough interval of the support.
\begin{lemma}\label{lem:log concave sample 1 dim}
Consider $V : [a,b] \to \R\cup \set{\pm \infty}$ with $0\in [a,b]. $
    Suppose $V(0) =\nabla V(0) = 0$ and $ V''(x)\geq \alpha> 0$ for $x\in (a,b).$ There exists $a',b'$ s.t. $a\leq a'\leq 0 \leq b'\leq b$ s.t. $ V(x) \geq 1/2 $ if $x\in [a,a'] \cup [b',b]$ and $ V''(x) \leq \beta $ if $x\in [a',b'].$ Let $\pi$ be a distribution $[a,b]$ s.t. $p(x) \propto\exp(-V(x)).$ For $\kappa = \beta/\alpha$, we have that: 
    \begin{itemize}
        \item There exists an algorithm which runs in $O(\log\log(\max \set{\abs{a}, \abs{b}} \kappa) )$ time and with probability $\Omega(1)$, outputs a sample from $ p.$ 
        \item Given any $\epsilon > 0 $, there exists an algorithm which samples from $\hat{p}$ s.t. $d_{TV}(p, \hat{p})\leq \epsilon$ in $O(\log (\frac{1}{\epsilon}) \cdot \log\log(\max \set{\abs{a}, \abs{b}} \kappa) ) $ time. 
    \end{itemize}
    
\end{lemma}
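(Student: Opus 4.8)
The plan is to adapt the one-dimensional rejection sampler of Chewi et al.~\cite{Chewi2021TheQC}, whose guarantee is stated for a potential defined on all of $\mathbb R$ with a \emph{global} Hessian sandwich $\alpha \le V'' \le \beta$, to the present setting, where the sandwich only holds on the sub-interval $[a',b']$ and on the two tails $[a,a']$, $[b',b]$ we only know $V \ge 1/2$. The first observation is that $V$ is convex on $(a,b)$ with its unique minimizer at $0$ (since $V(0)=\nabla V(0)=0$), so on $[0,b']$ we have $\tfrac{\alpha}{2}x^2 \le V(x) \le \tfrac{\beta}{2}x^2$ and symmetrically on $[a',0]$; in particular $V(b')\le \tfrac{\beta}{2}(b')^2$ together with $V(b')\ge 1/2$ gives $b' \ge 1/\sqrt\beta$ and $V'(b')\ge \alpha b' > 0$, and $Z := \int_a^b e^{-V}$ is of order $1/\sqrt\alpha$ up to the tail contributions. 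The sampler will be a rejection sampler whose proposal is a piecewise log-linear density $q \propto e^{-\tilde V}$, where $\tilde V \le V$ pointwise is a piecewise-affine convex lower envelope built from tangent lines of $V$; each affine piece integrates to a truncated exponential with an explicitly invertible CDF, so drawing from $q$ and deciding accept/reject (comparing $e^{-(V(x)-\tilde V(x))}$ against an independent uniform) costs $O(1)$ per piece and $O(1)$ evaluations of $V,V'$.

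On the bulk $[a',b']$ I would take exactly the breakpoint construction of \cite{Chewi2021TheQC}: place $O(\log\log\kappa)$ breakpoints at a doubly-geometric sequence of level/slope values of $V$ and let $\tilde V$ be the upper envelope of the tangent lines of $V$ at those breakpoints; their local estimates (using $V''\le\beta$) show that the envelope gap $V-\tilde V$ is $O(1)$ in an averaged sense on each piece, so $\int_{a'}^{b'} e^{-\tilde V} = O\!\big(\int_{a'}^{b'} e^{-V}\big)$. For the tails I would append a small number of further log-linear pieces: on $[b',b]$, $V$ is convex and increasing with $V(b')\ge 1/2$ and $V'(b')>0$, so it is bounded below by each of its tangent lines, and one may place breakpoints adaptively (evaluating $V,V'$) at a doubly-geometric sequence of slopes, from $V'(b')$ up to $V'(b)$; since the slope ranges over a factor at most $\mathrm{poly}(\max\{|a|,|b|\}\kappa)$, only $O(\log\log(\max\{|a|,|b|\}\kappa))$ pieces are needed, and on each such piece the true mass is a constant fraction of the mass of the local exponential envelope (a short window around the left endpoint of the piece already carries most of the envelope mass and there $V$ is within $O(1)$ of the tangent line), while the final piece out to $b$ has negligible proposal mass because $\tilde V$ matches $V$ at the last breakpoint, where $V$ is already large. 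The same is done on $[a,a']$. Combining the bulk and tail pieces, $\int_a^b e^{-\tilde V} = O(Z)$, i.e.\ the overall acceptance probability is $\Omega(1)$, and the proposal has $O(\log\log(\max\{|a|,|b|\}\kappa))$ pieces, which gives the first bullet.

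For the second bullet, run the exact sampler repeatedly and output the first accepted sample; this is an exact draw from $p$. If no sample is accepted within $\Theta(\log(1/\epsilon))$ independent rounds --- an event of probability at most $\epsilon$, since each round succeeds with probability $\Omega(1)$ --- output an arbitrary fixed point of $[a,b]$. The resulting distribution $\hat p$ satisfies $d_{TV}(p,\hat p)\le\epsilon$ and is produced in time $O(\log(1/\epsilon)\cdot\log\log(\max\{|a|,|b|\}\kappa))$.

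The main obstacle is the tail analysis: showing that a doubly-logarithmic number of log-linear pieces suffices to keep the rejection probability on $[b',b]$ and $[a,a']$ bounded by a constant \emph{uniformly in how large $\max\{|a|,|b|\}$ is}, given that there we have \emph{no} upper bound on $V''$ and only the crude lower bound $V\ge 1/2$. This is what forces the doubly-geometric (rather than merely geometric) placement of breakpoints, and requires quantifying --- using $V''\le\beta$ at the bulk boundary and $V''\ge\alpha$ throughout $(a,b)$ --- that the true tail mass never drops below a constant fraction of the envelope mass on any piece, and that the far tail contributes negligibly to both. By contrast, the reduction step --- checking that restricting attention to $[a',b']$ does not break the Chewi et al.\ estimates --- is routine once the envelope is in place, since those estimates are local to each piece.
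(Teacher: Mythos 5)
There is a genuine gap, and it sits exactly where you flagged it: the tails. Your plan bounds the number of tangent-line pieces on $[b',b]$ by arguing that the slope of $V$ there ``ranges over a factor at most $\mathrm{poly}(\max\{|a|,|b|\}\kappa)$''. Nothing in the hypotheses supports this: outside $[a',b']$ the only information is $V\ge 1/2$ and $V''\ge\alpha$, with \emph{no} upper bound on $V''$ (indeed $V$ is allowed to take the value $+\infty$), so $V'(b)$ --- and hence the number of breakpoints in any slope-doubling scheme, and the runtime of the adaptive construction --- can be arbitrarily large independently of $|a|,|b|,\alpha,\beta$. The same objection applies to the claim that on each tail piece the true mass is a constant fraction of the envelope mass: with unbounded curvature, $V$ can pull away from its tangent line immediately, and your sketch gives no uniform control. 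So as written, neither the $O(\log\log(\max\{|a|,|b|\}\kappa))$ query bound nor the $\Omega(1)$ acceptance probability is established.

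The fix (and this is what the paper does, following \cite{Chewi2021TheQC}) is to make \emph{zero} queries in the tails. After rescaling so $\alpha=1$, the global lower curvature bound alone gives $V(x)\ge V(x_+)+V'(x_+)(x-x_+)+\tfrac12(x-x_+)^2$ with $V(x_+)\ge 0$ and $V'(x_+)\ge x_+$, so a single flat-plus-Gaussian envelope of the form $\exp\bigl(-\tfrac{x-x_+}{2x_+}-\tfrac{(x-x_+)^2}{2}\bigr)$ dominates $e^{-V}$ on all of $[x_+,b]$ regardless of how wildly $V''$ behaves there; its mass is comparable to the true mass near $x_+$. The upper bound $V''\le\beta$ is needed only near the minimizer, to locate points $x_\pm$ where $V$ first reaches the level $1/2$ (clamped to $b'$, resp.\ $a'$, if the level is never reached inside the bulk): since $V$ is monotone on each side of $0$, this is a binary search over the $O(\log(b'\sqrt\kappa))$ dyadic points $2^i/\sqrt\kappa$, costing $O(\log\log(\max\{|a|,|b|\}\kappa))$ evaluations, and $\tfrac12\le V(x_\pm)\le \tfrac{\kappa}{2}x_\pm^2$ guarantees $x_+=O\bigl(\int_0^{x_+}e^{-V}\bigr)$ (and symmetrically for $x_-$), which is what drives the $\Omega(1)$ acceptance probability. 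Your bulk tangent-envelope idea could perhaps be made to work with its own analysis, but it is not what the cited algorithm does and it does not remove the tail problem; the constant-level search plus curvature-only tail envelope does. Your amplification step for the second bullet (repeat $O(\log(1/\epsilon))$ times, output a fallback on failure) matches the paper and is fine.
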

\begin{proof}
 Without loss of generality, we can assume $\alpha =1$ so that $V''(x) \geq 1\forall x\in [a,b]$ and  $ V''(x)\leq \kappa$ for $x\in [a',b'].$ Otherwise, we can replace $V$ with $\tilde{V}$ where $ \tilde{V}(x) = V(x/\sqrt{\alpha}).$ 

We use a rejection sampling algorithm similar to in \cite{Chewi2021TheQC}. To account for the fact that $V"(x)$ is only bounded above by $\kappa$ in the interval $ [a',b'],$ we only need to slightly modify the proof so that  $x_-, x_+\in [a',b'].$

First, we build the envelope $q$, which is a distribution over $[a,b]$, then we draw a sample from $q$ and accept with probability $\frac{p(x)}{q(x)}.$ We only need to show that the acceptance probability of this algorithm is $\Omega (1).$ For the second claim, we repeat this algorithm $O(\log (1/\epsilon))$ times and output the result of the first successful run. If all runs fail then output a uniform sample from $[a,b].$ We define $q$ as follows.
\begin{enumerate}
    \item Find the first index $i_+\in \set{0,1, \dots,\lfloor   \log_2 (b'\sqrt{\kappa}) \rfloor } $ s.t. $V(2^{i_+}/\sqrt{\kappa}) \geq 1/2.$ If $i_+$ exists, let $x_+ =2^i_+/\sqrt{\kappa}$ else let $x_+ = b'$
     \item Find the first index $i_-\in \set{0,1, \dots, \lceil \log_2 (-a'\sqrt{\kappa}) \rceil } $ s.t. $V(-2^{i_-}/\sqrt{\kappa}) \geq 1/2.$ If $i_-$ exists, let $x_- =-2^i_-/\sqrt{\kappa}$ else let $x_- = a'$
     \item  
     Let $q(x)\propto\tilde{q}(x)$ where
     \[\tilde{q}(x) = \begin{cases} \exp(-\frac{x-x_-}{2x_-} - \frac{(x-x_-)^2}{2}) 
 &\text{if } x \in [a, x_-] \\  1 &\text{if } x \in [x_-,x_+]\\  \exp(-\frac{x-x_+}{2x_+} - \frac{(x-x_+)^2}{2}) 
 &\text{if } x \in [x_+, b] \\\end{cases}\]
\end{enumerate}
    First, observe that $ V(x) \geq x^2/2$ for all $x \in \mathbb R$ and $ V (x) \leq \kappa x^2/2$ for all $x\in [a',b']$ thus $1/2 \leq V(b')\leq \kappa (b')^2/2$ and $1/2 \leq V(a') \leq \kappa (a')^2/2.$ Thus $ \min \set{\log (b' \sqrt{\kappa}) , \log (-a'\sqrt{\kappa}) } \geq  0$ and the first two steps are well-defined. Sampling from $q$ can be done in $O(1)$ time, since cumulative distribution of $q$ at $x$ is $\P_{Z\sim \mathcal{N}(0,1)}{u<Z<u'}$ where $u,u'$ can be computed in $O(1)$ time given $ x_-, x_+, a,b, x.$

Let $\tilde{p}(x)=\exp(-V(x)).$ 
    We only need to check that 
    $\tilde{p}(x)\leq \tilde{q}(x) $ and that there exists absolute constant $C > 0$ s.t. 
    \[ C  Z_{\tilde{p}} =\int_a^b \exp(-V(x)) dx \geq Z_{\tilde{q}}=\int_a^b \tilde{q}(x) dx. \]
    To prove this inequality, by using the argument in \cite{Chewi2021TheQC},
    we only need to check that $x_+ = O\left( \int_0^{x_+} \tilde{p}(x) dx\right)$ and $-x_- = O\left(\int_{x_-}^0 \tilde{p}(x) dx\right).$ We will show the first inequality; the second is entirely analogous. If $i_+$ doesn't exist, then since $V$ is increasing in $[0, b],$ \[V(x_+/2) = V(b'/2) \leq V(2^{\lfloor   \log_2 (b'\sqrt{\kappa}) \rfloor}/\kappa)< 1/2.  \]
If $ i_+$ exists and $i_+> 0$ then by definition of $i_+,$ $V(x_+/2) < 1/2.$ In both cases,
\[ \int_0^{x_+} \tilde{p}(x) d x \geq \int_0^{x_+/2}\exp(-1/2) dx = \Omega(x_+). \]
The remaining case is $i_+ = 0.$ Note that $ x_+ =1/\sqrt{\kappa} \in (0,b']$ thus
\[\int_0^{x_+} \tilde{p}(x) dx \geq \int_0^{1/\kappa } \exp(-\kappa x^2/2) dx \geq \frac{1}{3\sqrt{\kappa} } =\frac{x_+}{3} .\]

The rest of the proof follows from \cite[Proof of Theorem 3]{Chewi2021TheQC}, since the upper bound $V''(x)\leq \kappa$ is only used for proving the above claims. The rest of the proof in \cite{Chewi2021TheQC} only uses $V''(x) \geq 1,$ which does hold for the entire domain of $V.$ 
    
\end{proof}}
    \Tag{\section{Sampling Ising models with the polarized walk}\label{sec:expander}
In this section, we establish guarantees for sampling antiferromagnetic Ising models on spectral expanders and related results. Major differences from the previous sections is the use of the tools from geometry polynomials and the closely related \emph{polarized walk} rather than the Glauber dynamics.
\subsection{Estimates via geometry of polynomials}
    
\begin{lemma}\label{lem:half-flc}
Suppose that $\nu$ is a measure on the hypercube $\set{\pm 1}^n$ of the form
\[ \nu(x) \propto \exp\parens*{-\frac{\gamma}{2n} \parens*{\sum_i x_i}^2 } \]
for any $\gamma \ge 0$.
The homogenized generating polynomial of $\nu$ 
\[f(z_1,\dots, z_n,\bar{z}_1, \dots, \bar{z}_n) = \sum_x \nu(x) \prod_{i: x_i = 1} z_i \prod_{i: x_i =-1} \bar{z}_i\]
is $1/2$-fractionally log concave. 
\end{lemma}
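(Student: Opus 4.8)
The plan is to reduce everything to a one-line log-concavity check on the coefficient sequence of $\nu$, and then package it into a statement about a multi-affine polynomial via the polarization machinery of \cref{prop:slc-coeff} and \cref{prop:polarization}. Since $\nu(x)$ depends on $x$ only through $k = \card{\set{i \in [n] : x_i = 1}}$, write $\nu(x) = c_k$. Expanding $\parens*{\sum_i x_i}^2 = (2k-n)^2 = n^2 - 4k(n-k)$ gives $c_k \propto \exp\parens*{\tfrac{2\gamma}{n}\, k(n-k)}$. The map $k \mapsto k(n-k)$ is (strictly) concave, so $\log c_k$ is a concave function of the integer $k$; hence $\log c_{k-1} + \log c_{k+1} \le 2\log c_k$, i.e. $c_{k-1}c_{k+1} \le c_k^2$ for $1 \le k \le n-1$. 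All $c_k$ are strictly positive, so the support is the full interval $\set{0,\dots,n}$, and therefore the homogeneous bivariate polynomial $p(y,z) = \sum_{k=0}^n c_k \binom{n}{k} y^{n-k} z^k$ is strongly log-concave by \cref{prop:slc-coeff} (take $a_k = c_k\binom nk$, so that $a_k/\binom nk = c_k$ and the ultra-log-concavity inequality is exactly $c_k^2 \ge c_{k-1}c_{k+1}$).

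Next I would lift strong log-concavity of the bivariate ``profile'' polynomial $p$ to $1/2$-fractional log-concavity of the full $2n$-variable polynomial $f = g_\nu^{\hom}$. Because $\nu$ is exchangeable, $f$ is symmetric under the diagonal action of the symmetric group permuting the pairs $(z_i,\bar z_i)$, and its polar homogenization --- obtained by setting $\bar z_i = y$ in $f$ and then polarizing the single variable $y$ into $y_1,\dots,y_n$, which by \cref{def:polarize} has generating polynomial $\sum_k \tfrac{c_k}{\binom nk} e_k(z) e_{n-k}(y)$ --- is, up to the standard identification, the polarization $\polar_{(n,n)}(p)$ of $p$. By \cref{prop:polarization}, strong log-concavity passes from $p$ to its polarization, so the generating polynomial of $\nu\Pi$ is strongly log-concave (Lorentzian). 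The standard dictionary between polar homogenization/polarization and fractional log-concavity (the same circle of ideas underlying \cref{fact:si-flc} and \cref{prop:polarization}, cf.\ \cite{alimohammadi2021fractionally,anari2021log}) then converts Lorentzianity of $\nu\Pi$'s generating polynomial into $1/2$-FLC of $f$: spreading each variable of a multi-affine polynomial into two copies preserves strong log-concavity precisely when the polynomial is $1/2$-FLC, and for a homogenized generating polynomial this two-fold polarization is exactly governed by the polar homogenization.

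A second, more hands-on way to carry out the lifting is via \cref{fact:si-flc}: $f$ is $1/2$-FLC iff every exponential tilt of $\nu^{\hom}$ is $2$-spectrally independent. The tilts of $\nu^{\hom}$ are exactly the homogenizations $(\tilt_v \nu)^{\hom}$ of the tilts of $\nu$ (a difference-of-fields calculation), and a short computation with $\corMat$ shows that $\lambda_{\max}\parens*{\corMat_{(\tilt_v\nu)^{\hom}}}$ equals the largest eigenvalue of the (statistical) correlation matrix of the spins under $\tilt_v\nu$. So the claim reduces to: for the antiferromagnetic Curie--Weiss model $\propto \exp\parens*{-\tfrac{\gamma}{2n}(\sum_i x_i)^2 + \dotprod{v,x}}$ with an arbitrary external field $v$, the spin correlation matrix has largest eigenvalue at most $2$ --- which one again derives from log-concavity of $(c_k)$ together with the exchangeable/rank-one structure of the model. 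The main obstacle I expect is exactly this lifting step: the computational core (concavity of $k\mapsto k(n-k)$, hence log-concavity of $(c_k)$, hence strong log-concavity of $p$) is immediate, and all of the remaining work lies in invoking the correct form of the polarization / Grace--Walsh--Szeg\H{o} correspondence for block-symmetric multi-affine polynomials --- or, equivalently, in carrying out the correlation-matrix computation cleanly.
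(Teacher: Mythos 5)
Your first half coincides with the paper's own argument: writing the measure through its profile $c_k\propto\exp\parens*{\tfrac{2\gamma}{n}k(n-k)}$, checking ultra-log-concavity so that the bivariate profile polynomial is strongly log-concave by \cref{prop:slc-coeff}, and polarizing via \cref{prop:polarization} to get a Lorentzian multi-affine polynomial which (by exchangeability) is exactly the generating polynomial of the polar homogenization of the set-version $\mu$ of $\nu$ --- this is \cref{lem:bivariate}. The gap is in the lifting step, which is where all the remaining work of the paper lives. Your route~1 appeals to a ``standard dictionary'' converting Lorentzianity of $g_{\mu\Pi}$ into $1/2$-FLC of $f=g_{\mu}^{\hom}$; no such quotable statement appears in the paper or its preliminaries, and the justification you give --- that splitting each variable of a multi-affine polynomial into two copies preserves strong log-concavity precisely when the polynomial is $1/2$-FLC --- is not correct as stated: substituting each variable by a nonnegative linear form in fresh variables preserves Lorentzianity for \emph{every} Lorentzian polynomial, so the split polynomial being SLC characterizes nothing, and in any case $f$ arises from $g_\mu$ by homogenization, not by variable splitting. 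The paper closes this step with two concrete lemmas proved by covariance computations under arbitrary tilts: \cref{lem:polarize} (FLC of $\mu\Pi$ pulls back to $\mu$) and \cref{lem:complement vs homogenization} (if $\mu$ \emph{and its complement} are $\alpha$-FLC then $\mu^{\hom}$ is $\alpha/2$-FLC, via the block identity expressing $\cov{\mu^{\hom}}$ in terms of $\pm\cov{\mu}$ and a quadratic-form argument using $\cov{\mu}\succeq 0$). The complement hypothesis, which you never address, is genuinely used there; here it is available only because the zero-field model is spin-flip symmetric.

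Your route~2 is sound up to the reduction: by \cref{fact:si-flc}, by the observation that tilts of $\nu^{\hom}$ are homogenized tilts of $\mu$, and by the (correct) identity matching the nonzero spectrum of $\corMat_{(\tilt_v\mu)^{\hom}}$ with that of the Pearson correlation matrix of the spins, the lemma is \emph{equivalent} to the statement that the spin correlation matrix of the rank-one antiferromagnetic model with an arbitrary external field $v$ has largest eigenvalue at most $2$. But you then assert this bound ``from log-concavity of $(c_k)$ and the exchangeable/rank-one structure,'' and once $v\neq 0$ the measure is neither exchangeable nor a function of $k$ alone, so nothing you have established applies; the asserted bound is essentially the lemma itself. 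What actually closes it is the Lorentzian polarization you already built: since SLC is preserved under external fields and $\Pi(\tilt_v\mu)$ is a tilt of $\Pi(\mu)$, every tilt of $\mu$ and of its complement is $1$-spectrally independent, i.e.\ $\cov{\tilt_v\mu}\preceq\diag(p)$ and $\cov{\tilt_v\mu}\preceq\diag(1-p)$, and then the block quadratic-form argument of \cref{lem:complement vs homogenization} (not a coordinatewise minimum of the two diagonal bounds, which is not a valid deduction in the PSD order) produces the factor-$2$ correlation bound. Without that argument, or an equivalent one, the proof is incomplete.
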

\begin{remark}
Recall that the corresponding homogenized distribution $\nu^{\hom}$ is a version of $\nu$ supported on $\binom{[n]\cup [\bar{n}]}{n}.$ 
We will not need this fact, but it is worthwhile to note that
    \cref{lem:half-flc} implies the $2$-step down-up walk on $\nu^{\hom}$ mixes in $\widetilde{O}(n^2)$ steps and entropy contraction of the down operator $D_{n\to (n-2)}$ w.r.t.\ $\nu^{\hom}$, i.e., for all $\pi:\binom{[n]\cup [\bar{n}]}{n} \to \R_{\geq 0} $
    \[\DKL {\pi D_{n\to (n-2)} \river \nu D_{n \to (n-2)} }\leq \left(1- \frac{2}{n (n-1)} \right) \DKL{\pi \river \nu}.\]

    $D_{n\to (n-2)}$ is the Markov kernel that maps $ S\in \binom{[n]\cup [\bar{n}]}{n}$ to a uniformly random subset of $S $ of size $n-2.$
    The $2$-step down-up walk 
    corresponds to the $2$-block Glauber dynamics for $\nu$, which operates by choosing uniformly random $\set{i,j}\in \binom{[n]}{2},$ then resampling the spin at $i,j$ conditioned on the remaining assignments. 
\end{remark}
\begin{remark}
This result is sharp. 
Consider $n =2, h = \vec{0}.$ As $\gamma \to \infty,$ $\nu$ converges to the uniform distribution over $\begin{bmatrix}  1 \\ -1 \end{bmatrix}$ and $\begin{bmatrix}  -1 \\ 1 \end{bmatrix}.$ The homogenized generating polynomial converges to  $ z_1 \bar{z}_2 + z_2 \bar{z}_1,$ which is  $1/2$-fractionally log-concave; that it is exactly $1/2$-spectrally independent can be checked by explicitly computing its influence matrix. 
\end{remark}
\begin{remark}
It is interesting to compare \cref{lem:half-flc} to the NP Hardness result from Appendix H of \cite{koehler2022sampling}.
There they show that, replacing $ (\sum_i x_i)^2 = \dotprod{\vec{1}, x}^2$ with $\dotprod{a, x}^2$ in the definition of $\nu(x)$, for an arbitrary vector $a\in \Z^n$, results in a hard problem.
More precisely, for $a$ coming from the number partitioning problem, no polynomial time algorithm can approximately sample from such a model within a TV distance of $1/2$ unless RP = NP. 
Thus, \cref{lem:half-flc} cannot be recovered from a generic fact about negatively-spiked rank-one Ising models --- its validity has to do with the arithmetic structure of the interactions.
\end{remark}

One key to the proof of the result will be some fundamental facts relating the spectral independence and fractional log-concavity of different variants of a probability measure $\mu$, which we establish now. 
\begin{lemma}\label{lem:complement vs homogenization}
Let $\mu: 2^{[n]}\to \R_{\geq 0} $ be a probability distribution and $\mu^{\text{comp}}:2^{[n]} \to \R_{\geq 0}$ be the complement of $\mu$, i.e., $\mu^{\text{com}} ([n] \setminus S) = \mu(S).$ Let $\mu^{\hom}:\binom{[n]\cup [\bar{n}]}{n} \to \R_{\geq 0}$ be the homogenization of $\mu$.
Then:
\begin{enumerate}
    \item If $\mu$ and $\mu^{\text{comp}}$ are $\frac{1}{\alpha}$-spectrally independent then $\mu^{\hom}$ is $\frac{2}{\alpha}$-spectrally independent.
    \item If $\mu$ and $\mu^{\text{comp}} $ are $\alpha$-FLC then $\mu^{\hom}$ is $\alpha/2$-FLC.
\end{enumerate}
\end{lemma}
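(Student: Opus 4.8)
The plan is to prove part~(1) by a direct computation with covariance matrices, and then derive part~(2) from part~(1) by passing to exponential tilts via \cref{fact:si-flc}.

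For part~(1), the key observation is that in the homogenization $\mu^{\hom}$ the indicator of $\bar i$ is deterministically one minus the indicator of $i$. Concretely, if $X = \1_S$ for $S \sim \mu$, then the random subset of $[n]\sqcup[\bar n]$ drawn from $\mu^{\hom}$ has indicator vector $(X,\,\1-X)$ in the ordering $(1,\dots,n,\bar 1,\dots,\bar n)$. Hence, writing $D = \diag(\mean{\mu})$ and assuming (after deleting any deterministic coordinates, which contribute nothing to either correlation matrix) that $D$ and $I-D$ are invertible,
\[ \cov{\mu^{\hom}} = \begin{pmatrix} \cov{\mu} & -\cov{\mu} \\ -\cov{\mu} & \cov{\mu} \end{pmatrix}, \qquad D^{\hom} := \diag(\mean{\mu^{\hom}}) = \begin{pmatrix} D & 0 \\ 0 & I - D \end{pmatrix}. \]
Now $\lambda_{\max}(\corMat_{\mu^{\hom}})$ equals the largest eigenvalue of $(D^{\hom})^{-1/2}\cov{\mu^{\hom}}(D^{\hom})^{-1/2}$, since the latter is symmetric PSD and similar to $\corMat_{\mu^{\hom}}=(D^{\hom})^{-1}\cov{\mu^{\hom}}$. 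Factoring $\cov{\mu}=LL^\intercal$, this matrix equals $RR^\intercal$ for $R = \begin{pmatrix} D^{-1/2}L \\ -(I-D)^{-1/2}L \end{pmatrix}$, so it has the same top eigenvalue as
\[ R^\intercal R = L^\intercal D^{-1} L + L^\intercal (I - D)^{-1} L. \]
Here $L^\intercal D^{-1}L$ and $D^{-1/2}\cov{\mu}D^{-1/2}$ share nonzero eigenvalues, and the latter is similar to $\corMat_{\mu}=D^{-1}\cov{\mu}$, so $\lambda_{\max}(L^\intercal D^{-1}L)=\lambda_{\max}(\corMat_{\mu})\le 1/\alpha$; likewise $\lambda_{\max}(L^\intercal(I-D)^{-1}L)=\lambda_{\max}(\corMat_{\mu^{\text{comp}}})\le 1/\alpha$, using $\cov{\mu^{\text{comp}}}=\cov{\mu}$ and $\diag(\mean{\mu^{\text{comp}}})=I-D$. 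By Weyl's inequality $\lambda_{\max}(R^\intercal R)\le 2/\alpha$, which is the claim.

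For part~(2), I will use the characterization in \cref{fact:si-flc} that a distribution is $1/\eta$-FLC iff every one of its exponential tilts is $\eta$-spectrally independent. Two elementary identities drive the reduction: first, $\tilt_{(w,\bar w)}\mu^{\hom} = (\tilt_{w-\bar w}\mu)^{\hom}$ for all $w,\bar w\in\R^n$, because tilting $\mu^{\hom}$ reweights the mass at $S\sqcup\{\bar i : i\notin S\}$ by $\prod_{i\in S}e^{w_i}\prod_{i\notin S}e^{\bar w_i} = \prod_i e^{\bar w_i}\cdot\prod_{i\in S}e^{w_i-\bar w_i}$, and the first factor is a constant absorbed into the normalization; second, $(\tilt_u\mu)^{\text{comp}} = \tilt_{-u}(\mu^{\text{comp}})$, by the same bookkeeping. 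Given these, suppose $\mu$ and $\mu^{\text{comp}}$ are $\alpha$-FLC. Then for every $u\in\R^n$ both $\tilt_u\mu$ and $(\tilt_u\mu)^{\text{comp}}=\tilt_{-u}(\mu^{\text{comp}})$ are $(1/\alpha)$-spectrally independent, so part~(1) applied to $\tilt_u\mu$ shows $(\tilt_u\mu)^{\hom}$ is $(2/\alpha)$-spectrally independent. As $u$ ranges over $\R^n$, the first identity says these are exactly all the exponential tilts of $\mu^{\hom}$, so $\mu^{\hom}$ is $(\alpha/2)$-FLC.

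None of the steps is long, so I expect the only real subtlety to be the normalization matrix $D^{\hom}$: since it is not a scalar multiple of the identity one cannot simply bound $\opnorm{\cov{\mu^{\hom}}}$, and it is precisely the splitting $R^\intercal R = L^\intercal D^{-1}L + L^\intercal(I-D)^{-1}L$ that separates the contribution governed by the hypothesis on $\mu$ from the one governed by the hypothesis on $\mu^{\text{comp}}$ — this is also what makes both hypotheses genuinely necessary.
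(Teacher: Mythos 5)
Your proof is correct and takes essentially the same route as the paper: you establish the same block structure $\cov{\mu^{\hom}}=\begin{pmatrix}A&-A\\-A&A\end{pmatrix}$ with $A=\cov{\mu}=\cov{\mu^{\text{comp}}}$ and $\mean{\mu^{\hom}}=(\mean{\mu},\mean{\mu^{\text{comp}}})$, and you reduce part (2) to part (1) through exactly the tilt identities the paper uses together with \cref{fact:si-flc}. The only difference is the finishing linear algebra in part (1): the paper certifies $\cov{\mu^{\hom}}\preceq\frac{2}{\alpha}\diag(\mean{\mu^{\hom}})$ by splitting the quadratic form directly, while you factor $A=LL^\intercal$ and pass to $R^\intercal R=L^\intercal D^{-1}L+L^\intercal(I-D)^{-1}L$ plus Weyl's inequality — an equivalent verification of the same bound (your handling of degenerate marginals by dropping deterministic coordinates is a standard convention, not a gap).
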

\begin{proof}
 For a probability distribution $\rho: 2^{X}\to \R_{\geq 0}, $ let $ \text{mean}(\rho)\in \R_{\geq 0}^X$ be the vector with $\text{mean}(\rho)_i = \P_{\rho}{i}.$

 We consider $\mu, \mu^{\text{com}}, \mu^{\hom}$ as distributions over $2^{[n]}, 2^{[\bar{n}]}, 2^{[n] \cup[\bar{n}]}$ respectively.
Thus $ \cov{\mu}$ is indexed by $[n],$ $\cov{\mu^{\text{com}}}$ is indexed by $[\bar{n}]$ and $ \cov{\mu^{\hom}}$ is indexed by $[n] \cup [\bar{n}].$

Because being FLC is equivalent to being spectrally independent under arbitrary tilts, the second claim follows from the first. 
Indeed, for $\lambda \in \R^{[n]\cup [\bar{n}]},$ $\lambda \ast \mu^{\hom} = (\tilde{\lambda}\ast \mu)^{\hom}$ with $\tilde{\lambda_{i}} = \lambda_i/\lambda_{\bar{i}}$ and $(\tilde{\lambda}\ast \mu)^{\text{comp}} = \lambda' \ast \mu^{\text{comp}}$ with $\lambda'_{\bar{i}} = \frac{1}{\tilde{\lambda}_i}.$  By the assumption on $\mu$ and $\mu^{\text{comp}},$ $\tilde{\lambda}\ast \mu $
 and $(\tilde{\lambda}\ast \mu)$ are $\frac{1}{\alpha}$-spectrally independent thus the first claim implies $ \lambda \ast \mu^{\hom} = (\tilde{\lambda}\ast \mu)^{\hom}$ is $\frac{2}{\alpha}$-spectrally independent. Since this is true for all $\lambda,$ $\mu^{\hom}$ is $\alpha/2$-fractionally log-concave.

We now proceed to prove the claim about spectral independence. 
First, note that $\cov{\mu} = \cov{\mu^{\text{comp}}}$ and $\text{mean}(\mu^{\hom}) = (\text{mean}(\mu), \text{mean}(\mu^{\text{comp}}))$. Furthermore, we claim that 
\[ \cov{{\mu^{\hom}}}= \begin{bmatrix}A & -A \\ -A & A\end{bmatrix} \]
with $A = \cov{\mu}$. 
To see this, observe that $ \P_{{\mu^{\hom}}}{i} = \P_{\mu}{i}$, $ \P_{{\mu^{\hom}}}{\bar{i}} = \P_{\mu^{\text{comp}}} {\bar{i}} = 1- \P_{\mu}{i}$, and 
\begin{align*}
  (\cov{{\mu^{\hom}}})_{\bar{i}, \bar{j}} &= \P_{{\mu^{\hom}}} {\bar{i}, \bar{j}} - \P_{{\mu^{\hom}}}{\bar{i}} \P_{{\mu^{\hom}}} {\bar{j}} \\
  &=\P_{S\sim \mu}{i, j\not\in S} -\P_{S\sim \mu}{i\not\in S}\P_{S\sim \mu}{j\not \in S}   \\
  &= \P_{S\sim \mu} {i\not\in S} - \P_{S\sim \mu} {i\not\in S, j\in S} -   \P_{S\sim \mu}{i\not\in S} (1- \P_{S\sim \mu}{j \in S})\\
  &= -(\P_{S\sim \mu}{i \not\in S,j\in S} -\P_{S\sim \mu}{i\not\in S}\P_{S\sim \mu}{j\in S}\\
  &=-(\cov{{\mu^{\hom}}})_{\bar{i}, j},
\end{align*}
and furthermore
\begin{align*}
-(\cov{{\mu^{\hom}}})_{\bar{i}, j}
  &= -(\P_{S\sim \mu}{j\in S}- \P_{S\sim \mu}{i\in S, j\in S})+ (1- \P_{S\sim \mu}{i\in S}) \P_{S\sim \mu}{j \in S}\\
  &=\P_{S\sim \mu}{i\in S, j\in S} - \P_{S\sim \mu}{i\in S}) \P_{S\sim \mu}{j \in S}\\
  &= (\cov{{\mu^{\hom}}})_{i, j}.
\end{align*}
Next, for any vector $\vec{x}\in \R^{[n]\cup [\bar{n}]},$ we can write $\vec{x} = \vec{x}_0 || \vec{x}_1$ with $\vec{x}_0\in \R^{[n]}$ and $ \vec{x}_1 \in \R^{[\bar{n}]}.$ Then
\begin{align*}
    \MoveEqLeft \vec{x}^\intercal \left(\frac{2}{\alpha} \diag(\text{mean}({\mu^{\hom}})) - \cov{{\mu^{\hom}}}  \right)\vec{x} \\
    &= \frac{2}{\alpha}(\vec{x}_0^\intercal  \diag(\text{mean}(\mu)) \vec{x}_0 +  \vec{x}_1^\intercal  \diag(\text{mean}(\mu^{\text{comp}})) \vec{x}_1) - (\vec{x}_0 - \vec{x}_1)^\intercal A (\vec{x}_0 -\vec{x}_1)\\
    &= 2 \vec{x}_0^\intercal  \left(\frac{1}{\alpha}\diag(\text{mean}(\mu) - A\right) \vec{x}_0 +  \vec{x}_1^\intercal  \left(\frac{1}{\alpha}\diag(\text{mean}(\mu^{\text{comp}}))-A\right) \vec{x}_1  \\
    &\quad + 2 (\vec{x}_0^\intercal A \vec{x}_0 +\vec{x}_1^\intercal A \vec{x}_1) - (\vec{x}_0 - \vec{x}_1)^\intercal A (\vec{x}_0 - \vec{x}_1)\\
    &= 2 \vec{x}_0^\intercal  \left(\frac{1}{\alpha}\diag(\text{mean}(\mu)) - A\right) \vec{x}_0 +  \vec{x}_1^\intercal  \left(\frac{1}{\alpha}\diag(\text{mean}(\mu^{\text{comp}}))-A\right) \vec{x}_1  + (\vec{x}_0 +\vec{x}_1)^\intercal A (\vec{x}_0 + \vec{x}_1)\geq 0
\end{align*}
where the last inequality follows from the following inequalities: 
\begin{align*} 
\frac{1}{\alpha}\diag(\text{mean}(\mu)) &\succeq \cov{\mu} =A,
\\
\frac{1}{\alpha}\diag(\text{mean}(\mu^{\text{comp} } )) &\succeq \text{Cov}(\mu^{\text{comp}}) =A, \\
A= \cov{\mu} =\E_{\mu}{(x-\text{mean}(\mu)) (x -\text{mean}(\mu))^\intercal }&\succeq 0. 
\end{align*}
Since the above inequality is true for all $\vec{x},$ we conclude that $\frac{2}{\alpha} \diag(\text{mean}({\mu^{\hom}})) - \cov{{\mu^{\hom}}}  \succeq 0$ and $ \mu^{\hom}$ is $\frac{\alpha}{2}$-FLC.
\end{proof}

\begin{lemma}\label{lem:polarize}
Let $\mu : 2^{[n]} \to \R_{\ge 0}$ be a probability distribution. Then:
\begin{enumerate}
    \item If $\Pi(\mu)$ is $\frac{1}{\alpha}$-spectrally independent then so is $\mu.$ 
    \item If $ \Pi(\mu)$ is $\alpha$-fractionally log-concave then so is $\mu.$
\end{enumerate}
\end{lemma}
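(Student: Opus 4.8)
The plan is to notice that the generating polynomial of $\mu$ is obtained from that of $\Pi(\mu)$ by setting every barred variable equal to $1$, and that this ``de-polarization'' preserves both spectral independence and fractional log-concavity. Concretely, recall from \cref{def:polarize} that $\Pi(\mu)(S\sqcup T)=\mu(S)/\binom{n}{\card{S}}$ for $S\subseteq[n]$ and $T\in\binom{[\bar n]}{n-\card{S}}$, so, identifying the variable of $\bar j$ with $\bar z_j$,
\[ g_{\Pi(\mu)}(z_1,\dots,z_n,\bar z_1,\dots,\bar z_n)=\sum_{S\subseteq[n]}\frac{\mu(S)}{\binom{n}{\card{S}}}\parens*{\prod_{i\in S}z_i}\,e_{n-\card{S}}(\bar z_1,\dots,\bar z_n), \]
which is exactly $\polar g_\mu^{\hom}$. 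Since $e_{n-\card{S}}(\1)=\binom{n}{n-\card{S}}=\binom{n}{\card{S}}$, substituting $\bar z_1=\dots=\bar z_n=1$ collapses this polynomial to $\sum_S\mu(S)\prod_{i\in S}z_i=g_\mu(z_1,\dots,z_n)$. Thus $g_\mu(z)=g_{\Pi(\mu)}(z,\1)$, i.e.\ $g_\mu$ is the restriction of $g_{\Pi(\mu)}$ to the coordinate slice $\{\bar z=\1\}$.

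For part (2): if $\Pi(\mu)$ is $\alpha$-FLC, then $z\mapsto\log g_{\Pi(\mu)}(z_1^{\alpha},\dots,z_n^{\alpha},\bar z_1^{\alpha},\dots,\bar z_n^{\alpha})$ is concave on $\R_{>0}^{2n}$. Precomposing this concave function with the affine embedding $z\mapsto(z,\1)$ of $\R_{>0}^n$ into $\R_{>0}^{2n}$ keeps it concave, and by the identity above the composition equals $z\mapsto\log g_\mu(z_1^\alpha,\dots,z_n^\alpha)$. Hence $g_\mu$, and therefore $\mu$, is $\alpha$-FLC.

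For part (1): I would use the Hessian criterion of \cref{fact:si-flc}. By that criterion, $\Pi(\mu)$ being $1/\alpha$-spectrally independent means that $\nabla^2\log g_{\Pi(\mu)}(w_1^{\alpha},\dots,w_{2n}^{\alpha})$, evaluated at $w=\1$, is negative semidefinite, where $w$ runs over all $2n$ variables. Restricting the barred variables to $\1$ and then taking second partials only in the $n$ unbarred variables yields precisely the $[n]\times[n]$ principal submatrix of this Hessian, which is again negative semidefinite; and by $g_\mu(z)=g_{\Pi(\mu)}(z,\1)$ that submatrix is exactly $\nabla^2\log g_\mu(z_1^{\alpha},\dots,z_n^{\alpha})$ at $z=\1$. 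Applying \cref{fact:si-flc} now to $\mu$ gives that $\mu$ is $1/\alpha$-spectrally independent. (One could alternatively deduce part (2) from part (1), since $\alpha$-FLC is equivalent to $1/\alpha$-spectral independence of every exponential tilt by \cref{fact:si-flc}, and $\tilt_w\mu$ is again a ``barred variables set to $1$'' restriction of $\tilt_{(w,\vec 0)}\Pi(\mu)$.)

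There is no serious obstacle here; the two points I would check carefully are the combinatorial normalization $e_{n-\card{S}}(\1)=\binom{n}{\card{S}}$, which is what makes the substitution recover $g_\mu$ exactly rather than up to a set-dependent factor, and the elementary fact that the Hessian (in the surviving variables) of a function restricted to a coordinate slice is the corresponding principal submatrix of the full Hessian, so that negative semidefiniteness descends. This is the ``easy direction'' complementing \cref{prop:polarization}, which shows that polarization preserves log-concavity in the other direction.
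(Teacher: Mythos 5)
Your proof is correct, and it reaches the conclusion by a somewhat different route than the paper. The paper's proof works directly at the level of means and covariances: it computes from \cref{def:polarize} that $\mean{\Pi(\mu)}_i=\mean{\mu}_i$ and $(\cov{\Pi(\mu)})_{i,j}=(\cov{\mu})_{i,j}$ for $i,j\in[n]$, so the matrix $\frac{1}{\alpha}\diag(\mean{\mu})-\cov{\mu}$ is literally the $[n]\times[n]$ principal submatrix of $\frac{1}{\alpha}\diag(\mean{\Pi(\mu)})-\cov{\Pi(\mu)}\succeq 0$, which gives part (1); part (2) is then deduced from part (1) applied to all external fields, using $\Pi(\lambda\ast\mu)=(\lambda\,\|\,\1)\ast\Pi(\mu)$. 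You instead package everything into the single polynomial identity $g_\mu(z)=g_{\Pi(\mu)}(z,\1)$ (your normalization check $e_{n-\card{S}}(\1)=\binom{n}{\card{S}}$ is exactly right), prove part (2) directly because concavity of $\log g_{\Pi(\mu)}(z^\alpha,\bar z^\alpha)$ survives precomposition with the affine slice $\bar z=\1$, and prove part (1) via the Hessian criterion of \cref{fact:si-flc} plus the standard fact that the Hessian of a coordinate-slice restriction is the corresponding principal submatrix. Morally both arguments are the same ``restrict to the unbarred block'' observation — your Hessian-submatrix step at $w=\1$ is, after unwinding \cref{fact:si-flc}, precisely the paper's covariance computation — but your version buys a cleaner, more unified presentation: one identity yields both claims, and part (2) does not need the detour through tilts (though, as you note, that detour is also available and is what the paper does). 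The paper's version buys explicitness: the covariance identities it verifies are reused verbatim in the companion argument of \cref{lem:complement vs homogenization}, whereas your route would re-derive them implicitly.
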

\begin{proof}
To prove the second claim, we need to show $\lambda \ast \mu$ is $\frac{1}{\alpha}$-spectrally independent for all $\lambda \in \R_{\geq 0}^n.$ Note that $ \Pi(\lambda \ast \mu) = (\lambda || \textbf{1}) \ast \Pi(\mu)$ is $\frac{1}{\alpha}$-spectrally independent as a scaling of $\Pi(\mu)$, so the second conclusion follows from the first one. We now prove the first claim about spectral independence. 

We examine the covariance matrix of $ \Pi(\mu).$ We show that for $i, j\in X,$
\[(\cov{\Pi(\mu)})_{i,j} = (\cov{\mu})_{i,j} \text{\quad and\quad} \text{mean}(\Pi(\mu))_i = \text{mean}(\mu)_i\]
Indeed, 
\[\text{mean}(\Pi(\mu))_i =\sum_{S\subseteq X: i\in S, T \cup Y: \abs{T} = n-\abs{S}} \mu(S \sqcup T) =\sum_{S\subseteq X: i\in S, T \cup Y: \abs{T} = n-\abs{S}}  \mu(S) \frac{1}{\binom{n}{\card{S}}} = \sum_{S\subseteq X: i\in S} \mu(S) = \text{mean}(\mu)_i  \]
Similarly, 
\begin{align*}
    \P_{\Pi(\mu)}{i,j} &= \sum_{S\subseteq X: i, j\in S, T \cup Y: \abs{T} = n-\abs{S}} \mu(S \sqcup T)  = \sum_{S\subseteq X: i, j\in S} \mu(S) = \P_{\mu}{i,j}
\end{align*}
and
\begin{align*}
    (\cov{\Pi(\mu)})_{i,j} &= \P_{\Pi(\mu)}{i,j} -\P_{\Pi(\mu)}{i}\P_{\Pi(\mu)}{j} = \P_{\mu}{i,j}- \P_{\mu}{i}\P_{\mu}{j} = (\cov{\mu})_{i,j}.
\end{align*}
$1/\alpha$-spectral independence of $\Pi(\mu)$ means that $0\preceq (\frac{1}{\alpha}\diag(\text{mean}(\Pi(\mu))) - \cov{\Pi(\mu)})$ thus 
\[ 0\preceq \parens*{\frac{1}{\alpha}\diag(\text{mean}(\Pi(\mu))) - \cov{\Pi(\mu)}}_{X,X} = \frac{1}{\alpha}\diag(\text{mean} (\mu)) - \cov{\mu}. \]
\end{proof}
\begin{proof}[Proof of \cref{lem:half-flc}]
Let $\mu: 2^{[n]}\to \R_{\geq 0}$ be defined by $\mu(S) = \nu(\chi_S)$ where
\[ (\chi_S)_i =\begin{cases} 1 &\forall i\in S  \\-1 &\text{ else}\end{cases}. \] Note that $\nu = \mu^{\hom}.$
Let $\alpha =1.$
    By \cref{lem:bivariate,lem:polarize}, $\mu$ is ${\alpha}$-fractionally log concave. Since the generating polynomial of $\mu^{\text{comp}}$ is again $\Pi_g,$ $ \mu^{\text{comp}}$ is $\alpha$-fractionally log concave. Thus by \cref{lem:complement vs homogenization}, $ \nu=\mu^{\hom}$ is $\frac{\alpha}{2}$-fractionally log-concave.
\end{proof}

\begin{lemma}\label{lem:bivariate}
For any $n \ge 1$ and $c \ge 0$ the bivariate polynomial
\[ g(x,y) = \sum_{k = 0}^n \binom{n}{k} e^{-c(k - n/2)^2} x^k y^{n - k}  \]
is strongly log-concave. Furthermore, its polarization
\[ \Pi_n g(x_1,\ldots,x_n,y_1,\ldots,y_n) = \sum_{k = 0}^n \sum_{S\in\binom{[n]}{k}, T\in \binom{[n]}{n-k}} e^{-c (k - n/2)^2/2} x_S y_{T}\frac{1}{\binom{n}{n-k}} ,  \]
which is a multilinear polynomial,
is strongly log-concave. 
\end{lemma}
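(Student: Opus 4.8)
The plan is to verify strong log-concavity of $g$ directly from the coefficient criterion in \cref{prop:slc-coeff}, and then to obtain strong log-concavity of the polarization for free from \cref{prop:polarization}.

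First I would write $g(x,y) = \sum_{k=0}^n c_k\, x^k y^{n-k}$ with $c_k = \binom{n}{k} e^{-c(k - n/2)^2}$, a homogeneous bivariate polynomial of degree $n$ whose coefficients are all strictly positive, so that $\set{k : c_k > 0} = \set{0,1,\dots,n}$ is a contiguous interval. By \cref{prop:slc-coeff} it then suffices to check, for every $1 \le k \le n-1$, the ultra-log-concavity inequality $\bigl(c_k/\binom{n}{k}\bigr)^2 \ge \bigl(c_{k-1}/\binom{n}{k-1}\bigr)\bigl(c_{k+1}/\binom{n}{k+1}\bigr)$. Since $c_k/\binom{n}{k} = e^{-c(k-n/2)^2}$, taking logarithms and substituting $a = k - n/2$ reduces this to $2a^2 \le (a-1)^2 + (a+1)^2 = 2a^2 + 2$ after dividing through by $c \ge 0$; this holds unconditionally (it is just convexity of $t \mapsto t^2$ evaluated at $a \pm 1$). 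Hence $g$ is strongly log-concave, equivalently — $g$ being homogeneous — complete/Lorentzian.

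Next I would invoke \cref{prop:polarization}: the polarization of a homogeneous strongly log-concave polynomial is again strongly log-concave. Here $g$ has degree $n$ in each of $x$ and $y$, so polarizing with respect to $\kappa = (n,n)$ replaces $x^k$ by $e_k(x_1,\dots,x_n)/\binom{n}{k}$ and $y^{n-k}$ by $e_{n-k}(y_1,\dots,y_n)/\binom{n}{n-k}$; expanding these elementary symmetric polynomials into square-free monomials produces exactly the multilinear polynomial $\Pi_n g$ displayed in the statement (the harmless rescaling of $c$ visible in the exponent is immaterial, as $c \ge 0$ is arbitrary). Since $\Pi_n g$ is the polarization of a homogeneous strongly log-concave polynomial, it is strongly log-concave, and it is manifestly multilinear, which is the claim.

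I do not expect a genuine obstacle: the only actual computation is the one-line verification of $2a^2 \le (a-1)^2 + (a+1)^2$, and everything else is a direct application of the two cited propositions. The one point requiring a little care is bookkeeping — confirming that applying the general definition of $\polar_\kappa$ to $g$ in the two groups of $n$ variables $x_1,\dots,x_n$ and $y_1,\dots,y_n$ indeed yields precisely the symmetric-polynomial expansion written in the lemma, and that this expansion is multilinear.
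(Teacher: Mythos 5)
Your proposal is correct and follows essentially the same route as the paper: check ultra-log-concavity of the coefficients via \cref{prop:slc-coeff} (which reduces to log-concavity of $e^{-ct^2}$, i.e.\ your inequality $2a^2 \le (a-1)^2+(a+1)^2$), then transfer to the multilinear polynomial by \cref{prop:polarization}. Your side remark that the factor-of-two discrepancy in the exponent of the displayed polarization is an immaterial rescaling of $c$ is also the right way to read the statement.
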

\begin{proof}
The first conclusion is is true because the sequence $(e^{-c(k - n/2)^2})_{k = 0}^n$ is log-concave, i.e.
\[ e^{-c(k - n/2)^2} \ge \sqrt{e^{-c(k- 1 - n/2)^2} e^{-c(k + 1 - n/2)^2}} \]
which follows from the the more general fact that the function $e^{-cx^2}$ on the real-line is log-concave. Given this, 
strong log-concavity of the bivariate polynomial follows from \cref{prop:slc-coeff}. Strong log-concavity of the multilinear polynomial follows from polarization (\cref{prop:polarization}). 
\end{proof}
\subsection{Trickle-down}
\begin{lemma}\label{lem:trickle down d regular}
Suppose that $\nu$ is a measure on the hypercube $\set{\pm 1}^n$ of the form
\[ \nu(x) \propto \exp\parens*{\frac{1}{2} \dotprod{ x, J x } + \dotprod{ h, x } - \frac{\gamma}{2n}\parens*{\sum_i x_i}^2} \]
for some $J \succeq 0,\gamma \ge 0$ and $h \in \R^n$. 
If $\opnorm{J} < 1/2$ then
\[ \opnorm{\cov{\nu}} \le \frac{2}{1 - 2\opnorm{J}}. \]
Moreover, if we define $\nu_t$ to be the stochastic localization process with driving matrix $C_t = J^{1/2}$, then
\[\opnorm{\cov{\nu_t}}\le \frac{2}{ 1- 2(1-t) \opnorm{J} }\]
\end{lemma}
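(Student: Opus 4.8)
The plan is to run stochastic localization on $\nu$ with the constant driving matrix $C_t = J^{1/2}$ and set up a backward-induction (trickle-down) argument of the same shape as the proof of \cref{thm:ising}. The one new feature is that the confining term $-\tfrac{\gamma}{2n}(\sum_i x_i)^2$ is invisible to this process (the driving matrix only cancels the $J$-quadratic part), so it survives to the final time. Concretely, by \cref{thm:gci} the process started at $\nu_0=\nu$ satisfies
\[ \frac{d\nu_t}{d\mu_0}(x) \propto \exp\parens*{\frac{1-t}{2}\dotprod{x,Jx} + \dotprod{h_t,x} - \frac{\gamma}{2n}\parens*{\sum_i x_i}^2} \]
for an adapted field $h_t$ (here $\mu_0$ is uniform on $\set{\pm1}^n$); thus each $\nu_t$ is again a measure of the form in the statement with $J$ replaced by $(1-t)J$, and $\nu_1$ is an antiferromagnetic Curie--Weiss model with a (random) external field. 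The trickle-down identity \cref{eqn:sl-trickledown} with $C_s=J^{1/2}$ then reads
\[ \cov{\nu_t} = \E*{\cov{\nu_1} \given \F_t} + \int_t^1 \E*{\cov{\nu_s}\,J\,\cov{\nu_s} \given \F_t}\,ds. \]

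The crux is the \emph{base case}: a bound $\opnorm{\cov{\nu_1}} \le 2$ uniform over the external field (and over $\gamma$). This is where the geometry of polynomials enters, via \cref{lem:half-flc}: since exponential tilts act on the homogenized generating polynomial as positive rescalings of the variables, which preserve fractional log-concavity, \cref{lem:half-flc} shows that $\nu_1^{\hom}$ is $1/2$-fractionally log-concave for every realization of $h_1$. Plugging this into the spectral-independence characterization \cref{fact:si-flc} gives that $\nu_1^{\hom}$ is $2$-spectrally independent, i.e.\ $\cov{\nu_1^{\hom}} \preceq 2D$ with $D$ the diagonal matrix of marginals; combining this with the block identity $\cov{\nu_1^{\hom}}=\left[\begin{smallmatrix} A & -A \\ -A & A\end{smallmatrix}\right]$ from \cref{lem:complement vs homogenization} and optimizing over the choice of lifting vector yields $\cov{\nu_1} \preceq 2\diag(\cov{\nu_1}) \preceq 2I$, where the last step uses $\Var{x_i}\le 1$ for $\set{\pm1}$-valued spins. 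I expect this step --- extracting the clean constant $2$, rather than the coarser bound one gets by applying spectral independence directly to $\nu_1$ --- to be the main obstacle; the constant $2$ is sharp, as the $n=2$, $h=0$, $\gamma\to\infty$ two-point example shows. Everything else is a routine adaptation of \cref{thm:ising}.

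With the base case in hand, set $\beta(t) = \sup_{\gamma\ge 0,\,h\in\R^n}\opnorm{\cov{\nu_{(1-t)J,\gamma,h}}}$, which is finite (it is at most $n$) and satisfies $\beta(1)\le 2$. Exactly as in \cref{thm:ising} --- starting stochastic localization with driving $J^{1/2}$ from an arbitrary instance $\nu_{(1-t)J,\gamma,h}$, running for time $1-t$ so that the intermediate measures are instances with parameter $(1-s)J$ for $s\in[t,1]$, and taking operator norms in the trickle-down identity using $\opnorm{\cov{\nu_s}\,J\,\cov{\nu_s}}\le \opnorm{J}\opnorm{\cov{\nu_s}}^2 \le \opnorm{J}\beta(s)^2$ --- one obtains $\beta(t) \le 2 + \opnorm{J}\int_t^1\beta(s)^2\,ds$. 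Reindexing by $\tilde\beta(u)=\beta(1-u)$ turns this into $\tilde\beta(u)\le 2 + \opnorm{J}\int_0^u\tilde\beta(v)^2\,dv$, so the comparison principle \cref{lem:integral-equation} (with $f\equiv 2$, $K=\opnorm{J}$) gives $\tilde\beta(u)\le q(u)$ where $q'=\opnorm{J}q^2$, $q(0)=2$, i.e.\ $q(u)=\frac{2}{1-2\opnorm{J}u}$ --- finite on $[0,1]$ precisely because $\opnorm{J}<1/2$. Since conditionally on $\F_t$ the measure $\nu_t$ is one of the instances over which the supremum defining $\beta(t)$ ranges, we conclude $\opnorm{\cov{\nu_t}}\le\beta(t)=\tilde\beta(1-t)\le\frac{2}{1-2(1-t)\opnorm{J}}$, which at $t=0$ gives $\opnorm{\cov{\nu}}\le\frac{2}{1-2\opnorm{J}}$, as claimed.
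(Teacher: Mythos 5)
Your proposal is correct and follows essentially the same route as the paper: stochastic localization with constant driving matrix $J^{1/2}$, the base case $\opnorm{\cov{\nu_1}}\le 2$ extracted from \cref{lem:half-flc} together with \cref{fact:si-flc} (uniformly over the random external field, since fractional log-concavity is tilt-closed), and then the Riccati-type comparison argument as in \cref{thm:ising}, giving $2/(1-2(1-t)\opnorm{J})$. The only quibble is your intermediate claim $\cov{\nu_1}\preceq 2\diag(\cov{\nu_1})$, which is stronger than what the homogenized spectral-independence bound yields and is also unnecessary: testing $\cov{\nu_1^{\hom}}\preceq 2\diag(\mean{\nu_1^{\hom}})$ against vectors of the form $(v,-v)$ and using that the two marginals at each site sum to $1$ gives $\cov{\nu_1}\preceq 2I$ directly.
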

\begin{proof} 
By \cref{thm:gci} (or by explicit calculation), we have that 
\[ \nu_1(x) \propto \exp\parens*{ \dotprod{ h + H_1, x } - \frac{\gamma}{2n}\parens*{\sum_i x_i}^2}  \]
for some random vector $H_1$. By \cref{lem:half-flc} we know that the measure $\nu_1$ is $1/2$-fractionally log-concave, and by \cref{fact:si-flc} this tells us that the covariance matrix of $\nu_1$ has operator norm at most $2$. Hence by \eqref{eqn:sl-trickledown} and submultiplicativity of the operator norm we have that
\[ \opnorm{\cov{\nu_t}} \le 2 I + \int_t^1 \E{\norm{\cov{\nu_s}}^2 \opnorm{J}}. \]
Observe that the solution to the differential equation $dy/dt = \opnorm{J} y^2$ with initial condition $y(0) = 2$ is $y(t) = \frac{2}{1 - 2 \opnorm{J} t}$. So by performing a comparison argument in the same way as in the proof of \cref{thm:ising}, we obtain the result. 

The second statement follows from the first one and the definition of $\nu_t(x)$, since (by e.g.\ \cref{thm:gci})
\[\nu_t (x) \propto \exp\parens*{ \frac{1}{2}\dotprod{ x , (1-t)  J x} +\dotprod{ h + H_1, x } - \frac{\gamma}{2n}\parens*{\sum_i x_i}^2}  \]
and $\opnorm{(1-t) J} = (1-t) \opnorm{J}$;
\end{proof}
\subsection{Dynamics}
\begin{definition}[polarized operators]\label{def:polarized}
    Suppose $\nu$ is a probability measure on the hypercube $\set{\pm 1}^n.$ Let $\tilde{\nu}: 2^{[n]}\to \R_{\geq 0}$ be defined by $\tilde{\nu}(S) = \nu(\chi_S) $ where $\chi_S \in \set{\pm 1}^n$ is defined so that for each $i \in [n]$,
    \begin{equation} \label{eqn:chi-notation}
    (\chi_S)_i =\begin{cases} 1 &\text{if }i\in S,  \\-1 &\text{otherwise.}\end{cases} 
    \end{equation}
    Consider the polarization $\Pi(\tilde{\nu})$ of $\tilde{\nu}.$ For $x\in \set{\pm 1}^n$ let $x_+ =\set{i:x_i=1}.$ The down operator $D_{n\to (n-1)}$ on $\Pi(\nu) $ corresponds to the following Markov kernel $D^{\text{pol}}$ mapping\footnote{We can equivalently view it as a Markov operator on $\{\pm 1\}^n$ if we identify a vector $x \in \{\pm 1\}^n$ and the set $x_+$.} $x \in \set{\pm 1}^n$ to $T \in 2^{[n]}$ with the following transition probabilities: 
\begin{equation}
   D^{\text{pol}} (x, T ) = \begin{cases} \frac{1}{n} &\text{if } T = x_+\setminus \set{i} \text{ for some } i\in x_+,  \\ 
   \frac{n-\abs{x_+}}{n}   &\text{if } T = x_+, \\
   0 & \text{otherwise.}
\end{cases}
\end{equation}
The polarized up-operator, define with respect to $\nu,$ maps $T \in 2^{[n]}$ to $x \in \set{\pm 1}^n$ with transition probabilities as follows:
\begin{equation}
   U^{\text{pol}} ( T , x ) = \begin{cases} \frac{1}{n} \times \frac{\nu(x)  }{ \nu D^{\text{pol}} (T) } &\text{if } T = x_+\setminus \set{i} \text{ for some } i\in x_+,  \\ 
   \frac{n-\abs{T}}{n} \times \frac{\nu(x)  }{ \nu D^{\text{pol}} (T) }  &\text{if } T = x_+, \\
   0 & \text{otherwise.} 
\end{cases}
\end{equation}
The polarized walk on $\nu$ is the Markov operator $D^{\text{pol}} U^{\text{pol}}.$ Explicitly, its transitions are as follows for $x,x' \in \set{\pm 1}^n$:
\begin{equation}
    (D^{\text{pol}} U^{\text{pol}}) ( x, x' ) =\begin{cases} 
    \frac{n-\abs{x_+}}{n^2} \times \frac{\nu(x')  }{ \nu D^{\text{pol}} (x_+) } &\text{if } x'_+ =  x_+\cup \set{i}, i\not\in x_+, \\
     \frac{n - \abs{x'_+}}{n^2}\times \frac{\nu(x')  }{ \nu D^{\text{pol}} (x'_+) } &\text{if } x'_+=x_+\setminus \set{i}, i\in x_+, \\
    \frac{1}{n^2} \times \frac{\nu(x') }{\nu D^{\text{pol}} (x_+\cap x'_+) }&\text{if } x'_+=x_+\setminus \set{i}\cup \set{j}, i\in x_+, j\not\in x_+, \\
    \parens*{\frac{n-\abs{x_+}}{n}}^2 \times  \frac{\nu(x)  }{ \nu D^{\text{pol}} (T) }  &\text{if } x' = x, \\
    0 &\text{otherwise.}
    \end{cases}
\end{equation}

 
\end{definition}

\begin{proposition}\label{prop:entropy contraction at T=1}
    Suppose that $\nu$ is a measure on the hypercube $\set{\pm 1}^n$ of the form
\[ \nu(x) \propto \exp\parens*{-\frac{\gamma}{2n} \parens*{\sum_i x_i}^2 }. \]
The polarized down operator $D^{\text{pol}}$ has $(1-1/n)$-entropy contraction with respect to $\nu$.
\end{proposition}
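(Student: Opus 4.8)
The plan is to recognize $D^{\text{pol}}$ as the single-step down operator $D_{n\to n-1}$ acting on the \emph{polar homogenization} of $\nu$, and then to exploit that this polar homogenization is strongly log-concave, so that the classical optimal entropy-contraction rate $1-1/n$ for $D_{n\to n-1}$ on strongly log-concave distributions applies. Concretely, write $\tilde\nu(S)=\nu(\chi_S)$ for $S\subseteq[n]$; since there is no external field, $\tilde\nu(S)\propto\exp(-c(|S|-n/2)^2)$ with $c=2\gamma/n\ge 0$, so the $\tilde\nu$-mass $\mu_k$ of a fixed $k$-element set is proportional to $e^{-c(k-n/2)^2}$. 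Let $\Pi$ be the polar homogenization kernel of \cref{def:polarize} (from $2^{[n]}$ to $\binom{[n]\sqcup[\bar n]}{n}$), and let $\Phi$ be the deterministic kernel that forgets the $[\bar n]$-coordinates of a set. Comparing transition probabilities, and using the identification $x\leftrightarrow x_+$, one reads off from \cref{def:polarized} that $D^{\text{pol}}=\Pi\,D_{n\to n-1}\,\Phi$: the polarized down step is ``homogenize, take one step of $D_{n\to n-1}$, project away the bar-coordinates.''

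The first key input is that $\tilde\nu\Pi=\Pi(\tilde\nu)$ is strongly log-concave. Its generating polynomial is $\sum_k\frac{\mu_k}{\binom nk}\,e_k(z)\,e_{n-k}(\bar z)$, a positive scalar multiple of the polarization $\Pi_n g$ of the bivariate polynomial $g(x,y)=\sum_k\binom nk e^{-c(k-n/2)^2}x^ky^{n-k}$ from \cref{lem:bivariate}; since $(e^{-c(k-n/2)^2})_k$ is log-concave, \cref{lem:bivariate} gives that $\Pi_n g$, hence $\Pi(\tilde\nu)$, is strongly log-concave. The second key input is the standard fact that for a strongly log-concave distribution on $\binom{[N]}{k}$ the down operator $D_{k\to k-1}$ has $(1-1/k)$-entropy contraction (see, e.g., \cite{cryan2019modified}); applied with $N=2n$, $k=n$ this gives $(1-1/n)$-entropy contraction of $D_{n\to n-1}$ with respect to $\Pi(\tilde\nu)$.

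It remains to transfer this through $\Pi$ and $\Phi$. For any $\sigma$ absolutely continuous with respect to $\tilde\nu$,
\begin{align*}
\DKL{\sigma D^{\text{pol}}\river\tilde\nu D^{\text{pol}}}
&=\DKL{\sigma\Pi D_{n\to n-1}\Phi\river\tilde\nu\Pi D_{n\to n-1}\Phi}\\
&\le\DKL{\sigma\Pi D_{n\to n-1}\river\tilde\nu\Pi D_{n\to n-1}}\\
&\le\parens*{1-\tfrac1n}\DKL{\sigma\Pi\river\tilde\nu\Pi},
\end{align*}
using the data-processing inequality for $\Phi$ and then the contraction just established. Finally $\Pi$ is lossless: its time-reversal with respect to $\tilde\nu$ is the deterministic kernel $\Phi$ (recover $S$ as the $[n]$-part of $S\sqcup T$), so \cref{prop:data-processing-slack} with $\varphi(x)=x\log x$ gives $\DKL{\sigma\Pi\river\tilde\nu\Pi}=\DKL{\sigma\river\tilde\nu}$. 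Since $\tilde\nu$ is simply $\nu$ transported along $x\mapsto x_+$, this is the asserted $(1-1/n)$-entropy contraction of $D^{\text{pol}}$ with respect to $\nu$.

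I expect the main obstacle to be bookkeeping rather than conceptual: pinning down the precise statement and citation for entropy contraction of $D_{k\to k-1}$ under strongly log-concave measures with exactly the rate $1-1/k$, and carefully verifying the two identifications $D^{\text{pol}}=\Pi D_{n\to n-1}\Phi$ and $g_{\Pi(\tilde\nu)}\propto\Pi_n g$ so that the conventions of \cref{def:polarize}, \cref{def:polarized} and \cref{lem:bivariate} are all aligned.
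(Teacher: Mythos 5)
Your proposal is correct and follows essentially the same route as the paper: both reduce to the fact that the polar homogenization $\Pi(\tilde\nu)$ is strongly log-concave (via \cref{lem:bivariate}, i.e.\ \cref{prop:slc-coeff} plus \cref{prop:polarization}) and then invoke the $(1-1/n)$-entropy contraction of $D_{n\to n-1}$ for log-concave degree-$n$ distributions, transferring the bound back through $\Pi$. The only cosmetic difference is that you factor $D^{\text{pol}}=\Pi\,D_{n\to n-1}\,\Phi$ and use data processing plus losslessness of $\Pi$, where the paper simply asserts the corresponding KL equalities.
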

\begin{proof}
    Recall that by \cref{prop:slc-coeff} the polarization $\Pi(\nu)$ is log-concave, thus $ D_{n\to (n-1)}$ has $(1-\frac{1}{n})$-entropy contraction wrt $\Pi(\tilde{\nu}).$ For any probability distribution $\nu'$ on the hypercube $\{\pm 1\}^n$, we therefore have that
    \begin{align*}
      \DKL {\nu'D^{\text{pol}}\river\nu D^{\text{pol}} } &= \DKL {\Pi(\nu') D_{n\to (n-1)}\river \Pi(\nu) D_{n\to (n-1)}} \\
      &\leq\parens*{1- \frac{1}{n}}\DKL{\Pi(\nu')\river \Pi(\nu)} \\
      &= \parens*{1- \frac{1}{n}}\DKL {\nu'\river \nu}.  
    \end{align*}
\end{proof}

\begin{theorem} \label{thm:ising projected downup}
Suppose that $\nu$ is a measure on the hypercube $\set{\pm 1}^n$ of the form
\[ \nu(x) \propto \exp\parens*{\frac{1}{2} \dotprod{ x, J x } + \dotprod{ h, x } - \frac{\gamma}{2n}\parens*{\sum_i x_i}^2} \]
for some $J \succeq 0$, $\gamma \ge 0$, and $h \in \R^n$, and
suppose that $\opnorm{J} < 1/2.$ Then:
\begin{enumerate}
    \item The polarized down-operator $D^{\text{pol}} $ has $\left(1-\frac{1-2\opnorm{J}}{n}\right)$-entropy contraction with respect to $\nu.$ 
    \item The polarized walk on $ \nu$ mixes within $\epsilon$-TV distance in $O(\frac{1}{1-2\opnorm{J}} n\log(n/\epsilon))$ steps. 
    \item Let 
\[ Q = J - \frac{\gamma}{n} \vec{1} \vec{1}^T \] 
and let $d$ be the maximum number of non-diagonal nonzero entries in a row of $ Q.$ Each step of the polarized walk can be implemented in $ O(d\log n)$ time, so the polarized walk outputs a sample within $\epsilon$-TV distance of $\nu$ in total runtime $O\parens*{\frac{1}{1-2\opnorm{J}} nd\log(n)\log(n/\epsilon)}$
\end{enumerate}
\end{theorem}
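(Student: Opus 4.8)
\emph{Plan.} Following the proof of \cref{thm:ising}, I would run stochastic localization on $\nu$ with the time-invariant driving matrix $C_t = J^{1/2}$, write $\nu_0 = \nu$ and let $\nu_t$ be the resulting measure-valued martingale, and then transfer an entropy-contraction estimate from the simple terminal measure $\nu_1$ back to $\nu_0$. Write $\rho = (1 - 2\opnorm{J})/n$ and $\kappa = D^{\text{pol}}$. The first step is to control $\nu_t$ along the trajectory. By \cref{thm:gci} (or a direct computation), conditionally on the Brownian filtration $\F_t$ one has $\nu_t(x) \propto \exp\parens*{\tfrac12 \dotprod{x, (1-t) J x} + \dotprod{w_t, x} - \tfrac{\gamma}{2n}\parens*{\sum_i x_i}^2}$ for an adapted field $w_t$; the quadratic interaction term vanishes at $t = 1$, leaving $\nu_1(x) \propto \exp\parens*{\dotprod{w_1, x} - \tfrac{\gamma}{2n}\parens*{\sum_i x_i}^2}$, which — unlike the terminal measure in the Ising case — is \emph{not} a product measure (this is exactly why the polarized walk is used rather than Glauber). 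Since $(1-t) J \succeq 0$ and $\opnorm{(1-t)J} \le \opnorm{J} < 1/2$ for $t \in [0,1]$, \cref{lem:trickle down d regular} applied to the interaction matrix $(1-t)J$ with an arbitrary external field gives, almost surely, $\cov{\tilt_w \nu_t} \preceq \frac{2}{1 - 2(1-t)\opnorm{J}} I$ for every $w$. By \cref{lem:entropic-stability}, $\nu_t$ is therefore $\alpha_t$-entropically stable with respect to $\psi(x,y) = \tfrac12 \norm{J^{1/2}(x-y)}_2^2$ with $\alpha_t = \frac{2\opnorm{J}}{1 - 2(1-t)\opnorm{J}}$, and the substitution $u = 1 - t$ gives $\int_0^1 \alpha_t\, dt = -\log(1 - 2\opnorm{J})$.

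The second step is the base case. The measure $\nu_1$ has the form of \cref{prop:entropy contraction at T=1} except for the extra external field $w_1$; but the proof of that proposition only uses strong log-concavity of the polarization of the associated set distribution, and strong log-concavity is preserved under the rescaling of the $z$-variables corresponding to an exponential tilt (the bar-variables of \cref{lem:bivariate} are untouched). Hence $D^{\text{pol}}$ has $(1 - 1/n)$-entropy contraction with respect to $\nu_1$; equivalently, for every nonnegative $f$ one has $\E*_{y \sim \nu_1 \kappa}{\Ent_{\nu_1 \observe{\kappa} y}{f}} \ge \tfrac1n \Ent_{\nu_1}{f}$ almost surely. Chaining the supermartingale property (\cref{lem:supermartingale} with $s = 0$, $t = 1$ and Markov kernel $\kappa$), this base case, and conservation of entropy (\cref{prop:conservation-of-entropy}) yields, for every nonnegative $f$,
\[ \E*_{y \sim \nu \kappa}{\Ent_{\nu \observe{\kappa} y}{f}} \;\ge\; \E*{\E*_{y \sim \nu_1 \kappa}{\Ent_{\nu_1 \observe{\kappa} y}{f}}} \;\ge\; \tfrac1n\, \E*{\Ent_{\nu_1}{f}} \;\ge\; \tfrac1n \, e^{-\int_0^1 \alpha_t\, dt}\, \Ent_{\nu}{f} \;=\; \rho\, \Ent_{\nu}{f}. \]
By the characterization of $\varphi$-entropy contraction from the proposition following \cref{prop:data-processing-slack} (with $\varphi(x) = x \log x$), this is exactly the statement that $D^{\text{pol}}$ has $(1 - \rho)$-entropy contraction with respect to $\nu$, i.e.\ part (1).

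Part (2) is then immediate: since $U^{\text{pol}}$ is the time-reversal of $D^{\text{pol}}$ with respect to $\nu$, the polarized walk $P = D^{\text{pol}} U^{\text{pol}}$ has $\nu$ as its stationary distribution, and for any distribution $\mu$ on $\set{\pm 1}^n$ the data-processing inequality for $U^{\text{pol}}$ together with part (1) gives $\DKL{\mu P \river \nu} \le \DKL{\mu D^{\text{pol}} \river \nu D^{\text{pol}}} \le (1 - \rho)\DKL{\mu \river \nu}$; iterating and applying Pinsker's inequality, from any point-mass start (for which the initial KL-divergence is $O(\operatorname{poly}(n))$ in the relevant parameter ranges) the walk reaches total variation distance $\epsilon$ within $O\parens*{\rho^{-1} \log(n/\epsilon)} = O\parens*{\tfrac{n}{1 - 2\opnorm{J}}\log(n/\epsilon)}$ steps. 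For part (3), write $g(x) = \tfrac12 \dotprod{x, Q x} + \dotprod{h, x}$ so that $\nu(x) \propto e^{g(x)}$ with $Q = J - \tfrac{\gamma}{n}\1\1^\intercal$ symmetric and of constant diagonal. A $D^{\text{pol}}$-step merely picks a uniform coordinate. For a $U^{\text{pol}}$-step from a set $T$ one must sample $x$ with $x_+ \in \set{T} \cup \set{T \cup \set{i} : i \notin T}$, with weights proportional to $\nu(\chi_{T \cup \set{i}})$ for $i \notin T$ and to $\tfrac{n - \abs{T}}{n}\nu(\chi_T)$; since $\log \nu(\chi_{T \cup \set{i}}) - \log \nu(\chi_T) = 2(Q\chi_T)_i + 2Q_{ii} + 2h_i$, it suffices to maintain the vector $Q\chi_T$ — separating off the rank-one term $\tfrac{\gamma}{n}\1\1^\intercal$, whose effect is a common shift of all coordinates depending only on $\abs{T}$, so that the remaining part has $O(d)$-sparse column updates — and a weighted-sampling data structure over the numbers $\exp\parens*{2(Q\chi_T)_i + 2h_i}$ for $i \notin T$, with the residual factor $\tfrac{n - \abs{T}}{n}$ kept as a single extra scalar. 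Each step of the walk changes $T$ by at most one element, which alters $O(d)$ of these weights; using a self-balancing binary search tree with subtree sums (\cref{prop:data structure}), each weight update, each insertion/deletion, and each weighted sample costs $O(\log n)$, so one step runs in $O(d\log n)$ amortized time, and multiplying by the step count from part (2) gives the stated runtime.

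\emph{Main obstacle.} This is essentially the Ising proof with the polarized walk in place of Glauber, so the real content lies in two places. First, one must verify that the terminal measure $\nu_1$ — which is no longer a product measure, since the confining potential survives stochastic localization — still enjoys the \emph{sharp} $(1-1/n)$-entropy contraction of $D^{\text{pol}}$ even with the external field $w_1$; this is precisely where the geometry-of-polynomials input is essential (strong log-concavity of the polarization via \cref{lem:bivariate}, transferred to the homogenized/tilted distribution via \cref{lem:polarize} and \cref{lem:complement vs homogenization}, i.e.\ \cref{lem:half-flc}), and it cannot be recovered from a generic rank-one-Ising argument. Second, one must check that the constants align so that $\tfrac1n\, e^{-\int_0^1 \alpha_t\, dt}$ equals exactly $\tfrac{1 - 2\opnorm{J}}{n}$, which rests on the uniform-in-external-field covariance bound of \cref{lem:trickle down d regular} and the elementary evaluation $\int_0^1 \frac{2\opnorm{J}}{1 - 2(1-t)\opnorm{J}}\, dt = -\log(1 - 2\opnorm{J})$. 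The data structure for part (3) is the remaining piece: it must support $O(d)$ weight updates and one weighted sample per step within $O(d\log n)$ amortized time.
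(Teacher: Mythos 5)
Your parts (1) and (3) follow the paper's own argument essentially step for step: the same stochastic localization with driving matrix $J^{1/2}$, the covariance bound of \cref{lem:trickle down d regular} fed into \cref{lem:entropic-stability}, the base-case contraction of \cref{prop:entropy contraction at T=1} (your extension to a nonzero external field is the right fix, since a tilt is a positive rescaling of the $z$-variables and preserves strong log-concavity of the polarization), chained via \cref{lem:supermartingale} and \cref{prop:conservation-of-entropy} with the evaluation $\int_0^1 \alpha_t\,dt = -\log(1-2\opnorm{J})$; and the same balanced-BST-with-subtree-sums implementation of the up step in $O(d\log n)$ amortized time as in \cref{prop:data structure}.

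The genuine gap is in part (2). You iterate the contraction from a point mass and assert the initial KL divergence is $O(\operatorname{poly}(n))$ ``in the relevant parameter ranges,'' but the theorem allows arbitrary $\gamma \ge 0$ and $h \in \R^n$, and the claimed bound $O\parens*{\frac{1}{1-2\opnorm{J}}n\log(n/\epsilon)}$ is uniform in $\gamma$ and $h$. For large $\gamma$ (or large $\norm{h}_1$) the smallest atom of $\nu$ is as small as $e^{-\Theta(\gamma n)}$, so $\DKL{\delta_{x_0}\river \nu}$ is unbounded in $\gamma$, and your route only yields a mixing time with an additional term of order $\log\log(1/\nu_{\min})$ that diverges as $\gamma\to\infty$. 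This is not a corner case: the fixed-magnetization results (\cref{prop:simple-limit}, \cref{cor:down up walk slice}) are obtained precisely in the limit $\gamma\to\infty$, where your bound becomes vacuous. The paper removes the dependence on the smallest atom by proving an exchange inequality for the polarized distribution $\Pi(\nu)$ --- the $\gamma$- and $h$-dependent part of the weight is the polarization of an ultra-log-concave-coefficient distribution, which satisfies the exchange inequality on its own, and the residual $J$-dependent factor in an exchange is bounded by $e^{O(\sqrt{n})}$ using $\sum_{j\ne i}\abs{J_{ij}}\le \sqrt{n}\,\opnorm{J}$ --- and then invoking \cite[Lemma 23]{ALOVV21} to conclude that $O(n\log n)$ steps of the walk reach a $\operatorname{poly}(n)$-warm start from any initial state, after which the entropy contraction of part (1) finishes. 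You need this warm-start (or some substitute controlling the burn-in uniformly in $\gamma$ and $h$) before part (2) is complete.
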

\begin{proof}
First we prove the entropy contraction claim.
As in \cref{lem:trickle down d regular}, define $\nu_t$ to be the stochastic localization process with driving matrix $C_t = J^{1/2}$, then 
\[ \nu_1(x) := \nu_1^{(H_1)}(x)\propto \exp\parens*{ \dotprod{ h + H_1, x } - \frac{\gamma}{2n}\parens*{\sum_i x_i}^2}  \]
for some random vector $H_1$. Let $\pi$ be the distribution of $ H_1$, we can write
\[\nu = \int \nu_1^{(H_1)} d\pi(H_1). \]

Fix a test function $\varphi:\set{\pm 1}^n \to\R_{> 0}.$
By \cref{lem:trickle down d regular} and \cref{lem:entropic-stability}
$\nu_t$ is $\alpha_t$-entropically stable with respect to $ \psi(x,y) = \frac{1}{2} \norm{J^{1/2} (x-y)}^2 = \frac{1}{2}\norm{C_t (x-y)}^2$ with $\alpha_t = \frac{2 \opnorm{J}  }{1 - 2(1-t) \opnorm{J} }$ thus
by \cref{prop:conservation-of-entropy},
\[\exp \parens*{-\int_0^1\alpha_t dt } \Ent_{\nu }{\varphi}\leq  \E{\Ent_{x\sim \nu_1}{\varphi(x)}} = \E_{H_1 \sim \pi}{\Ent_{x\sim \nu_1^{(H_1)}}{\varphi(x)} } \]
By \cref{prop:entropy contraction at T=1}, $D^{\text{pol}}$ has $(1-\kappa)$-entropy contraction wrt $\nu_1$ with $\kappa =1/n.$ Using the equivalent definition of entropy contraction in \cref{def:entropy-contraction} with $F = D^{\text{pol}}$ and the supermartingale property (\cref{lem:supermartingale}), we have 
\[  \kappa \E_{H_1 \sim \pi}{\Ent_{x\sim \nu_1^{(H_1)}}{\varphi(x)} } \leq  \E_{H_1 \sim \pi}{  \E_{z \sim \nu_1^{(H_1)} D^{\text{pol}}  }{ \Ent_{X\sim \nu_1^{(H_1)} (\cdot   
 \vartriangleright
 z ) }{\varphi (X) }  }} \leq \E_{z\sim \mu D^{\text{pol}}  } {\Ent_{X\sim \mu (\cdot   
 \vartriangleright
 z ) }{\varphi (X) } } \]
 thus
 \[\frac{1}{n} (1- 2\opnorm{J}) \Ent_{\nu }{\varphi} = \kappa  \exp \parens*{-\int_0^1 \frac{2}{1/\opnorm{J} - 2 (1-t) } dt }\Ent_{\nu }{\varphi} \leq \E_{z\sim \mu D^{\text{pol}}  } {\Ent_{X\sim \mu (\cdot   
 \vartriangleright
 z ) }{\varphi (X) } }. \]
So we have shown that $ D^{\text{pol}} $ has $(1- \frac{ 1- 2\opnorm{J}}{n})$-entropy contraction with respect to $\mu.$

Approximate tensorization of entropy implies a mixing time bound which depends on the probability of the smallest atom in the probability measure. We now eliminate this dependence to state a slightly sharper bound. 

\paragraph{Exchange inequality.} We show $\Pi(\nu)$ satisfy the exchange inequality as defined in \cite[Lemma 23]{ALOVV21}, which implies that the down-up walk on $\Pi(\nu)$ reaches a $\poly(n)$-warm start after $O(n \log n)$ steps and thus the same holds for the polarized walk on $\nu.$ 

The exchange inequality says that for any sets $R, R'$ in the support of $\Pi(\nu)$ and $u\in R \setminus R',$ there exists $v \in R'\setminus R $ such that
\begin{equation}\label{eq:exchange}
  \Pi(\nu) (R)\, \Pi(\nu) (R') \leq 2^{O(n)}\, \Pi(\nu)(R\setminus\set{u} \cup\set{v})\, \Pi(\nu)(R \setminus \set{v} \cup \set{u}).   
\end{equation}
We now show how to prove \cref{eq:exchange}. 
Let $\tilde{\nu}_1$ be a probability measure on $2^{[n]}$ such that $\tilde{\nu}_1(S) = \nu_1(\chi_S)$, where $\chi_S$ is as defined in \eqref{eqn:chi-notation}, and where
\[ \nu_1 (x) \propto \exp\parens*{-\frac{\gamma}{2n} \left(\sum_i x_i\right)^2 + \dotprod{ h , x}}. \]
Recall from \cref{prop:slc-coeff} that $\Pi(\tilde{\nu}_1)$ is log-concave, thus \cref{eq:exchange} is satisfied if we replace $ \nu$ with $\tilde{\nu}_1.$ 
Moreover,  we consider $\Pi(\rho)$ as a probability distribution over $2^{X\sqcup Y} $ where as in \cref{def:polarize} ,$X$ is the set of vertices, that is $[n]$, and $Y$ is the set of dummy variables, also of size $n.$ For $R\subseteq X\cup Y$, $\Pi(\rho)(R)$ is defined by 
\[ \Pi(\rho)(R) = \frac{\rho(R\cap X)}{\binom{X}{\abs{R \cap X} } } 
, \] 
we have that
\[ \Pi(\nu) (R) \propto \Pi(\tilde{\nu}_1) (R) \exp\left(\frac{1}{2}\dotprod{ r , J  r}  \right)  \]
with $r = \chi_{R \cap X}.$ So we only need to check that 
\begin{equation}\label{ineq:exchange 2}
   \exp\left(\frac{1}{2}\dotprod{ r , J  r}\right)  \exp\left(\frac{1}{2}\dotprod{ r' , J  r'} \right) \leq 2^{O(n)} \exp\left(\frac{1}{2}\dotprod{ s , J  s }\right)\exp\left(\frac{1}{2}\dotprod{ s' , J  s' }\right) 
\end{equation}
where we define $r' = \chi_{R' \cap X}$, $s = \chi_{(R\setminus \set{u} \cup \set{v}) \cap X} $, and $s' = \chi_{(R'\setminus \set{v} \cup \set{u}) \cap X}.$ There are two cases:
\begin{itemize}
    \item $s$ ($s'$ resp.) is $r$ ($r'$ resp.) with coordinate $i $ flipped. Then 
    \[\exp\left(\frac{1}{2}\dotprod{ r , J  r} \right) \exp\left(- \frac{1}{2}\dotprod{ s , J  s }\right) = \exp\left(\sum_{j\neq i} J_{ij } r_i r_j\right) \leq \exp \left(\sum_{j\neq i} \abs{J_{ij}}\right) = e^{O(\sqrt{n})} \]
    since $\sum_{j\neq i} \abs{J_{ij}}\leq \norm{J}_{\infty} \leq \sqrt{n} \opnorm{J}$ and $\opnorm{J} < 1/2$ by assumption. 
    \item $s$ ($s'$ resp.) is $r$ ($r'$ resp.) with coordinates $i$ and $j$ flipped.  Then
    \[\exp\left(\frac{1}{2}\dotprod{ r , J  r} \right) \exp\left(- \frac{1}{2}\dotprod{ s , J  s }\right) = \exp\left(\sum_{k\neq i, j} r_k(J_{ki } r_i + J_{kj} r_j)  \right) \leq \exp \left(\sum_{k\neq i, j} (\abs{J_{ki}}+ \abs{J_{kj}}) \right) = e^{O(\sqrt{n})}.    \]
\end{itemize} 
The same inequality holds if we replace $ r$ with $r'$ and $s$ with $s'.$ Thus \cref{ineq:exchange 2} holds in both cases and we finished showing the exchange inequality for $\Pi(\nu).$

 For any distribution $\nu'$ on $2^{[n]},$ by the construction of the polarized walk we have that 
 \[ \Pi(\nu' D^{\text{pol}} U^{\text{pol}}) = \Pi(\nu') D_{n\to (n-1)} U_{(n-1)\to n}. \]
 Thus, for $t_1 = O(n \log n)$, by the result of \cite{ALOVV21} we have
 \begin{align*}
  \DKL {\nu (D^{\text{pol}} 
U^{\text{pol}})^{t_1}\river \nu } &= \DKL {\Pi(\nu (D^{\text{pol}} U^{\text{pol}})^{t_1})\river \Pi(\nu) } \\
&= \DKL { \Pi(\nu')  (D_{n\to (n-1)} U_{(n-1)\to n})^{t_1}\river \Pi(\nu)}   \\
&\leq \text{poly}(n)
 \end{align*}
This combines with the entropy contraction implies that after $O(\frac{1}{1- 2 \opnorm{J}} n \log \frac{n}{\epsilon} ) $ steps of the polarized walk, we obtain a distribution $\hat{\nu}$ s.t. $d_{TV}(\hat{\nu}, \nu) \leq \epsilon.$ 

\paragraph{Implementing a step of the walk.} 
Now we show each step of the polarized walk can be implemented in $O(d \log n)$ time.
For each vertex $i,$ we define the quantities $B_i(x) = \sum_j Q_{ij} x_j + h_i$ and $R_i(x) = \exp( B_i(x))$ which represent the effect of the other sites on site $i$. Note that we can compute $B_i(x)$ and $ R_i(x)$ in $O(d)$ time given the list of neighbors of $i.$  Throughout the algorithm, we store
\begin{itemize}
    \item The current state $x$, which is a vector on the hypercube $\{\pm 1\}^n$, and the set $x_+=\set{i\in [n]:x_i=1}.$
    \item The current value $V.$ We maintain the invariant that $V:= V(x) =\exp\left(\frac{1}{2}\dotprod{ x, Q x} + \dotprod{ h, x} \right).$ 
\item A data structure $\mathcal{D}$ storing tuples $(i,R_i)$ at the vertices of a binary search tree keyed by $i$, which additionally allows the following operations:
    \begin{enumerate}
        \item
        Sum(): output $\sum_i R_i$ for all $(i, R_i)$ currently stored in the data structure.
        \item Range-Search($v$, $\ell$): given $\ell \geq 0,$ output the minimum $i$ in the subtree rooted at the node $v$ such that
        \[\sum_{j< i} R_j \geq \ell. \]
        We omit $v$ and simply write Range-Search($\ell$) if $v$ is the root of the tree.
        \item 
        $\text{Update} (i, R)$: sets  $R_i$ to $R.$ If $ i$ doesn't already exist in the data structure then it inserts key $i$ with value $R_i = R$ into the tree. 
        \item $\text{Delete}(i)$: delete the pair $(i,R_i)$ from the binary search tree. 
    \end{enumerate}
\end{itemize}
We maintain the invariant that $i\in \mathcal{D}$ iff $ x_i = -1$ and $R_j = \exp(B_j(x)) > 0$ for all $j\in \mathcal{D}.$
 With this invariant, $\mathcal{D}$ contains at most $n$ nodes at any given time, so all operations on this data structure take $O(\log n)$ time by using the implementation specified in \cref{prop:data structure}.
  It takes $O(nd \log n)$ time to initialize the algorithm at an arbitrary $x \in\set{\pm }^n$ by inserting all vertices assigned to $-$ into $\mathcal{D}. $
  
Now, we show that each step of the polarized walk can be implemented in $O(d\log n)$ time.   

For the down step, in $O(\log n)$ time, we sample a coordinate $ i$ in $x_+$ with probability $1/n$ or $\perp$ with probability $ \frac{n - \abs{x_+}}{n}.$ Let $T=x_+.$ If the sample is a coordinate $i\in x_+,$ in a total of $O(d\log n)$ time we perform the following updates: 
\begin{itemize}
    \item Update $T = T \setminus \set{i}, $ compute $R_i = \exp(B_i(x))$ and insert $(i, R_i)$ into $ \mathcal{D}.$ Update $V \leftarrow V/R_i^2.$
    \item Update $B_{j} (x) \leftarrow B_j(x) -2 Q_{ij}$ and $R_j \leftarrow R_j /\exp(2 Q_{ij}) $ for all $j \in \mathcal{N}(i) $ such that $x_j = -1$.
    \item Update $x = \chi_T.$
\end{itemize}
Note that we maintain the invariant 
$V=\exp(\frac{1}{2}\dotprod{ x, Q x} + \dotprod{ h, x} )$ 
with $x = \chi_T$, $R_j = \exp(B_{j}(x))$ and $j\in \mathcal{D} $ if $x_j= -1,$ or equivalently, $j\not\in T.$
 
For the up step, let $L$ be the output of Sum(). Then:
\begin{enumerate}
    \item With probability $\frac{n -\abs{T} }{n} ( \frac{L}{n}  +\frac{n -\abs{T} }{n} )^{-1}, $  set the new state $x'$ s.t. $x'_+=T$ and finish the step.
    \item Otherwise (so with probability $ \frac{L}{n} (\frac{L}{n}  +\frac{n -\abs{T} }{n} )^{-1}$),  do the following
\begin{itemize}
    \item Sample $\ell$ uniformly at random in the interval $[0,L].$ Let $j$ be the output of Range-Search($\ell$). Note that $j$ is sampled with probability 
    \[ \frac{R_j }{L} =\frac{\exp(B_j (\chi_T) ) }{\sum_{j'\not\in T}  \exp(B_{j'} (\chi_T) ) }= \frac{ \nu(\chi_{T\cup \set{j}}  ) }{\sum_{j'\not\in T } \nu(\chi_{T\cup \set{j'}})}. \]
    \item Remove $j$ from $ \mathcal{D},$ and update $V \gets V R_j^2 $.
    \item Update $B_k(x) \leftarrow B_k(x) + 2 Q_{jk} $ and $R_k \leftarrow R_k \exp(2 Q_{jk})$ for all $k\in \mathcal{N}(j) $ such that $x_k = -1.$
    \item  Update $ x =\chi_{T
    \cup \set{j}},$
 \end{itemize}
\end{enumerate} 
The total time for the up step is $ O(\abs{\mathcal{N}(j)}\log n) =O(d \log n).$
\end{proof}
\begin{proposition}\label{prop:data structure}
    The data structure $\mathcal{D}$ as described in the proof of \cref{thm:ising projected downup} can be implemented so that each operation takes $O(\log n)$ time, where $n$ is an upper bound on the number of nodes stored in $\mathcal{D}$ at any given time.
\end{proposition}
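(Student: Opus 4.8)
The plan is to realize $\mathcal{D}$ as an \emph{augmented} self-balancing binary search tree. I would take any balanced BST with worst-case height $O(\log n)$ --- for instance an AVL tree, a red--black tree, or a weight-balanced $\mathrm{BB}[\alpha]$ tree (a randomized treap also works in expectation) --- with nodes keyed by the index $i$, and I would store at each node $v$ not only the value $R_i$ but also an auxiliary field $s(v)$ equal to the sum of the $R$-values over the whole subtree rooted at $v$. This field obeys the purely local recurrence $s(v) = R_v + s(\mathrm{left}(v)) + s(\mathrm{right}(v))$, with $s = 0$ on an empty subtree, so it is a \emph{subtree-additive} augmentation depending only on a node together with its two children. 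By the standard theory of augmenting balanced search trees, such a field can be recomputed in $O(1)$ time at each node of a root-to-leaf path and, more importantly, restored in $O(1)$ time per tree rotation, since a rotation alters the parent/child structure only among a constant number of nodes. Verifying this last point for the particular balancing discipline chosen is the only step that requires genuine care; everything else is routine bookkeeping.

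Given this data structure, the four operations are straightforward. $\mathrm{Sum}()$ returns $s(\mathrm{root})$, in $O(1)$ time. $\mathrm{Update}(i,R)$ performs an ordinary BST search for key $i$; if $i$ is present it overwrites $R_i \gets R$, and otherwise it inserts a fresh node with key $i$ and value $R$; in either case one then walks back toward the root repairing the $s(\cdot)$ fields, and in the insertion case performs the $O(\log n)$ rebalancing rotations of the chosen scheme, each of which also repairs its $O(1)$ affected $s(\cdot)$ fields. $\mathrm{Delete}(i)$ is the analogous textbook deletion (splicing out the node, replacing it by its in-order successor when it has two children), again followed by the upward $s(\cdot)$-repair pass and the scheme's rebalancing. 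Each of these visits $O(\log n)$ nodes and does $O(1)$ work per node, hence runs in $O(\log n)$ time whenever $\mathcal{D}$ holds at most $n$ nodes.

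Finally, $\mathrm{Range\text{-}Search}(v,\ell)$ is implemented by a single downward walk from $v$, carrying an accumulator $a$ equal to the total $R$-mass of all keys lying strictly to the left of the current subtree (so $a = 0$ when $v$ is the root, and in recursive calls $a$ is the mass already skipped). At a node $u$ with left subtree $L$ one has $\sum_{j < \mathrm{key}(u)} R_j = a + s(L)$: if $a + s(L) \ge \ell$ the sought minimizer lies in $L$, so one recurses into $L$ with $a$ unchanged; otherwise the answer is $\mathrm{key}(u)$ exactly when $a + s(L) + R_u \ge \ell$, and failing that it lies in the right subtree, into which one recurses after setting $a \gets a + s(L) + R_u$. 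Because every stored value satisfies $R_j = \exp(B_j) > 0$, the prefix sums are strictly increasing over the keys currently present, so this descent is well defined, terminates at the unique correct index, and touches one node per level; thus it also runs in $O(\log n)$ time. Assembling these four routines completes the implementation, and the main obstacle --- really the only nontrivial verification --- is the claim that the subtree-sum augmentation survives rebalancing at $O(1)$ cost per rotation, which is exactly the classical augmentation condition and is met here since $s(\cdot)$ is a function of a node and its children alone.
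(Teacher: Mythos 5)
Your proposal is correct and follows essentially the same route as the paper: an augmented self-balancing BST keyed by $i$ with subtree partial sums, giving $O(\log n)$ updates and a single root-to-leaf descent for the weighted prefix search. The only (cosmetic) differences are that you store full-subtree sums rather than the paper's left-subtree sums --- which makes $\mathrm{Sum}()$ an $O(1)$ root lookup --- and that you spell out the $O(1)$-per-rotation repair of the augmentation, a point the paper leaves implicit.
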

\begin{proof}
    To implement $\mathcal{D},$ we will use a self-balancing binary search tree (BST) --- for example, an AVL tree or red-black tree (see e.g.\ \cite{cormen2022introduction}). Each node of the tree stores the tuple $(i,R_i)$ and is keyed by $ i$, i.e., a transversal of the tree should return a list sorted in increasing order $i$ of all tuples $(i,R_i)$ in the tree.  Each node also stores the sum of $ R_j$ for all $j$ in its left subtree
\[S_i = \sum_{j \in \text{left-subtree} (i)} {R_j }.\] 
Insertion/deletion/update for BST involves following the path from the root to a certain node, and we only need to update $S_j$ along this path, which is of length $O(\log n)$ in a self-balancing BST. Thus, insertion/deletion/update takes worst case $O(\log n)$ time. 

To implement Sum(), output the sum of $S_j$ along each node in the path from the root to the element with the largest key $i$ in $\mathcal{D}.$
The algorithm only does $O(1)$ work at each node along a path from the root to a certain node, again taking $O(\log n)$ time.

To implement Range-Search($v$, $\ell$), 
repeat the following:
\begin{itemize}
    \item If $\ell  = S_v,$ output $v$.
    \item If $\ell < S_v:$ If the left-child $v.\text{left}$ of $v$  exists, recursively call Range-Search($v.\text{left}$, $\ell$). If not, output $v.$
    \item If $\ell > S_v:$  If the right-child $v.\text{right}$ of $v$  exists, recursively call Range-Search($v.\text{right}$, $\ell-S_v$). If not, output $v.$ 
\end{itemize}

The algorithm only needs to do $O(1)$ work at each node along a path from the root to a certain node, so it runs in time $O(\log n).$

Let $i$ be the correct output of Range-Search($v, \ell$) according to its specification, i.e., the minimum $i$ in the subtree rooted at the node $v$ such that
\[\sum_{j< i} R_j \geq \ell. \]
We prove by induction that this $i$ is indeed the output of our implementation. 
Let $T(v)$ be the tree rooted at $v$ and consider three cases:
\begin{itemize}
    \item If $ \ell = S_v = \sum_{j < v, j\in T(v)} R_j$ then $i=v,$ thus our algorithm behaves correctly.
    \item If $\ell< S_v = \sum_{j < v, j\in T(v) } R_j$ then $v > i$, since $i$ is the minimum key satisfying the above inequality, i.e., $i = \min \set{i' \in T(v) \given \ell \leq \sum_{j < i', j\in T(v)} R_j} , $ thus $ i\in T(v.\text{left}).$
For any $i' \in T(v.\text{left})$ 
\[ \sum_{j < i', j \in T(v.\text{left})} R_j = \sum_{j < i', j <v,j\in T(v)} R_j = \sum_{j< i', j\in T(v) } R_j \]
thus $i = \min \set{i' \in T(v.\text{left}) \given \ell \leq \sum_{j < i', j\in T(v.\text{left}) } R_j},$ and thus by the inductive hypothesis it will be the output of Range-Search($v.\text{left}, \ell$).
    \item If $\ell> S_v = \sum_{j < v, j\in T(v) } R_j$ then $v < i$ and $i\in T(v.\text{right})$, since $\sum_{j< i, j\in T(v)} R_j \geq \ell > \sum_{j < v , j \in T(v)} R_j.$ 
Analogously, $\sum_{j < i', j\in T(v) } R_j\geq \ell $ iff $i'  \in T(v.\text{right}).$
For any $i' \in T(v.\text{right})$ 
\[ S_v+ \sum_{j < i', j \in T(v.\text{right})} R_j = \sum_{j<v, j \in T(v)} R_j + \sum_{j< i' } R_j  = \sum_{j< i' , j\in T(v) } R_j  \]
thus \[i = \min \set*{i'\in T(v) \given \ell \leq \sum_{j < i', j\in T(v) } R_j} = \min \set*{i' \in T(v.\text{right}) \given \ell -S_r \leq \sum_{j < i', j\in T(v.\text{right}) } R_j},\] 
thus by the inductive hypothesis it will be the output of Range-Search($v.\text{right}, \ell-S_v $).

\end{itemize}
\end{proof}
\subsection{Concentration of measure and entropy-transportation inequality}\label{sec:concentration}

We say a Markov operator $P$ with state space $\set{\pm 1}^n$ is $\rho$-local if $ P(x,y) \neq 0 $ only if $\norm{x-y}^2 \leq \rho.$   By a standard Herbst argument \cite{hermon2019modified}, if a $ \rho$-local Markov chain with stationary distribution $\nu:\set{\pm 1}^n \to \R_{\geq 0}$ has modified log-Sobolev constant $\alpha$ 
then $\nu$ exhibits sub-Gaussian concentration of Lipschitz function with constant $O(\alpha).$   
\begin{definition}\label{def:lipschitz concentration}
We say $\nu:\set{\pm 1}^n \to \R_{\geq 0}$ satisfies sub-Gaussian concentration of Lipschitz functions with constant $K$ if   
\[\P*_{S \sim \nu} {f(S) \geq \E_{\nu}{f(S)} + a } \leq \exp\parens*{-\frac{Ka^2 }{
2 c ^2}}.\]  
where $f : \set{\pm 1}^n \to \R$ is an arbitrary a $c$-Lipschitz functional with respect to the Hamming metric. This is equivalent to a $W_1$-entropy transport inequality  \cite[Theorem 4.8]{van2014probability}.
\end{definition}
\begin{corollary}\label{corollary:concentration}
Under the assumptions of \cref{thm:ising projected downup}, the probability measure $\nu$ satisfies sub-Gaussian concentration of Lipschitz functions with constant $K = O\left(\frac{(1 - 2\opnorm{J})}{n}\right)$.
\end{corollary}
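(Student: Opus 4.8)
The plan is to derive the concentration bound from the entropy-contraction estimate for the polarized walk established in \cref{thm:ising projected downup}, via the standard route ``entropy contraction $\Rightarrow$ modified log-Sobolev inequality $\Rightarrow$ sub-Gaussian concentration''; the only non-automatic point is that although the polarized walk is not the Glauber dynamics, it is still a \emph{local} Markov chain, so the Herbst argument recalled immediately before the statement applies with a dimension-free loss.

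First I would record two elementary properties of the polarized walk $P = D^{\text{pol}} U^{\text{pol}}$. Since $U^{\text{pol}}$ is by construction the time-reversal of $D^{\text{pol}}$ with respect to $\nu$ (\cref{def:polarized}), the walk $P$ is reversible with respect to $\nu$, and in particular $\nu D^{\text{pol}} U^{\text{pol}} = \nu$. Moreover, reading off the transition probabilities of $D^{\text{pol}} U^{\text{pol}}$ in \cref{def:polarized}, whenever $P(x,x') > 0$ the configurations $x$ and $x'$ differ in at most two coordinates, so $\norm{x - x'}^2 \le 8$; that is, $P$ is $8$-local. Next I would upgrade the contraction of the down operator to the whole walk: applying the data processing inequality to the kernel $U^{\text{pol}}$ and using stationarity, \cref{thm:ising projected downup}(1) yields, for every $\mu \ll \nu$,
\[ \DKL{\mu P \river \nu} = \DKL{\mu D^{\text{pol}} U^{\text{pol}} \river \nu D^{\text{pol}} U^{\text{pol}}} \le \DKL{\mu D^{\text{pol}} \river \nu D^{\text{pol}}} \le (1-\rho)\,\DKL{\mu \river \nu}, \]
with $\rho = \frac{1 - 2\opnorm{J}}{n}$, equivalently $\Ent_\nu{Pf} \le (1-\rho)\,\Ent_\nu{f}$ for all nonnegative $f$. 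I would then pass to continuous time: writing $P_t = e^{t(P-I)} = e^{-t}\sum_{k \ge 0}\frac{t^k}{k!}P^k$ as a Poisson mixture of the powers $P^k$ and using the convexity of the map $g \mapsto \Ent_\nu{g}$ together with the iterated one-step bound $\Ent_\nu{P^k f} \le (1-\rho)^k \Ent_\nu{f}$, I obtain $\Ent_\nu{P_t f} \le e^{-\rho t}\,\Ent_\nu{f}$, which is a modified log-Sobolev inequality for the continuous-time polarized walk with constant at least $\rho$.

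Finally, since $P$ is $8$-local, for any function $f$ that is $c$-Lipschitz in the Hamming metric the squared-gradient term $\frac12\sum_{x'}P(x,x')\bigl(f(x') - f(x)\bigr)^2$ is bounded by $2c^2$ uniformly in $x$ and in $n$, and feeding this bound together with the modified log-Sobolev inequality of constant $\rho$ into the Herbst argument of \cite{hermon2019modified} gives sub-Gaussian concentration of Lipschitz functions with constant $O(\rho) = O\!\bigl(\frac{1-2\opnorm{J}}{n}\bigr)$. The step requiring the most care is this last bookkeeping: one must observe that each step of the polarized walk changes only $O(1)$ coordinates, so that the sup-norm bound on the squared-gradient term — and hence the final concentration constant — is proportional to the modified log-Sobolev constant itself rather than being diluted by a factor of $n$. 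The remaining ingredients (data processing, the Poisson-mixture convexity argument, and invoking the quoted Herbst statement) are routine.
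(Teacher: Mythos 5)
Your proposal is correct and follows essentially the same route the paper intends: the paper gives no separate proof of \cref{corollary:concentration} beyond the remark preceding it, namely that the polarized walk is an $O(1)$-local chain whose modified log-Sobolev constant is $\Omega\bigl(\tfrac{1-2\opnorm{J}}{n}\bigr)$ by the entropy contraction of \cref{thm:ising projected downup}, so the Herbst argument of \cite{hermon2019modified} applies. The extra steps you spell out (data processing through $U^{\text{pol}}$ plus stationarity to pass from the down operator to the full walk, and the Poisson-mixture continuization to get the MLSI) are exactly the routine details the paper leaves implicit.
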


\subsection{Antiferromagnetic Ising model}
\begin{proposition} \label{prop:ising graph}
    Let $\nu:\set{\pm 1 }^n \to \R_{\geq 0}$ be the antiferromagnetic Ising model on graph $G = G([n],E)$, i.e., 
    \[\nu(x) \propto \exp \left(-\frac{\beta}{2} \dotprod{ x, A x} + \dotprod{ h ,x}\right)\]
    where $A$ is the adjacency matrix of $G,$ and $\beta\geq 0 $ and $h\in \R^n$ are given parameters.

Let $ d_{\max}, d_{\min}$ be maximum and mimimum degrees of $G$ respectively. Consider the normalized Laplacian matrix $L$ of $G$, i.e., $L = I - D^{-1/2} A D^{-1/2}$ where $D$ is the diagonal matrix with $D_{i,i}$ be the degree of vertex $i.$ The eigenvalues of $L$ are $0=\lambda_1\leq \lambda_2 \leq \cdots  \lambda_{n} \leq 2.$ Let $\lambda(G) =\max_{i\neq 1} \abs{1- \lambda_i} .$

If $0\leq \beta \leq \frac{1-\delta}{4}  (d_{\max} -d_{\min} + d_{\max}\lambda(G))^{-1}$ 
 for $\delta >0$, then \cref{thm:ising projected downup} and \cref{corollary:concentration} applies i.e.
\begin{enumerate}
    \item We can sample from $\hat{\nu}$ s.t. $d_{TV}(\hat{\nu}, \nu) \leq \epsilon$ by running the polarized walk for $O(\delta^{-1} n\log \frac{n}{\epsilon})$ steps. The total runtime is $O( \delta^{-1} n d_{\max} \log \frac{n}{\epsilon} \log n) .$
    \item $\nu$ has sub-Gaussian concentration of Lipschitz functions.
\end{enumerate}
\end{proposition}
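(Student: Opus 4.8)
The plan is to rewrite the antiferromagnetic Ising model on $G$ in the exact form required by \cref{thm:ising projected downup}, namely $\nu(x) \propto \exp(\tfrac12\langle x, Jx\rangle + \langle h', x\rangle - \tfrac{\gamma}{2n}(\sum_i x_i)^2)$ with $J \succeq 0$ and $\opnorm{J} < 1/2$, and then simply invoke that theorem together with \cref{corollary:concentration}. First I would write $-\beta A = -\beta D^{1/2}(I - L)D^{1/2}$, so that $-\beta A = -\beta D - \beta D^{1/2}(\,- L\,)D^{1/2}$ up to the Laplacian relation; more precisely $-\beta A = \beta D^{1/2}(L-I)D^{1/2}$. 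The term $\beta D^{1/2}(L - I)D^{1/2}$ has eigenvalues controlled by $\lambda(G)$ \emph{on the subspace orthogonal to the Perron direction}, but the Perron/trivial direction $D^{1/2}\mathbf 1$ contributes a large negative eigenvalue that we cannot absorb into the external field — this is precisely the obstruction flagged in \cref{subsec:techniques}. So the key algebraic step is to split off the trivial eigenvalue: write
\[
-\frac{\beta}{2}\langle x, A x\rangle = \frac{1}{2}\langle x, \tilde J x\rangle - \frac{\gamma}{2n}\Bigl(\sum_i x_i\Bigr)^2 + \langle c, x\rangle
\]
for an appropriate $\gamma \ge 0$ (proportional to $\beta d_{\max}$ or $\beta$ times the relevant degree quantity, chosen so the rank-one piece $\tfrac{\gamma}{n}\mathbf 1\mathbf 1^\intercal$ dominates the projection of $\beta A$ onto the all-ones direction), a residual matrix $\tilde J$, and a linear correction $c$ that gets folded into $h$. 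Since in a $d$-regular graph $D^{1/2}\mathbf 1 \propto \mathbf 1$ this is clean; for the irregular case one needs $d_{\max} - d_{\min}$ to appear, which is why that term shows up in the hypothesis.

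Next I would add a multiple of the identity to make the residual matrix PSD: set $J = \tilde J + \alpha I$ with $\alpha = -\lambda_{\min}(\tilde J)$ (this does not change $\nu$ on the hypercube), so $J \succeq 0$. Then I would bound $\opnorm{J} = \alpha + \lambda_{\max}(\tilde J) = \lambda_{\max}(\tilde J) - \lambda_{\min}(\tilde J)$, i.e.\ the spread of the spectrum of $\tilde J$. The point of the splitting is that $\tilde J$ no longer contains the trivial eigenvalue, so its spectral spread is governed by $\beta$ times $(d_{\max} - d_{\min} + d_{\max}\lambda(G))$, up to the constant factor coming from the $D^{1/2}\cdot D^{1/2}$ conjugation and from the fact that both the largest and smallest non-trivial eigenvalues of $A$ lie within $d_{\max}\lambda(G)$ of the bulk while degree irregularity contributes at most $d_{\max}-d_{\min}$. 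Carrying the constants carefully, the hypothesis $\beta \le \tfrac{1-\delta}{4}(d_{\max}-d_{\min}+d_{\max}\lambda(G))^{-1}$ yields $\opnorm{J} \le \tfrac{1-\delta}{2} < \tfrac12$, and moreover $1 - 2\opnorm{J} \ge \delta$.

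Finally I would apply \cref{thm:ising projected downup}: part (1) gives the entropy-contraction constant $1 - \tfrac{1-2\opnorm{J}}{n} \le 1 - \tfrac{\delta}{n}$, part (2) gives mixing of the polarized walk in $O(\delta^{-1} n\log(n/\epsilon))$ steps, and part (3) gives per-step cost $O(d\log n)$ where $d$ is the max number of off-diagonal nonzeros in a row of $Q = J - \tfrac{\gamma}{n}\mathbf 1\mathbf 1^\intercal$; since $Q$ differs from a scalar multiple of $A$ only in the diagonal, $d \le d_{\max}$, yielding total runtime $O(\delta^{-1} n d_{\max}\log n\log(n/\epsilon))$. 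Claim (2) follows directly from \cref{corollary:concentration} with $K = O((1-2\opnorm{J})/n) = \Omega(\delta/n)$. The main obstacle is purely the bookkeeping in the second paragraph — getting the right constant ($4$ in the denominator) out of the Laplacian normalization, the degree-irregularity correction, and the PSD-shift, so that $\opnorm{J}$ lands below $1/2$ with the stated slack $\delta$; everything after that is a black-box citation.
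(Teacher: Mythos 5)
Your proposal is correct and follows essentially the same route as the paper: split $-\beta A$ via $D^{-1/2}AD^{-1/2}=I-L$ into the all-ones rank-one piece with $\gamma=\beta d_{\max}$, a degree-irregularity correction of operator norm at most $\beta(d_{\max}-d_{\min})$, and a bulk term of norm at most $\beta d_{\max}\lambda(G)$, then shift by a multiple of the identity so that $\opnorm{J}\le\tfrac{1-\delta}{2}$ and invoke \cref{thm:ising projected downup} and \cref{corollary:concentration}, with the row-sparsity of $Q$ matching that of $A$. The only cosmetic difference is that you shift by $-\lambda_{\min}(\tilde J)\,I$ and bound the spectral spread, whereas the paper shifts by the fixed $\tfrac{1-\delta}{4}I$; both give the same constants (and your optional linear correction $c$ is unnecessary but harmless).
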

\begin{proof}
Let the eigenvalues of $D^{-1/2} A D^{-1/2} = I-L $ be $\beta_i = 1-\lambda_i.$
    Note that the unit eigenvector associated with $ \beta_1 = 1$ is $ v_1 = \frac{1}{\sqrt{n}}\vec{1},$ thus we can write
    \[ D^{-1/2} A D^{-1/2} = \frac{1}{n} \vec{1} \vec{1}^\intercal + B \]
    where, since $B$ is symmetric, $\opnorm{B} = \max_{i\neq 1} \abs{\beta_i} = \max_{i\neq 1}\abs{1-\lambda_i}  =\lambda(G).$ Thus
    \begin{align*}
      -\beta A &=  -\frac{\beta}{n} (D^{1/2}\vec{1}) (D^{1/2}\vec{1})^\intercal -\beta D^{1/2} B D^{1/2} \\
      & = -  \frac{\beta d_{\max} }{n } \vec{1} \vec{1}^\intercal +  \frac{\beta}{n} (d_{\max} \vec{1} \vec{1}^\intercal-  (D^{1/2}\vec{1}) (D^{1/2}\vec{1})^\intercal ) - \beta D^{1/2} B D^{1/2} 
    \end{align*}
    Let 
    \[ U = \frac{1}{n} (d_{\max} \vec{1} \vec{1}^\intercal-  (D^{1/2}\vec{1}) (D^{1/2}\vec{1})^\intercal ), \qquad V = -  D^{1/2} B D^{1/2}, \qquad J = \frac{1-\delta }{4} I+ \beta(U+V) . \]
    Note that 
    \[ u_{ij} = \frac{1}{n} (d_{\max} -D_i^{1/2} D_j^{1/2}) \leq \frac{1}{n} (d_{\max} - d_{\min}) \]
    so $ \opnorm{U} \leq\norm{U}_{\infty}= \max_i \sum_j \abs{U_{ij}} \leq  (d_{\max} - d_{\min}).$ Also,
    \[\opnorm{V} \leq \opnorm{D^{1/2}} \opnorm{B} \opnorm{D^{1/2}} \leq d_{\max} \lambda(G)\]
    thus \[\opnorm{\beta(U+V)}\leq \beta (\opnorm{U}+\opnorm{V}) \leq \abs{\beta} (d_{\max}-d_{\max} + d_{\max} \lambda(G)) \leq \frac{1-\delta }{4} \]
    and $0\preceq  J \preceq \frac{1-\delta}{2}.$ Since adding multiples $I$ to $A$ doesn't change the distribution $\nu,$ we can write
    \[\nu(x) \propto  \exp\parens*{\frac{1}{2} \dotprod{ x, J x } + \dotprod{ h, x } - \frac{\beta d_{\max} }{2n}\parens*{\sum_i x_i}^2}\]
    with $J$ as defined above, thus \cref{thm:ising projected downup} applies. Also, since all non-diagonal entries of $Q$ is same as that of $-A,$ the maximum number of non-diagonal nonzero entries in each row of $Q$ is bounded above by $d_{\max}.$
   \end{proof}
We thus obtain the following results for the antiferromagnetic Ising model on random graphs.
\begin{corollary}\label{cor:random d regular}
Let $\nu$ be the antiferromagnetic Ising model with parameter $\beta $ on $G$ where $G$ is a random $d$-regular graph. Suppose $0\leq \beta \leq \frac{1-\delta }{8 \sqrt{d-1}}$ for a  constant $\delta > 0.$  With probability $1-o(1)$ over the random instance $G$, we can sample from $\hat{\nu}$ s.t. $d_{TV}(\hat{\nu}, \nu) \leq \epsilon$ in $O(\delta^{-1} n d\log \frac{n}{\epsilon}\log n)$ time and $\nu$ has sub-Gaussian concentration of Lipschitz function.
\end{corollary}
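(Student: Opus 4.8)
The plan is to obtain this as an immediate corollary of \cref{prop:ising graph} combined with the spectral bound for random regular graphs. Since $G$ is $d$-regular, $d_{\max} = d_{\min} = d$, so the quantity governing the threshold in \cref{prop:ising graph} collapses to $d_{\max} - d_{\min} + d_{\max}\lambda(G) = d\,\lambda(G)$, where $\lambda(G) = \max_{i \ne 1}\abs{1 - \lambda_i}$ for $\lambda_i$ the eigenvalues of the normalized Laplacian $L = I - A/d$. For a $d$-regular graph this equals $\frac{1}{d}\max\set{\abs{\mu_2(A)}, \abs{\mu_n(A)}}$, where $\mu_1(A) = d \ge \mu_2(A) \ge \cdots \ge \mu_n(A)$ are the adjacency eigenvalues, since the eigenvalues of $L$ are $1 - \mu_i(A)/d$.

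Next I would invoke Friedman's theorem \cite{Friedman2003APO} (a version for $d$-regular models also appears in \cite{deshpande2019threshold}): with probability $1 - o(1)$ over a uniformly random $d$-regular graph on $n$ vertices, $\max\set{\abs{\mu_2(A)}, \abs{\mu_n(A)}} \le 2\sqrt{d-1} + o(1)$. On this high-probability event, $d\,\lambda(G) \le 2\sqrt{d-1} + o(1)$, hence
\[ \frac{1-\delta}{4}\,(d\,\lambda(G))^{-1} \ge \frac{1-\delta}{8\sqrt{d-1}\,(1 + o(1))}. \]
For $n$ large enough, the $o(1)$ slack is absorbed by replacing $\delta$ with $\delta/2$ (which only changes the constant hidden in the final $O(\cdot)$): the hypothesis $\beta \le \frac{1-\delta}{8\sqrt{d-1}}$ then implies $\beta \le \frac{1 - \delta/2}{4}(d_{\max} - d_{\min} + d_{\max}\lambda(G))^{-1}$, so the precondition of \cref{prop:ising graph} holds with $\delta/2$ in place of $\delta$, uniformly over all external fields $h$.

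Finally, applying \cref{prop:ising graph} with $d_{\max} = d$ yields that the polarized walk mixes within $\epsilon$ total variation distance in $O(\delta^{-1} n \log(n/\epsilon))$ steps, each step implementable in $O(d\log n)$ amortized time (since all off-diagonal entries of $Q$ coincide with those of $-\beta A$, so a row of $Q$ has at most $d$ nonzero off-diagonal entries), for a total runtime of $O(\delta^{-1} n d \log(n/\epsilon)\log n)$; and $\nu$ satisfies sub-Gaussian concentration of Lipschitz functions by \cref{corollary:concentration}. There is essentially no obstacle here beyond quoting Friedman's theorem correctly; the only mild subtlety is handling the $o(1)$ error in the spectral bound, which is dealt with by weakening $\delta$ to $\delta/2$ as above. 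The same template, with the appropriate high-probability spectral-gap estimate substituted, also gives the analogous statements for Erdős–Rényi and other random graph models.
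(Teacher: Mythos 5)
Your proposal is correct and follows essentially the same route as the paper: the paper's proof is exactly an application of \cref{prop:ising graph} with $d_{\max}=d_{\min}=d$ together with Friedman's theorem giving $d\lambda(G)\le 2\sqrt{d-1}+o(1)$ with probability $1-o(1)$. Your extra care in absorbing the $o(1)$ slack by weakening $\delta$ to $\delta/2$ is a harmless refinement of the same argument.
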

\begin{proof}
By Friedman's theorem \cite{Friedman2003APO}, with probability $1-o(1),$ we have that $d_{\max} \lambda(G)=d\lambda(G) \leq 2 \sqrt{d-1} + o(1).$
\end{proof}
   \begin{corollary}\label{cor:erdos renyi}
       Let $\nu$ be the antiferromagnetic Ising model with parameter $\beta $ on $G$ where
       $G = G(n, p)$ is a Erdos-Renyi random graph. Let $d=(n-1)p$ be the expected degree. Suppose $p > \frac{\log n}{n}.$ There exists constant $C > 0$ s.t. if  $0\leq \beta \leq \frac{1}{C  \sqrt{d \log n}}, $ then with probability $ 1 - o(1)$ over the random instance $G,$ we can sample from $\hat{\nu}$ s.t. $d_{TV}(\hat{\nu}, \nu) \leq \epsilon$ in $O(n d\log \frac{n}{\epsilon}\log n)$ time and $\nu$ has sub-Gaussian concentration of Lipschitz function.
   \end{corollary}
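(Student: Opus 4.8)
The plan is to deduce this from \cref{prop:ising graph}, following the template used for \cref{cor:random d regular} in the random $d$-regular case; the two differences are that Friedman's theorem must be replaced by a spectral concentration estimate for sparse Erd\H{o}s--R\'enyi graphs, and that the degree-fluctuation term $d_{\max}-d_{\min}$, which was zero in the regular case, must now also be controlled. For the degrees: writing $d=(n-1)p$, each vertex degree is distributed as $\mathrm{Bin}(n-1,p)$ with mean $d$, so a Chernoff bound (using $d=\Omega(\log n)$, from $p>\log n/n$) shows that a fixed degree deviates from $d$ by more than $t\sqrt{d\log n}$ with probability at most $n^{-\Omega(t^2)}$; a union bound over the $n$ vertices gives, with probability $1-o(1)$, both $d_{\max}-d_{\min}=O(\sqrt{d\log n})$ and $d_{\max}=O(d)$. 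Since $p$ is (just) above the connectivity threshold, with probability $1-o(1)$ the graph has no isolated vertices, so $D$ is invertible and the normalized Laplacian in \cref{prop:ising graph} is well defined.

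Next I would invoke a concentration bound for the spectrum of the normalized adjacency matrix $D^{-1/2}AD^{-1/2}$ of $G(n,p)$: in the regime $p=\Omega(\log n/n)$ one has, with probability $1-o(1)$, that every eigenvalue other than the Perron eigenvalue is $O(\sqrt{\log n/d})$ in absolute value, i.e.\ $\lambda(G)=O(\sqrt{\log n/d})$. (This is, for example, the content of Oliveira's concentration inequality for normalized Laplacians of random graphs; it can also be derived from Feige--Ofek or Le--Levina--Vershynin-type bounds on $\|A-pJ\|$ combined with the degree control above, and the trivial bound $\lambda(G)\le 1$ already suffices when $d=\Theta(\log n)$.) Together with $d_{\max}=O(d)$ this yields $d_{\max}\lambda(G)=O(\sqrt{d\log n})$, so on the intersection of the two events above,
\[ d_{\max}-d_{\min}+d_{\max}\lambda(G)=O(\sqrt{d\log n}). \]

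It then only remains to fix parameters. Choose any constant $\delta\in(0,1)$, and take the absolute constant $C$ in the statement large enough that $\beta\le \frac{1}{C\sqrt{d\log n}}$ forces $\beta\le \frac{1-\delta}{4}\bigl(d_{\max}-d_{\min}+d_{\max}\lambda(G)\bigr)^{-1}$ on the $1-o(1)$-probability event above. Then \cref{prop:ising graph} applies and gives both the polarized-walk sampling guarantee and sub-Gaussian concentration of Lipschitz functions; and since $d_{\max}=O(d)$ and $\delta=\Theta(1)$, its runtime bound $O(\delta^{-1}nd_{\max}\log\frac{n}{\epsilon}\log n)$ simplifies to $O(nd\log\frac{n}{\epsilon}\log n)$, as claimed.

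The main obstacle is the spectral step. Unlike the $d$-regular case there is no clean Ramanujan-type statement to quote, and one needs a random-matrix concentration estimate valid essentially down to $p\approx \log n/n$, where vertices of atypically large degree spoil the concentration of the unnormalized adjacency matrix and it is the normalization (or an explicit regularization/vertex-trimming step) that saves the bound. This is also exactly where the extra $\sqrt{\log n}$ factor in the admissible range of $\beta$ comes from, compared with the $1/\sqrt{d-1}$ scaling in the regular case.
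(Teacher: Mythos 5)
Your proposal is correct and follows essentially the same route as the paper's proof: a Chernoff bound giving $d_{\max}-d_{\min}=O(\sqrt{d\log n})$ and $d_{\max}=O(d)$, a high-probability bound on $\lambda(G)$, and then a direct application of \cref{prop:ising graph} with a constant $\delta$. The only divergence is the spectral ingredient: the paper quotes \cite{CojaOghlan2007OnTL} and \cite{Hoffman2012SpectralGO} to get $\lambda(G)=O(1/\sqrt{d})$, while you use the weaker Oliveira-type bound $\lambda(G)=O(\sqrt{\log n/d})$ — which, as you correctly note, still suffices because the degree-fluctuation term $O(\sqrt{d\log n})$ dominates the quantity $d_{\max}-d_{\min}+d_{\max}\lambda(G)$ in either case, so the threshold $\beta\le 1/(C\sqrt{d\log n})$ and the stated runtime follow just the same.
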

   \begin{proof}
       By a standard Chernoff bound, with probability $ 1-o(1)$, $ d_{\max} \leq d+ O(\sqrt{d \log n}) $ and $d_{\min} \geq d-O(\sqrt{d \log n}) .$ With probability $1-o(1)$, $G$ is connected. By \cite{CojaOghlan2007OnTL} and \cite[Theorem 1.1]{Hoffman2012SpectralGO}, $\lambda(G) \leq O(\frac{1}{\sqrt{d}}).$ Applying \cref{prop:ising graph} gives the desired result. 
   \end{proof}

   \subsection{Application to fixed magnetization models}
   As a special case, our general theory applies to the fixed magnetization Ising model, i.e., the restriction of the Ising measure to the slice $\sum_i x_i = k$ of the hypercube $\{\pm 1\}^n$ for any $k$. See e.g.\  \cite{carlson2022computational}.
   This is because we can recover fixed magnetization as the limit $\gamma \to \infty$ of our more general model. These results will be applicable in particular to models on both ferromagnetic and antiferromagnetic expander graphs.
   \begin{proposition}\label{prop:simple-limit}
   Suppose that integers $k,n$ satisfy $-n \le k \le n$ and $k \equiv n \mod 2$. Suppose $J \succeq 0$ and $h \in \mathbb{R}^n$ are arbitrary and let $\mu$ be a probability measure on the slice $\{x \in \{\pm 1\}^n : \sum_i x_i = k \}$ with probability mass function
    \[ \mu(x) \propto \exp\left(\frac{1}{2} \langle x, J x \rangle + \langle h, x \rangle \right). \]    
    
    For every $\lambda \ge 0$, define the Ising model $\nu_{\lambda}$ on the hypercube $\{\pm 1\}^n$ by its probability mass function
    \[ \nu_{\lambda}(x) \propto \exp\left(\frac{1}{2} \langle x, J x \rangle + \langle h, x \rangle - \frac{\lambda}{2}\left(\sum_i x_i - k\right)^2 \right). \]
    Then $\nu_{\lambda} \to \mu$ as $\lambda \to \infty$. 
   \end{proposition}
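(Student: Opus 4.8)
The plan is to prove this by checking convergence of the probability mass functions pointwise on the finite set $\set{\pm 1}^n$; since the state space is finite, pointwise convergence of the mass functions is equivalent to $\dTV{\nu_\lambda, \mu} \to 0$, so nothing subtler is needed.

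The first step is to record the parity observation. For any $x \in \set{\pm 1}^n$ we have $\sum_i x_i \equiv n \pmod 2$, and by hypothesis $k \equiv n \pmod 2$, so $\sum_i x_i - k$ is always an even integer. Hence for each $x$ exactly one of two things happens: either $x$ lies on the slice $\mathcal S = \set{x : \sum_i x_i = k}$, in which case the confining term $\parens*{\sum_i x_i - k}^2$ vanishes identically, or $\parens*{\sum_i x_i - k}^2 \ge 4$, in which case $\exp\parens*{-\tfrac{\lambda}{2}\parens*{\sum_i x_i - k}^2} \le e^{-2\lambda}$.

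Next I would write the unnormalized weights $W_\lambda(x) = \exp\parens*{\tfrac12 \dotprod{x, Jx} + \dotprod{h, x} - \tfrac{\lambda}{2}\parens*{\sum_i x_i - k}^2}$, define $W_\infty(x) = \exp\parens*{\tfrac12 \dotprod{x, Jx} + \dotprod{h, x}}$, and set $Z_\lambda = \sum_x W_\lambda(x)$. By the dichotomy above, $W_\lambda(x) = W_\infty(x)$ for all $x \in \mathcal S$ (independent of $\lambda$), while $0 \le W_\lambda(x) \le e^{-2\lambda} \max_y W_\infty(y)$ for $x \notin \mathcal S$; the latter tends to $0$. Because $-n \le k \le n$ and $k \equiv n \pmod 2$, the slice $\mathcal S$ is nonempty, so $Z_\infty := \sum_{x \in \mathcal S} W_\infty(x) > 0$. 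Summing over the finitely many $x$ gives $Z_\lambda \to Z_\infty$, and therefore for every fixed $x$,
\[ \nu_\lambda(x) = \frac{W_\lambda(x)}{Z_\lambda} \longrightarrow \frac{W_\infty(x)\,\1[x \in \mathcal S]}{Z_\infty} = \mu(x). \]
Finiteness of $\set{\pm 1}^n$ then promotes this to $\dTV{\nu_\lambda, \mu} \to 0$, which is the claim.

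There is no real obstacle here; the only thing demanding care is the parity and non-emptiness bookkeeping, where the hypothesis $k \equiv n \pmod 2$ is used twice: once to ensure off-slice deviations are bounded away from zero (so the penalty genuinely suppresses those configurations in the limit), and once to guarantee the slice is nonempty (so $\mu$ is well-defined and $Z_\infty > 0$). Everything else is the routine finite-space limiting argument above.
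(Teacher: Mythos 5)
Your proof is correct and follows essentially the same route as the paper's: the parity hypothesis forces off-slice configurations to incur a penalty bounded away from zero (so their weights vanish as $\lambda \to \infty$), while on the slice the penalty term is identically zero, so the conditional law there is exactly $\mu$; you simply spell out the normalizing-constant bookkeeping that the paper leaves implicit. No gaps.
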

   \begin{proof}
       The parity constraint is exactly the condition which determines whether $\sum_i x_i = k$ has a solution for $x \in \{\pm 1\}^n$.
       So the result follows because $\nu_{\lambda}(x) \to 0$ as $\lambda \to \infty$ for any $x \in \{\pm 1\}^n$ not satisfying the constraint $\sum_i x_i = k$, and because the conditional law of $x$ given $\sum_i x_i = k$ under $\nu_{\lambda}$ is exactly $\mu$. 
   \end{proof}

    As the following corollary shows, the result we prove has consequences not just for the polarized walk but also the ordinary down-up walk. To be more precise, we can identify a vector $x \in \{\pm 1\}^n$ with the set $x_+ = \{i : x_i = +1\}$ of plus-valued entries, so the usual $D_{k \to k - 1}$ down operator on sets corresponds to picking a random $+1$ entry of $x$ and setting it to $-1$, and as usual the up operator $U_{k - 1 \to k}$ samples from the posterior on $x$ given such an observation. As the proof of the following result shows, the down-up walk is simply a less lazy version of the polarized walk, and this results in it mixing faster when the number of $+$ entries is small. 

   \begin{corollary} \label{cor:down up walk slice}
   Let $\mu$ be as defined in \cref{prop:simple-limit}. If $\opnorm{J} < 1/2$ then
   \begin{enumerate}
    \item The polarized down-operator $D^{\text{pol}} $ has $\left(1-\frac{1-2\opnorm{J}}{n}\right)$-entropy contraction with respect to $\mu.$ 
    \item The polarized walk on $ \nu$ mixes within $\epsilon$-TV distance in $O\left(\frac{1}{1-2\opnorm{J}} n\log(n/\epsilon)\right)$ steps. 
    \item Let $m = (n + k)/2 \in [0,n]$ be the total number of $+$ spins under the fixed magnetization model.
By identifying a spin assignment $x\in \set{\pm 1}^n$ with the set of vertices assigned to $+$, we can view $\mu$ as a distribution over $\binom{[n]}{m}.$    The (ordinary) down-operator $D_{m \to (m - 1)}$ has $\left(1 - \frac{1 - 2\opnorm{J}}{m}\right)$-entropy contraction with respect to $\mu$.
    \item The (ordinary) down-up walk on $\nu$  mixes within $\epsilon$-TV distance in $O\left(\frac{1}{1-2\opnorm{J}} m\log(m/\epsilon)\right)$ steps. 
    \item The probability measure $\mu$ satisfies sub-Gaussian concentration of Lipschitz functions with constant $K = O\left(\frac{1 - 2\opnorm{J}}{n - \abs{k}}\right)$.
   \end{enumerate}
   \begin{proof}
    The first two points follow by combining \cref{prop:simple-limit} with \cref{thm:ising projected downup} taking $\gamma = \lambda$. For the third point, first observe that from the output of the channel $D^{\text{pol}}$ we can determine whether a $+$-spin was dropped, since the magnetization is fixed. Hence, by the chain rule for KL divergence we have
    \[ \DKL{\mu' D^{\text{pol}} \river \mu D^{\text{pol}}} = \frac{n + k}{2n} \DKL{\mu' D \river \mu D} + \frac{n - k}{2n} \DKL{\mu' \river \mu}, \]
    so using the contraction of the polarized down operator, we have
    \begin{align*} 
    \DKL{\mu' D \river \mu D} 
    &= \frac{2n}{n + k} \DKL{\mu' D^{\text{pol}} \river \mu D^{\text{pol}}} - \frac{n - k}{n + k}\DKL{\mu' \river \mu} \\
    &\le  \left(\frac{2n}{n + k}\left(1 - \frac{1 - 2\opnorm{J}}{n}\right) - \frac{n - k}{n + k}\right) \DKL{\mu' \river \mu } \\
    &\le \left(1 - \frac{2(1 - 2\opnorm{J})}{n + k}\right) \DKL{\mu' \river \mu }.
    \end{align*}
    The mixing time bound follows in the same way as before (using the exchange inequality). 

For the last point, let $m$ and $m'$ be the number of $+$ and $-$ spins respectively, then $ \min \set{m, m'} =\frac{n-\abs{k}}{2}.$ The down-up walk on the set of vertices assigned to $+$ ($-$ resp.) has modified log-Sobolev constant  $ \Omega\left(\frac{1 - 2\opnorm{J}}{ m}\right)$ ($ \Omega\left(\frac{1 - 2\opnorm{J}}{ m'}\right)$ resp.). Both of these walks are $2$-local walks, so the conclusion follows by the Herbst argument, same as in \cref{corollary:concentration}.  
   \end{proof}

   \paragraph{Application: ferromagnetic Ising with fixed magnetization on expanders.}

   \begin{proposition} \label{prop:fixed magnetization expander general}
     Let $\mu:\set{\pm 1 }^n \to \R_{\geq 0}$ be the \emph{canonical} Ising model on graph $G = G([n],E)$ at fixed magnetization $k.$ More precisely, $\mu$ is a probability measure on the slice $\{x \in \{\pm 1\}^n : \sum_i x_i = k \}$ with probability mass function   
    \[\mu(x) \propto \exp \left(-\frac{\beta}{2} \dotprod{ x, A x} + \dotprod{ h ,x}\right)\]
where $A$ is the adjacency matrix of $G,$ and $\beta$ and $h\in \R^n$ are given parameters

Let $ d_{\max}, d_{\min}$ be maximum and mimimum degrees of $G$ respectively. Consider the normalized Laplacian matrix $L$ of $G$, i.e., $L = I - D^{-1/2} A D^{-1/2}$ where $D$ is the diagonal matrix with $D_{i,i}$ be the degree of vertex $i.$ The eigenvalues of $L$ are $0=\lambda_1\leq \lambda_2 \leq \cdots  \lambda_{n} \leq 2.$ Let $\lambda(G) =\max_{i\neq 1} \abs{1- \lambda_i} .$

If $\abs{\beta} \leq \frac{1-\delta}{4}  (d_{\max} -d_{\min} + d_{\max}\lambda(G))^{-1}$ 
 for $\delta >0$, then \cref{cor:down up walk slice} applies i.e., 
\begin{enumerate}
    \item  Let $m = (n + k)/2 \in [0,n]$ be the total number of $+$ spins under the fixed magnetization model. We can sample from $\hat{\mu}$ s.t. $d_{TV}(\hat{\mu}, \mu) \leq \epsilon$ by running the down-up walk for $O(\delta^{-1} m\log \frac{n}{\epsilon})$ steps.

 The down-up walk can implemented using $ O(nd_{\max} \log n)$ time for preprocessing and $O(d_{\max}\log n)$ time for each step, hence the total runtime is $O( n d_{\max}\log n+ \delta^{-1} m d_{\max} \log n \log \frac{m}{\epsilon} ) .$

 When the external field is uniform, i.e., $ h_i  = \bar{h} \forall i$ and the graph is $d$-regular ($d_{\max} = d_{\min } =d$), then the preprocessing time and the time to implement each step can be reduced to $O(m d \log(md))  $ and $ O(d \log (md) +\log \frac{1}{\epsilon})$ respectively, thus the total runtime is $ O(\delta^{-1} m\log \frac{m}{\epsilon}  (d \log (m d)  \log \frac{1}{\epsilon}) ). $
    \item $\mu$ has sub-Gaussian concentration of Lipschitz functions with constant $K = O\left(\frac{\delta }{n - \abs{k}} \right)$.
\end{enumerate}
\end{proposition}
   \end{corollary}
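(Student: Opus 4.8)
The plan is to reduce everything to \cref{cor:down up walk slice} by the same spectral surgery used in \cref{prop:ising graph}, and then to borrow the implementation analysis from the proof of \cref{thm:ising projected downup}. Throughout we assume the slice $\{x\in\{\pm1\}^n:\sum_i x_i=k\}$ is nonempty, i.e.\ $k\equiv n\bmod 2$ and $|k|\le n$, as in the hypotheses of \cref{prop:simple-limit}. First I would repeat the decomposition from \cref{prop:ising graph}: write $D^{-1/2}AD^{-1/2}=\tfrac1n\1\1^\intercal+B$ with $B$ symmetric and $\opnorm{B}=\lambda(G)$, and set
\[ U=\tfrac1n\bigl(d_{\max}\1\1^\intercal-(D^{1/2}\1)(D^{1/2}\1)^\intercal\bigr),\qquad V=-D^{1/2}BD^{1/2},\qquad J=\tfrac{1-\delta}{4}I+\beta(U+V). \]
As there, $\opnorm{U}\le\|U\|_\infty\le d_{\max}-d_{\min}$ and $\opnorm{V}\le d_{\max}\lambda(G)$, so the hypothesis on $|\beta|$ gives $\opnorm{\beta(U+V)}\le\tfrac{1-\delta}{4}$, hence $0\preceq J\preceq\tfrac{1-\delta}{2}I$ and in particular $\opnorm{J}<\tfrac12$. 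Since $-\beta A=-\tfrac{\beta d_{\max}}{n}\1\1^\intercal+\beta(U+V)$ and both $\dotprod{x,x}=n$ and $\dotprod{x,\1\1^\intercal x}=k^2$ are constant on the slice, the measure $\mu$ has density $\propto\exp(\tfrac12\dotprod{x,Jx}+\dotprod{h,x})$ there — exactly the form required by \cref{prop:simple-limit} — so \cref{cor:down up walk slice} applies with this $J$.

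Given this, the functional-inequality half of the claim is immediate: because $\opnorm{J}\le\tfrac{1-\delta}{2}$ we have $1-2\opnorm{J}\ge\delta$, so \cref{cor:down up walk slice} yields entropy contraction of $D_{m\to m-1}$ at rate $1-\delta/m$, mixing of the down-up walk in $O(\delta^{-1}m\log(m/\epsilon))=O(\delta^{-1}m\log(n/\epsilon))$ steps, and (as $\delta$ is a constant) sub-Gaussian concentration of Lipschitz functions with constant of order $\tfrac{1-2\opnorm{J}}{n-|k|}=\Theta\!\bigl(\tfrac{\delta}{n-|k|}\bigr)$, which is point~(2). For the per-step cost in the general case, I would observe that the down-up walk on the slice is exactly the less-lazy polarized walk (as in the proof of \cref{cor:down up walk slice}), so each step runs with the balanced-BST data structure of \cref{prop:data structure} just as in the proof of \cref{thm:ising projected downup}; here $Q=J-\tfrac{\beta d_{\max}}{n}\1\1^\intercal=\tfrac{1-\delta}{4}I-\beta A$ has off-diagonal support equal to the edge set of $G$, so each row has at most $d_{\max}$ off-diagonal nonzeros, giving $O(nd_{\max}\log n)$ preprocessing, $O(d_{\max}\log n)$ amortized per step, and the stated total.

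For the faster implementation when $h_i\equiv\bar h$ and $G$ is $d$-regular, the plan is to exploit that for $j\notin T$ one has $B_j(\chi_T)=c_0-2\beta\,|\mathcal N(j)\cap T|$ for a constant $c_0$ independent of $j$ (this is where $d$-regularity and uniformity of $h$ enter), so the up-step samples $j\notin T$ with probability $\propto e^{-2\beta|\mathcal N(j)\cap T|}$; since the exponent takes only the $d+1$ values $0,\dots,d$, I would maintain buckets of the non-$T$ vertices keyed by $|\mathcal N(j)\cap T|$, sample a bucket in $O(d)$ arithmetic operations (plus $O(\log(1/\epsilon))$ to control finite-precision error in the bucket weights) and a uniform vertex within it, and after a flip move the $O(d)$ affected neighbours between adjacent buckets, each move costing $O(\log(md))$. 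This gives $O(md\log(md))$ preprocessing and $O(d\log(md)+\log(1/\epsilon))$ per step, hence the claimed runtime.

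The steps above are all routine once the preceding machinery is in place; the only genuinely new bookkeeping is the homogeneous speed-up, and the main obstacle there is verifying that the bucketed sampler implements the up-step exactly (up to the claimed $\epsilon$) — in particular that the constant $c_0$ absorbed above really is $j$-independent, and that re-bucketing after a spin flip touches only $O(d)$ vertices so that the amortized per-step bound holds.
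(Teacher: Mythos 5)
Your reduction to \cref{cor:down up walk slice} is exactly the paper's: the same decomposition $-\beta A=-\tfrac{\beta d_{\max}}{n}\1\1^\intercal+\beta(U+V)$ from \cref{prop:ising graph}, the observation that $\norm{x}^2$ and $\dotprod{x,\1}^2$ are constant on the slice so the confining term can be dropped, and the bound $0\preceq J\preceq\tfrac{1-\delta}{2}I$ so that $1-2\opnorm{J}\geq\delta$; the general-case implementation via the data structure of \cref{thm:ising projected downup} applied to $Q=\tfrac{1-\delta}{4}I-\beta A$ (at most $d_{\max}$ off-diagonal nonzeros per row) also matches the paper. The structural fact driving the uniform-field, $d$-regular speedup --- that for $j\notin T$ one has $B_j(\chi_T)=c_0-2\beta\,\abs{\mathcal N(j)\cap T}$ with $c_0$ independent of $j$ --- is likewise the paper's key observation.

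However, your cost accounting for that speedup has a genuine gap. You propose explicit buckets of \emph{all} non-$T$ vertices keyed by $\abs{\mathcal N(j)\cap T}\in\set{0,\dots,d}$. The bucket with key $0$ has size at least $n-m(d+1)$, so merely building it (or an array of neighbor counts indexed by $[n]$) costs $\Omega(n)$ time, which exceeds the claimed preprocessing bound $O(md\log(md))$ whenever $md\log(md)\ll n$; since the claimed total runtime for the uniform case carries no additive $n$ term, this cannot be absorbed. The paper avoids it by storing in the weighted BST only the $-$ vertices with at least one $+$ neighbor (at most $md$ of them) and treating the zero-count set $\mathrm{Free}_-$ implicitly: its total weight is $\abs{\mathrm{Free}_-}$ times the common weight $e^{c_0}$, and a uniform element of it is drawn by rejection sampling from $[n]$ when $md\log(md)<n/4$ --- which is the actual source of the $O(\log(1/\epsilon))$ per-step term, not finite-precision arithmetic as you suggest --- while an explicit structure is used only when $n=O(md\log(md))$. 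Your bucketing can be repaired the same way (keep bucket $0$ implicit, store counts only for touched vertices in a dictionary, sample from bucket $0$ by rejection, and argue the failure probability over all steps), but as written the claimed preprocessing and per-step bounds are not established.
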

   \begin{proof}
       As in proof of \cref{prop:ising graph}, we can rewrite the probability mass function of $\mu$ as
       \[\mu(x) \propto \exp \left(\frac{1}{2}\langle x, J x\rangle + \langle h, x\rangle - \frac{\beta d_{\max}}{2n} (\sum_i x_i)^2 \right)  \]
       where $0\preceq J \preceq \frac{1-\delta}{2}.$
       Since $\mu$ is supported on the slice $\sum_i x_i = k,$ we can further rewrite $\mu$ as
       \[\mu(x) \propto \exp \left(\frac{1}{2}\langle x, J x\rangle + \langle h, x\rangle \right) \]
       thus \cref{cor:down up walk slice} applies.

       For implementing the random walk, we use the algorithm in \cref{thm:ising projected downup}, which resulted in the stated runtime.

        Below, we discuss how to improve the runtime for $d$-regular graphs with uniform external fields. We use the same notation as in the proof of \cref{thm:ising projected downup}. For a given assignment $x,$ $B_i(x)$ only depends on the number of neighbors of $i$ with $+$ spin. In particular, if $ i$ has no such neighbors, then $B_i(x) = - d + \bar{h}.$ Thus, instead of keeping track of all vertices with $-$ spin as the algorithm in \cref{thm:ising projected downup} does, we only need to keep track of those which have at least one neighbor with $+$ spin. More precisely, we insert $i$ with $ x_i= -1$ and $\mathcal{N}(i) \cap T \neq \emptyset$ into $\mathcal{D},$ and maintain the set $\text{Free}_-  = [n] \setminus T \setminus \mathcal{D}.$ We can compute $F = \sum_{i\in \text{Free}_-} \exp(B_i(x)) =  \exp(-d+\bar{h}) \abs{\text{Free}_-}$ and $L = \sum_{i\in \mathcal{D}} \exp(B_i(x))$ by calling Sum() on $\mathcal{D}.$ Thus, we can implement the up step by uniformly sampling from $\text{Free}_-$ with probability $\frac{F}{L+F}$ and (weighted) sampling from $ \mathcal{D}$ with probability $\frac{L}{L+F}$ where the weighted sampling is implemented in the same way as in \cref{thm:ising projected downup}. The down step can be implemented same as in \cref{thm:ising projected downup}, except that we insert the sampled coordinate $i$ into $\mathcal{D}$ or $\text{Free}_-$ depending on the assignment of its neighbor. 

        \paragraph{Maintaining $\mathcal{D}.$}
        
        The number of vertices in $\mathcal{D}$ is bounded by $ m d ,$ since the $m$ vertices with $+$ spin have at most $m d $ neighbors.
At initialization, it takes $O(md)$ time to compute the number $n^+_i(x)$ of $+$ neighbors of each $-$ vertex $i$ with at least one $+$ neighbor. To do so, we loop through all $+$ vertices $j$ and their neighbors $i$ and increase $ n^+_i(x)$ by $1.$ Since $R_i(x)$ is a function of $ n^+_i(x)$, i.e., $R_i(x) =\exp(B_i(x)) = \exp(2 n^+_i - d + \bar{h})$, we can build $\mathcal{D}$ by inserting $(i, R_i(x))$ with $n_i^+(x) > 0$\footnote{There are $O(md)$ such $i.$} in total $O(md \log (md))$ time. Maintaining $\mathcal{D}$ in each down-up step takes $O(d \log(md))$ time.

Note also that computing the initial value $ V (x) = -\exp(\frac{\beta}{2}\langle x, Ax\rangle + \langle h, x\rangle )$ can also be done in $O(md)$ time given $ n^+_i(x)$ for $i$ s.t. $n^+_i(x) > 0.$ 
Indeed, let $ S_1 = \set{i:  n^+_i(x) > 0}$ and $ S_2 = \set{i: n^+_i(x)=0 \text{ and } x_i = 1}$, we can write 
\[\log V(x) = -\frac{\beta}{2} \sum_i x_i B_i(x)  =  -\frac{\beta}{2} \left ( \sum_{i\in S_1} x_i B_i(x)+  (\bar{h} -d)(\abs{S_2} - (n-\abs{S_1} - \abs{S_2}))  \right)\]
Thus, computing $V(x)$ takes $\abs{S_1} +\abs{S_2} = O(md)$ time.
\paragraph{Maintaining $\text{Free}_-.$}
        Since $\abs{\text{Free}_-} \geq n - m(d+1),$ if $md\log (md) < n/4$ then uniformly sampling from $\abs{\text{Free}_-}$ can be done by uniform sampling from $ [n]$\footnote{Each sample can be produced in constant time by hashing.} then accept the sample if it is not in $ T\cup \mathcal{D}.$ To ensure that the failure probability stays below $\epsilon /2 $ in $\poly(md)$ steps of the down-up walk, we only need to do rejection sampling $\log \frac{md}{\epsilon}$ times. In this case, there is no need to keep an explicit data structure for $\text{Free}_-, $ and the preprocessing time is $O(md \log (md)).$ Each step of the down-up walk takes $ O(d \log (md)+ \log (\frac{md}{\epsilon})) = O(d \log (md) +\log(\frac{1}{\epsilon}))$ time to maintain $\mathcal{D}$ and $\text{Free}_-.$

        If $m d \log (md) > n/4$ then we can keep a data structure (i.e., a binary search tree) for  $\text{Free}_-$ that enables uniform sampling for preprocessing time of $ O(\abs{\text{Free}_-}) = O(n) = O(md \log(md) )$, and the sampling time is $O(\log \abs{\text{Free}_-}) = O(\log (md))$. Each step of the down-up walk takes $O(d \log (md))$ time.

   \end{proof}

   \begin{corollary}[Fixed-magnetization on random-$d$-regular graphs and expanders] 
   \label{cor:d regular expander fixed magnetization}
       Let $\mu$ be the canonical (ferromagnetic or antiferromagnetic) Ising model with fixed magnetization $k$ and parameter $\beta$ on graph $G.$ 
       If $G$ is a random-$d$ regular graph and $ \beta < \frac{1}{8 \sqrt{d-1}}$, with probability $1-o(1)$ over the random instance $G$, the down-up walk on $\mu$ mixes in $O_{\beta}(m \log \frac{m}{\epsilon})$ steps. 
   \end{corollary}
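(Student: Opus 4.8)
The plan is to specialize \cref{prop:fixed magnetization expander general} to a random $d$-regular graph, verifying its spectral hypothesis via Friedman's theorem exactly as in the proof of \cref{cor:random d regular}. Since $G$ is $d$-regular we have $d_{\max} = d_{\min} = d$, so the condition $|\beta| \le \frac{1-\delta}{4}(d_{\max} - d_{\min} + d_{\max}\lambda(G))^{-1}$ in \cref{prop:fixed magnetization expander general} simplifies to $|\beta| \le \frac{1-\delta}{4 d \lambda(G)}$.

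By Friedman's theorem~\cite{Friedman2003APO}, with probability $1 - o(1)$ over the random instance $G$ we have $d\,\lambda(G) \le 2\sqrt{d-1} + o(1)$, so the right-hand side above is at least $\frac{1-\delta}{8\sqrt{d-1} + o(1)}$. Since $d$ is a fixed constant and $\beta < \frac{1}{8\sqrt{d-1}}$, we may choose a constant $\delta = \delta(\beta,d) > 0$ with $\beta \le \frac{1-\delta}{8\sqrt{d-1}}$; then for all sufficiently large $n$ we have $\beta \le \frac{1-\delta}{8\sqrt{d-1}+o(1)} \le \frac{1-\delta}{4 d \lambda(G)}$ on the high-probability event, so the hypothesis of \cref{prop:fixed magnetization expander general} is met with this $\delta$.

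Applying \cref{prop:fixed magnetization expander general} (equivalently, point~4 of \cref{cor:down up walk slice}) then shows that the down-up walk on $\mu$ reaches total variation distance $\epsilon$ from $\mu$ in $O(\delta^{-1} m \log(m/\epsilon))$ steps, where $m = (n+k)/2$; absorbing $\delta^{-1}$, which depends only on $\beta$ and $d$, into the $O_\beta(\cdot)$ notation gives the claimed $O_\beta(m \log(m/\epsilon))$ bound. The ferromagnetic and antiferromagnetic cases are handled uniformly, since only $|\beta|$ enters the hypothesis. There is no real obstacle here: the statement is a direct specialization of the expander result to random regular graphs, and the only point needing care is that the $o(1)$ slack in Friedman's bound leaves room strictly below the threshold $\frac{1}{8\sqrt{d-1}}$ once $n$ is large, which holds because $d$ is constant.
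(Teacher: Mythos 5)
Your proof is correct and matches the paper's intended argument: the corollary is a direct specialization of \cref{prop:fixed magnetization expander general} to $d$-regular graphs, with the spectral hypothesis verified via Friedman's theorem exactly as in \cref{cor:random d regular}, and the $\delta^{-1}$ factor absorbed into the $O_\beta(\cdot)$ notation. No gaps.
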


   \begin{corollary}[Fixed-magnetization on Erdos-Renyi random graphs]\label{cor:fixed magnetization erdos renyi}
        Let $\mu$ be the canonical (ferromagnetic or antiferromagnetic) Ising model with fixed magnetization $k$ and parameter $\beta$ on $G$ where $G= G(n,p)$ is an Erdos-Renyi random graph.  Let $d=(n-1)p$ be the expected degree. Suppose $p > \frac{\log n}{n}.$ Let $m=\frac{n+k}{2}.$ There exists constant $C > 0$ s.t. if  $0\leq \beta \leq \frac{1}{C  \sqrt{d \log n}}, $ then with probability $ 1 - o(1)$ over the random instance $G,$ the down-up walk on $\mu$ mixes in $O(m \log \frac{m}{\epsilon})$ steps.
   \end{corollary}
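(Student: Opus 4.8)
The plan is to obtain this as a direct instantiation of \cref{prop:fixed magnetization expander general} (which itself routes through \cref{cor:down up walk slice}), the only work being to verify its spectral hypothesis for $G = G(n,p)$ with probability $1 - o(1)$. Concretely, I would invoke \cref{prop:fixed magnetization expander general} with the interaction matrix coming from $-\beta A$ and show that the controlling quantity $d_{\max} - d_{\min} + d_{\max}\lambda(G)$ is $O(\sqrt{d\log n})$ with high probability, so that the assumed bound $\beta \le \tfrac{1}{C\sqrt{d\log n}}$ yields $|\beta| \le \tfrac{1-\delta}{4}\,(d_{\max} - d_{\min} + d_{\max}\lambda(G))^{-1}$ with, say, $\delta = 1/2$, once the constant $C$ is chosen large enough to absorb the implicit constants below.

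First I would pin down the degree sequence: each vertex degree is $\mathrm{Bin}(n-1,p)$ with mean $d = (n-1)p$, and since $p > \log n / n$ we have $d \gtrsim \log n$, so a Chernoff bound together with a union bound over the $n$ vertices gives $d_{\max} \le d + O(\sqrt{d\log n})$ and $d_{\min} \ge d - O(\sqrt{d\log n})$ with probability $1 - o(1)$; in particular $d_{\max} - d_{\min} = O(\sqrt{d\log n})$ and $d_{\max} = O(d)$. Next, for the normalized spectral gap, I would cite the same facts used in the proof of \cref{cor:erdos renyi}: in the regime $p > \log n/n$, by \cite{CojaOghlan2007OnTL} and \cite[Theorem 1.1]{Hoffman2012SpectralGO}, the graph $G$ is connected and $\lambda(G) = O(1/\sqrt{d})$ with probability $1 - o(1)$. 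Combining these, $d_{\max}\lambda(G) = O(d)\cdot O(1/\sqrt{d}) = O(\sqrt{d}) \le O(\sqrt{d\log n})$, so indeed $d_{\max} - d_{\min} + d_{\max}\lambda(G) = O(\sqrt{d\log n})$. I would remark that the binding term is the degree irregularity of the Erd\H{o}s--R\'enyi model rather than the spectral gap, which is why the threshold incurs the extra $\sqrt{\log n}$ compared to the random $d$-regular case (\cref{cor:d regular expander fixed magnetization}).

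With the hypothesis of \cref{prop:fixed magnetization expander general} verified for $\delta = 1/2$, the matrix $J$ produced there satisfies $0 \preceq J \preceq \tfrac{1-\delta}{2} I$, so $1 - 2\opnorm{J} \ge \delta = 1/2$; thus \cref{cor:down up walk slice} applies, and by its fourth point the ordinary down-up walk on $\mu$, viewed as a walk on $\binom{[n]}{m}$ with $m = (n+k)/2$, mixes within $\epsilon$ total variation distance in $O\!\left(\tfrac{1}{1 - 2\opnorm{J}}\, m \log(m/\epsilon)\right) = O(m\log(m/\epsilon))$ steps, which is exactly the claimed bound. This argument introduces no new ideas beyond \cref{prop:fixed magnetization expander general}; the only mild obstacle is the bookkeeping above — namely confirming that both the degree fluctuations and the term $d_{\max}\lambda(G)$ are simultaneously $O(\sqrt{d\log n})$ on the same high-probability event, and that the connectivity of $G$ holds there as well so that the model and the walk are well defined.
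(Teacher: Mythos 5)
Your proposal is correct and follows exactly the route the paper intends: instantiate \cref{prop:fixed magnetization expander general} after verifying, as in the proof of \cref{cor:erdos renyi}, that with probability $1-o(1)$ the Chernoff bound gives $d_{\max}-d_{\min}=O(\sqrt{d\log n})$ with $d_{\max}=O(d)$ and the cited spectral estimates give $\lambda(G)=O(1/\sqrt{d})$, so that $\beta\le \frac{1}{C\sqrt{d\log n}}$ puts you in the regime of the proposition with constant $\delta$ and the $O(m\log(m/\epsilon))$ mixing bound follows from \cref{cor:down up walk slice}. No gaps; this matches the paper's argument.
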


   }
    
    \Tag<sigconf>{\begin{acks}
		
\end{acks}
}
    \Tag{\PrintBibliography}
    
    \Tag{\appendix
    \section{Illustrative examples of trickle-down}
\subsection{Illustration: Oppenheim's trickle-down}\label{a:oppenheim}
For illustrative purposes, we show how Oppenheim's celebrated trickle-down result \cite{Opp18} indeed follows by carefully bounding the trickle-down equation \cref{eqn:trickledown-pinning} applied with $T = 1$ and $t = 0$, i.e., from the equation
\begin{equation}\label{eqn:oppenheim-start}
\cov{\nu_0} = \E{\cov{\nu_1}} + \frac{1}{k} \cov{\nu_0} N_0^{-1} \cov{\nu_0}.
\end{equation}
where $N_0 = \diag(\mean{\nu_0})$ is a diagonal matrix encoding the marginals of the measure $\nu_0$. Also, recall that $N_1$ denotes a diagonal matrix encoding the marginals of the link, so it has entries $(N_1)_{ii} = \nu_1(i)$ for $i \ne a_1$ and $(N_1)_{a_1 a_1} = 0$. 

The proof will be equivalent to the usual one, but we will use slightly different terminology and notation consistent with the rest of this paper. The following fact is not needed for the proof, but is a helpful reminder of the link between spectral independence and the $1 \to k \to 1$ up-down walk:
\begin{lemma}[\cite{ALO20}]\label{lem:si-local}
Suppose that $\nu$ is a probability measure on $\binom{[n]}{k}$. Then $\nu$ is $C$-spectrally independent iff $\lambda_2(P) \le C/k$, where $P = U_{1 \to k} D_{k \to 1}$ is the (lazy version of the) $1 \to k \to 1$ up-down walk.
\end{lemma}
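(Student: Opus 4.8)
The plan is to diagonalize the up-down walk $P$ in terms of the correlation matrix $\corMat_{\nu}$ and then simply read off the claimed equivalence. First I would set up the relevant measure: let $\pi$ be the probability distribution on $[n]$ with $\pi(i) = \tfrac1k\,\P_{S\sim\nu}{i\in S}$, which is exactly $\nu D_{k\to 1}$, and recall that the up kernel $U_{1\to k}$ --- sending a point $i$ to the law of $S\sim\nu$ conditioned on $i\in S$ --- is the time-reversal of $D_{k\to 1}$ with respect to $\nu$. Consequently $\pi$ is stationary for $P = U_{1\to k}D_{k\to 1}$, the walk is reversible (i.e.\ $P$ is self-adjoint on $L^2(\pi)$), and, writing out the composition, for $i,j$ in the support of $\pi$ one has
\[ P(i,j) = \tfrac1k\,\P_{S\sim\nu}{j\in S\given i\in S} = \tfrac1k\bigl(\corMat_{\nu}(i,j) + \P_{S\sim\nu}{j\in S}\bigr) = \tfrac1k\corMat_{\nu}(i,j) + \pi(j), \]
so that, as operators on functions supported on $\operatorname{supp}(\pi)$, we have $P = \tfrac1k\corMat_{\nu} + \1\pi^{\intercal}$. (Coordinates $i$ with $\P_{S\sim\nu}{i\in S}=0$ play no role and may be discarded throughout.)

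Next I would record two structural facts about $\corMat_{\nu}$. Since $\cov{\nu} = D\corMat_{\nu}$ is symmetric with $D = \diag(k\pi)$, the matrix $\corMat_{\nu} = D^{-1}\cov{\nu}$ is self-adjoint with respect to $\langle \cdot,\cdot\rangle_{\pi}$; and $\corMat_{\nu}\1 = 0$, because $(\corMat_{\nu}\1)(i) = \E_{S\sim\nu}{\abs{S}\given i\in S} - \E_{S\sim\nu}{\abs{S}} = k - k = 0$. As $\1\pi^{\intercal}$ is precisely the orthogonal projection onto the constants in $L^2(\pi)$, the orthogonal decomposition $L^2(\pi) = \operatorname{span}(\1)\oplus\1^{\perp}$ is $P$-invariant, with $P$ acting as the identity on $\operatorname{span}(\1)$ and as $\tfrac1k\corMat_{\nu}$ on $\1^{\perp}$; hence $\operatorname{spec}(P) = \{1\}\cup\tfrac1k\operatorname{spec}(\corMat_{\nu}|_{\1^{\perp}})$. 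Finally, $P = U_{1\to k}D_{k\to 1}$ equals $D_{k\to 1}^{\ast}D_{k\to 1}$ (with the adjoint taken between $L^2(\nu)$ and $L^2(\pi)$) and is therefore positive semidefinite, so all its eigenvalues are nonnegative; together with $\corMat_{\nu}\1 = 0$ this gives $\lambda_{\max}(\corMat_{\nu}) = \lambda_{\max}(\corMat_{\nu}|_{\1^{\perp}}) = k\,\lambda_2(P)$.

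Putting these observations together, $\nu$ is $C$-spectrally independent (i.e.\ $\lambda_{\max}(\corMat_{\nu}) \le C$) if and only if $\lambda_2(P) \le C/k$, which is exactly the statement. I do not anticipate a genuine obstacle: the only points needing a little care are verifying that $U_{1\to k}$ is the time-reversal of $D_{k\to 1}$ (equivalently, the reversibility of $P$ and the identification of $\pi$), the bookkeeping relating $\corMat_{\nu}$, $\cov{\nu}$ and $D$, and the harmless restriction to $\operatorname{supp}(\pi)$.
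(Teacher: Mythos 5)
Your proof is correct: the identity $P=\tfrac1k\Psi^{\mathrm{cor}}_{\nu}+\mathbf{1}\pi^{\intercal}$ on the support of $\pi$, self-adjointness of $P$ and $\Psi^{\mathrm{cor}}_{\nu}$ in $L^2(\pi)$, the relation $\Psi^{\mathrm{cor}}_{\nu}\mathbf{1}=0$, and positive semidefiniteness of $P=D_{k\to1}^{\ast}D_{k\to1}$ do give $\operatorname{spec}(P)=\{1\}\cup\tfrac1k\operatorname{spec}\bigl(\Psi^{\mathrm{cor}}_{\nu}|_{\mathbf{1}^{\perp}}\bigr)$ and hence $\lambda_{\max}(\Psi^{\mathrm{cor}}_{\nu})=k\,\lambda_2(P)$, which is exactly the claimed equivalence. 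The paper itself offers no proof (it cites \cite{ALO20} and points to the preliminaries of \cite{anari2023universality}), and your argument is essentially that standard one; note only that you correctly use the convention $\Psi^{\mathrm{cor}}_{\nu}(i,j)=\Pr[j\in S\mid i\in S]-\Pr[j\in S]$, which is what makes $D\Psi^{\mathrm{cor}}_{\nu}=\operatorname{cov}(\nu)$ as the paper asserts, rather than the literal formula in the paper's definition (which subtracts $\Pr[i\in S]$ and appears to be a typo).
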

See e.g.\ the preliminaries of \cite{anari2023universality} for a self-contained proof of the previous lemma with identical notation.

Recall from \cref{fact:si-flc} that $C$-spectral independence is equivalent to the statement $\cov{\nu} \preceq Ck \Pi$ where $\cov{\nu}$ is the covariance matrix of the random vector $1_S$ for $S \sim \nu$, and $\Pi = \diag(\pi)$ where $\pi_{i} = \nu(i)/k$. The following lemma gives a slightly tighter PSD inequalilty which is yet another equivalent formulation of spectral independence. 
\begin{lemma}\label{lem:si-variant}
Suppose that $\nu$ is a probability measure on ${[n] \choose k}$
which is $C$-spectrally independent, equivalently $\cov{\nu} \preceq Ck \Pi$.
Then in fact
\[ \cov{\nu} \preceq Ck(\Pi - \pi \pi^{\intercal}). \]
\end{lemma}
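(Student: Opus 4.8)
The plan is to exploit the one structural feature of $\nu$ that a generic measure on $\set{0,1}^n$ lacks: every set in its support has size exactly $k$, so the indicator vector $\1_S$ for $S \sim \nu$ lies on the affine hyperplane $\set{x : \dotprod{x,\1} = k}$. In particular the all-ones vector lies in the kernel of the covariance, $\cov{\nu}\1 = 0$, since $\dotprod{\1_S - \mean{\nu},\1} = \abs{S} - k = 0$ almost surely. I would also record the normalization $\dotprod{\1,\pi} = \frac{1}{k}\E_{S\sim\nu}{\abs{S}} = 1$, which follows from the convention $\pi_i = \nu(i)/k = \frac{1}{k}\P_{S\sim\nu}{i\in S}$, together with the identity $\Pi\1 = \diag(\pi)\1 = \pi$.

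Next, set $M := Ck\Pi - \cov{\nu}$. By hypothesis (equivalently, by $C$-spectral independence via \cref{fact:si-flc}) we have $M \succeq 0$, and the claimed inequality $\cov{\nu} \preceq Ck(\Pi - \pi\pi^{\intercal})$ is literally the assertion $M \succeq Ck\,\pi\pi^{\intercal}$. The key computation is to evaluate $M$ on the vector $\1$: using $\Pi\1 = \pi$ and $\cov{\nu}\1 = 0$ gives $M\1 = Ck\,\pi$, and pairing once more with $\1$ gives $\1^{\intercal} M \1 = Ck\,\dotprod{\1,\pi} = Ck$.

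To finish I would invoke the elementary positive-semidefinite bound that for any symmetric $M \succeq 0$ and any vector $v$ with $v^{\intercal} M v > 0$ one has $M \succeq \frac{(Mv)(Mv)^{\intercal}}{v^{\intercal} M v}$, which is a one-line consequence of applying the Cauchy--Schwarz inequality to $M^{1/2}x$ and $M^{1/2}v$ for arbitrary $x$. Taking $v = \1$ yields $M \succeq \frac{(Ck\,\pi)(Ck\,\pi)^{\intercal}}{Ck} = Ck\,\pi\pi^{\intercal}$, and rearranging the definition $M = Ck\Pi - \cov{\nu}$ gives exactly $\cov{\nu} \preceq Ck(\Pi - \pi\pi^{\intercal})$, as desired.

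There is essentially no obstacle here; the only points requiring care are the bookkeeping of the convention $\pi_i = \nu(i)/k$ (so that $\sum_i \pi_i = 1$, which is what makes $\1^{\intercal} M \1 = Ck$ land cleanly) and the degenerate case $C = 0$, in which $\cov{\nu} = 0$ and both sides of the desired inequality vanish, so that the division by $v^{\intercal} M v$ never actually arises.
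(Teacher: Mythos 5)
Your proof is correct, and it takes a genuinely different and more elementary route than the paper's. The paper proves the lemma through the Markov-chain interpretation: it writes $\cov{\nu} = k^2(\Pi P - \pi\pi^{\intercal})$ for the lazy $1 \to k \to 1$ up-down walk $P$, uses self-adjointness of $P$ with respect to the $\Pi$-weighted inner product together with the bound $\lambda_2(P) \le C/k$ from \cref{lem:si-local}, and expands a test vector in the eigenbasis of $P$ to get $\Pi P \preceq (1-\lambda)\pi\pi^{\intercal} + \lambda \Pi$ with $\lambda = C/k$. You instead use only two facts: that $\1$ lies in the kernel of $\cov{\nu}$ because the support is the slice of sets of size exactly $k$, and the Cauchy--Schwarz rank-one bound $M \succeq \frac{(Mv)(Mv)^{\intercal}}{v^{\intercal}Mv}$ for symmetric $M \succeq 0$, applied to $M = Ck\Pi - \cov{\nu}$ and $v = \1$ (with $M\1 = Ck\pi$ and $\1^{\intercal}M\1 = Ck$, since $\sum_i \pi_i = 1$). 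All of these computations check out, and you correctly flag the degenerate case $C = 0$. What each approach buys: the paper's argument stays inside the walk-spectral framework that the surrounding appendix reuses (e.g.\ the eigenvector identity of \cref{lem:sigma-u2} and the quadratic inequality in the trickle-down proof), so the eigen-decomposition is set up once and exploited repeatedly; your argument is shorter, avoids any reference to $P$, and in fact proves the more general statement that for a measure supported on a constant-cardinality slice, any PSD upper bound $\cov{\nu} \preceq A$ self-improves to $\cov{\nu} \preceq A - \frac{(A\1)(A\1)^{\intercal}}{\1^{\intercal}A\1}$, of which the stated lemma is the special case $A = Ck\Pi$.
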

\begin{proof}
First recall (by expanding the definitions) that if $P = U_{1 \to k} D_{k \to 1}$ is the (lazy version of the) $1 \to k \to 1$ up-down walk, then
\[ \cov{\nu_0} = k^2 (\Pi P - \pi \pi^{\intercal}) \]
where $\pi_i = (1/k) \nu_0(i)$ and $\Pi = \diag(\pi)$ so $N_0 = k\Pi$. 

Define $\lambda = C/k$, which by \cref{lem:si-local} is an upper bound on the second eigenvalue of $P$. Recall that $P$ is self-adjoint with respect to the inner product $u,v \mapsto u^{\intercal} \Pi v$. 
Let $u_1 = \vec{1},u_2,\ldots$ be the right-eigenbasis of $P$ guaranteed by the spectral theorem (orthogonal under that inner product) with eigenvalues $\lambda_1 = 1, \lambda_2,\ldots$, and let $f$ be arbitrary with components $f_i = (f^{\intercal} \Pi u_i) u_i$ so by Plancherel's theorem $f^{\intercal} \Pi f = \sum_i f_i^{\intercal} \Pi f_i$. Then for any $f$,
\[ f^{\intercal} \Pi P f = \sum_i \lambda_i f^{\intercal} \Pi f_i = \sum_i \lambda_i f_i^{\intercal} \Pi f_i \le (1 - \lambda) f_1^{\intercal} \Pi f_1 + \lambda \sum_i f_i^{\intercal} \Pi f_i = (1 - \lambda) (f^{\intercal} \pi)^2 + \lambda f^{\intercal}\Pi f  \]
which proves that $\Pi P \preceq (1 - \lambda) \pi \pi^{\intercal} + \lambda \Pi$. Multiplying by $k^2$ proves the result.  
\end{proof}
\begin{lemma}\label{lem:sigma-u2}
Suppose that $\nu$ is a probability measure on ${[n] \choose k}$ and $\lambda_2(P)$ is the second-largest eigenvalue of $P = U_{1 \to k}D_{k \to 1}$, the (lazy) $1 \to k \to 1$ up-down walk. Let $u_2$ be the corresponding right eigenvector of $P$. Then
\[ \cov{\nu} u_2 =  k^2 \lambda_2(P) \Pi  u_2. \]
\end{lemma}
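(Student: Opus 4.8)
\textbf{Proof plan for \cref{lem:sigma-u2}.}

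The plan is to leverage the explicit identity $\cov{\nu} = k^2(\Pi P - \pi\pi^\intercal)$ that was recalled in the proof of \cref{lem:si-variant}, where $P = U_{1\to k}D_{k\to 1}$ is the lazy up-down walk, $\pi_i = (1/k)\nu(i)$, and $\Pi = \diag(\pi)$. Given this, I would simply apply both sides of the identity to the right eigenvector $u_2$ of $P$ corresponding to $\lambda_2(P)$.

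First I would write $\cov{\nu}\, u_2 = k^2(\Pi P u_2 - \pi\pi^\intercal u_2) = k^2(\lambda_2(P)\,\Pi u_2 - \pi (\pi^\intercal u_2))$, using that $P u_2 = \lambda_2(P) u_2$. The only thing left is to argue $\pi^\intercal u_2 = 0$, i.e.\ that $u_2$ is orthogonal to the all-ones vector $u_1 = \vec 1$ under the $\Pi$-inner product (note $\pi^\intercal u_2 = \vec 1^\intercal \Pi u_2$). This is exactly the orthogonality in the spectral-theorem eigenbasis of $P$: since $P$ is self-adjoint with respect to $\langle u,v\rangle_\Pi = u^\intercal \Pi v$ (as already noted in the proof of \cref{lem:si-variant}), its eigenvectors for distinct eigenvalues are $\Pi$-orthogonal, and $u_1 = \vec 1$ is the top eigenvector with eigenvalue $1$. (In the degenerate case $\lambda_2(P) = 1$ one chooses $u_2$ within the eigenspace to be $\Pi$-orthogonal to $\vec 1$, which is always possible.) Hence $\pi^\intercal u_2 = \vec 1^\intercal \Pi u_2 = \langle \vec 1, u_2\rangle_\Pi = 0$, and the second term drops out, giving $\cov{\nu}\, u_2 = k^2\lambda_2(P)\,\Pi u_2$.

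There is no real obstacle here — the lemma is essentially a one-line corollary of the identity $\cov{\nu} = k^2(\Pi P - \pi\pi^\intercal)$ combined with $\Pi$-self-adjointness of $P$. The only point requiring a word of care is the handling of $\pi^\intercal u_2 = 0$, which I would justify by the self-adjointness already invoked in the proof of \cref{lem:si-variant} rather than re-deriving it; alternatively one can simply observe directly that $\Pi P$ is symmetric, so its eigenvectors (for $\Pi P$ viewed with eigenvalue $\lambda_2$, namely $\Pi u_2$... ) — but the cleanest route is the $\Pi$-orthogonality statement as above.
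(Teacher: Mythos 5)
Your proposal is correct and matches the paper's proof essentially verbatim: both apply the identity $\cov{\nu} = k^2(\Pi P - \pi\pi^\intercal)$ to $u_2$ and kill the rank-one term via the $\Pi$-self-adjointness of $P$, which gives $\pi^\intercal u_2 = \vec 1^\intercal \Pi u_2 = 0$ by the spectral theorem. Your extra remark about choosing $u_2$ $\Pi$-orthogonal to $\vec 1$ when $\lambda_2(P)=1$ is a fine precaution, though the paper simply works under $\lambda_2(P)<1$.
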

\begin{proof}
Recall, as in the proof of \cref{lem:si-variant}, that
$\cov{\nu} = k^2 (\Pi P - \pi \pi^{\intercal})$
where $\pi_i = (1/k) \nu_0(i)$ and $\Pi = \diag(\pi)$ so $N_0 = k\Pi$. 
Recall that the largest right eigenvector of $P$ is the all-ones vector, and let $u_2$ be the second largest right eigenvector of $P$, which has eigenvalue $\lambda_2(P) < 1$. Recall that $P$ is self-adjoint with respect to the inner product $u,v \mapsto u^\intercal \Pi v$ and so by spectral theorem we have $u_2^\intercal \pi = u_2^\intercal \Pi \vec{1} = 0$.
From the above facts, we have that
\[ \cov{\nu} u_2 = k^2 (\Pi P - \pi \pi^{\intercal}) u_2  =  k^2 \lambda_2(P) \Pi  u_2. \]
\end{proof}
The following statement is Oppenheim's trickledown theorem. It may look a bit different from the usual statement, because we have stated it for a lazy version of the $1 \to k \to 1$ up-down walk, but it is directly seen to be equivalent to the ``active'' version --- see Remark 13 of \cite{anari2023universality}. In particular, note that when $C = 1$, trickle-down preserves the property of being 1-spectrally independent inherited from the link, so trickle-down can easily be applied recursively in this case.
\begin{theorem}[\cite{Opp18}]
With the notation as above,
suppose that almost surely
\[ \Sigma_{1} \preceq C N_1, \] 
and $\lambda_2(P) < 1$ where $P = U_{1 \to k} D_{k \to 1}$ is the (lazy version of the) $1 \to k \to 1$ up-down walk. Then
\[ \Sigma_{0} \preceq \frac{C(k- 2)}{k - 1 - C} N_0. \]
\end{theorem}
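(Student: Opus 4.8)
The plan is to run the backward-induction idea of the paper on the one-step pinning localization, starting from the specialization \cref{eqn:oppenheim-start},
\[ \Sigma_0 = \E{\Sigma_1} + \tfrac{1}{k}\,\Sigma_0 N_0^{-1}\Sigma_0, \]
and massaging the hypothesis on the link into a self-referential matrix inequality for $\Sigma_0$ that, after conjugating by $N_0^{-1/2}$, reduces to a scalar quadratic inequality on the eigenvalues of $M := N_0^{-1/2}\Sigma_0 N_0^{-1/2}$. The first move is to replace the assumption $\Sigma_1 \preceq C N_1$ by the strictly stronger consequence furnished by \cref{lem:si-variant}, applied to the link at $a_1$ (a $C$-spectrally-independent measure on $(k-1)$-sets whose covariance, embedded in $\R^n$, is exactly $\Sigma_1$ and whose diagonal of marginals is $N_1$). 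Writing $m_1 = \diag(N_1)$ for the vector of link marginals, this gives the almost-sure bound $\Sigma_1 \preceq C\left(N_1 - \tfrac{1}{k-1} m_1 m_1^{\intercal}\right)$; using the weaker bound $\Sigma_1 \preceq C N_1$ instead would only reprove weaker thresholds and, for $C$ close to $k-1$, nothing at all — this is exactly why \cref{lem:si-variant} is needed.

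Next I would compute the two expectations $\E{N_1}$ and $\E{m_1 m_1^{\intercal}}$ over the randomness of $a_1$, which is a uniformly random element of a $\nu_0$-random $k$-set, hence has law $\P{a_1 = j} = \nu_0(j)/k$. A short computation using $\sum_j \P_{\nu_0}{i \in S,\ j \in S} = k\,\nu_0(i)$ gives $\E{N_1} = \tfrac{k-1}{k} N_0$. For the second moment, I would use $m_1 = \mean{\nu_1} - \1_{a_1}$ together with $\mean{\nu_1} = \mean{\nu_0} + \Sigma_0 \1_{a_1}/\nu_0(a_1)$ (which follows from $\mean{\nu_{t+1}} - \mean{\nu_t} = \cov{\nu_t}Z_{t+1}$ and $\Sigma_0 \1_{[n]} = 0$), expand $m_1 m_1^{\intercal}$ and average termwise; the cross terms proportional to $\Sigma_0 \1_{[n]}$ vanish, leaving $\E{m_1 m_1^{\intercal}} = \tfrac{k-2}{k}\mean{\nu_0}\mean{\nu_0}^{\intercal} + \tfrac1k \Sigma_0 N_0^{-1}\Sigma_0 + \tfrac1k N_0 - \tfrac2k \Sigma_0$. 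Substituting both into the trickle-down equation, discarding the negative semidefinite multiple of $\mean{\nu_0}\mean{\nu_0}^{\intercal}$ (which lies in the kernel direction of $M$ anyway), and collecting the coefficients of $N_0$, $\Sigma_0$, and $\Sigma_0 N_0^{-1}\Sigma_0$ should yield, after dividing through by $\tfrac{k^2-k-2C}{k(k-1)}$ (positive because $C < k-1$),
\[ \Sigma_0 - \tfrac{k-1-C}{k^2-k-2C}\,\Sigma_0 N_0^{-1}\Sigma_0 \preceq \tfrac{Ck(k-2)}{k^2-k-2C}\,N_0. \]

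Conjugating by $N_0^{-1/2}$ turns this into $M - \tfrac{k-1-C}{k^2-k-2C} M^2 \preceq \tfrac{Ck(k-2)}{k^2-k-2C} I$, so every eigenvalue $\lambda$ of $M$ obeys $\lambda^2 - \tfrac{k^2-k-2C}{k-1-C}\lambda + \tfrac{Ck(k-2)}{k-1-C} \ge 0$. The key algebraic check is that this quadratic factors as $(\lambda - B)(\lambda - k)$ with $B = \tfrac{C(k-2)}{k-1-C}$: its two roots are precisely the target bound and $k$. Thus each eigenvalue of $M$ must avoid the open interval with endpoints $B$ and $k$. To finish, I would invoke $\lambda_2(P) < 1$: from the identity $\Sigma_0 = k^2(\Pi_0 P - \pi_0\pi_0^{\intercal})$ recalled in the proof of \cref{lem:si-variant} (with $\pi_0 = (1/k)\mean{\nu_0}$, $\Pi_0 = \diag(\pi_0)$), one gets $M = k\left(\Pi_0^{1/2} P \Pi_0^{-1/2} - \operatorname{proj}_{\Pi_0^{1/2}\1}\right)$, whose eigenvalues are $0$ together with $k\lambda_i(P)$ over the non-principal eigenvalues of $P$ — all at most $k\lambda_2(P) < k$. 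Since no eigenvalue of $M$ attains $k$, each must lie on the near side $\lambda \le B$ of the forbidden interval, i.e.\ $\Sigma_0 \preceq \tfrac{C(k-2)}{k-1-C} N_0$, which is the claim. (When $C \ge k/2$ one has $B \ge k$ and the forbidden interval is $(k,B)$ instead, so $\lambda < k \le B$ is immediate.) I expect the only real obstacle to be bookkeeping: getting $\E{m_1 m_1^{\intercal}}$ exactly right — it is tempting to drop the $-\1_{a_1}$ term in $m_1$ or mishandle the cross terms — and then tracking the three matrix coefficients carefully enough that the resulting scalar quadratic genuinely factors through $B$ and $k$; the one conceptual ingredient, that $\lambda_2(P) < 1$ excludes the spurious branch $\lambda \ge k$, parallels how comparison/uniqueness arguments select the correct solution branch elsewhere in the paper (cf.\ \cref{lem:integral-equation}).
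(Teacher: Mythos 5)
Your proposal is correct and takes essentially the same route as the paper: it starts from \cref{eqn:oppenheim-start}, strengthens the hypothesis via \cref{lem:si-variant} applied to the link, computes the same expectations $\mathbb{E}[N_1]$ and $\mathbb{E}[m_1 m_1^{\intercal}]$ (you via the mean-update formula of the linear-tilt scheme, the paper via the law of total variance --- the resulting identities agree), and ends with the same factored quadratic, whose roots are $k$ and $\tfrac{C(k-2)}{k-1-C}$, with $\lambda_2(P)<1$ selecting the correct branch. The only cosmetic difference is that you apply the scalar inequality to every eigenvalue of $N_0^{-1/2}\Sigma_0 N_0^{-1/2}$ at once, after discarding the rank-one term supported on its kernel direction, rather than testing against the second eigenvector $u_2$ as in \cref{lem:sigma-u2}, so you reach the covariance bound directly without passing through \cref{lem:si-local}.
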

\begin{proof}
First observe from \cref{eqn:oppenheim-start} and \cref{lem:si-variant}
\begin{align} 
\Sigma_{0} 
&= \E{\Sigma_{1}} + \frac{1}{k} \Sigma_{0} N_0^{-1} \Sigma_{0} \\
&\preceq C(k - 1)\E{\Pi_1 - \pi_1 \pi_1^{\intercal}} + \frac{1}{k} \Sigma_{0} N_0^{-1} \Sigma_{0} \\
&= C(k - 1)\E{\Pi_1 - \pi \pi^{\intercal}} - C(k - 1)(\E{\pi_1 \pi_1^{\intercal}} - \pi \pi^{\intercal}) + \frac{1}{k} \Sigma_{0} N_0^{-1} \Sigma_{0} \label{eqn:oppenheim-psd}
\end{align}
where $\Pi_1 = \diag(\pi_1)$ and $(\pi_1)_i = \nu_1(i)/(k - 1)$ for $i \ne a_1$, $(\pi_1)_{a_1} = 0$. (In other words $\pi_1$ is the marginal of the ``link''.) Note that for $S \sim \nu_0$
\[ (k - 1)\pi_1 = \E{1_{S \setminus a_1} \mid a_1} = \E{1_S \mid a_1} - 1_{a_1} \]
so by the law of total expectation
\[ (k - 1)\E{\pi_1} = \E{1_S} - \E{1_{a_1}} = (k - 1)\pi, \]
i.e., $\E{\pi_1} = \pi$. Note that
\[ \E*{1_{S \setminus a_1} 1_{S \setminus a_1}^{\intercal}}_{ij} = \begin{cases}
    (1 - 2/k) \E*{1_{S} 1_{S}^{\intercal}}_{ij} & \text{if $i \ne j$}\\
    (1 - 1/k) \E*{1_{S}}_i & \text{if $i = j$}
\end{cases}\]
so
\begin{align*}
\cov{1_{S \setminus a_1}} 
&= \E*{1_{S \setminus a_1} 1_{S \setminus a_1}^{\intercal}} - \E*{1_{S \setminus a_1}} \E{1_{S \setminus a_1}^{\intercal}} \\
&= (1 - 2/k) \E{1_S 1_S^{\intercal}} +  \Pi - (1 - 1/k)^2 \E{1_S} \E{1_S^{\intercal}} \\
&= (1 - 2/k) \Sigma_{\nu_0} + \Pi - \pi\pi^{\intercal}
\end{align*}
so by the law of total variance and \cref{eqn:oppenheim-start}
\begin{align*} 
(k - 1)^2[\E{\pi_1 \pi_1^{\intercal}} - \pi \pi^{\intercal}] 
&= \cov{\E{1_{S \setminus a_1} \mid a_1}} \\
&= \cov{1_{S \setminus a_1}} - \E{\cov{1_{S \setminus a_1} \mid a_1}} \\
&= (1 - 2/k) \Sigma_{0} + \Pi - \pi \pi^{\intercal} - \E{\Sigma_{1}} \\
&= \frac{1}{k} \Sigma_{0} N_0^{-1} \Sigma_{0} - \frac{2}{k} \Sigma_{0} + \Pi - \pi \pi^{\intercal}.
\end{align*}
Substituting into \cref{eqn:oppenheim-psd} yields
\begin{align*}
\Sigma_{0} 
&\preceq C(k - 1)[\Pi - \pi \pi^{\intercal}] - C(k - 1)(\E{\pi_1 \pi_1^{\intercal}} - \pi \pi^{\intercal}) + \frac{1}{k} \Sigma_{0} N_0^{-1} \Sigma_{0}  \\
&= C(k - 1)[\Pi - \pi \pi^{\intercal}] - \frac{C}{k - 1}\left(\frac{1}{k} \Sigma_{0} N_0^{-1} \Sigma_{0} - \frac{2}{k} \Sigma_{0} + \Pi - \pi \pi^{\intercal}\right) + \frac{1}{k} \Sigma_{0} N_0^{-1} \Sigma_{0}   
\end{align*}
and rearranging gives
\begin{equation}\label{eqn:oppenheim-mid} \left(1 - \frac{2C}{k(k - 1)}\right) \Sigma_{0} \preceq C(k - 1 - 1/(k - 1))[\Pi - \pi\pi^{\intercal}] + \frac{1}{k} (1 - C/(k - 1)) \Sigma_{0} N_0^{-1} \Sigma_{0}   
\end{equation}


Therefore by \eqref{eqn:oppenheim-mid} and \cref{lem:sigma-u2}, we have for $u_2$ the right eigenvector of $P$ corresponding to its second eigenvalue $\lambda_2$ that
\begin{align*}
\left(1 - \frac{2C}{k(k - 1)}\right)k^2 \lambda_2 u_2^\intercal \Pi u_2 
&= 
\left(1 - \frac{2C}{k(k - 1)}\right) u_2^\intercal \Sigma_{0} u_2  \\
&\le C(k - 1 - 1/(k - 1)) u_2^{\intercal} \Pi u_2 + \left(\frac{1}{k} - \frac{C}{k(k - 1)}\right) u_2^{\intercal} \Sigma_{0} N_0^{-1} \Sigma_{0} u_2 \\
&= C(k - 1 - 1/(k - 1)) u_2^{\intercal} \Pi u_2 + \left(1 - \frac{C}{(k - 1)}\right) k^2 \lambda_2^2  u_2^{\intercal} \Pi u_2 \\
\end{align*}
i.e., $\lambda_2$ satisfies the quadratic inequality
\[ 0 \le C(k - 1 - 1/(k - 1)) - \left(1 - \frac{2C}{k(k - 1)}\right) k^2 \lambda_2  + \left(1 - \frac{C}{(k - 1)}\right) k^2 \lambda_2^2. \]
Multiplying by $k - 1$ makes this
\[ 0 \le C(k^2 - 2k) - \left(k^2 - k - 2C\right) k \lambda_2  + \left(k - 1 - C\right) k^2 \lambda_2^2 = (\lambda_2 - 1) \left((k - 1 - C) k^2 \lambda_2 - C(k^2 - 2k)\right)\]
so using the assumption $\lambda_2 < 1$ we have that
\[ \lambda_2 \le \frac{C(1 - 2/k)}{k - 1 - C} = \frac{\lambda(1 - 2/k)}{1 - \lambda} \]
if we define $\lambda = C/(k - 1)$. The conclusion follows from \cref{lem:si-local} and \cref{fact:si-flc}. 
\end{proof}
\subsection{Illustration: trickle-down of semi-log-concavity}\label{a:basic-trickledown}
Oppenheim's trickle-down shows how spectral independence/fractional log-concavity can trickle-down from top links. As another example of the trickle-down idea, we prove a simple and roughly analogous result about semi-log-concave measures. In the special case of strongly log-concave measures, the analogous fact follows from the Brascamp-Lieb theorem \cite{brascamp2002some}.

\begin{theorem}\label{thm:trickle-down-semi-logconcave}
Suppose that $\mu$ is a $\gamma$-semi-log-concave measure on $\mathbb{R}^N$. Then for $T < \gamma$, the probability measure
$\mathcal{G}_{-T} \mu$ defined by
\[ \frac{d\mathcal{G}_{-T} \mu}{d\mu}(x) \propto \exp(T \|x\|^2/2) \]
is $\frac{1}{\gamma - T}$ semi-log-concave.
\end{theorem}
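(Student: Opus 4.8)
The plan is to realize $\mathcal{G}_{-T}\mu$ as the time-$T$ marginal of a stochastic localization process run on $\mu$ with identity driving matrix, and then apply the trickle-down equation \eqref{eqn:sl-trickledown} backward in time. Concretely, let $\nu_0 = \mu$ and let $(\nu_t)_{t\ge 0}$ be stochastic localization with constant driving matrix $C_t = I_N$. By Theorem~\ref{thm:gci}, the time-$t$ measure has relative density $\frac{d\nu_t}{d\mu}(x) \propto \exp(\dotprod{C^2 y_t, x} - \tfrac{t}{2}\norm{x}^2)$, i.e.\ $\nu_t = \mathcal{T}_{w_t}\mathcal{G}_t \mu$ for the random external field $w_t = y_t$. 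Thus a single realization of $\nu_t$ is an exponential tilt of $\mathcal{G}_t\mu$, and conversely $\mathcal{G}_{-T}\mu$ is obtained by replacing $\mathcal{G}_t$ by $\mathcal{G}_{-T}$; the point is that tilts do not affect the semi-log-concavity constant, so it suffices to bound $\opnorm{\cov{\mathcal{T}_w \mathcal{G}_{-T}\mu}}$ uniformly in $w$.

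First I would reduce to proving the statement for $\mathcal{T}_w\mathcal{G}_{-T}\mu$ with arbitrary fixed $w$. Note that running stochastic localization with identity driving matrix from the initial measure $\nu_0 = \mathcal{T}_w \mathcal{G}_{-T}\mu$ and stopping at an appropriate time: the heat-semigroup parameter accumulates, so after time $T$ the measure $\nu_T$ is again a tilt of $\mu$ (the $\mathcal{G}$ exponents add: $\mathcal{G}_{-T}$ composed with time-$T$ localization contributes $\mathcal{G}_0 = \mathrm{id}$, up to a tilt). More precisely, since $\mathcal{G}_s \mathcal{G}_{s'} = \mathcal{G}_{s+s'}$ and $\mathcal G$ commutes with $\mathcal T_w$, the time-$T$ measure of localization started at $\mathcal T_w\mathcal G_{-T}\mu$ is almost surely a tilt $\mathcal T_{w'}\mu$ of $\mu$, whose covariance is $\preceq \gamma I$ by the $\gamma$-semi-log-concavity hypothesis. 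Then the trickle-down equation \eqref{eqn:sl-trickledown} with $t=0$ gives
\[
\cov{\nu_0} = \E{\cov{\nu_T}} + \int_0^T \E{\cov{\nu_s}^2}\, ds \preceq \gamma I + \int_0^T \E{\cov{\nu_s}^2}\, ds.
\]
Taking operator norms and writing $\beta(s) = \sup_{w}\opnorm{\cov{\mathcal{T}_w\mathcal{G}_{-T+s}\mu}}$ (the sup over all tilts, which dominates $\opnorm{\cov{\nu_s}}$ since $\nu_s$ is such a tilt), one gets the scalar comparison inequality $\beta(0) \le \gamma + \int_0^T \beta(s)^2\, ds$, and more generally, by starting the process at an earlier time, a family of such inequalities parameterized by the stopping time. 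Applying Lemma~\ref{lem:integral-equation} (comparison with the solution of $\gamma'(t) = \gamma^{-1}... $, i.e.\ $f(t) = \gamma$, $K=1$) bounds $\beta(0)$ by the solution of $q(z) = \gamma + \int_0^z q(y)^2 dy$, which is the Riccati-type ODE $q' = q^2$, $q(0)=\gamma$, with solution $q(z) = \frac{1}{\gamma^{-1} - z} = \frac{\gamma}{1-\gamma z}$; evaluating at $z = T$ gives $\opnorm{\cov{\mathcal{T}_w \mathcal{G}_{-T}\mu}} \le \frac{\gamma}{1 - \gamma T} = \frac{1}{\gamma^{-1} - T} = \frac{1}{ (1/\gamma) - T}$. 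Rewriting, $\frac{1}{\gamma^{-1}-T}$ — and since $\frac{1}{\gamma^{-1}-T} = \frac{\gamma}{1-\gamma T}$, for this to equal $\frac{1}{\gamma - T}$ we instead track the semi-log-concavity bound directly: the cleanest route is to observe $\frac1{\gamma^{-1}-T}\cdot$(rescale) — I would instead just use Lemma~\ref{lem:cov-decay}-style bookkeeping, below.

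Actually the slickest argument avoids the integral-equation machinery altogether: apply Lemma~\ref{lem:cov-decay} in reverse. Running localization with $C_t = I$ from $\mathcal T_w\mathcal G_{-T}\mu$ for time $T$ ends at a tilt of $\mu$ with covariance $\preceq \gamma I$; but Lemma~\ref{lem:cov-decay} is a \emph{forward} bound, so instead I would use the SDE for the covariance directly. We have $d\cov{\nu_t} = \text{martingale} - \cov{\nu_t}^2\, dt$ (from the computation preceding \eqref{eqn:sl-trickledown} with $C_t = I$). Let $M_t = \opnorm{\E{\cov{\nu_t}\mid \F_0}}$ or, better, track the maximal eigenvalue $\phi(t)$ of $\E{\cov{\nu_t}}$; using $\E{\cov{\nu_t}^2}\succeq (\E\cov{\nu_t})^2$ only helps in the wrong direction, so one genuinely wants the comparison lemma. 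Therefore I will keep the Lemma~\ref{lem:integral-equation} route but be careful about the rescaling: the equation $q(z)=\gamma+\int_0^z q^2$ has solution $q(z)=\gamma/(1-\gamma z)$, and one checks $\gamma/(1-\gamma z)\big|_{z=T} = \gamma/(1-\gamma T)$; this is $\le \frac{1}{\gamma-T}$ precisely when $\gamma - \gamma^2 T \le 1 - \gamma T$... which is not an identity. The resolution: $\mathcal G_{-T}$ corresponds to adding $T$ to the \emph{inverse} covariance scale, consistent with Brascamp--Lieb where $\gamma$-strong-log-concavity (i.e.\ $\cov \preceq \gamma^{-1}I$... conventions differ) gives $\frac1{\gamma-T}$. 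So I would open by fixing the convention: $\gamma$-semi-log-concave means $\cov{\mathcal T_w\mu}\preceq \gamma I$ for all $w$; then one shows $\cov{\mathcal T_w \mathcal G_{-T}\mu}\preceq \frac{1}{1/\gamma - T}I = \frac{\gamma}{1-\gamma T}I$, and I restate the theorem's conclusion $\frac{1}{\gamma-T}$ as a typo-for $\frac{1}{\gamma^{-1}-T}$, or equivalently check that the cited Brascamp--Lieb special case ($\mu$ $\gamma$-strongly-log-concave $\Rightarrow$ $\rho(s) = \frac{1}{\gamma-s}$ in Theorem~\ref{thm:semilogconcave}) matches after translating $\gamma$-strong-log-concavity to $\frac1\gamma$-semi-log-concavity. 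The main obstacle is thus not the geometry — the localization-plus-trickle-down argument is short — but getting the two conventions (strong-log-concavity vs.\ semi-log-concavity, and which of $\gamma$ or $\gamma^{-1}$ appears) to line up with the Brascamp--Lieb benchmark and with \eqref{eqn:qsemilogconcave}; once that is pinned down the proof is a two-line application of \eqref{eqn:sl-trickledown} and Lemma~\ref{lem:integral-equation}.
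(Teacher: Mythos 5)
Your proposal is correct and is essentially the paper's own proof: run stochastic localization with identity driving matrix from $\mathcal{G}_{-T}\mu$, observe via the Gaussian-channel interpretation that $\nu_T$ is an exponential tilt of $\mu$, and feed the trickle-down identity \eqref{eqn:sl-trickledown} into a Riccati comparison argument (the paper works with the time-reversed form $f(t)\le \gamma^{-1}+\int_t^T f(s)^2\,ds$ for $f(t)=\sup_w\opnorm{\cov{\mathcal{T}_w\mathcal{G}_t\nu_0}}$, yielding $f(0)\le \frac{1}{\gamma-T}$). Your convention remark is also on target: the paper's proof bounds $\cov{\nu_T}$ by $\gamma^{-1}I$, i.e.\ in this theorem ``$\gamma$-semi-log-concave'' is used in the strong-log-concavity (Brascamp--Lieb) parameterization rather than the earlier definition $\cov{\mathcal{T}_w\mu}\preceq\gamma I$, which is exactly what produces the constant $\frac{1}{\gamma-T}$ with the condition $T<\gamma$ instead of $\frac{1}{\gamma^{-1}-T}$.
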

\begin{proof}
Let $\nu_0 = \mathcal{G}_{-T} \mu$ and let $\nu_t$ be the stochastic localization process with identity driving matrix initialized at $\nu_0$. By \eqref{eqn:sl-trickledown}, for all $t \in [0,T]$ we have
\[	\cov{\nu_t} = \E{\cov{\nu_T} \given \F_t} + \int_t^T \E{\cov{\nu_s}^2  \given \F_t} ds \preceq \frac{1}{\gamma} +  \int_t^T \E{\cov{\nu_s}^2 \given \F_t} ds. \]
Letting $f(t) = \sup_w \|\cov{\mathcal T_w \mathcal G_{t} \nu_0}\|_{OP}$ we therefore find that
\[ f(t) \le \frac{1}{\gamma} + \int_t^T f(s)^2 ds. \]
Recall that the solution of $dg/dt = -f(s)^2$ and $g(T) = 1/\gamma$ is $g(t) = \frac{1}{\gamma + t - T}$, so by a standard comparison argument $f(t) \le \frac{1}{\gamma + t - T}$ and plugging in $t = 0$ gives the desired result. 
\end{proof}
            \section{Glauber dynamics for antiferromagnetic Ising models on low-degree expanders}\label{a:glauber-lowdegree}
        In this appendix, we observe the following general result, which follows by combining key observations of the work \cite{koehler2022sampling} (more specifically, Corollary B.2 there)
        with a version of the ``annealing'' argument from \cite{chen2022localization}. One of the applications of this result is $O(n \log n)$ time mixing of the Glauber dynamics for antiferromagnetic Ising models on \emph{low-degree} expanders.
        
        The resulting algorithm has an incomparable runtime guarantee versus the algorithm proposed and analyzed in \cite{koehler2022sampling} based on nonconvex optimization and rejection sampling. As an advantage, the dependence on $n$ in the runtime is nearly linear for the Glauber dynamics, whereas the algorithm from \cite{koehler2022sampling} requires $\poly(n)$ time to output a single sample. As a disadvantage, the guarantee for Glauber dynamics has a doubly-exponential instead of single-exponential dependence on $\Tr(J_-)$. (In the application to antiferromagnetic Ising models on expanders, this means the guarantee for the Glauber dynamics would have a doubly-exponential dependence on the degree, but see \cref{rmk:trickledown-alternative} below.) 
        \begin{theorem}\label{thm:klr-based}
        Suppose that $\nu$ is a probability measure on the hypercube $\set{\pm 1}^n$
        \[ \nu(x) \propto \exp\parens*{\frac{1}{2}\dotprod{ x, J x } + \dotprod{ h, x }} \]
        for some $J,h$ such that $J \preceq 1 - 1/c$ (but $J$ is \emph{not} required to be positive definite). Let $J = J_+ - J_-$ be the decomposition of $J$ into its positive and negative definite parts. Then
        \[ \opnorm{\cov{\nu}} \le c e^{c \Tr(J_-)} \]
        and $\nu$ satisfies ATE with constant at most
        \[ \exp\parens*{\int_0^T c e^{cs\Tr(J_-)} ds }\]
        where $T = \lambda_{\max}(J) - \lambda_{\min}(J)$.
        \end{theorem}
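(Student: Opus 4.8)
The plan is to read off the covariance bound directly from \cite{koehler2022sampling} and then feed it into the same stochastic-localization machinery used to prove \cref{thm:ising}, but with the driving matrix chosen to ``anneal away'' the quadratic form in the style of the annealing argument of \cite{chen2022localization}. For the first assertion, $\opnorm{\cov{\nu}} \le c e^{c\Tr(J_-)}$ is essentially Corollary B.2 of \cite{koehler2022sampling}. Two small points: (i) we will need this uniformly over all external fields, but $\tilt_w \nu$ is again an Ising model with the \emph{same} interaction matrix $J$ and shifted field $h+w$, so the same corollary applies verbatim; and (ii) before invoking it we may replace $J$ by $J + \lambda I$ for any $\lambda$ without changing $\nu$, and taking $\lambda = 1 - 1/c - \lambda_{\max}(J) \ge 0$ arranges $\lambda_{\max}(J+\lambda I) = 1-1/c$ while only decreasing the trace of the negative part, so the hypothesis $J \preceq 1-1/c$ is exactly what is needed.

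For the ATE bound, run the stochastic localization process from $\nu_0 = \nu$ with time-homogeneous driving matrix $C_t \equiv C$, where $C^2 = J - \lambda_{\min}(J) I + \epsilon I$ (a positive definite matrix of operator norm $T + \epsilon$; let $\epsilon \to 0$ at the end). By \cref{thm:gci} the measure $\nu_t$ is an Ising model with interaction matrix $J - tC^2$, which on $\set{\pm 1}^n$ induces the \emph{same} measure as $(1-t)J$, since the extra terms $-t\lambda_{\min}(J)I - t\epsilon I$ only shift the Hamiltonian by the constant $-t(\lambda_{\min}(J)+\epsilon)n$ (as $\norm{x}^2 = n$). In particular $\nu_1$ is a genuine product measure, hence satisfies ATE with constant $1$, and at each intermediate time the uniform covariance bound from the first step applied to the interaction $(1-t)J$ (which still satisfies $(1-t)J \preceq 1-1/c$ and has negative part of trace $(1-t)\Tr(J_-)$) gives $\opnorm{\cov{\tilt_w \nu_t}} \le c e^{c(1-t)\Tr(J_-)}$ for every $w$.

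Now combine exactly as in the proof of \cref{thm:ising}. By \cref{lem:entropic-stability}, the uniform covariance bound makes $\nu_t$ $\alpha_t$-entropically stable with respect to $\psi(x,y) = \tfrac12\norm{C(x-y)}^2$ for $\alpha_t = \opnorm{C^2}\cdot c e^{c(1-t)\Tr(J_-)} \to T\, c e^{c(1-t)\Tr(J_-)}$ as $\epsilon \to 0$. Feeding this into \cref{prop:conservation-of-entropy}, using tensorization of entropy for the product measure $\nu_1$, and applying the supermartingale property \cref{lem:supermartingale}, one obtains ATE for $\nu$ with constant $\exp\!\big(\int_0^1 \alpha_t\,dt\big) = \exp\!\big(T\int_0^1 c e^{c(1-t)\Tr(J_-)}\,dt\big)$. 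The change of variables $s = T(1-t)$ turns this into $\exp\!\big(\int_0^T c e^{c(s/T)\Tr(J_-)}\,ds\big)$, which is at most $\exp\!\big(\int_0^T c e^{cs\Tr(J_-)}\,ds\big)$; the clean way to land exactly on the stated integral is to normalize $C$ to have unit operator norm and run the localization for time $T$ rather than for time $1$.

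The main obstacle is really the first step: extracting from \cite{koehler2022sampling} a covariance bound in precisely the uniform-over-external-fields form needed to invoke entropic stability, and recognizing that the identity shift plays a double role — it makes the chosen driving matrix $C^2 = J - \lambda_{\min}(J)I$ positive semidefinite (so that $\nu_1$ collapses to a product measure on the hypercube) \emph{and} it keeps the trace of the negative part of the intermediate interaction matrices controlled by $\Tr(J_-)$ along the whole path. Once those are in place, the remainder is a routine instance of the conservation-of-entropy argument already developed for \cref{thm:ising}, and the only bookkeeping is checking that $(1-t)J \preceq 1-1/c$ throughout and that the resulting integral matches the claimed expression.
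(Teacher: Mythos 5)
Your proposal is essentially the paper's proof: the covariance bound is imported from Corollary B.2 of \cite{koehler2022sampling}, and ATE follows by running stochastic localization with driving matrix $C^2=J-\lambda_{\min}(J)I$, bounding $\opnorm{\cov{\tilt_w\nu_t}}$ uniformly over tilts for the intermediate Ising models with interaction $(1-t)J$ (modulo a diagonal shift, immaterial on the hypercube), and then combining \cref{lem:entropic-stability}, \cref{prop:conservation-of-entropy}, tensorization at the product endpoint $\nu_1$, and the supermartingale property \cref{lem:supermartingale}, exactly as at the end of the proof of \cref{thm:ising}. One small difference of emphasis: the paper does not read the covariance bound off Corollary B.2 directly; it combines the density comparison $d\nu/d\pi\le e^{c\Tr(J_-)}$ against an Ising model $\pi$ of spectral diameter $\lambda_{\max}(J)\le 1-1/c$ with the change-of-measure inequality $\operatorname{Var}_\nu(f)\le e^{c\Tr(J_-)}\operatorname{Var}_\pi(f)$ for linear $f$ and the Brascamp--Lieb covariance bound $\opnorm{\cov{\pi}}\le c$. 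Your observation that the bound must hold uniformly over external fields (tilting only shifts $h$) is exactly what makes entropic stability applicable along the whole path; your extra shift of $J$ by $\lambda I$ before invoking the corollary is harmless but unnecessary.

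The only genuine issue is the final bookkeeping. Your computation gives the ATE constant $\exp\bigl(T\int_0^1 c e^{c(1-t)\Tr(J_-)}\,dt\bigr)=\exp\bigl(\int_0^T c e^{c(s/T)\Tr(J_-)}\,ds\bigr)$, and the comparison with the stated $\exp\bigl(\int_0^T c e^{cs\Tr(J_-)}\,ds\bigr)$ only goes the right way when $T\ge 1$. Moreover, the proposed fix does not do what you claim: normalizing $C$ to unit operator norm and running for time $T$ merely rescales time without changing the interpolation, so at time $s$ the remaining interaction is still $(1-s/T)J$ with negative-part trace $(1-s/T)\Tr(J_-)$, the entropic-stability rate is again $c e^{c(1-s/T)\Tr(J_-)}$, and after reversing time you land on the same integral $\int_0^T c e^{c(s/T)\Tr(J_-)}\,ds$, not on the stated one. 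In fairness, the paper's own two-line argument (``exactly as in \cref{thm:ising}'') also produces the integrand you derived, so for $T\ge 1$ your bound is the sharper one and the residual mismatch for $T<1$ is a defect of the statement rather than of your argument; but the claim that the renormalization lands exactly on the stated integral should be removed and replaced by the $T\ge 1$ comparison (which covers the intended applications).
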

        \begin{proof}
            By Corollary B.2 of \cite{koehler2022sampling}, $\nu$ has a density with respect to another Ising model $\pi$ where $\pi$ has an interaction matrix of spectral diameter $\lambda_{\max}(J)$, and $\frac{d\nu}{d\pi} \le e^{c \Tr(J_-)}$ where $J = J_+ - J_-$ is the decomposition of $J$ into psd and negative definite components. It follows that for any function $f$,
            \[ \Var_{\nu} f = \E_{\nu}{(f - \E_{\nu} f)^2} \le  \E_{\nu}{(f - \E_{\pi} f)^2} = \E*_{\pi} {\frac{d\nu}{d\pi} (f - \E_{\pi} f)^2} \le e^{c \Tr(J_-)} \Var_{\pi} f.  \]
            In particular, if we consider $f$ to be a linear function and use the bound on the covariance matrix of $\pi$ from page 405 of \cite{brascamp2002some}, this proves the first conclusion. 
        
            We consider the stochastic localization process with driving matrix $C_t = J + \lambda I$ where $-\lambda$ is the smallest eigenvalue of $J$. By using the above argument to bound the covariance at every time $t$ and appealing to entropic stability and the supermartingale property exactly as in the end of the proof of \cref{thm:ising}, we obtain the conclusion. 
        \end{proof}
        As a consequence of this result, we get $O(n \log n)$ time mixing of the Glauber dynamics for antiferromagnetic Ising models on $\lambda$-spectral expanders with interactions of strength up to $O(1/\lambda)$, e.g., up to $O(1/\sqrt{d})$ on random graphs and other very good spectral expanders, \emph{provided} that $d = O(1)$.
        \begin{corollary} \label{cor:glauber ising low degree}
        Suppose that 
        \[ \nu(x) \propto \exp\left(-\frac{\beta}{2} \langle x, A x \rangle + \langle h, x \rangle\right) \]
        where $A$ is the adjacency matrix of a $d$-regular graph on $n$ vertices and suppose that $\max\{|\lambda_2(A)|, |\lambda_n(A)|\} \le \lambda$. If $\lambda \beta \le 1 - 1/c$ for some $c > 0$, then $\nu$ satisfies ATE with constant at most $e^{c e^{O(\beta d)}}$. 
        \end{corollary}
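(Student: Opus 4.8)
The plan is to deduce this from \cref{thm:klr-based}, applied to $\nu$ regarded as an Ising model with interaction matrix $J = -\beta A$; recall that any additive multiple of $I$ in the interaction matrix leaves the measure unchanged. Two ingredients must be supplied: the spectral condition $J \preceq (1-1/c)I$ assumed by \cref{thm:klr-based}, and bounds on the quantities $T = \lambda_{\max}(J) - \lambda_{\min}(J)$ and $\Tr(J_-)$ appearing in its conclusion.

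The first ingredient is immediate from the spectral hypothesis on $A$. Since $A$ is the adjacency matrix of a $d$-regular graph, its eigenvalues are $d = \mu_1 \ge \mu_2 \ge \cdots \ge \mu_n$, with $\1$ an eigenvector for $\mu_1 = d$, and by assumption $\max\set{\abs{\mu_2}, \abs{\mu_n}} \le \lambda$. Hence the eigenvalues of $J = -\beta A$ are $-\beta d$ together with $\set{-\beta\mu_i : i \ge 2} \subseteq [-\beta\lambda, \beta\lambda]$, so $\lambda_{\max}(J) = -\beta\mu_n \le \beta\lambda \le 1 - 1/c$ and \cref{thm:klr-based} applies. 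Moreover $\lambda_{\min}(J) = -\beta d$, whence $T = \lambda_{\max}(J) - \lambda_{\min}(J) \le \beta(d + \lambda) \le 2\beta d = O(\beta d)$.

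The crux is the estimate $\Tr(J_-) = O(\beta d)$, which cannot use the literal decomposition of $J$ into positive and negative definite parts: since $\Tr(A^2) = nd$, the positive non-trivial eigenvalues of $A$ already sum to order $n\sqrt d$, so the negative part of $-\beta A$ in the spectral sense has trace of order $\beta n\sqrt d$, which grows with $n$. The structural point---and the one place where the expander hypothesis is genuinely used---is that all this spectral spread lives in directions along which $-\beta A$ has operator norm at most $\beta\lambda \le 1 - 1/c$; the only ``large negative'' direction is the trivial eigenvector. Concretely I would write $A = \tfrac dn \1\1^\intercal + B$ with $\opnorm{B} \le \lambda$ and $B\1 = 0$, and use the (non-spectral) splitting $J = J_+ - J_-$ with $J_- = \tfrac{\beta d}{n}\1\1^\intercal \succeq 0$ and $J_+ = -\beta B$; then $\Tr(J_-) = \beta d$ and $\opnorm{J_+} \le \beta\lambda < 1$. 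Feeding $T = O(\beta d)$ and $\Tr(J_-) = O(\beta d)$ into the bound $\exp\parens*{\int_0^T c e^{cs\Tr(J_-)}\,ds}$ from \cref{thm:klr-based} and evaluating the elementary integral then produces an ATE constant of the claimed form $e^{c e^{O(\beta d)}}$.

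The main obstacle I anticipate is justifying the last paragraph: one must check that the proof of \cref{thm:klr-based}---i.e.\ Corollary B.2 of \cite{koehler2022sampling} together with the Brascamp--Lieb covariance bound of \cite{brascamp2002some}---really goes through for the rank-one, non-spectral splitting $J = J_+ - J_-$ above, where $J_+$ merely has operator norm below $1$ rather than being positive semidefinite. If that is not literally the case, the fallback is to first shift $J$ by a suitable multiple of $I$ so that the genuine negative-definite part becomes rank one while $J_+$ stays positive semidefinite with operator norm below $1$, at the cost of replacing $c$ by a slightly worse constant; the remaining steps are routine spectral estimates for $d$-regular expanders.
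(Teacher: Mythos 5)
Your reduction to \cref{thm:klr-based}, the spectral checks ($\lambda_{\max}(-\beta A)\le\beta\lambda\le 1-1/c$ and $T\le\beta(d+\lambda)$), and above all the observation that the literal positive/negative-definite decomposition of $J=-\beta A$ is useless (the positive eigenvalues of $A$ sum to at least $\Tr(A^2)/2d=n/2$, so that $\Tr(J_-)\ge\beta n/2$ grows with $n$) are all correct, and you are right that the argument must isolate the rank-one spike $\frac{\beta d}{n}\1\1^\intercal$ and treat $-\beta B$ as the ``bounded'' part. This is indeed the shape of the paper's intended derivation, which presents the corollary as an immediate consequence of \cref{thm:klr-based}, i.e.\ it ultimately leans on the precise statement of Corollary B.2 of \cite{koehler2022sampling} as a black box; that corollary is formulated exactly for such bulk-plus-negative-spike models, and its comparison measure carries an \emph{adjusted} external field (with the same field $h$, the bound $d\nu/d\pi\le e^{c\Tr(J_-)}$ is simply false under strong fields), so ``checking the proof goes through'' is not a routine verification.

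The gap is that you never close this step at the stated threshold $\beta\lambda\le 1-1/c$. Using the non-spectral splitting means re-opening the proof of \cref{thm:klr-based}: there the comparison Ising model $\pi$ would have interaction matrix $-\beta B$, and the covariance bound invoked for $\pi$ (page 405 of \cite{brascamp2002some}) is a spectral-\emph{diameter} condition; since the nontrivial spectrum of $A$ may fill $[-\lambda,\lambda]$, the spectral diameter of $-\beta B$ is only bounded by $2\beta\lambda$, so this route closes only when $2\beta\lambda\le 1-1/c$, and having $\opnorm{J_+}<1$ without positive semidefiniteness is not enough for that bound. Your fallback has exactly the same defect: shifting to $J=\beta\lambda I-\beta A$ does make the genuine negative part the rank-one spike with $\Tr(J_-)\le\beta d$, but then $\lambda_{\max}(J)\le 2\beta\lambda$, so \cref{thm:klr-based} applies only under $2\beta\lambda\le 1-1/c$. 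That is not ``replacing $c$ by a slightly worse constant'': it halves the admissible range of $\beta\lambda$ (from below $1$ to below $1/2$) and therefore proves a strictly weaker statement than \cref{cor:glauber ising low degree}; it is essentially the constant-factor-worse variant the paper records separately in \cref{rmk:trickledown-alternative} via \cref{lem:trickle down d regular}, not the corollary itself. (A smaller mismatch: even granting $\Tr(J_-)=O(\beta d)$ and $T=O(\beta d)$, the displayed integral $\int_0^T ce^{cs\Tr(J_-)}\,ds$ evaluates to roughly $e^{O(c\beta^2d^2)}/\beta d$, so some care with the time parametrization in \cref{thm:klr-based} is needed to land on the advertised $e^{ce^{O(\beta d)}}$.) To prove the corollary as stated you must quote and use the actual statement of Corollary B.2 of \cite{koehler2022sampling}, including its choice of comparison field and the covariance bound it supplies for the bulk model, rather than re-deriving it from the spectral-decomposition phrasing of \cref{thm:klr-based}.
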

        \begin{remark}\label{rmk:trickledown-alternative}
        For the above corollary, instead of using the results of \cite{koehler2022sampling}, we could use the trickledown-based result of \cref{lem:trickle down d regular} to bound the covariance matrix. This would reduce the range of $\beta$ we can handle by a constant factor, but it would improve the dependence on $d$ to be $e^{O(\beta d)}$. 
        \end{remark}
}
	\Tag<sigconf>{\balance\PrintBibliography}

@misc{pspin,
  doi = {10.48550/ARXIV.2208.07844},
  url = {https://arxiv.org/abs/2208.07844},
  author = {Adhikari, Arka and Brennecke, Christian and Xu, Changji and Yau, Horng-Tzer},
  keywords = {Probability (math.PR), Mathematical Physics (math-ph), FOS: Mathematics, FOS: Mathematics, FOS: Physical sciences, FOS: Physical sciences},
  title = {Spectral Gap Estimates for Mixed $p$-Spin Models at High Temperature},
  publisher = {arXiv},
  year = {2022},
  copyright = {arXiv.org perpetual, non-exclusive license}
}

@techreport{van2014probability,
  title={Probability in high dimension},
  author={Van Handel, Ramon},
  year={2014},
  institution={PRINCETON UNIV NJ}
}

@article{hermon2019modified,
  title={Modified log-Sobolev inequalities for strong-Rayleigh measures},
  author={Hermon, Jonathan and Salez, Justin},
  journal={arXiv preprint arXiv:1902.02775},
  year={2019}
}

@inproceedings{alimohammadi2021fractionally,
author = {Alimohammadi, Yeganeh and Anari, Nima and Shiragur, Kirankumar and Vuong, Thuy-Duong},
title = {Fractionally Log-Concave and Sector-Stable Polynomials: Counting Planar Matchings and More},
year = {2021},
isbn = {9781450380539},
publisher = {Association for Computing Machinery},
address = {New York, NY, USA},
url = {https://doi.org/10.1145/3406325.3451123},
doi = {10.1145/3406325.3451123},
booktitle = {Proceedings of the 53rd Annual ACM SIGACT Symposium on Theory of Computing},
pages = {433–446},
numpages = {14},
keywords = {Counting and Sampling, Random Walk, Geometry of Polynomials, Spectral Independence},
location = {Virtual, Italy},
series = {STOC 2021}
}

@article{anari2021entropic,
  title={Entropic Independence I: Modified Log-{S}obolev Inequalities for Fractionally Log-Concave Distributions and High-Temperature {I}sing Models},
  author={Anari, Nima and Jain, Vishesh and Koehler, Frederic and Pham, Huy Tuan and Vuong, Thuy-Duong},
  journal={arXiv preprint arXiv:2106.04105},
  year={2021}
}

@article{anari2021entropic2,
  title={Entropic independence II: optimal sampling and concentration via restricted modified log-{S}obolev inequalities},
  author={Anari, Nima and Jain, Vishesh and Koehler, Frederic and Pham, Huy Tuan and Vuong, Thuy-Duong},
  journal={arXiv preprint arXiv:2111.03247},
  year={2021}
}

@inproceedings{AL20,
  title={Improved analysis of higher order random walks and applications},
  author={Alev, Vedat Levi and Lau, Lap Chi},
  booktitle={Proceedings of the 52nd Annual ACM SIGACT Symposium on Theory of Computing},
  pages={1198--1211},
  year={2020}
}

@inproceedings{KO18,
  title={High order random walks: Beyond spectral gap},
  author={Kaufman, Tali and Oppenheim, Izhar},
  booktitle={Approximation, Randomization, and Combinatorial Optimization. Algorithms and Techniques (APPROX/RANDOM 2018)},
  year={2018},
  organization={Schloss Dagstuhl-Leibniz-Zentrum f{\"u}r Informatik}
}

@inproceedings{ALO20,
    author = "Anari, Nima and Liu, Kuikui and Oveis Gharan, Shayan",
    title = "Spectral Independence in High-Dimensional Expanders and Applications to the Hardcore Model",
    booktitle = "Proceedings of the 61st {IEEE} Annual Symposium on Foundations of Computer Science",
    year = "2020",
    publisher = "{IEEE} Computer Society"
}

@inproceedings{chen2021optimal,
  title={Optimal mixing of Glauber dynamics: Entropy factorization via high-dimensional expansion},
  author={Chen, Zongchen and Liu, Kuikui and Vigoda, Eric},
  booktitle={Proceedings of the 53rd Annual ACM SIGACT Symposium on Theory of Computing},
  pages={1537--1550},
  year={2021}
}

@inproceedings{ALOV19,
    author = "Anari, Nima and Liu, Kuikui and Oveis Gharan, Shayan and Vinzant, Cynthia",
    title = "Log-Concave Polynomials II: High-Dimensional Walks and an FPRAS for Counting Bases of a Matroid",
    booktitle = "Proceedings of the 51st Annual {ACM} {SIGACT} Symposium on Theory of Computing",
    publisher = "{ACM}",
    year = "2019",
    month = jun
}

@inproceedings{weitz2006counting,
  title={Counting independent sets up to the tree threshold},
  author={Weitz, Dror},
  booktitle={Proceedings of the thirty-eighth annual ACM symposium on Theory of computing},
  pages={140--149},
  year={2006}
}

@article{mossel2009hardness,
  title={On the hardness of sampling independent sets beyond the tree threshold},
  author={Mossel, Elchanan and Weitz, Dror and Wormald, Nicholas},
  journal={Probability Theory and Related Fields},
  volume={143},
  number={3},
  pages={401--439},
  year={2009},
  publisher={Springer}
}

@inproceedings{sly2012computational,
  title={The computational hardness of counting in two-spin models on d-regular graphs},
  author={Sly, Allan and Sun, Nike},
  booktitle={2012 IEEE 53rd Annual Symposium on Foundations of Computer Science},
  pages={361--369},
  year={2012},
  organization={IEEE}
}

@article{galanis2016inapproximability,
  title={Inapproximability of the partition function for the antiferromagnetic Ising and hard-core models},
  author={Galanis, Andreas and {\v{S}}tefankovi{\v{c}}, Daniel and Vigoda, Eric},
  journal={Combinatorics, Probability and Computing},
  volume={25},
  number={4},
  pages={500--559},
  year={2016},
  publisher={Cambridge University Press}
}

@misc{BCSV21,
  doi = {10.48550/ARXIV.2207.09102},
  
  url = {https://arxiv.org/abs/2207.09102},
  
  author = {Blanca, Antonio and Chen, Zongchen and Štefankovič, Daniel and Vigoda, Eric},
  
  keywords = {Data Structures and Algorithms (cs.DS), Machine Learning (cs.LG), Probability (math.PR), Statistics Theory (math.ST), FOS: Computer and information sciences, FOS: Computer and information sciences, FOS: Mathematics, FOS: Mathematics},
  
  title = {Identity Testing for High-Dimensional Distributions via Entropy Tensorization},
  
  publisher = {arXiv},
  
  year = {2022},
  
  copyright = {Creative Commons Attribution 4.0 International}
}

@article{wu2006poincare,
  title={Poincar{\'e} and transportation inequalities for Gibbs measures under the Dobrushin uniqueness condition},
  author={Wu, Liming},
  journal={The Annals of Probability},
  volume={34},
  number={5},
  pages={1960--1989},
  year={2006},
  publisher={Institute of Mathematical Statistics}
}

@article{bauerschmidt2019very,
  title={A very simple proof of the LSI for high temperature spin systems},
  author={Bauerschmidt, Roland and Bodineau, Thierry},
  journal={Journal of Functional Analysis},
  volume={276},
  number={8},
  pages={2582--2588},
  year={2019},
  publisher={Elsevier}
}

@article{eldan2021spectral,
  title={A spectral condition for spectral gap: fast mixing in high-temperature Ising models},
  author={Eldan, Ronen and Koehler, Frederic and Zeitouni, Ofer},
  journal={Probability Theory and Related Fields},
  pages={1--17},
  year={2021},
  publisher={Springer}
}

@book{mezard1987spin,
  title={Spin glass theory and beyond: An Introduction to the Replica Method and Its Applications},
  author={M{\'e}zard, Marc and Parisi, Giorgio and Virasoro, Miguel Angel},
  volume={9},
  year={1987},
  publisher={World Scientific Publishing Company}
}

@article{Opp18,
  title={Local spectral expansion approach to high dimensional expanders part I: Descent of spectral gaps},
  author={Oppenheim, Izhar},
  journal={Discrete \& Computational Geometry},
  volume={59},
  number={2},
  pages={293--330},
  year={2018},
  publisher={Springer}
}

@misc{caputo14,
  doi = {10.48550/ARXIV.1405.0608},
  
  url = {https://arxiv.org/abs/1405.0608},
  
  author = {Caputo, Pietro and Menz, Georg and Tetali, Prasad},
  
  keywords = {Probability (math.PR), Mathematical Physics (math-ph), Functional Analysis (math.FA), FOS: Mathematics, FOS: Mathematics, FOS: Physical sciences, FOS: Physical sciences, 60K35, 39B62, 82B20, 82C2},
  
  title = {Approximate tensorization of entropy at high temperature},
  
  publisher = {arXiv},
  
  year = {2014},
  
  copyright = {arXiv.org perpetual, non-exclusive license}
}

@article{eldan2022log,
  title={Log concavity and concentration of Lipschitz functions on the Boolean hypercube},
  author={Eldan, Ronen and Shamir, Omer},
  journal={Journal of Functional Analysis},
  volume={282},
  number={8},
  pages={109392},
  year={2022},
  publisher={Elsevier}
}

@article{koehler2022sampling,
  title={Sampling Approximately Low-Rank Ising Models: MCMC meets Variational Methods},
  author={Koehler, Frederic and Lee, Holden and Risteski, Andrej},
  journal={arXiv preprint arXiv:2202.08907},
  year={2022}
}

@book{bakry2014analysis,
  title={Analysis and geometry of Markov diffusion operators},
  author={Bakry, Dominique and Gentil, Ivan and Ledoux, Michel and others},
  volume={103},
  year={2014},
  publisher={Springer}
}

@inproceedings{anari2019log,
  title={Log-concave polynomials II: high-dimensional walks and an FPRAS for counting bases of a matroid},
  author={Anari, Nima and Liu, Kuikui and Gharan, Shayan Oveis and Vinzant, Cynthia},
  booktitle={Proceedings of the 51st Annual ACM SIGACT Symposium on Theory of Computing},
  pages={1--12},
  year={2019}
}

@article{besag1975statistical,
  title={Statistical analysis of non-lattice data},
  author={Besag, Julian},
  journal={Journal of the Royal Statistical Society: Series D (The Statistician)},
  volume={24},
  number={3},
  pages={179--195},
  year={1975},
  publisher={Wiley Online Library}
}

@article{koehler2022statistical,
  title={Statistical Efficiency of Score Matching: The View from Isoperimetry},
  author={Koehler, Frederic and Heckett, Alexander and Risteski, Andrej},
  journal={arXiv preprint arXiv:2210.00726},
  year={2022}
}

@incollection{martinelli1999lectures,
  title={Lectures on Glauber dynamics for discrete spin models},
  author={Martinelli, Fabio},
  booktitle={Lectures on probability theory and statistics},
  pages={93--191},
  year={1999},
  publisher={Springer}
}

@article{zhang2011uniform,
  title={Uniform logarithmic Sobolev inequality for Boltzmann measures with exterior magnetic field over spheres},
  author={Zhang, Zhengliang and Qian, Bin and Ma, Yutao},
  journal={Acta applicandae mathematicae},
  volume={116},
  number={3},
  pages={305--315},
  year={2011},
  publisher={Springer}
}

@inproceedings{abdolazimi2022matrix,
  title={A matrix trickle-down theorem on simplicial complexes and applications to sampling colorings},
  author={Abdolazimi, Dorna and Liu, Kuikui and Gharan, Shayan Oveis},
  booktitle={2021 IEEE 62nd Annual Symposium on Foundations of Computer Science (FOCS)},
  pages={161--172},
  year={2022},
  organization={IEEE}
}

@article{chen2022localization,
  title={Localization schemes: A framework for proving mixing bounds for Markov chains},
  author={Chen, Yuansi and Eldan, Ronen},
  journal={arXiv preprint arXiv:2203.04163},
  year={2022}
}

@inproceedings{gheissari2019spectral,
  title={On the spectral gap of spherical spin glass dynamics},
  author={Gheissari, Reza and Jagannath, Aukosh},
  booktitle={Annales de l'Institut Henri Poincar{\'e}, Probabilit{\'e}s et Statistiques},
  volume={55},
  number={2},
  pages={756--776},
  year={2019},
  organization={Institut Henri Poincar{\'e}}
}

@article{alaoui2022sampling,
  title={Sampling from the Sherrington-Kirkpatrick Gibbs measure via algorithmic stochastic localization},
  author={Alaoui, Ahmed El and Montanari, Andrea and Sellke, Mark},
  journal={arXiv preprint arXiv:2203.05093},
  year={2022}
}

@article{celentano2022sudakov,
  title={Sudakov-Fernique post-AMP, and a new proof of the local convexity of the TAP free energy},
  author={Celentano, Michael},
  journal={arXiv preprint arXiv:2208.09550},
  year={2022}
}

@book{talagrand2010mean,
  title={Mean field models for spin glasses: Volume I: Basic examples},
  author={Talagrand, Michel},
  volume={54},
  year={2010},
  publisher={Springer Science \& Business Media}
}

@book{talagrand2011mean,
  title={Mean field models for spin glasses: Volume II: Advanced Replica-Symmetry and Low Temperature},
  author={Talagrand, Michel},
  volume={55},
  year={2011},
  publisher={Springer Science \& Business Media}
}

@article{mossel2013reverse,
  title={On reverse hypercontractivity},
  author={Mossel, Elchanan and Oleszkiewicz, Krzysztof and Sen, Arnab},
  journal={Geometric and Functional Analysis},
  volume={23},
  number={3},
  pages={1062--1097},
  year={2013},
  publisher={Springer}
}

@inproceedings{cryan2019modified,
  title={Modified log-Sobolev inequalities for strongly log-concave distributions},
  author={Cryan, Mary and Guo, Heng and Mousa, Giorgos},
  booktitle={2019 IEEE 60th Annual Symposium on Foundations of Computer Science (FOCS)},
  pages={1358--1370},
  year={2019},
  organization={IEEE}
}

@article{el2022information,
  title={An information-theoretic view of stochastic localization},
  author={El Alaoui, Ahmed and Montanari, Andrea},
  journal={IEEE Transactions on Information Theory},
  volume={68},
  number={11},
  pages={7423--7426},
  year={2022},
  publisher={IEEE}
}

@article{dyson1978phase,
  title={Phase Transitions in Quantum Spin Systems with Isotropic and Nonisotropic Interactions},
  author={Dyson, Freeman J and Lieb, Elliott H and Simon, Barry},
  journal={Journal of Statistical Physics},
  volume={18},
  number={4},
  year={1978},
  publisher={Citeseer}
}

@article{eldan2013thin,
  title={Thin shell implies spectral gap up to polylog via a stochastic localization scheme},
  author={Eldan, Ronen},
  journal={Geometric and Functional Analysis},
  volume={23},
  number={2},
  pages={532--569},
  year={2013},
  publisher={Springer}
}

@article{klartag2021spectral,
  title={Spectral monotonicity under Gaussian convolution},
  author={Klartag, Bo'az and Putterman, Eli},
  journal={arXiv preprint arXiv:2107.09496},
  year={2021}
}

@book{lakshmikantham1969differential,
  title={Differential and integral inequalities: theory and applications: volume I: ordinary differential equations},
  author={Lakshmikantham, Vangipuram and Leela, Srinivasa},
  year={1969},
  publisher={Academic press}
}

@article{eldan2020clt,
  title={The CLT in high dimensions: quantitative bounds via martingale embedding},
  author={Eldan, Ronen and Mikulincer, Dan and Zhai, Alex},
  year={2020}
}

@incollection{brascamp2002some,
  title={Some inequalities for Gaussian measures and the long-range order of the one-dimensional plasma},
  author={Brascamp, Herm Jan and Lieb, Elliot H},
  booktitle={Inequalities: Selecta of Elliott H. Lieb},
  pages={403--416},
  year={2002},
  publisher={Springer}
}

@inproceedings{gurvits2009multivariate,
  title={On multivariate Newton-like inequalities},
  author={Gurvits, Leonid},
  booktitle={Advances in Combinatorial Mathematics: Proceedings of the Waterloo Workshop in Computer Algebra 2008},
  pages={61--78},
  year={2009},
  organization={Springer}
}

@article{anari2018log,
  title={Log-concave polynomials III: Mason's ultra-log-concavity conjecture for independent sets of matroids},
  author={Anari, Nima and Liu, Kuikui and Gharan, Shayan Oveis and Vinzant, Cynthia},
  journal={arXiv preprint arXiv:1811.01600},
  year={2018}
}

@article{branden2020lorentzian,
  title={Lorentzian polynomials},
  author={Br{\"a}nd{\'e}n, Petter and Huh, June},
  journal={Annals of Mathematics},
  volume={192},
  number={3},
  pages={821--891},
  year={2020},
  publisher={Department of Mathematics, Princeton University Princeton, New Jersey, USA}
}

@inproceedings{anari2021log,
  title={Log-concave polynomials IV: approximate exchange, tight mixing times, and near-optimal sampling of forests},
  author={Anari, Nima and Liu, Kuikui and Gharan, Shayan Oveis and Vinzant, Cynthia and Vuong, Thuy-Duong},
  booktitle={Proceedings of the 53rd Annual ACM SIGACT Symposium on Theory of Computing},
  pages={408--420},
  year={2021}
}

@inproceedings{polyanskiy2017strong,
  title={Strong data-processing inequalities for channels and Bayesian networks},
  author={Polyanskiy, Yury and Wu, Yihong},
  booktitle={Convexity and Concentration},
  pages={211--249},
  year={2017},
  organization={Springer}
}

@article{hubbard1959calculation,
  title={Calculation of partition functions},
  author={Hubbard, John},
  journal={Physical Review Letters},
  volume={3},
  number={2},
  pages={77},
  year={1959},
  publisher={APS}
}

@article{anari2023universality,
  title={Universality of Spectral Independence with Applications to Fast Mixing in Spin Glasses},
  author={Anari, Nima and Jain, Vishesh and Koehler, Frederic and Pham, Huy Tuan and Vuong, Thuy-Duong},
  journal={arXiv preprint arXiv:2307.10466},
  year={2023}
}

@article{kannan1995isoperimetric,
  title={Isoperimetric problems for convex bodies and a localization lemma},
  author={Kannan, Ravi and Lov{\'a}sz, L{\'a}szl{\'o} and Simonovits, Mikl{\'o}s},
  journal={Discrete \& Computational Geometry},
  volume={13},
  pages={541--559},
  year={1995},
  publisher={Springer}
}

@article{li2020riemannian,
  title={Riemannian langevin algorithm for solving semidefinite programs},
  author={Li, Mufan Bill and Erdogdu, Murat A},
  journal={arXiv preprint arXiv:2010.11176},
  year={2020}
}

@article{stanley1968dependence,
  title={Dependence of critical properties on dimensionality of spins},
  author={Stanley, H Eugene},
  journal={Physical Review Letters},
  volume={20},
  number={12},
  pages={589},
  year={1968},
  publisher={APS}
}

@article{berezinskii1971destruction,
  title={Destruction of long-range order in one-dimensional and two-dimensional systems having a continuous symmetry group I. Classical systems},
  author={Berezinskii, VL314399},
  journal={Sov. Phys. JETP},
  volume={32},
  number={3},
  pages={493--500},
  year={1971}
}

@incollection{kosterlitz2018ordering,
  title={Ordering, metastability and phase transitions in two-dimensional systems},
  author={Kosterlitz, John Michael and Thouless, David James},
  booktitle={Basic Notions Of Condensed Matter Physics},
  pages={493--515},
  year={2018},
  publisher={CRC Press}
}

@article{ALOVV21,
  author       = {Nima Anari and
                  Kuikui Liu and
                  Shayan Oveis Gharan and
                  Cynthia Vinzant},
  title        = {Log-Concave Polynomials {IV:} Exchange Properties, Tight Mixing Times,
                  and Faster Sampling of Spanning Trees},
  journal      = {CoRR},
  volume       = {abs/2004.07220},
  year         = {2020},
  url          = {https://arxiv.org/abs/2004.07220},
  eprinttype    = {arXiv},
  eprint       = {2004.07220},
  timestamp    = {Tue, 21 Apr 2020 16:51:52 +0200},
  biburl       = {https://dblp.org/rec/journals/corr/abs-2004-07220.bib},
  bibsource    = {dblp computer science bibliography, https://dblp.org}
}

@article{CGM19,
  title={Modified log-Sobolev inequalities for strongly log-concave distributions},
  author={Cryan, Mary and Guo, Heng and Mousa, Giorgos},
  journal={arXiv preprint arXiv:1903.06081},
  year={2019}
}

@inproceedings{CLV20,
  title={Optimal mixing of {G}lauber dynamics: Entropy factorization via high-dimensional expansion},
  author={Chen, Zongchen and Liu, Kuikui and Vigoda, Eric},
  booktitle={Proceedings of the 53rd Annual ACM SIGACT Symposium on Theory of Computing (STOC)},
  pages={1537--1550},
  year={2021}
}

@inproceedings{Friedman2003APO,
  title={A proof of alon's second eigenvalue conjecture},
  author={Joel Friedman},
  booktitle={Symposium on the Theory of Computing},
  year={2003},
  url={https://api.semanticscholar.org/CorpusID:60275}
}

@article{CojaOghlan2007OnTL,
  title={On the Laplacian Eigenvalues of Gn,p},
  author={Amin Coja-Oghlan},
  journal={Combinatorics, Probability and Computing},
  year={2007},
  volume={16},
  pages={923 - 946},
  url={https://api.semanticscholar.org/CorpusID:39058365}
}

@article{Hoffman2012SpectralGO,
  title={Spectral Gaps of Random Graphs and Applications},
  author={Christopher Hoffman and Matthew Kahle and Elliot Paquette},
  journal={International Mathematics Research Notices},
  year={2012},
  url={https://api.semanticscholar.org/CorpusID:119522728}
}

@book{cormen2022introduction,
  title={Introduction to algorithms},
  author={Cormen, Thomas H and Leiserson, Charles E and Rivest, Ronald L and Stein, Clifford},
  year={2022},
  publisher={MIT press}
}

@article{lee2023parallelising,
  title={Parallelising Glauber dynamics},
  author={Lee, Holden},
  journal={arXiv preprint arXiv:2307.07131},
  year={2023}
}

@article{marton2013inequality,
  title={An inequality for relative entropy and logarithmic Sobolev inequalities in Euclidean spaces},
  author={Marton, Katalin},
  journal={Journal of Functional Analysis},
  volume={264},
  number={1},
  pages={34--61},
  year={2013},
  publisher={Elsevier}
}

@article{Borcea2008TheLA,
  title={The Lee‐Yang and P{\'o}lya‐Schur programs. II. Theory of stable polynomials and applications},
  author={Julius Borcea and Petter Br{\"a}nd{\'e}n},
  journal={Communications on Pure and Applied Mathematics},
  year={2008},
  volume={62},
  url={https://api.semanticscholar.org/CorpusID:18991668}
}

@article{cheng2022efficient,
  title={Efficient Sampling on Riemannian Manifolds via Langevin MCMC},
  author={Cheng, Xiang and Zhang, Jingzhao and Sra, Suvrit},
  journal={Advances in Neural Information Processing Systems},
  volume={35},
  pages={5995--6006},
  year={2022}
}

@book{hsu2002stochastic,
  title={Stochastic analysis on manifolds},
  author={Hsu, Elton P},
  number={38},
  year={2002},
  publisher={American Mathematical Soc.}
}

@article{koehler2023influences,
  title={Influences in Mixing Measures},
  author={Koehler, Frederic and Lifshitz, Noam and Minzer, Dor and Mossel, Elchanan},
  journal={arXiv preprint arXiv:2307.07625},
  year={2023}
}

@article{alaoui2022bounds,
  title={Bounds on the covariance matrix of the Sherrington-Kirkpatrick model},
  author={Alaoui, Ahmed El and Gaitonde, Jason},
  journal={arXiv preprint arXiv:2212.02445},
  year={2022}
}

@article{brennecke2022two,
  title={The Two Point Function of the SK Model without External Field at High Temperature},
  author={Brennecke, Christian and Schertzer, Adrien and Xu, Changji and Yau, Horng-Tzer},
  journal={arXiv preprint arXiv:2212.14476},
  year={2022}
}

@article{brennecke2023operator,
  title={Operator Norm Bounds on the Correlation Matrix of the SK Model at High Temperature},
  author={Brennecke, Christian and Xu, Changji and Yau, Horng-Tzer},
  journal={arXiv preprint arXiv:2307.12535},
  year={2023}
}

@article{sherrington1975solvable,
  title={Solvable model of a spin-glass},
  author={Sherrington, David and Kirkpatrick, Scott},
  journal={Physical review letters},
  volume={35},
  number={26},
  pages={1792},
  year={1975},
  publisher={APS}
}

@article{montanari2023posterior,
  title={Posterior sampling from the spiked models via diffusion processes},
  author={Montanari, Andrea and Wu, Yuchen},
  journal={arXiv preprint arXiv:2304.11449},
  year={2023}
}

@article{kunisky2023optimality,
  title={Optimality of Glauber dynamics for general-purpose Ising model sampling and free energy approximation},
  author={Kunisky, Dmitriy},
  journal={arXiv preprint arXiv:2307.12581},
  year={2023}
}

@book{reid1972riccati,
  title={Riccati differential equations},
  author={Reid, William Thomas},
  volume={86},
  year={1972},
  publisher={Academic press}
}

@book{ellis2006entropy,
  title={Entropy, large deviations, and statistical mechanics},
  author={Ellis, Richard S},
  volume={1431},
  number={821},
  year={2006},
  publisher={Taylor \& Francis}
}

@inproceedings{Chewi2021TheQC,
  title={The query complexity of sampling from strongly log-concave distributions in one dimension},
  author={Sinho Chewi and P. Dean Gerber and Chen Lu and Thibaut Le Gouic and Philippe Rigollet},
  booktitle={Annual Conference Computational Learning Theory},
  year={2021},
  url={https://api.semanticscholar.org/CorpusID:235254173}
}

@article{bauerschmidt2022log,
  title={Log-Sobolev inequality for near critical Ising models},
  author={Bauerschmidt, Roland and Dagallier, Benoit},
  journal={arXiv preprint arXiv:2202.02301},
  year={2022}
}

@article{carlen2009subadditivity,
  title={Subadditivity of the entropy and its relation to Brascamp--Lieb type inequalities},
  author={Carlen, Eric A and Cordero--Erausquin, Dario},
  journal={Geometric and Functional Analysis},
  volume={19},
  number={2},
  pages={373--405},
  year={2009},
  publisher={Springer}
}

@inproceedings{carlson2022computational,
  title={Computational thresholds for the fixed-magnetization Ising model},
  author={Carlson, Charlie and Davies, Ewan and Kolla, Alexandra and Perkins, Will},
  booktitle={Proceedings of the 54th Annual ACM SIGACT Symposium on Theory of Computing},
  pages={1459--1472},
  year={2022}
}

@article{lee2021lower,
  title={Lower bounds on Metropolized sampling methods for well-conditioned distributions},
  author={Lee, Yin Tat and Shen, Ruoqi and Tian, Kevin},
  journal={Advances in Neural Information Processing Systems},
  volume={34},
  pages={18812--18824},
  year={2021}
}

@article{laddha2023convergence,
  title={Convergence of gibbs sampling: coordinate Hit-and-Run mixes fast},
  author={Laddha, Aditi and Vempala, Santosh S},
  journal={Discrete \& Computational Geometry},
  pages={1--20},
  year={2023},
  publisher={Springer}
}

@article{narayanan2022mixing,
  title={On the mixing time of coordinate Hit-and-Run},
  author={Narayanan, Hariharan and Srivastava, Piyush},
  journal={Combinatorics, Probability and Computing},
  volume={31},
  number={2},
  pages={320--332},
  year={2022},
  publisher={Cambridge University Press}
}

@article{coja2022ising,
  title={The Ising antiferromagnet and max cut on random regular graphs},
  author={Coja-Oghlan, Amin and Loick, Philipp and Mezei, Bal{\'a}zs F and Sorkin, Gregory B},
  journal={SIAM Journal on Discrete Mathematics},
  volume={36},
  number={2},
  pages={1306--1342},
  year={2022},
  publisher={SIAM}
}

@inproceedings{deshpande2019threshold,
  title={The threshold for SDP-refutation of random regular NAE-3SAT},
  author={Deshpande, Yash and Montanari, Andrea and O'Donnell, Ryan and Schramm, Tselil and Sen, Subhabrata},
  booktitle={Proceedings of the Thirtieth Annual ACM-SIAM Symposium on Discrete Algorithms},
  pages={2305--2321},
  year={2019},
  organization={SIAM}
}

@article{hopfield1982neural,
  title={Neural networks and physical systems with emergent collective computational abilities},
  author={Hopfield, John J},
  journal={Proceedings of the national academy of sciences},
  volume={79},
  number={8},
  pages={2554--2558},
  year={1982},
  publisher={National Acad Sciences}
}

@article{pastur1977exactly,
  title={Exactly soluble model of a spin glass},
  author={Pastur, Leonid A and Figotin, Alexander L},
  journal={Sov. J. Low Temp. Phys},
  volume={3},
  number={6},
  pages={378--383},
  year={1977}
}

@article{little1974existence,
  title={The existence of persistent states in the brain},
  author={Little, William A},
  journal={Mathematical biosciences},
  volume={19},
  number={1-2},
  pages={101--120},
  year={1974},
  publisher={Elsevier}
}

@article{marchenko1967distribution,
  title={Distribution of eigenvalues for some sets of random matrices},
  author={Marchenko, Vladimir Alexandrovich and Pastur, Leonid Andreevich},
  journal={Matematicheskii Sbornik},
  volume={114},
  number={4},
  pages={507--536},
  year={1967},
  publisher={Russian Academy of Sciences, Steklov Mathematical Institute of Russian~…}
}

@book{anderson2010introduction,
  title={An introduction to random matrices},
  author={Anderson, Greg W and Guionnet, Alice and Zeitouni, Ofer},
  number={118},
  year={2010},
  publisher={Cambridge university press}
}

@inproceedings{Bauerschmidt2023KawasakiDB,
  title={Kawasaki dynamics beyond the uniqueness threshold},
  author={Roland Bauerschmidt and Thierry Bodineau and Benoit Dagallier},
  year={2023},
  url={https://api.semanticscholar.org/CorpusID:263829113}
}

@article{garland1973p,
  title={p-Adic curvature and the cohomology of discrete subgroups of p-adic groups},
  author={Garland, Howard},
  journal={Annals of Mathematics},
  volume={97},
  number={3},
  pages={375--423},
  year={1973},
  publisher={JSTOR}
}

@inproceedings{dinur2017high,
  title={High dimensional expanders imply agreement expanders},
  author={Dinur, Irit and Kaufman, Tali},
  booktitle={2017 IEEE 58th Annual Symposium on Foundations of Computer Science (FOCS)},
  pages={974--985},
  year={2017},
  organization={IEEE}
}

@article{goldberg2007complexity,
  title={The complexity of ferromagnetic Ising with local fields},
  author={Goldberg, Leslie Ann and Jerrum, Mark},
  journal={Combinatorics, Probability and Computing},
  volume={16},
  number={1},
  pages={43--61},
  year={2007},
  publisher={Cambridge University Press}
}

@article{jerrum1986random,
  title={Random generation of combinatorial structures from a uniform distribution},
  author={Jerrum, Mark R and Valiant, Leslie G and Vazirani, Vijay V},
  journal={Theoretical computer science},
  volume={43},
  pages={169--188},
  year={1986},
  publisher={Elsevier}
}

@article{eldan2020taming,
  title={Taming correlations through entropy-efficient measure decompositions with applications to mean-field approximation},
  author={Eldan, Ronen},
  journal={Probability Theory and Related Fields},
  volume={176},
  number={3},
  pages={737--755},
  year={2020},
  publisher={Springer}
}

@book{karatzas2014brownian,
  title={Brownian motion and stochastic calculus},
  author={Karatzas, Ioannis and Shreve, Steven},
  volume={113},
  year={2014},
  publisher={springer}
}

@inproceedings{conference-version,
    author = "Anari, Nima and Koehler, Frederic and Vuong, Thuy-Duong",
    title = "Trickle-Down in Localization Schemes and Applications",
    booktitle = "Proceedings of the 56th Annual {ACM} {SIGACT} Symposium on Theory of Computing",
    publisher = "{ACM}",
    year = "2024",
    month = jun
}

@article{simon1973varphi,
  title={The ($\varphi$ 4) 2 field theory as a classical Ising model},
  author={Simon, Barry and Griffiths, Robert B},
  journal={Communications in Mathematical Physics},
  volume={33},
  pages={145--164},
  year={1973},
  publisher={Springer}
}
\end{document}